\renewcommand{\(}{$\,}
\renewcommand{\)}{\,$}
\newcommand{\cc}[1]{\mathcal{#1}}
\newcommand{\bb}[1]{\boldsymbol{#1}}
\renewcommand{\bar}[1]{\overline{#1}}
\renewcommand{\hat}[1]{\widehat{#1}}
\renewcommand{\tilde}[1]{\widetilde{#1}}
\newcommand{\nn}{\nonumber \\}
\numberwithin{equation}{section}
\numberwithin{figure}{section}
\newcounter{example}[section]
\numberwithin{example}{section}
\newcounter{remark}[section]
\numberwithin{remark}{section}
\newtheorem{theorem}{Theorem}[section]
\newtheorem{proposition}[theorem]{Proposition}
\newtheorem{definition}[theorem]{Definition}
\newtheorem{lemma}[theorem]{Lemma}
\newtheorem{corollary}[theorem]{Corollary}
\newtheorem{exmp}[example]{Example}
\newtheorem{rmrk}[remark]{Remark}
\newenvironment{example}{\begin{exmp}\rm}{\end{exmp}}
\newenvironment{remark}{\begin{rmrk}\rm}{\end{rmrk}}
\newcommand{\argmax}{\operatornamewithlimits{argmax}}
\newcommand{\citeasnoun}[1]{\cite{#1}}
\def\argmin{\operatornamewithlimits{argmin}}
\def\Var{\operatorname{Var}}
\def\eqdef{\stackrel{\operatorname{def}}{=}}
\def\eqdistr{\stackrel{\operatorname{d}}{=}}
\def\E{I\!\!E}
\def\P{I\!\!P}
\def\R{I\!\!R}
\def\T{\top}
\newcommand{\diag}{\operatorname{diag}}
\def\ind{\mathbb{I}}
\newcommand{\bbpf}[1]{\tta^{*}_{#1}}
\newcommand{\mle}[2]{\tilde{\theta}_{#1}^{(#2)}}
\newcommand{\mmle}[1]{\tilde{\tta}_{#1}}
\newcommand{\fle}[2]{\tilde{f}_{#1}{(#2)}}
\newcommand{\flej}[3]{\tilde{f}^{(#1)}_{#2}{(#3)}}
\def\A{\bb{A} }
\def\aadapest{\hat{\tta}}
\def\adaplpest{\hat{f}}
\def\adapind{\hat{k}}
\def\dd{\mathrm{d}}
\def\KK{\cc{K}}
\def\LL{\operatorname{L}}
\def\EE{\mathbb{E}}
\def\PPi{\bb{\Pi}}
\def\RR{\mathbb{R}}
\def\RRn{\mathbb{R}^{n}}
\def\RRp{\mathbb{R}^{p}}
\def\RRd{\mathbb{R}^{d}}
\def\YY{\bb{Y}}
\def\Yi{Y_i}
\def\Xi{X_i}
\def\ff{\bb{f}}
\def\ffi{f(\Xi)}
\def\fta{f_{\bb\theta}}
\def\ftai{f_{\bb\theta}(\Xi)}
\newcommand{\ta}[1]{\theta^{(#1)}}
\newcommand{\W}[1]{\mathbf{W}_{#1}}
\newcommand{\w}[2]{w_{{#1},{#2}}}
\newcommand{\B}[1]{\mathbf{B}_{#1}}
\def\KL{\mathbb{K}\mathbb{L}}
\def\M{\mathbf{M}}
\def\S{\mathbf{S}}
\def\Lam{\mathbf{\Lambda}}
\def\tta{\bb\theta}
\def\DD{\mathbf{D}}
\def\TTa{\mathbf{\Theta}}
\def\SSigma{\mathbf{\Sigma}}
\def\VV{\mathbf{V}}
\def\eps{\varepsilon}
\def\eeps{\bb\varepsilon}
\def\epsi{\varepsilon_{i}}
\newcommand{\norm}[2]{\cc{N} \left( {#1},{#2} \right)}
\def\p{p}
\def\cov{\operatorname{cov}}
\def\MSE{\operatorname{MSE}}
\def\MISE{\operatorname{MISE}}
\def\rank{\operatorname{rank}}
\def\dim{\operatorname{dim}}
\def\tr{\operatorname{tr}}
\def\vec{\operatorname{vec}}
\def\PPsi{\bb{\Psi}}
\def\Psii{\Psi_{i}}
\def\z{\mathfrak{z}}
\def\be#1\ee{\begin{equation}#1\end{equation}}
\newcommand{\bea}{\begin{eqnarray}}
\newcommand{\eea}{\end{eqnarray}}
\newcommand{\beaa}{\begin{eqnarray*}}
\newcommand{\eeaa}{\end{eqnarray*}}
\newcommand{\bei}{\begin{itemize}}
\newcommand{\eei}{\end{itemize}}
\newcommand{\al}{\alpha}
\newcommand{\e}{\varepsilon}
\newcommand{\kk}{K}
\newcommand{\s}{\sigma}
\newcommand{\la}{\lambda}
\newcommand{\La}{\Lambda}
\newcommand{\bk}{\mathbf{k}}
\newcommand{\X}{\mathbb{X}}
\newcommand{\PP}{\mathbb{P}}
\newcommand{\N}{\mathbb{N}}
\newcommand{\I}{\mathbb{I}}
\begin{document}

\pagenumbering{roman}

\thesistitle
	{Adaptive estimation in regression \\
	 and complexity of approximation \\
of random fields}

    הה

\begin{flushright}
    To my mother Svetlana
\end{flushright}

\Huge \textbf{Zusammenfassung}
\vspace{10pt}
\linespread{1.5}
\par \normalsize

Gegenstand der vorliegenden Dissertationsschrift sind spezielle Fragen
der nicht-parametrischen Regression mit Misspezifikation der
Rausch-Kovarianz, und der Informationskomplexit\"at der Approximation
von zuf\"alligen Feldern in Abh\"angigkeit der Dimension.
\par Im ersten Abschnitt untersuchen wir Fragen der nichtparametrischen
Regression unter heteroskedastischem Gau\ss schem Rauschen. Wir nutzen die Methode der lokalen Approximation und Lepskis Methode zur Wahl eines Sch\"atzers aus der Menge der linearen Sch\"atzer, die wir durch verschiedene Grade von Lokalisierung erhalten.
Dieser Zugang wird kombiniert mit den ``Propagation Bedingungen''
bei der Wahl der kritischen Werte der Prozedur, wie dies k\"urzlich von Spokoiny und
Vial \citeasnoun{SV} vorgeschlagen wurde. Die ``Propagation Bedingungen'' des Modells mit misspezifizierter Kovarianzstruktur werden abgeschw\"acht.  Insbesondere im Gau\ss schen
Modell mit unbekanntem Mittelwert und unbekannter Kovarianz nutzen wir eine lokal
lineare parametrische Approximation des Mittels und eine inkorrekt
spezifizierte Kovarianzmatrix.
Wir zeigen, dass dieses Verfahren eine Misspezifikation der Kovarianzmatrix mit einem relativen Fehler bis zu \( o\big( \frac{1}{\log n}  \big)  \) erlaubt, wobei \( n \) der Stichprobenumfang ist. Die Qualit\"at der Absch\"atzung wird im Sinne von nichtasymptotischen Orakel-Risikoschranken gemessen.
\par Im zweiten Abschnitt untersuchen wir die Approximation
\( d \)-parametrischer 
zuf\"{a}l-liger Felder vom Tensorprodukt-Typ durch
Partialsummen der Karhunen-Lo\`eve Entwicklung und beschr\"anken
uns auf den mittleren Fehler.
Gegenstand der Analysis ist die
Informationskomplexit\"at \( n(\eps,d)\), die die minimale Anzahl zu verwendender Koeffizienten
der Reihenentwicklung angibt, die n\"otig ist, den Fehler
\( \eps \) zu garantieren. Seit der Untersuchung von
Lifshits und Tulyakova \citeasnoun{LT} ist bekannt, dass dieses
Problem dem ``Fluch der Dimension'' unterliegt. Wir bestimmen
hier die asymptotisch exakte Darstellung der Informationskomplexit\"at.

\Huge \textbf{Abstract}
\vspace{10pt}
\normalsize
\par In this thesis we study adaptive nonparametric regression with noise
misspecification and the complexity of approximation of random fields in
dependence of the dimension.
\par First, we consider the problem of pointwise estimation in
nonparametric regression with heteroscedastic additive Gaussian noise.
We use the method of local approximation applying the Lepski method for
selecting one estimate from the set of linear estimates obtained by the
different degrees of localization. This approach is combined with the
``propagation conditions'' on the choice of critical values of the
procedure, as suggested recently by
Spokoiny and Vial \citeasnoun{SV}. The ``propagation conditions'' are relaxed for the
model with misspecified covariance structure. Specifically, the model with
unknown mean and variance is approximated by the one with the parametric
assumption of local linearity of the mean function and with an
incorrectly specified covariance matrix.
We show that this procedure
allows a misspecification of the covariance matrix with a relative
error up to \( o\big( \frac{1}{\log n}  \big)  \), where \( n \) is the
sample size. The quality of estimation is measured in terms of
nonasymptotic ``oracle'' risk bounds.
\par We then turn to the \( \eps \)-approximation of \( d \)-parametric
random fields of tensor product-type by means of \( n \)-term partial
sums of the Karhunen-Lo\`eve expansion. The analysis is restricted to
the average case setting.
The quantity of interest is the information
complexity \( n(\eps,d) \) describing the minimal number of terms in the
partial sums, which guarantees an error not exceeding a given level
\( \eps \). The behavior of \( n(\eps,d) \) as \( d \to \infty \) is
the subject of our study. It was shown by Lifshits and
Tulyakova \citeasnoun{LT} that this problem inherits the curse of
dimensionality (intractability) phenomenon. We present the exact asymptotic expression for the information complexity \( n(\eps,d) \).

\include{abstract1}
\Huge \textbf{Acknowledgements}
\vspace{20pt}

\par \normalsize
It is a great pleasure to write this page. I would like to thank my supervisor Vladimir Spokoinyi for introducting me to the challenging world of adaptive methods. I am deeply grateful to my
friends and colleagues: Gilles Blanchard, Rada Dakovi\'c (Mati\'c), Le-Minh Ho, Anastasia and Vladislav Kolodko, Nicole Kr\"amer, Volker Kr\"atschmer, Anna Martius (Levina), John G. M. Schoenmakers and Nataliya Togobytska for their sympathy and help. I am greatly indebted to Alexandre B. Tsybakov for his support, important comments and constructive criticism. Many thanks go to Andre Beinrucker and Peter Math\'e for the careful translation of the abstract into German and for valuable comments. I thank the secretary of our research group Cristine Schneider for her help in thousands of administrative problems, and for being always so friendly
and nice. I wish to thank the Weierstrass Institute for Applied Analysis and Stochastics (WIAS) which made
the completion of this thesis possible. Special thanks are due to the WIAS library and especially to
Ulrike Hintze and Ilka Kleinod.

\par The last chapter of this thesis was partially written while the author was visiting
the Institut f\"{u}r Matematische Stochastik, Georg-August-Universit\"at, G\"ottingen, and was supported by the grants RFBR 05-01-00911 and RFBR-DFG 04-01-04000. I am thankful to the supervisor of this part
of the thesis Mikhail A. Lifshits for the formulation of the problem, and to Manfred Denker for his support and for providing excellent working conditions.

\tableofcontents

\pagenumbering{arabic}
\chapter{Introduction}
\label{chap:intro}

\section{Nonparametric versus parametric methods}
In \emph{nonparametric} estimation the balance between the approximation error (bias) and the
 variance of the estimator, the so-called \emph{bias-variance trade-off}, plays a key role. The bias part depends on the regularity properties of the unobserved signal. Often, for example in image denoising, see \citeasnoun{KatkEA2006} and the references therein, this signal has spatially inhomogeneous smoothness. This prompts the idea to \emph{adapt} statistical methods to the spatially varying smoothness of the function to be recovered from the noisy data.

 \par On the other side, there exists the powerful classical theory of \emph{parametric} estimation, see \citeasnoun{Ibragimov and Khasminskii}, where the underlying data distribution \( \P \) belongs to a parametric family \( \cc{P} = (\P_{\tta}, \tta \in \Theta) \) described by a \emph{finite-dimensional} parameter \( \tta \in \Theta \subset \RRp \). Obviously, the assumption that the parametric model holds \emph{globally}, i.e., that there exists a parameter \( \tta_0 \in \Theta \) such that \( \P = \P_{\tta_0} \), is too restrictive. It is hopeless to believe that the real data indeed follow some parametric model or even can be well approximated by it globally.

 \par One way out of this situation is to increase the number of parameters of the model, increasing the dimension of the parameter set \( \Theta \). This increases dramatically the complexity of the model and may, especially for high-dimensional data, make the problem computationally unfeasible. See Chapter \ref{chap:Approx} for an example of a such problem. One can also approximate a high- or infinite-dimensional parameter set \( \Theta \) by a dense sequence of low-dimensional subsets \emph{``sieves''} \( \{ \Theta_p \} \), \( p= 1,2, \ldots \, \). See \citeasnoun{vanderVaartWellner} for details. The simplest example of sieves is given by projection estimators, when the signal \( f \) is considered as a series expansion with respect to some functional basis. One tries to approximate \( f \) by the finite sums of this expansion, that is by its projection on the linear span of the first \( N \) basis functions, see \citeasnoun{Tsybakov}. The crucial problem is to decide how large \( N \) should be in order to provide a satisfactory level of approximation error. Chapter \ref{chap:Approx} of this thesis addresses to the problem of approximation of random fields of specific ``tensor-product'' type by the finite sums of the Karhunen-Lo\`eve expansion.

 \par Another idea to make a parametric model more flexible is to fix a small number of parameters, that is, the dimension \( p \) of the set \( \Theta \), but to reduce the amount of the data. This leads to the \emph{local parametric approach} dating back to the book by Katkovnik \citeasnoun{Katk1985} and papers \citeasnoun{Katk1979}, \citeasnoun{Katk1983}, where he suggested the \emph{method of local approximation}. This approach was further developed with application to image denoising, see \citeasnoun{KatkEA2006}, \citeasnoun{Foi} and the references therein. For local polynomial fitting see \citeasnoun{Fan and Gijbels book}. An interesting development in the direction of local-likelihood estimation, closely connected with the ideas of \citeasnoun{Akaike} and~\citeasnoun{White}, is due to Loader \citeasnoun{Loader}, Polzehl and Spokoiny \citeasnoun{Polzehl and Spokoiny}, Belomestny and Spokoiny~\citeasnoun{Belomestny and Spokoiny}. A fruitful application of this approach is to change-point detection in time series, see \citeasnoun{Spokoiny discontinious}, \citeasnoun{Spokoiny change point detection}, \citeasnoun{Cizek}.

 \par In order to compare the method of local approximation with the projection estimation described above, let us consider the following example. Fix a reference point \( x \in \R \). By the Taylor theorem any function which is \( p \) times differentiable on the closed interval \( [x-h, x+h] \) and \( p +1\) times differentiable on the open interval \( (x-h, x+h) \) can be expanded with respect to the polynomial basis
 \( f(t) \approx f_p(t)= f(x) + f'(x) (t-x) + \cdots + f^{(p)} (t-x)^{p}/p! \) for any \( t \in (x-h, x+h) \). Here \( N=p \) is fixed; we aim to choose the width of the interval \( h \) by the data. If the \emph{bandwidth} \( h \) is sufficiently small, the class of such functions is large and \( f_p(t) \) can serve as a reasonable estimator of the value of the unknown signal \( f(t) \) for \( t \) close to~\( x \). This idea leads to the method of local approximation, see Section \ref{sect:LocPol} for details. Due to the dependence on \( x \) this approach is nonparametric or local parametric.

 \par The most important problem is the detection of the width \( h \) of the interval providing a satisfactory quality of approximation. If the bandwidth is chosen too large it will result in a large
approximation error (bias). Small \( h \) will improve the bias, but because the number of data points falling in this interval will also be small, the variance of the estimator will be large. In the projection estimation framework the number of basis functions \( N \) plays a similar role.
The larger \( N \) is
the smaller is the modeling bias, and the larger is the variance. Thus we come back to the trade-off between bias and variance, that is to the problem of the choice of a ``good'' bandwidth.

\par If the function \( f \) would be known or its smoothness would be given, then the bandwidth \( h \) would be easy to select. Unfortunately, in most real life problems no information about the regularity properties of the underlying signal is available. Thus we need to construct a \emph{data-driven} method which would \emph{adapt} itself \emph{automatically} to the properties of the function \( f \) and, particularly, to its probably spatially inhomogeneous smoothness. One way of doing this is, instead of considering the single bandwidth \( h \), to take a finite grid (usually of geometric type) of bandwidths \( \{ h_k \}_{k=1}^K \) producing a growing sequence of \emph{nested neighborhoods} of the reference point \( x \). This pointwise-adaptive bandwidth (scale, localizing scheme) selection is based on the idea known as Lepski's method. This approach was proposed in a series of papers \citeasnoun{Lep1990}, \citeasnoun{Lep1991}, \citeasnoun{Lep1992}.
The idea is as follows: suppose that a point \( x \) and some method of localization (a smoothing kernel) are fixed. One calculates a sequence of estimators corresponding to different scales, and the procedure searches for the largest local vicinity of the center of approximation \( x \), that is for the largest bandwidth, for which the corresponding estimator is not rejected by the data. The calculated estimators are compared by the algorithm, and the adaptively selected bandwidth is the largest one such that the corresponding estimator does not differ significantly from the estimators with smaller bandwidths. Among other applications, this idea was further applied by Katkovnik as the intersection of the confidence intervals (ICI) rule (see \citeasnoun{KatkEA2006}), by Spokoiny as the fitted log-likelihood (FLL) technique (see \citeasnoun{KatkSpok}) and as a two sample likelihood ratio test with application to change-point detection (see \citeasnoun{Spokoiny discontinious}, \citeasnoun{Spokoiny change point detection}). The interesting recent paper by Rei\ss, Rozenholc, and Cuenod \citeasnoun{Reiss} presents a Lepski-type method based on the Wald-test statistics for robust and quantile regression estimation.

\par It is well known from approximation theory that the smoothness of a function can be expressed via the quality of its approximation by a sufficiently regular kernel smoother (see \citeasnoun{Triebel}). The Taylor theorem can be considered from this point of view as well. Let the degree \( p \) of the Taylor polynomial be fixed. Then the quality of approximation of a function \( f \) by the finite sum of the Taylor expansion and the width \( h \) of the proper vicinity of approximation also express the smoothness of \( f \). Thus the procedure described above intrinsically adapts directly to the local smoothness properties of the unknown function \( f \). One can also select simultaneously a kernel and a bandwidth, see the second part of \citeasnoun{LepSpok97}.

\par Since the seminal paper by Lepski \cite{Lep1990} dating back to 1990, the local pointwise adaptive methods based on Lepski's approach have showed their power being applied to image denoising \citeasnoun{Polzehl and Spokoiny}, \citeasnoun{Foi}, \citeasnoun{KatkEA2006}, robust and quantile regression \citeasnoun{Reiss}, change-point detection and volatility estimation in time series \citeasnoun{Spokoiny discontinious}, \citeasnoun{Mercurio and Spokoiny}, \citeasnoun{Spokoiny change point detection}, \citeasnoun{Cizek}, density estimation \citeasnoun{Butucea} and inverse problems, see \citeasnoun{Mathe} and the references therein. This list is not complete and just shows the possible spectrum of application. A new technique originating from \cite{Lep1990} for spatially adaptive local
constant approximation  employing local-likelihood methods was suggested in \cite{KatkSpok}. This approach is based on the assumption that a regression function can be well
approximated by a constant in a vicinity of a given point. The suggested test statistics \( T_{lk} \), \( 1 \le l <k \le K \) are based on the fitted local-likelihood (FLL), that is on the difference between the value of the local log-likelihood corresponding to the smaller scale at the point of its maximum and the maximum of the local log-likelihood corresponding to the larger scale. These statistics are used for data-driven detection of the size and shape of the homogeneity area. Lepski's selection rule from \cite{Lep1990}, see also \cite{LepMamSpok97}, is applied to the FLL-statistics, whereby chooses an adaptive scale (bandwidth \( h_{\adapind} \)) as the largest for which the values of \( T_{lm} \) are sufficiently small:
\begin{equation}\label{adaptind}
    \adapind = \max \left\{k \leq K :   T_{lm}  \leq \z_l, \,l < m \leq k    \right\}.
 \end{equation}
  The crucial problem for such adaptive methods is the choice of critical values \( \z_1, \ldots ,\z_{K-1} \). A ``propagation approach'' for choosing the parameters in the selection rule \eqref{adaptind} is advocated in \cite{KatkSpok} and \cite{SV}. The idea is to select the critical values to provide the prescribed behavior of the procedure in the simplest parametric situation. Then the procedure should work well even when the parametric assumption is violated.

\par In \cite{KatkSpok} and \cite{SV} the local constant fit is considered. In Chapter \ref{chap:Regression} we generalize the FLL method  to the local linear approximation in regression with heteroscedastic Gaussian noise, and the ``propagation approach'' is justified for the case of misspecified covariance structure.

\section{Local approximation}
\label{sect:LocPol}

\subsection{Local polynomial estimators: basic properties}
Let us consider as a motivation for local polynomial fitting the case of a deterministic design in \( \RR \). By the Taylor theorem any function \( f(\cdot) \) in a H\"older class
\( \Sigma(\beta, L) \), \( \beta >1 \) can be represented, up to a reminder term, as
\( f(t) \approx f(x) + f'(x) (t-x) + \cdots + f^{(p-1)} (t-x)^{\p-1}/(p-1)! \) for \( t \)
sufficiently close to \( x \) and \( p-1 = \lfloor \beta \rfloor  \). This suggests the use of a local polynomial approximation to \( f(t) \) in the form \( \fta(t) = \Psi(t-x)^{\T} \tta \) with
\( \Psi(u) = (1,u ,\ldots,(u)^{\p-1}/(p-1)! )^{\T} \) and the vector of parameters
\( \tta = \tta(x) =(\ta{0},\,\ta{1}, \ldots , \ta{\p-1})^{\T} \) with \( \ta{j}(x) = f^{(j)}(x) \) to be estimated. The main intrinsic issue is to detect an optimal ``vicinity'' of the point \( x \) in order to avoid over- or undersmoothing.

Consider a regression model
\begin{equation*}
     \Yi =\ffi + \sigma \,\epsi , \;\;\;\; i=1, \ldots , n
\end{equation*}
where \( \epsi \) are independent zero mean random variables with \( \EE \epsi^2 =1 \). Given a point \( x \in \RR \), we aim to recover the value \( f(x) \) from the noisy data. Let \( \YY \) be an \( n \)-dimensional vector of observations such that \( \YY = (Y_1, Y_2, \ldots, Y_n)^{\T} \). Denote for any \( i=1, \ldots , n \) by \( \Psii \) the vector of values of the polynomial basis functions at the design points centered at the reference point \( x \):
\begin{equation*}
     \Psii= \Psi(\Xi-x) \eqdef \left(1,\, \Xi -x, \ldots, (\Xi -x)^{\p-1}/(p-1)!\right)^{\T}
\end{equation*}
and by \( \PPsi \) the \( p \times n \) matrix with columns \( \Psii \).
Let \( W(u) \) be a nonnegative localizing function (smoothing kernel) having its maximum at zero and being finite or vanishing at infinity: \( W(u) \to 0 \) as \( |u| \to \infty \).
To shorten the notation denote also by \( \w{h}{i}(x) \eqdef W \big( \frac{\Xi -x}{h} \big) \).
 The localizing scheme corresponding to a bandwidth \( h >0\) then
can be represented as a diagonal matrix of the form:
\begin{equation*}
      \W{h}(x) \eqdef  \diag \{  \w{h}{1}(x), \ldots,\w{h}{n}(x) \}.
  \end{equation*}
The following definition of local polynomial estimators is based on the ones from \citeasnoun{Tsybakov} page 35 and \citeasnoun{Katk1985} pages 28--29.
\begin{definition}\label{loc pol est}
A vector \( \mmle{h}(x) \in \RRp \) defined as a minimizer of the weighted sum of squares
\begin{eqnarray}\label{def loc polynom est}
  \mmle{h}(x) &=&  \argmin_{\tta \in \RRp} \| \W{h}(x)^{1/2} \left( \YY -\PPsi^{\T}\tta\right)\|^2 \nn
  &=& \argmin_{\tta \in \RRp} \sum_{i=1}^{n} |\Yi -\Psii^{\T} \tta|^2  \w{h}{i}(x)
\end{eqnarray}
is called a local polynomial estimator of order \( p-1 \) of \( \tta(x) \). The statistic
\begin{equation*}
    \fle{h}{x} = \bb e^{\T}_1 \mmle{h}(x) = \Psi(0)^{\T} \mmle{h}(x)
\end{equation*}
is called a local polynomial estimator of order \( p-1 \) of \( f(x) \).
Here \( \bb e_1 \in \RRp \) is the first canonical basis vector.
\end{definition}
We will refer to the local polynomial estimators of order \( p-1 \) of \( \tta(x) \)  and of \( f(x) \) as the \(LP (p-1) \) estimator of \( \tta(x) \) or of \( f(x) \) respectively.
It is easy to see that for the properly normalized basis functions the \(LP (p-1) \) estimator of \( \tta(x) \) provides estimators of all derivatives of the function \( f \) of order less or equal \( p-1 \):
\begin{equation*}
    \flej{j}{h}{x} = \bb e^{\T}_{j+1} \mmle{h}(x)\;,\;\; j = 1,\ldots,p-1
\end{equation*}
with the \( j \)th canonical basis vector \( \bb e_j \in \RRp \)

\par The \(LP (p-1) \) estimator \( \mmle{h}(x) \) satisfies the \emph{normal equations}
\begin{equation}\label{normal equations}
    \mathbf{B}(x)  \mmle{h}(x) = \PPsi \W{h}(x) \YY
\end{equation}
where the symmetric \( p \times p \) matrix \( \mathbf{B}(x) \) is given by
\begin{equation}\label{B(x)}
     \mathbf{B}(x) \eqdef   \PPsi  \W{h}(x)\PPsi^{\T} =
        \sum_{i=1}^{n} \Psii \Psii^{\T} \w{h}{i}(x).
\end{equation}
If the matrix \( \mathbf{B}(x) \) is positive definite (\( \mathbf{B}(x) \succ 0 \)), the \( LP(p-1) \) estimator is the unique solution of \eqref{normal equations} and is given by the following formula:
\begin{equation}\label{formula for loc polyn est}
   \mmle{h}(x) = \mathbf{B}(x)^{-1} \PPsi \W{h}(x) \YY
                = \mathbf{B}(x)^{-1} \sum_{i=1}^{n}  \Psii \Yi \w{h}{i}(x).
\end{equation}
In this case the \(LP (p-1) \) estimator \( \fle{h}{x} \) is a \emph{linear estimator} of \( f(x) \):
\begin{equation}\label{linearity of LP estim.}
    \fle{h}{x} = \sum_{i=1}^{n} \Yi W^*_i(x)
\end{equation}
where the weights \( W^*_i(x) \) are given by:
\begin{equation}\label{polynomial weights}
    W^*_i(x) = \bb e^{\T}_1 \mathbf{B}(x)^{-1} \Psii  \w{h}{i}(x).
\end{equation}
\vspace{15pt}
\par Recall the important reproducing polynomials property of the local polynomial estimator (see \citeasnoun{Tsybakov} page 36), and for a more general representation \citeasnoun{Katk1985} page 85.
\begin{proposition}\label{Henderson th}
Let \( x \in \RR \) be such that \( \mathbf{B}(x) \succ 0 \) and let \( P_{p-1} \) be a polynomial of degree less or equal to \( p-1 \). Then the weights defined by \eqref{polynomial weights} satisfy
\begin{equation*}
    \sum_{i=1}^{n} P_{p-1}(\Xi) W^*_i(x) = P_{p-1}(x)
\end{equation*}
for any design points \( \{ X_1, \ldots, X_n \} \). Particularly,
\begin{eqnarray}\label{normalization of polynomial weights}
   & & \sum_{i=1}^{n} W^*_i(x) = 1,  \\ \nonumber
   & & \sum_{i=1}^{n} (\Xi - x)^m W^*_i(x) = 0 \; , \;\; m = 1, \ldots, p-1.
\end{eqnarray}
\end{proposition}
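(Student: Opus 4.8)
The plan is to prove the stronger identity $\sum_i P_{p-1}(\Xi) W^*_i(x) = P_{p-1}(x)$ directly from the definition of the weights \eqref{polynomial weights}, and then obtain \eqref{normalization of polynomial weights} as the special cases $P_{p-1}(t) = 1$ and $P_{p-1}(t) = (t-x)^m$. The key observation is that it suffices to check the claim for the particular polynomials $t \mapsto (t-x)^m/m!$ with $0 \le m \le p-1$, since these form a basis of the space of polynomials of degree at most $p-1$ and both sides of the asserted identity are linear in $P_{p-1}$. Note moreover that $(t-x)^m/m!$ is exactly the $(m+1)$-st component of the shifted basis vector $\Psi(t-x)$; in particular $\Psi(\Xi-x) = \Psii$ and $\Psi(x-x) = \Psi(0) = \bb e_1$.

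First I would rewrite the left-hand side using \eqref{polynomial weights}. For $P_{p-1}(t) = \bb e_{m+1}^\T \Psi(t-x)$ we have
\begin{equation*}
  \sum_{i=1}^{n} P_{p-1}(\Xi)\, W^*_i(x)
    = \sum_{i=1}^{n} \bigl(\bb e_{m+1}^\T \Psii\bigr)\, \bb e_1^\T \mathbf{B}(x)^{-1} \Psii\, \w{h}{i}(x)
    = \bb e_1^\T \mathbf{B}(x)^{-1} \Bigl( \sum_{i=1}^{n} \Psii \Psii^\T \w{h}{i}(x) \Bigr) \bb e_{m+1},
\end{equation*}
where I have used the scalar identity $(\bb e_{m+1}^\T \Psii) \Psii = \Psii (\Psii^\T \bb e_{m+1})$ to pull the sum into a matrix product. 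By the definition \eqref{B(x)} of $\mathbf{B}(x)$, the bracketed sum is precisely $\mathbf{B}(x)$, so the expression collapses to $\bb e_1^\T \mathbf{B}(x)^{-1} \mathbf{B}(x) \bb e_{m+1} = \bb e_1^\T \bb e_{m+1}$, which equals $1$ if $m = 0$ and $0$ if $1 \le m \le p-1$. On the other hand, $P_{p-1}(x) = \bb e_{m+1}^\T \Psi(0) = \bb e_{m+1}^\T \bb e_1$ gives the same value, so the identity holds on the basis, hence for every $P_{p-1}$ by linearity. Taking $m=0$ yields the first line of \eqref{normalization of polynomial weights} and $m=1,\ldots,p-1$ yields the second (after multiplying through by $m!$, which is harmless).

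The only thing one must be careful about — and this is the mild obstacle — is the invertibility of $\mathbf{B}(x)$: the cancellation $\mathbf{B}(x)^{-1}\mathbf{B}(x) = \mathrm{Id}$ is exactly where the hypothesis $\mathbf{B}(x) \succ 0$ enters, and without it $W^*_i(x)$ is not even well defined. Everything else is bookkeeping: identifying the monomials $(t-x)^m/m!$ with coordinates of $\Psi(t-x)$, and recognizing the weighted outer-product sum as $\mathbf{B}(x)$. No genuine analytic difficulty arises; the proof is a one-line matrix computation once the right basis is chosen, and the design points $X_1,\ldots,X_n$ play no role beyond entering $\mathbf{B}(x)$ and $\Psii$, which is why the conclusion holds for arbitrary designs.
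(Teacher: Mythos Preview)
Your proof is correct and follows essentially the same approach as the paper: both write $P_{p-1}(\Xi)$ as $\Psii^\T \tta$ for an appropriate coefficient vector, plug in the definition of $W^*_i(x)$, and collapse $\mathbf{B}(x)^{-1}\sum_i \Psii\Psii^\T \w{h}{i}(x) = I_p$. The only cosmetic difference is that the paper treats a general polynomial at once via its Taylor coefficients $\tta = (P_{p-1}(x),P_{p-1}'(x),\ldots,P_{p-1}^{(p-1)}(x))^\T$, whereas you reduce first to the basis $\{(t-x)^m/m!\}$ by linearity and take $\tta = \bb e_{m+1}$; the computation is identical.
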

\begin{proof}
By the Taylor expansion
\begin{equation*}
    P_{p-1}(\Xi) = \sum_{m=1}^{p} \frac{P^{(m-1)}_{p-1}(x)}{(m-1)!} (\Xi-x)^{(m-1)} = \Psii^{\T} \tta
\end{equation*}
with \( 0! \eqdef 1 \) and
\( \tta(x) \eqdef (P_{p-1}(x), P'_{p-1}(x), \ldots, P^{(p-1)}_{p-1}(x))^{\T} \). Then by \eqref{polynomial weights} and \eqref{B(x)}
\begin{eqnarray*}
  && \sum_{i=1}^{n} P_{p-1}(\Xi) W^*_i(x)
    = \bb e_1^{\T} \mathbf{B}(x)^{-1} \sum_{i=1}^{n} \Psii \Psii^{\T} \w{h}{i}(x) \tta(x) \\
  &&= \bb e_1^{\T} \tta(x)= P_{p-1}(x).
\end{eqnarray*}
\end{proof}

\subsection{Mean squared error of local polynomial estimators}
\label{Nonadaptive rate}
In this section we show a classical method for obtaining upper bounds for the quadratic risk of the \( LR(p-1) \) estimator under the assumption that the underlying function~\( f \) belongs to a H\"older class \( \Sigma(\beta, L) \) with \( p-1 = \lfloor \beta \rfloor \). This analysis will be done via the traditional bias-variance trade-off. Later on  in Section \ref{section: adaptive rates} it will be shown how this approach can be adjusted for the purpose of pointwise adaptation. In what follows we assume a deterministic design with \( \Xi \in [0,1] \). Fix a point \( x \in \RR \) and the method of localization \( \W{h}(x) \). By \eqref{formula for loc polyn est} the local polynomial estimator \( \mmle{h}(x) \) can be easily decomposed into deterministic and stochastic parts:
\begin{equation*}
    \mmle{h}(x) = \bbpf{h}(x) + \bb \zeta_h(x),
\end{equation*}
where
\begin{eqnarray*}
  \bbpf{h}(x) &=& \mathbf{B}(x)^{-1} \sum_{i=1}^{n}  \Psii  \w{h}{i}(x) \ffi, \\
 \bb \zeta_h(x) &=& \s \, \mathbf{B}(x)^{-1} \sum_{i=1}^{n}  \Psii  \w{h}{i}(x) \epsi.
\end{eqnarray*}
Then
\begin{equation}\label{decomposition of LP estim into det and stoch}
    \fle{h}{x} = \bb e_1^{\T} \mmle{h}(x) = \bb e_1^{\T} \bbpf{h}(x) +  \bb e_1^{\T} \bb \zeta_h(x)
\end{equation}
with
\begin{eqnarray*}
  \bb e_1^{\T} \bbpf{h}(x) &=&  \sum_{i=1}^{n} W^*_i(x) \ffi, \\
  \bb e_1^{\T} \bb \zeta_h(x) &=&  \s \sum_{i=1}^{n} W^*_i(x) \epsi.
\end{eqnarray*}
Denote the variance of the stochastic part \( \bb e_1^{\T} \bb \zeta_h(x) \) of \eqref{decomposition of LP estim into det and stoch} by
  \begin{eqnarray}\label{bound for variance}
    \s^2_h(x)   &\eqdef& \Var_f[\fle{h}{x}] \nn
                &=& \bb e_1^{\T} \EE[\bb \zeta_h(x)\bb \zeta_h(x)^{\T}] \bb e_1 \nn
                &=& \s^2 \sum_{i=1}^{n} (W^*_i(x))^2.
 \end{eqnarray}
Define the bias (the approximation error)
\begin{equation*}
    b_h(x) \eqdef \EE_f[\fle{h}{x} -f(x)]  = \bb e_1^{\T} \bbpf{h}(x) -f(x).
\end{equation*}
Then the \emph{bias-variance decomposition} for the mean squared error at \( x \) is given by
\begin{equation}\label{bias-variance decomposition}
   \MSE(x)  \eqdef \EE_f[|\fle{h}{x} -f(x)|^2] =b^2_h(x) + \s^2_h(x).
\end{equation}
Using Proposition \ref{Henderson th} the bias can be written as follows:
  \begin{equation}\label{formula for bias}
     b_h(x) = \sum_{i=1}^{n} (\ffi - f(x)) W^*_i(x).
 \end{equation}
 Following the line of presentation from \citeasnoun{Tsybakov}, we impose the following assumptions on the localizing schemes and the design.
 \begin{description}
\item[\( \bb{\mathfrak{(Lp1)}} \)]
\emph{There exists a number \( \lambda_0>0  \) such that uniformly in \( x \) the smallest eigenvalue fulfills \( \lambda_p(\mathbf{B}(x)) \ge  nh\lambda_0 \) for all sufficiently large \( n \).
\item[\( \bb{\mathfrak{(Lp2)}} \)]
There exists a real number \( a_0>0 \) such that for any interval \( A \subseteq [0,1] \) and all \( n \ge 1 \)}
\begin{equation*}
    \frac{1}{n} \sum_{i=1}^n \ind \{ \Xi \in A \} \le a_0 \max \big\{ \int_A\dd t, \frac{1}{n} \big\}.
\end{equation*}
\item[\( \bb{\mathfrak{(Lp3)}} \)]
\emph{ The localizing functions (kernels) \( \w{h}{i} \) are compactly supported in \( [0,1] \) with}
\begin{equation*}
    \w{h}{i}(x) = 0 \;\;\;\text{if} \;\;\; |\Xi - x|> h.
\end{equation*}
\noindent This immediately implies a similar property for the local polynomial weights:
\begin{equation*}
    W^*_i(x) = 0 \;\;\;\text{if} \;\;\; |\Xi - x|> h.
\end{equation*}

\item[\( \bb{\mathfrak{(Lp4)}} \)]
\emph{ There exists a finite number \( w_{max} \) such that}
\begin{equation*}
    \sup_{i,x}|\w{h}{i}(x) | \le w_{max}.
\end{equation*}
\end{description}
\begin{lemma}\label{bounds for loc pol weights}
Assume \( \mathfrak{(Lp1)} - \mathfrak{(Lp4)} \). Then for \( n\) sufficiently large and all \( h \ge  \frac{1}{2n} \) and \( x \in[0,1] \) the local polynomial weights \( W^*_i(x) \) are such that:
\begin{eqnarray*}
  \sup_{i,x} |W^*_i(x)| &\le& \frac{C_1}{nh}, \\
  \sum_{i=1}^n|W^*_i(x)|&\le& C_2
\end{eqnarray*}
with \( C_1 =  w_{max} \sqrt{e}/\lambda_0\) and \( C_2 = 2 w_{max} a_0 \sqrt{e}/\lambda_0\).
\end{lemma}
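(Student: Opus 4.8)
The plan is to bound each local polynomial weight $W^*_i(x) = \bb e_1^{\T} \mathbf{B}(x)^{-1} \Psii\, \w{h}{i}(x)$ directly from formula \eqref{polynomial weights}, controlling its three factors $\mathbf{B}(x)^{-1}\bb e_1$, $\Psii$ and $\w{h}{i}(x)$ separately. First note that for $n$ large and $h \ge \tfrac{1}{2n} > 0$ assumption $\mathfrak{(Lp1)}$ gives $\lambda_p(\mathbf{B}(x)) \ge nh\lambda_0 > 0$, so $\mathbf{B}(x) \succ 0$ and \eqref{polynomial weights} is meaningful. Writing $W^*_i(x) = \langle \mathbf{B}(x)^{-1}\bb e_1, \Psii\rangle\, \w{h}{i}(x)$ and applying the Cauchy--Schwarz inequality together with $\mathfrak{(Lp4)}$,
\[
|W^*_i(x)| \le \|\mathbf{B}(x)^{-1}\bb e_1\|\cdot\|\Psii\|\cdot \w{h}{i}(x) \le \lambda_p(\mathbf{B}(x))^{-1}\,\|\Psii\|\, w_{max} \le \frac{w_{max}}{nh\lambda_0}\,\|\Psii\|,
\]
where the middle step uses $\|\mathbf{B}(x)^{-1}\bb e_1\| \le \|\mathbf{B}(x)^{-1}\| = \lambda_p(\mathbf{B}(x))^{-1}$ for symmetric positive definite $\mathbf{B}(x)$.

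The remaining ingredient is the bound $\|\Psii\| \le \sqrt{e}$. The point is that $W^*_i(x)$ vanishes unless $\w{h}{i}(x) \ne 0$, in which case $\mathfrak{(Lp3)}$ forces $|\Xi - x| \le h$; combined with $\Xi, x \in [0,1]$ this yields $|\Xi - x| \le 1$ no matter how large $h$ is, so $\|\Psii\|^2 = \sum_{m=0}^{p-1} (\Xi-x)^{2m}/(m!)^2 \le \sum_{m=0}^{\infty} 1/(m!)^2 \le \sum_{m=0}^{\infty} 1/m! = e$. Substituting $\|\Psii\| \le \sqrt{e}$ into the previous display gives $\sup_{i,x}|W^*_i(x)| \le w_{max}\sqrt{e}/(\lambda_0 nh) = C_1/(nh)$, which is the first assertion.

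For the second assertion I would use $\mathfrak{(Lp3)}$ again: since $W^*_i(x) = 0$ unless $\Xi \in [x-h, x+h]$, only the design points in that interval contribute, so $\sum_{i=1}^n |W^*_i(x)| \le \tfrac{C_1}{nh}\,\#\{i : \Xi \in [x-h,x+h]\}$. Applying $\mathfrak{(Lp2)}$ with $A = [x-h,x+h]\cap[0,1]$, an interval of length at most $2h$, bounds $\tfrac{1}{n}\#\{i : \Xi \in A\}$ by $a_0\max\{2h,\tfrac{1}{n}\}$, and since $h \ge \tfrac{1}{2n}$ this maximum equals $2h$; the factor $nh$ then cancels, leaving $\sum_{i=1}^n|W^*_i(x)| \le 2 a_0 w_{max}\sqrt{e}/\lambda_0 = C_2$. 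The argument is a direct chain through the four structural assumptions, so there is no genuine obstacle; the only two places that need a little care are noticing that compact support of the kernels in $[0,1]$ together with $x\in[0,1]$ already bounds $|\Xi-x|$ by $1$ (so $\|\Psii\|$, and hence the polynomial part of the weight, is controlled uniformly and independently of $h$), and invoking $h\ge\tfrac{1}{2n}$ to collapse the maximum in $\mathfrak{(Lp2)}$ to $2h$.
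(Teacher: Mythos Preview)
Your proof is correct and follows essentially the same route as the paper: bound $|W^*_i(x)|$ via Cauchy--Schwarz, control $\|\mathbf{B}(x)^{-1}\|$ through the smallest-eigenvalue assumption $\mathfrak{(Lp1)}$, bound $\|\Psii\|\le\sqrt{e}$ using $|\Xi-x|\le 1$, and then sum over the at most $2a_0 nh$ design points in $[x-h,x+h]$ supplied by $\mathfrak{(Lp2)}$--$\mathfrak{(Lp3)}$. The only cosmetic difference is that the paper writes $\|\mathbf{B}(x)^{-1}\Psii\|$ and invokes $h<1$ to bound $\|\Psii\|$, whereas you move the inverse onto $\bb e_1$ and use $\Xi,x\in[0,1]$ directly; both lead to the same estimates.
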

\begin{proof}
Recall that \( \mathbf{B}(x) \) is a symmetric non-degenerate \( p \times p \) matrix. Then by the Schur theorem there exist an orthogonal matrix \( U \) and a diagonal matrix \( \Lambda = \diag\{ \lambda_1^{-2}(\mathbf{B}(x)), \ldots, \lambda_p^{-2}(\mathbf{B}(x)) \} \) such that \( \mathbf{B}(x)^{-2} = U^{\T} \Lambda U  \). Then by Assumption~\( \mathfrak{(Lp1)} \) for any \( \gamma \in \RRp \)
\begin{equation*}
    \gamma^{\T} \mathbf{B}(x)^{-2} \gamma   = \gamma^{\T} U^{\T} \Lambda  U\gamma \le (nh\lambda_0)^{-2} \| \gamma \|^2 ,
\end{equation*}
implying
\begin{equation*}
    \| \mathbf{B}(x)^{-1} \gamma \| \le (nh\lambda_0)^{-1} \| \gamma \|.
\end{equation*}
\noindent By \eqref{polynomial weights}, Assumptions \( \mathfrak{(Lp3)} \) and \( \mathfrak{(Lp4)} \) and using that \( h<1 \), we have
\begin{eqnarray*}
  |W^*_i(x)| &=& |\bb e_1^{\T} \mathbf{B}(x)^{-1} \Psii \w{h}{i}(x)| \\
   &\le & w_{max}\| \mathbf{B}(x)^{-1} \Psii \| \le \frac{w_{max}}{\lambda_0 nh} \| \Psii \| \\
   &\le & \frac{w_{max}}{\lambda_0 nh} \big(1 + h^2 + \frac{h^4}{(2!)^2} + \cdots + \frac{h^{2(p-1)}}{((p-1)!)^2}  \big )^{1/2}\\
   &\le & \frac{w_{max}}{\lambda_0 nh} \big( 1+1+\frac{1}{2!} + \cdots + \frac{1}{(p-1)!}     \big)^{1/2}\\
   &< & \frac{w_{max} \sqrt{e}}{\lambda_0 nh} ,
\end{eqnarray*}
where the upper bound \( w_{max} \sqrt{e} (\lambda_0 nh)^{-1} \) does not depend on \( i \) and \( n \).
\par The second assertion of the lemma is obtained similarly. Condition \( \mathfrak{(Lp2)} \) implies
\begin{eqnarray*}
  \sum_{i=1}^n|W^*_i(x)| &\le& \frac{w_{max}}{\lambda_0 nh} \sum_{i=1}^n \| \Psii \| \,\ind\{ \Xi \in [x-h,x+h] \} \\
  &\le & \frac{w_{max} \sqrt{e}}{\lambda_0 } a_0 \max\{ 2, \frac{1}{nh} \}\\
  &\le & \frac{2 w_{max} \sqrt{e} a_0}{\lambda_0 }
\end{eqnarray*}
for all \( h \ge \frac{1}{2n} \).
\end{proof}
\begin{theorem}\label{theorem upper bound for MSE}
Let \( f \in \Sigma (\beta, L) \) on \( [0,1] \) and let \( \fle{h}{x} \) be the \( LP(p-1) \) estimator of \( f(x) \) with \( p -1 = \lfloor \beta \rfloor\). Then under the conditions of Lemma \ref{bounds for loc pol weights}  for \( n\) sufficiently large and all \( h \ge  \frac{1}{2n} \) and \( x \in[0,1] \),
\begin{eqnarray*}
  |b_h(x)|    &\le & C_2\frac{Lh^{\beta}}{(p-1)!} , \\
  \s^2_h(x)   &\le & \frac{\s^2 C_1 C_2}{nh}
\end{eqnarray*}
with \( C_1 \) and \( C_2 \) as in Lemma \ref{bounds for loc pol weights}.
\par Moreover, the choice of positive bandwidth \( h = h^\star(n) \) given by \eqref{optimal bandwidth classical} such that
\begin{equation*}
    h^\star(n) = \cc O \big(n^{ -\frac{1}{2 \beta +1} }\big)
\end{equation*}
provides the following upper bound for the quadratic risk:
\begin{equation}\label{upper bound for MSE classical}
    \varlimsup_{n \to \infty} \sup_{f \in \Sigma (\beta, L)} \sup_{x \in[0,1]}
        \EE_f[ \psi_n^{-2} |\fle{h}{x} -f(x)|^2] \le  C ,
\end{equation}
where
\begin{equation}\label{nonadaptive rate Obig}
    \psi_n = \cc O \bigg(n^{-\frac{\beta}{2 \beta +1} }\bigg)
\end{equation}
is given by \eqref{nonadaptive rate}
and the constant \( C \) is finite and depends on \( \beta \), \( L \), \( \s^2 \), \( p \), \( w_{max} \) and \( a_0 \) only.
\end{theorem}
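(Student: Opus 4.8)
The plan is to establish the three assertions in order: the deterministic bias bound, the stochastic variance bound, and then the bias--variance balancing that produces the rate $\psi_n$. The first two are direct consequences of Proposition~\ref{Henderson th}, Lemma~\ref{bounds for loc pol weights}, and the formulas~\eqref{bound for variance} and~\eqref{formula for bias} already derived; the third is a routine optimization of a sum of a power of $h$ and an inverse power of $h$.

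For the bias, I would start from $b_h(x) = \sum_{i=1}^{n}(\ffi - f(x))W^*_i(x)$ as in~\eqref{formula for bias}. Since $f \in \Sigma(\beta,L)$ with $p-1 = \lfloor\beta\rfloor$, Taylor's theorem supplies the polynomial $P_{p-1}$ of degree $\le p-1$ given by $P_{p-1}(t) = \sum_{m=0}^{p-1}\frac{f^{(m)}(x)}{m!}(t-x)^m$, which satisfies $P_{p-1}(x) = f(x)$ and $|f(t) - P_{p-1}(t)| \le \frac{L}{(p-1)!}|t-x|^{\beta}$ for all $t \in [0,1]$. The reproducing-polynomials property (Proposition~\ref{Henderson th}) gives $\sum_i P_{p-1}(\Xi)W^*_i(x) = P_{p-1}(x) = f(x)$, so subtracting this identity from~\eqref{formula for bias} yields $b_h(x) = \sum_i (\ffi - P_{p-1}(\Xi))W^*_i(x)$. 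The Hölder remainder bound together with Assumption~$\mathfrak{(Lp3)}$ (the weights vanish once $|\Xi - x| > h$) then gives $|b_h(x)| \le \frac{L h^{\beta}}{(p-1)!}\sum_i |W^*_i(x)|$, and Lemma~\ref{bounds for loc pol weights} closes this with the factor $C_2$.

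For the variance, I would combine the exact identity $\s^2_h(x) = \s^2 \sum_i (W^*_i(x))^2$ from~\eqref{bound for variance} with the elementary inequality $\sum_i (W^*_i(x))^2 \le \big(\sup_i|W^*_i(x)|\big)\sum_i |W^*_i(x)|$, and apply Lemma~\ref{bounds for loc pol weights} once more to bound the two factors by $C_1/(nh)$ and $C_2$ respectively. For the rate, I would insert both bounds into the bias--variance decomposition~\eqref{bias-variance decomposition}, obtaining $\MSE(x) \le A h^{2\beta} + B/(nh)$ with $A = \big(C_2 L/(p-1)!\big)^2$ and $B = \s^2 C_1 C_2$, both independent of $x \in [0,1]$ and of $f$ in the Hölder ball. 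Minimizing the right-hand side in $h$ gives a minimizer of order $n^{-1/(2\beta+1)}$, which is the asserted $h^\star(n)$ and which satisfies the admissibility constraint $h^\star(n) \ge 1/(2n)$ for $n$ large since $1/(2\beta+1) < 1$; substituting it back makes both terms of order $n^{-2\beta/(2\beta+1)} = \psi_n^2$. Taking $\psi_n^{-2}\MSE(x)$, then the supremum over $x$ and over $f \in \Sigma(\beta,L)$, and finally $\varlimsup_{n\to\infty}$, yields~\eqref{upper bound for MSE classical} with a constant $C$ depending only on $\beta$, $L$, $\s^2$, $p$, $w_{max}$ and $a_0$, as these are the only quantities entering $A$, $B$ and the balancing constant.

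The argument is classical and contains no genuinely hard step; the point requiring care is the bias estimate, where one must choose precisely the Taylor polynomial of $f$ at $x$ so that Proposition~\ref{Henderson th} applies verbatim, and keep every constant uniform in $x \in [0,1]$ and in $f$ over $\Sigma(\beta,L)$. A minor bookkeeping issue is reconciling the ``$n$ sufficiently large'' thresholds coming from Lemma~\ref{bounds for loc pol weights} with the requirement $h \ge 1/(2n)$ for the optimal bandwidth, which is harmless because $h^\star(n)$ decays strictly slower than $1/n$.
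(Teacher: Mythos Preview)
Your proposal is correct and follows essentially the same path as the paper's proof: Taylor expansion at $x$ combined with Proposition~\ref{Henderson th} to kill the polynomial part, the H\"older condition on $f^{(p-1)}$ to bound the remainder by $Lh^\beta/(p-1)!$, Lemma~\ref{bounds for loc pol weights} for $\sum_i |W^*_i(x)|\le C_2$ and $\sup_i |W^*_i(x)|\le C_1/(nh)$, and then the standard balancing of~\eqref{bias-variance decomposition}. The only cosmetic difference is that the paper writes the Taylor remainder explicitly with intermediate points $\tau_i X_i$ and invokes Proposition~\ref{Henderson th} a second time to extract the increment $f^{(p-1)}(\tau_i X_i)-f^{(p-1)}(x)$, whereas you package this as the single inequality $|f(t)-P_{p-1}(t)|\le L|t-x|^\beta/(p-1)!$; the content is identical.
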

\begin{corollary}
Under the conditions of Theorem \ref{theorem upper bound for MSE} we have the same rate for the \( \MISE \) (mean integrated square error):
\begin{equation}\label{upper bound for MISE classical}
    \varlimsup_{n \to \infty} \sup_{f \in \Sigma (\beta, L)}
        \EE_f[ \psi_n^{-2} \int_0^1|\fle{h}{x} -f(x)|^2\dd x] \le  C
\end{equation}
with the rate \( \psi_n \) given by \eqref{nonadaptive rate Obig} and the finite constant \( C \) depending on \( \beta \), \( L \), \( \s^2 \), \( p \), \( w_{max} \) and \( a_0 \) only.
\end{corollary}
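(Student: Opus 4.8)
The plan is to obtain the $\MISE$ bound directly from the pointwise bounds already established in Theorem \ref{theorem upper bound for MSE}, exploiting that those bounds are uniform in $x \in [0,1]$ and that the domain of integration has Lebesgue measure one, so that averaging over $x$ costs nothing.

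First I would apply Tonelli's theorem to interchange the expectation and the integral over $x$ --- legitimate since $|\fle{h}{x} - f(x)|^2$ is nonnegative and jointly measurable in $(x,\omega)$ --- which gives
\begin{equation*}
    \EE_f\Big[ \int_0^1 |\fle{h}{x} - f(x)|^2 \, \dd x \Big] = \int_0^1 \EE_f\big[ |\fle{h}{x} - f(x)|^2 \big] \, \dd x = \int_0^1 \MSE(x) \, \dd x .
\end{equation*}
Then I would use the bias--variance decomposition \eqref{bias-variance decomposition} together with the two uniform estimates of Theorem \ref{theorem upper bound for MSE}, namely $|b_h(x)| \le C_2 L h^{\beta}/(p-1)!$ and $\s^2_h(x) \le \s^2 C_1 C_2/(nh)$, both valid for all $x \in [0,1]$ and all $h \ge \frac{1}{2n}$ with $n$ large. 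Since the right-hand sides are independent of $x$, integrating over the unit interval yields
\begin{equation*}
    \int_0^1 \MSE(x) \, \dd x \le \sup_{x \in [0,1]} \MSE(x) \le \Big( \frac{C_2 L h^{\beta}}{(p-1)!} \Big)^2 + \frac{\s^2 C_1 C_2}{nh} .
\end{equation*}

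Finally I would insert the bandwidth $h = h^\star(n) = \cc O(n^{-1/(2\beta+1)})$ from the theorem; this choice is admissible (for $n$ large it satisfies $h^\star(n) \ge \frac{1}{2n}$, since $1/(2\beta+1) < 1$) and it balances the squared bias, of order $h^{2\beta}$, against the variance, of order $(nh)^{-1}$, making both of order $\psi_n^2 \asymp n^{-2\beta/(2\beta+1)}$. Multiplying through by $\psi_n^{-2}$, then taking $\varlimsup_{n\to\infty}$ and the supremum over $f \in \Sigma(\beta,L)$, gives the asserted bound, with the constant $C$ inherited from Theorem \ref{theorem upper bound for MSE} and hence depending only on $\beta$, $L$, $\s^2$, $p$, $w_{max}$ and $a_0$. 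There is no genuine obstacle: the single point needing care is the uniformity in $x$ of the pointwise estimates, which was already secured in Lemma \ref{bounds for loc pol weights} and Theorem \ref{theorem upper bound for MSE}, so that replacing $\sup_{x\in[0,1]}$ by $\int_0^1 \cdot \, \dd x$ over a length-one interval is free.
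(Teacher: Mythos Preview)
Your proposal is correct. In the paper, the corollary is not given a separate proof: the proof environment that follows it is actually the proof of Theorem~\ref{theorem upper bound for MSE} (it establishes the bias and variance bounds and derives the optimal bandwidth), and the $\MISE$ statement is treated as an immediate consequence of the fact that the pointwise bound \eqref{upper bound for MSE classical} is uniform in $x\in[0,1]$. Your argument makes this passage explicit via Tonelli and the trivial bound $\int_0^1 \MSE(x)\,\dd x \le \sup_{x\in[0,1]}\MSE(x)$, which is exactly the intended reasoning.
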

\begin{proof}
By \eqref{formula for bias} and the Taylor theorem with \( \tau_i \) such that the points \( \tau_i \Xi \) are between \( \Xi \) and \( x \), we have
\begin{eqnarray*}
  b_h(x) &=& \sum_{i=1}^{n} (\ffi - f(x)) W^*_i(x) \\
    &=& \sum_{j=1}^{p-2} \frac{f^{(j)}(x)}{j!}   \sum_{i=1}^{n}  (\Xi -x)^j W^*_i(x)
        +   \sum_{i=1}^{n} \frac{f^{(p-1)}(\tau_i \Xi)}{(p-1)!}  (\Xi -x)^{p-1} W^*_i(x).
\end{eqnarray*}
The first summand is equal to zero by Proposition \ref{Henderson th}. By the same argumentation the second term can be rewritten as follows:
\begin{eqnarray*}
   b_h(x) &=& \sum_{i=1}^{n} \frac{f^{(p-1)}(\tau_i \Xi)}{(p-1)!}  (\Xi -x)^{p-1} W^*_i(x) \\
          &=& \frac{1}{(p-1)!} \sum_{i=1}^{n} \big( f^{(p-1)}(\tau_i \Xi) -f^{(p-1)}(x)  \big)
                            (\Xi -x)^{p-1} W^*_i(x).
\end{eqnarray*}
Then by Lemma \ref{bounds for loc pol weights}
\begin{eqnarray*}
   |b_h(x)|   &\le & \frac{L}{(p-1)!}
                        \sum_{i=1}^{n}  |\tau_i \Xi -x|^{\beta -(p-1)} |\Xi -x|^{p-1}|W^*_i(x)|\\
            &\le & \frac{L}{(p-1)!}
                        \sum_{i=1}^{n}  |\Xi -x|^{\beta} |W^*_i(x)| \ind\{ |\Xi -x| \le h \}\\
            &\le & C_2\frac{Lh^{\beta}}{(p-1)!}.
\end{eqnarray*}
By formula \eqref{bound for variance} and Lemma \ref{bounds for loc pol weights} the variance is bounded by
\begin{eqnarray*}
  \s^2_h(x) &\le & \s^2  \sup_{i,x} |W^*_i(x)| \sum_{i=1}^{n} |W^*_i(x)| \\
   &\le & \frac{\s^2 C_1 C_2}{nh}.
\end{eqnarray*}
Then by \eqref{bias-variance decomposition}
\begin{equation}\label{balance equation classical}
    \MSE(x) \le \tilde{C}_2 h^{2 \beta} + \frac{\tilde{C}_1}{nh}
\end{equation}
with \( \tilde{C}_1 = \s^2 C_1 C_2 \) and \( \tilde{C}_2 = C_2^2 L^2  ((p-1)!)^{-2} \). Then the optimal bandwidth \( h^\star(n) \) minimizing the upper bound for the \( \MSE \) at \( x \) is given by
\begin{eqnarray}\label{optimal bandwidth classical}
    h^\star(n) &=& \bigg( \frac{\tilde{C}_1}{2 \beta \tilde{C}_2 } \bigg)^{\frac{1}{2 \beta +1} }
                            n^{ -\frac{1}{2 \beta +1} }\nn
                &=&  \bigg( \frac{\s^2((p-1)!)^2 }{4 a_0 \beta L^2 } \bigg)^{\frac{1}{2 \beta +1} } n^{ -\frac{1}{2 \beta +1} }.
\end{eqnarray}
This gives us the rate \( \psi_n \) w.r.t. the squared loss function over a H\"older class \( \Sigma (\beta, L) \):
\begin{eqnarray}\label{nonadaptive rate}
  \psi_n &=& \tilde{C} \bigg(  \frac{L}{(p-1)!} \bigg)^{\frac{1}{2 \beta +1} }
                \bigg(  \frac{\s^2}{n} \bigg)^{\frac{\beta}{2 \beta +1} } \nn
  &=& \cc O \bigg(n^{-\frac{\beta}{2 \beta +1} }\bigg)
\end{eqnarray}
with \( \tilde{C} = 2^{\frac{1}{2 \beta +1} } w_{max} \sqrt{e} \lambda_0^{-1} a_0^{\frac{\beta+1}{2 \beta +1} }  \beta^{-\frac{\beta}{2 \beta +1} }\).
\end{proof}

\subsection{Method of local approximation: general set-up}
\label{subsec:Method of local approximation}

In this section, following up to the notation the book \citeasnoun{Katk1985} and the papers \citeasnoun{Katk1979}, \citeasnoun{Katk1983} we will explain the basic idea of the \emph{method of local approximation} in a more general set-up than in the previous section. Consider for simplicity the regression model
\begin{equation*}
    \Yi = \ffi + \epsi \;, \; \; i = 1, \ldots, n.
\end{equation*}
If we want to recover \( f(x) \) at the point \( x \), we put the \emph{center of localization} at \( x \). Suppose that some basis \( \{ \psi_j(\cdot) \} \) is chosen. Denote by \( \Psi(u) = (\psi_1(u), \ldots, \psi_p(u))^{\T} \) a vector of the basis function.
We believe that for \( t \) close to \( x \) the values \( f(t) \) can be well approximated by the finite sum
\begin{equation}\label{approximation sum from MLA}
    \fta(t) \eqdef \Psi(t-x)^{\T} \tta(x) = \sum_{j=1}^p \theta^{(j)}(x) \psi_j(t-x)
\end{equation}
where \( \Psi(t-x) \) is the vector of values of the basis functions centered at \( x \). Thus, to estimate \( f(x) \), we have to estimate the vector of coefficients
\( \tta(x) = (\theta^{(1)} (x) , \ldots , \theta^{(p)} (x) )^{\T} \).

\par Let \( W(u) \) be a nonnegative localizing function (smoothing kernel) having maximum at zero and being finite or vanishing at infinity: \( W(u) \to 0 \) as \( \| u \| \to \infty \).
Denote also \( \w{h}{i}(x) \eqdef W \big( \frac{\Xi -x}{h} \big) \).
Let \( F : \RR \to \RR_{\ge 0} \)
be a convex \emph{loss function}. Then the solution (solutions) of the following minimization problem
\begin{equation}\label{MLA est}
  \mmle{h}(x)    = \argmin_{\tta \in \RRp} \sum_{i=1}^n F( \Yi - \Psii^{\T} \tta)\w{h}{i}(x)
\end{equation}
with \( \Psii \eqdef \Psi(\Xi -x) \), \( i = 1, \ldots, n \)
is the estimator of the vector \( \tta \) at the point \( x \) obtained by the method of local approximation.
Notice that \( \mmle{h}(x) \) is an \emph{M-estimator}, see \citeasnoun{Huber} or \citeasnoun{vanderVaartWellner}. The estimator
 \begin{equation}\label{MLA est of f}
     \fle{h}{x} \eqdef \Psi(0)^{\T} \mmle{h}(x) = \sum_{j=1}^p \tilde{\theta}_h^{(j)}(x) \psi_j(0)
 \end{equation}
is an estimator of the function \( f \) at the point \( x \) by the method of local approximation.
In the case of the polynomial basis \( \langle 1,u,u^2,  \ldots \rangle \) we have \( \Psi(0) = (1, 0, \ldots, 0)^{\T} \) and \( \fle{h}{x} \) is just the first coordinate of \( \mmle{h}(x) \).

\par It was stressed in \citeasnoun{Katk1985} (see page 29) that the optimal choice of the parameter of locality (bandwidth \( h \)) is one of the most important issues of the nonparametric estimation. Katkovnik \citeasnoun{Katk1985}, see page 16, pointed out that the practical use of the estimators obtained by the method of local approximation, as well as of any estimators, requires to construct them \emph{adaptively}, that is with a tuning of the parameters in accordance with the data in hand. This leads essentially to the traditional problem of testing the hypothesis about the model. The necessity of data-driven treatment motivates the application of the Lepski-type procedure to the selection of the scale (of the bandwidth \( h_{\adapind} \)) and the ``propagation conditions'' approach on the choice of the critical values of the adaptive procedure (see Section \ref{section: adaptive procedure}) suggested in \citeasnoun{KatkSpok} and in \citeasnoun{SV} and developed in the present work.

\par The asymptotic properties of the estimators given by \eqref{MLA est} and \eqref{MLA est of f} were precisely studied in \citeasnoun{Tsybakov82a}, \citeasnoun{Tsybakov82b} and \citeasnoun{Tsybakov86}. In \citeasnoun{Tsybakov86} it was shown that the estimators, constructed by \eqref{MLA est} w.r.t. the convex loss function and the polynomial basis \( \langle 1, \ldots, u^p \rangle \) exhibit the best rate of convergence among all estimators of functions over H\"older classes \( \Sigma(p-1, L) \) on some bounded subset of \( \RR \), as well as among all estimators of their derivatives. The use of a  non-quadratic loss function \( F(\cdot) \) is very important in the theory of robust estimation and allows to treat the noise with unbounded variance, see for instance the classical paper of Huber \citeasnoun{Huber}.

\par If the basis \( \{ \psi_j(\cdot) \} \) is an orthonormal basis in \( L_2(\cc X) \) for some compact \( \cc X \subset \RR^d \) and the loss function is quadratic, i.e., if \( F(y)=y^2 \), then the estimator \( \mmle{h}(x) \) defined by \eqref{MLA est} is the weighted least squares estimator. If the matrix \( \mathbf{B}(x) \eqdef \sum_{i=1}^{n} \Psii \Psii^{\T} \w{h}{i}(x) \) is positive definite then one can write:
\begin{equation*}
    \mmle{h}(x) = \mathbf{B}(x)^{-1} \sum_{i=1}^{n}  \Psii \Yi \w{h}{i}(x)
\end{equation*}
In this case \( \mmle{h}(x) \) and \( \fle{h}{x} \) are linear estimators. Taking the polynomial basis we come back to the \(LP (p-1) \) estimator introduced in the previous section.

\section{Information-based complexity and \\approximation in increasing dimension}
\label{sec:Approx}
\emph{Computational complexity} is a measure of the intrinsic computational resources required to solve a mathematically formulated problem. It depends on the problem, but not on the particularly used algorithm. The notion ``information'' is used in the theory of complexity in the every-day sense of the word. The information is what we know about the problem to be solved. It should be stressed that this term used in Chapter \ref{chap:Approx} has nothing in common with Shannon's definition of information, nor with the Kullback-Leibler information criterion \citeasnoun{KL} used in Chapter~\ref{chap:Regression}. See \citeasnoun{Traub and Werschulz} for an informal introduction, however containing a comprehensive overview of the literature.

\par One can distinguish two different types of complexity. In the first case the information is complete, exact, and free; an example is provided by the traveling salesman problem. This is the so-called \emph{combinatorial complexity}. The \emph{information-based complexity} that we are interested in here, is the computational complexity of (multivariate) continuous mathematical models. This branch of computational complexity deals with the intrinsic difficulty of the approximate solution of a problem for which the information is partial, noisy, and priced, see \citeasnoun{TraubWW}. This is the case when dealing with continuous problems on infinite dimensional spaces. Only partial information such as a finite number of functional values is available. In this case the problem can only be solved approximately implying the presence of error. Usually one requires the problem to be solved with an error not larger than a threshold \( \eps \). The information-based complexity is then defined as the minimal number \( n(\eps,d) \) of information operations (functional values, for example), needed to solve the \( d \)-variate problem with an error not exceeding \( \eps \). In different settings and for different error criteria, \( \eps \) may have different meanings, but always reflects the error tolerance.

\par As pointed out in \citeasnoun{Novak and Wozniakowski}, a central issue is the study of how the information complexity depends on \( \eps^{-1} \) and \( d \). If \( n(\eps,d) \) depends exponentially on \( \eps^{-1} \) and~\( d \), the problem is called \emph{intractable}. Many multivariate problems exhibit exponential dependence on \( d \), called after Bellman \citeasnoun{B} the \emph{curse of dimensionality}. If the information complexity depends on \( \eps^{-1} \) and \( d \) polynomially, the problem is \emph{polynomially tractable}.

\par In spite of the existence of vast literature on the computational complexity of \( d \)-variate problems, most of the papers and books study error bounds without taking into account the dependence on \( d \). Research on \emph{tractability}, requiring the knowledge of dependence on both \( \eps^{-1} \) and~\( d \), was started in the early nineties by Wo\'zniakowski \citeasnoun{W92}, \citeasnoun{W94a}, \citeasnoun{W94b}, who introduced the notion of ``tractability'' and suggested to consider the dependence on \( d \) as \( d \to \infty \). This is important for numerous applications including physics, chemistry, finance, economics, and the computational sciences. For instance, in quantum mechanics, statistical mechanics and mathematical finance, for path integration the number of variables is infinite; approximations to path integrals result in arbitrary large \( d \), see \citeasnoun{R} and \citeasnoun{Novak and Wozniakowski} for details.

\par  In \emph{average case settings} the cost and the error are defined by their average performance. The general theory in the average case settings, among other approaches, was created by Traub, Wasilkowski, and Wo\'zniakowski in \citeasnoun{TraubWW}. The future development is presented by the monographs of Ritter \citeasnoun{R} and Novak and Wozniakowski \citeasnoun{Novak and Wozniakowski}.

\par One of the problems which can be treated in this framework is the approximation (recovery) of
functions. Let \( T = [0,1]^d \) and \( \cc F=C^k(T) \). We identify any \( f \in \cc F \) with its embedding \( id(f) =f \) in the (weighted) \( L_p \)-space over \( T \) with \( 1\le p\le  \infty \). Let the data be the functional values \( f(t_1), \ldots, f(t_n) \). Based on the data \( f(t_i) \) an approximate solution (function) \( \tilde f \) is constructed. The average error of \( \tilde f \)
is defined by \( (\EE\| f-\tilde f \|_p^q )^{1/q}\) with some \( 1 \le q < \infty \), where \( \| \cdot \|_p \) denotes a (weighted) \( L_p \)-norm.

\par Usually, the computational costs are proportional to the total number of functional values, and therefore to the information complexity. One aims at finding a ``good'' method \( \tilde f \) with average cost not exceeding a given bound and with minimal average error. Often one considers methods which use only the functional values \( f(t_i) \). Then the key quantity in the average case settings is the \( n \)th minimal average error
\begin{equation*}
    \inf_{t_i \in T} \inf_{a_i \in L_p(T)} \Big( \EE\| f - \sum_{i=1}^n a_i f(t_i) \|^q_{L_p(T)}\Big)^{1/q}.
\end{equation*}
This minimal error states how well \( f \) can be approximated on average by (affine) linear methods using \( n \) functional values. Chapter \ref{chap:Approx} is devoted to the approximation of \( d \)-parametric random fields of tensor product-type, which is a particular case of linear tensor product problems, see Chapter 6 of \citeasnoun{Novak and Wozniakowski} for a general study.

\chapter{Adaptive estimation under noise misspecification in regression}
\label{chap:Regression}

We consider the problem of pointwise estimation in
nonparametric regression with heteroscedastic additive Gaussian noise.
We use the method of local approximation applying the Lepski method for
selecting one estimator from a set of linear estimators obtained by different degrees of localization. This approach is combined with the
``propagation conditions'' on the choice of critical values of the
procedure, as suggested recently by
Spokoiny and Vial \citeasnoun{SV}. The ``propagation conditions'' are relaxed for the
model with misspecified covariance structure. Specifically, the model with
unknown mean and variance is approximated by the one with the parametric
assumption of local linearity of the mean function and with an
incorrectly specified covariance matrix.
We show that this procedure
allows a misspecification of the covariance matrix with a relative
error up to \( o\big( \frac{1}{\log n}  \big)  \), where \( n \) is the
sample size. The quality of estimation is measured in terms of
nonasymptotic ``oracle'' risk bounds.

\section{Model and set-up}
Consider a regression model
\begin{equation}\label{true model}
    \YY = \ff + \Sigma_0^{1/2} \eeps,\;\;\;\;\eeps \sim \norm{0}{I_n}
\end{equation}
with response vector \( \YY \in \RRn \) and the
covariance matrix \( \Sigma_0 = \diag(\sigma_{0,1}^2, \ldots ,\sigma_{0,n}^2 ) \).
 This model can be written as
 \begin{equation*}
     \Yi =\ffi + \sigma_{0,i} \,\epsi , \;\;\;\; i=1, \ldots , n
\end{equation*}
with design points \( \Xi \in \cc{X} \subset \RR^d \).
Given a point \( x \in \cc X\), the target of estimation is the value of the regression function \( f(x) \). We apply the method of local approximation described in Section \ref{subsec:Method of local approximation}. In view of the representation \eqref{true model}
this means that we believe that at a vicinity of some given point \( x\in \RR^d \) the unknown vector \( \ff \) can be well approximated by \( \ff_{\tta} = \PPsi^{\T}\tta \), where \( \PPsi \) is a given \( \p \times n \) matrix whose columns~\( \Psii \) consist of the values, at the design points, of basis functions centered at \( x \), that is, \( \Psi(u) = (\psi_1(u), \ldots, \psi_p(u))^{\T}\) for some basis \( \{ \psi_j \} \) in \( L_2(\cc X) \) and \( \Psii \eqdef \Psi(\Xi -x) \). The parameter \( \tta =(\ta{0},\,\ta{1}, \ldots , \ta{\p-1})^{\T} \in \Theta \subset \RR^{\p} \) is the target of estimation, and we will choose the appropriate width of the localization window adaptively by application of Lepski's method. The covariance matrix \( \Sigma_0 \) is not assumed to be known exactly and the approximate model used instead of the true one reads as follows:
\begin{equation}\label{PA}
    \YY = \PPsi^{\T}\tta + \Sigma^{1/2} \eeps,
\end{equation}
where \( \Sigma= \diag(\sigma_1^2, \ldots , \sigma_n^2 ) \), \(  \min\{ \sigma_{i}^2 \}>0 \).
Thus the model is misspecified in two places: in the form of the regression
function and in the error distribution. Following the abbreviation from Katkovnik
\citeasnoun{KatkEA2006} we will refer to this model as to ``local polynomial approximation'' or, more generally, ``local parametric approximation'' (LPA), since it is assumed that the ``true'' model \eqref{true model} \emph{locally} can be
replaced by the ``wrong'' parametric one.

\par The model constraint on the form of the regression function includes the important class of polynomial regressions. For example in the univariate case \( x \in \R \), due to the Taylor theorem, the approximation of the unknown function \( f(t) \) for \( t \) close to \( x \) can be written in the following form: \( \fta(t)= \ta{0} + \ta{1} (t-x) + \cdots + \ta{\p-1} (t-x)^{\p-1}/(p-1)! \), with the parameter \( \tta =(\ta{0},\,\ta{1}, \ldots , \ta{\p-1})^{\T} \) corresponding to the values of \( f \) and its derivatives at the point \( x \). The \( \p \times n \) matrix \( \PPsi \) then consists of the columns \( \Psii=\left(1,\, \Xi -x, \ldots, (\Xi -x)^{\p-1}/(p-1)!\right)^{\T} \),
\( i=1, \ldots , n \). If the regression function is sufficiently smooth then, for any \( t \) close to \( x \), up to a reminder term,  \( f(t) \approx  \fta(t) \) and the estimator of \( f(x) \) at the point \( x \) is given  by the first coordinate of \( \mmle{} \), that is by \( \fle{}{x} = f_{\tilde{\tta}} (x) = \tilde{\theta}^{(0)} \). See for further information on local polynomial regression Section \ref{sect:LocPol}, or for more deep insight \citeasnoun{Fan and Gijbels book},
\citeasnoun{KatkEA2006} or \citeasnoun{Loader}.

\par The general approach advocated in here includes also the important case of local constant approximation at a given point \( x \in \R \). In this case the design matrix \(   \PPsi = (1, \ldots ,1) \) and \( \fta(\Xi) = \PPsi_i^{\T} \tta = \theta^{(0)} = \fta(x) ,\;\;\;\;i=1, \ldots ,n. \)

\section{Quasi-maximum local likelihood estimation}
\par Fix a point \( x \in \RR^d \) and an orthogonal basis \( \{ \psi_j \} \) in \( L_2(\cc X) \).
Let the localizing operator be identified by the corresponding matrix. Thus for every \( x \) the sequence of localizing schemes (scales) \( \cc{W}_{k}(x) \), \( k=1, \ldots, K \) is given by the matrices
 \( \cc{W}_{k}(x) = \diag(\w{k}{1}(x), \ldots , \w{k}{n}(x)) \), where the weights
  \( \w{k}{i}(x)\in [0,1]\) can be understood, for instance, as smoothing kernels \( \w{k}{i}(x) = W ((\Xi -x) h^{-1}) \). We assume that a particular localizing function \( w_{(\cdot)} \) is fixed, and we aim to choose the index \( k \) of the optimal bandwidth \( h_k \) based on the available data. To simplify the notation we sometimes suppress the dependence on the reference point \( x \). Denote by
  \begin{equation}\label{def_Wk}
      \W{k} \eqdef  \Sigma^{-1/2} \cc{W}_{k}  \Sigma^{-1/2}  =
        \diag \left( \frac{\w{k}{1}}{\sigma_1^2} , \ldots ,
        \frac{\w{k}{n}}{\sigma_n^2}\right),\;\;\;k=1, \ldots, K.
  \end{equation}
Let \( \Theta \) be a compact subset of \( \RRp \). The LPA means that there exist non-zero weights \( \w{k}{i} \) and a parameter
\( \tta \in \Theta \) such that \( \ffi \approx \ftai = \Psi_{i}^{\T}\theta \) for all \( \Xi \)
providing \(\w{k}{i}>0 \). The notation \( \ffi \approx \Psi_{i}^{\T}\theta \) also has the meaning
that the localized data distribution, obtained by restricting the measures \( \P_{\ff, \Sigma_0} \) and
\( \P_{\PPsi^{\T}\tta, \Sigma_0} \) to the \( \sigma \)-field generated by those data for which \(\w{k}{i}>0 \),
 are close to each other in a certain sense, see {\it modeling bias} in Section~\ref{nearly parametric case}.
\par Under the LPA the corresponding local quasi-log-likelihood has the following form:
\begin{eqnarray}\label{log-likelihood}
      \LL(\W{k},\tta) &=& -\frac{1}{2 }
        \left( \YY -\PPsi^{\T}\tta\right)^{\T}   \W{k}
            \left( \YY -\PPsi^{\T}\tta\right) + R \nn
   &=& -\frac{1}{2} \sum_{i=1}^{n} |\Yi -\Psii^{\T} \tta|^2
    \frac{\w{k}{i}}{ \sigma_i^2} +R,
\end{eqnarray}
where \( R \) stands for the terms not depending on \( \tta \) and
\begin{equation*}
     \Psii = \Psi(\Xi -x) =
(\psi_1(\Xi -x) , \ldots, \psi_p(\Xi -x))^{\T} .
\end{equation*}
Then, due to the assumption of the normality of the errors, for every \( k \) the quasi-maximum likelihood estimator (QMLE) \( \mmle{k} =  \mmle{k}(x) =
(\mle{k}{0}(x),\,\mle{k}{1}(x), \ldots , \mle{k}{\,\p-1}(x) )^{\T} \) coincides with the LSE and  is defined as the minimizer of the weighted sum of squares from \eqref{log-likelihood}:
\begin{eqnarray}
  \mmle{k}  &\eqdef& \argmax_{\tta \in \Theta} \LL(\W{k},\tta) \label{MLE} \nn
            &=&   \argmin_{\tta \in \Theta} \| \W{k}^{1/2} \left( \YY -\PPsi^{\T}\tta\right)\|^2 \nn
            &=& \B{k}^{-1} \PPsi \W{k} \YY
                = \B{k}^{-1} \sum_{i=1}^{n}  \Psii \Yi \frac{\w{k}{i}}{ \sigma_i^2},
\end{eqnarray}
where the \( p \times p \) matrix \( \B{k} = \B{k}(x) \) is given by
\begin{equation}\label{B}
     \B{k} \eqdef   \PPsi  \W{k}\PPsi^{\T} =
        \sum_{i=1}^{n} \Psii \Psii^{\T} \frac{\w{k}{i}}{ \sigma_i^2}.
\end{equation}
That is, by Definition \ref{loc pol est} in the case of the polynomial basis the estimator \( \mmle{k}(x) \) is a
\( LP_k(p-1) \) estimator of \( \tta(x) \) corresponding to \( k \)th scale.
In the following we assume that \( n>p \) and \( \det \B{k}>0 \) for any \( k=1, \ldots , K \).
Because \( p=\rank(\B{k}) \le \min \{ p, \rank(\cc{W}_{k}(x)) \} \) this requires the following
conditions on the design matrix \( \PPsi \) and the minimal localizing scheme \( \cc{W}_{1}(x) \):
\begin{description}
\item[\( \bb{\mathfrak{(D)}} \)]
\emph{
The \( p \times n  \) design matrix \( \PPsi \) has full row rank, i.e.,
 \begin{equation*}
     \dim \cc{C}(\PPsi^{\T}) = \dim \cc{C}(\PPsi^{\T} \PPsi) =p.
 \end{equation*}
}
\end{description}

\begin{description}
\item[\( \bb{\mathfrak{(Loc)}} \)]
\emph{
The smallest localizing scheme \( \cc{W}_{1}(x) \) is chosen to contain at least \( p \) design points such that \( \w{1}{i}(x)>0 \), i.e., \( p \le \#\{ i: \w{1}{i}(x)>0 \} \).
}
\end{description}
The condition \( {\mathfrak{(Loc)}} \) is automatically fulfilled in practise since, for example,
in \( \RR^1 \) it means that for local constant fitting we need at least one observation and so on. Usually it is intrinsically assumed that, starting from the smallest window, at every step of the procedure every new window contains at least \( p \) new design points.

\par The formulas \eqref{MLE} give a sequence of estimators \( \{ \mmle{k}(x) \}_{k=1}^K \). It was noticed in~\citeasnoun{Akaike} that in the case of unknown true data distribution the MLE is a natural estimator for the parameter maximizing the expected log-likelihood. That is, for every \( k=1, \ldots, K \), the estimator \( \mmle{k}(x) \) can be considered as an estimator of
\begin{eqnarray}\label{prameter of BPF}
    \bbpf{k}(x) &\eqdef&
     \argmax_{\tta \in \Theta}  \EE \LL \left( \W{k}, \tta \right) \\
     &=& \argmin_{\tta \in \Theta}  (\ff-\PPsi^{\T}\tta)^{\T} \W{k} (\ff-\PPsi^{\T}\tta)\nn
      &=&   \B{k}^{-1} \PPsi \W{k}\ff
        =\B{k}^{-1} \sum_{i=1}^{n}  \Psii f(\Xi) \frac{\w{k}{i}}{ \sigma_i^2}.
\end{eqnarray}
Recall that we do not assume that the regression function \( f \) even locally satisfies the LPA.
It is known from \citeasnoun{White} that in the presence of model misspecification for every~\( k \)
the QMLE \( \mmle{k} \) is a strongly consistent estimator for \( \bbpf{k}(x) \), which is the minimizer of the localized Kullback-Leibler \citeasnoun{KullbackLeibler} information criterion:
\begin{eqnarray*}
  \bbpf{k}(x) &=&
     \argmin_{\tta \in \Theta} \sum_{i=1}^n
        \KL \left( \norm{\ffi}{\s_i} , \norm{\Psii^{\T} \tta}{\s_i}\right) \w{k}{i}(x) \\
  &=& \argmin_{\tta \in \Theta} \sum_{i=1}^n |\ffi - \Psii^{\T} \tta|^2 \frac{\w{k}{i}(x)}{\s_i^2}
\end{eqnarray*}
with \( \KL(P,P_{\tta}) \eqdef \EE_P \big[\log \big(\frac{\dd P}{\dd P_{\tta}}  \big)\big] \). For the properties of the Kullback-Leibler divergence see, for example, \citeasnoun{Tsybakov}.

\par It follows from the above definition of \( \bbpf{k}(x) \)  and from \eqref{MLE}
that the QMLE \( \mmle{k} \) admits a decomposition into
deterministic and stochastic parts:
\begin{eqnarray}
  & & \mmle{k}
  = \B{k}^{-1}\PPsi \W{k}  (\ff + \Sigma_0^{1/2} \eeps)
  = \bbpf{k} + \B{k}^{-1}\PPsi \W{k}
    \Sigma_0^{1/2} \eeps \label{linearity if quasiMLE}\\
  & & \EE \mmle{k}
    = \bbpf{k},
  \end{eqnarray}
  where \( \eeps \sim \norm{0}{I_n} \). Notice that if the regression function indeed follows the LPA, that is if
\( \ff \equiv \PPsi^{\T}\tta \), then \( \bbpf{k} \equiv \tta \)
for any \( k \) and the classical parametric set-up is recovered.

\section{Adaptive procedure}
\label{section: adaptive procedure}
Let a point \( x \in \cc X \subset \RRn \), an orthogonal basis \( \{ \psi_j \} \) in \( L_2(\cc X) \)  and the method of localization \( w_{(\cdot)} \) be fixed.
The crucial assumption for the procedure under consideration to work is that the localizing schemes (scales) \( \cc{W}_{k}(x) = \diag(\w{k}{1} , \ldots , \w{k}{n} ) \) are nested. Specifically, we say that the localizing schemes are nested if the following {\it ordering condition} is fulfilled:
\begin{description}
\item[\( \bb{(\cc{W})} \)]
    \emph{ For any fixed \( x \) and the method of localization \( w_{(\cdot)} \) the following relation holds:
    \begin{equation*}
    \cc{W}_{1}(x) \le  \ldots \le  \cc{W}_{k}(x) \le  \ldots \le  \cc{W}_{K}(x).
            \end{equation*}    }
\end{description}
For kernel smoothing this condition means the following. Let the sequence of bandwidths \( \{ h_k \} \) be ordered by increasing magnitude, i.e., \( h_1 <  \ldots < h_{K} \), and
let \( \cc{W}_{k}(x)  = \diag (\w{k}{1}, \ldots , \w{k}{n}  ) \) be the localizing matrix,
corresponding to the bandwidth \( h_k \). Here the weights \( \w{k}{i}  = \w{k}{i}(x) = W ( (\Xi-x) h_k^{-1} ) \in [0,1]\) are nonnegative functions such that for any \( 0<h_l < h_k<1 \) it holds \( W ( u h_l^{-1} ) \leq W ( u h_k^{-1} ) \) and \( W(u) \to 0 \) as \( |u| \to \infty \), or even are compactly supported.

\par Recall that given a center of localization \( x \in \cc X \), a basis \( \{ \psi_j \} \) and the method of localization \( w_{(\cdot)} \), we look for the estimator of \( f(x) \) having the form
\begin{equation*}
    \fle{k}{x} = \sum_{j=1}^p\tilde{\theta}^{(j)}_k(x) \psi_j(0).
\end{equation*}
The parameters \( \tilde{\theta}^{(j)}_k(x) \), \( j=1, \ldots, p \) are the components of the QMLE given by~\eqref{MLE}. The use of the adaptively chosen \( \adapind \) gives the adaptive estimator \( \fle{\adapind}{x}  \) of \( f(x) \) corresponding to the adaptive window choice \( \w{\adapind}{\cdot}(x)  \). In the case of the polynomial basis \( \psi_1(0)=1 \) and \( \psi_j(0) = 0 \) for \( j=2, \ldots, p \). Then the estimator of \( f(x) \) is just the first coordinate \( \tilde{\theta}^{(1)}_{\adapind}(x) \). In this case we also can get the estimators for the derivatives of \( f \) at the point \( x \).
\par The index \( \adapind \in \left\{ 1, \ldots, K \right\}\) corresponds to the adaptive choice of the degree of localization (of the width of the window), and it will be obtained by application of Lepski's method, see below.
Then the adaptive estimator of the parameter vector is
\begin{equation}\label{aadapest}
  \aadapest(x) \eqdef  \mmle{\adapind}(x) =
        (\tilde{\theta}^{(1)}_{\adapind}(x), \ldots, \tilde{\theta}^{(p)}_{\adapind}(x))^{\T}.
\end{equation}
In a non-formal way the idea of the adaptive procedure used for selection of \( \adapind \) can be described as follows. Let a point \( x \) and the method of localization \( W \) be fixed. For
 \( k=1, \ldots , K \), let \( \mmle{k} = \mmle{k}(x) = ( \mle{k}{0}(x), \mle{k}{1}(x),
\ldots , \mle{k}{\;\p-1}(x)  )^{\T} \) be the linear estimator defined
 by \eqref{MLE}. We aim to choose an adaptive estimator \( \aadapest(x) =  \mmle{\adapind}(x) \) from the set \( \{\mmle{1}, \ldots,  \mmle{K}   \} \), that is to pick the adaptive index
  \( \adapind \) from \( \left\{ 1, \ldots, K \right\} \).
Following the Lepski method (see \citeasnoun{Lep1990}), we will proceed with the multiple testing of homogeneity: starting with the smallest scheme \( \cc W_1(x) \) and enlarging it step by step so long as the estimators \( \mmle{l}(x) \) do not differ from each other significantly. More precisely, to describe the test statistic, define for any \( \tta \), \( \tta' \in \Theta\) the corresponding log-likelihood ratio:
  \begin{equation}\label{log-likelihood ratio}
     \LL(\W{k},\tta, \tta') \eqdef \LL(\W{k},\tta) -\LL(\W{k},\tta').
  \end{equation} Then, using the approach suggested in \citeasnoun{KatkSpok}, for every \( l = 1, \ldots, K \), \emph{the fitted log-likelihood (FLL)} ratio is defined as follows:
\begin{equation*}
    \LL(\W{l},\mmle{l}, \tta') \eqdef \max_{\tta \in \Theta} \LL(\W{l},\tta, \tta').
  \end{equation*}
By Theorem \ref{Th. Spokoiny_fitted likelihood}, for any \( l \) and \( \tta \), the FLL is a quadratic form:
\begin{equation*}
    2 \LL(\W{l}, \mmle{l}, \tta) = (\mmle{l} - \tta)^{\T}
        \B{l} (\mmle{l} - \tta).
\end{equation*}

\par Define the confidence set corresponding to \( \mmle{l} \) as
\begin{eqnarray}\label{confidence set}
    \cc{E}_l (\z_l)
    &\eqdef&
    \left\{ \tta: 2 \LL ( \W{l}, \mmle{l}, \tta ) \leq \z_l  \right\}\nn
    &=&
    \left\{ \tta: (\mmle{l} - \tta)^{\T}
        \B{l} (\mmle{l} - \tta) \leq \z_l  \right\} .
\end{eqnarray}
In terms of this definition ``the estimator \( \mmle{k} \) does not differ significantly from \( \mmle{l} \)'' means that \(  \mmle{k}  \in \cc{E}_l (\z_l)  \). This prompts to use (see \citeasnoun{KatkSpok}) the \emph{FLL-statistics}:
 \begin{eqnarray}\label{Tlk}
    T_{lk} &\eqdef&
                2 \LL(\W{l}, \mmle{l}, \mmle{k})\nn
           & = &
                (\mmle{l} - \mmle{k})^{\T}  \B{l} (\mmle{l} - \mmle{k})\;,\;\;\;\; l < k.
 \end{eqnarray}
 If \( T_{lk} \) is significantly large, say \( T_{lk} > \z_l \) for some sufficiently big value
 \( \z_l \), then the discrepancy between \( \mmle{l} \) and \( \mmle{k} \) is not negligible and the corresponding hypothesis of homogeneity should be rejected in favor of the smaller one. Notice that this simple approach works only due to the condition \( (\cc{W} ) \),
  because the hypotheses are nested.

  \par A justification of the FLL approach is given by the fact that the fitted log-likelihood ratio
  \( \LL(\W{k},\mmle{k}, \bbpf{k}) \) can be used to measure the quality of estimation of~\( \bbpf{k} \) by its empirical counterpart \( \mmle{k} \) at each level of localization
   (see \citeasnoun{Akaike} and \citeasnoun{White}).
\subsection{Algorithm}
Given the set of linear estimators \( \{ \mmle{1}, \ldots, \mmle{K}  \} \) and the set of critical values \( \{ \z_1, \ldots, \z_{K-1} \} \), see the \emph{``propagation conditions''} from the next subsection for details, one aims to select in a data-driven way the estimator \( \aadapest = \mmle{\adapind} \) with
\( \adapind \in \{1,\ldots, K\} \). The selection procedure originating from~\citeasnoun{Lep1990} is described as follows:
\begin{equation*}
    \mmle{1}
\end{equation*}
\begin{equation*}
    \vdots
\end{equation*}
\begin{equation*}
     \mmle{k} \;\;  \text{is accepted iff} \;\;  \mmle{k-1} \;\;  \text{was accepted and}
\end{equation*}
\begin{equation*}
        \mmle{k} \in \bigcap_{l<k} \cc{E}_l(\z_l)
        \iff
        \bigcap_{l<k} \{ T_{lk} \le  \z_l \} \not= \emptyset.
\end{equation*}
That is, we use Lepski's selection rule with the FLL test statistics \( \{ T_{lm} \} \):
 \begin{equation}\label{adaptind}
    \adapind = \max \left\{k \leq K :   T_{lm}  \leq \z_l, \,l < m \leq k    \right\}.
 \end{equation}
 \subsection{Choice of the critical values}
Let \( \aadapest_{k} \) denote the last accepted estimator after the
 first \( k \) steps of the procedure:
  \begin{equation}\label{the last accepted}
     \aadapest_{k} \eqdef \mmle{\min \{k,\adapind\}}.
  \end{equation}
Denote for some \( \kappa \le  K \) the hypothesis
\( H_{\kappa} \): \( \bbpf{1} = \cdots = \bbpf{\kappa} = \tta \), which means that the LPA is
fulfilled up to the step \( \kappa \). Clearly, by Assumption \( {(\cc{W})} \) for any
\( k < \kappa \) the hypothesis \( H_k \) is included in \( H_{\kappa} \).

\par Following the idea proposed in \citeasnoun{SV} we will choose the critical values \( \z_1, \ldots ,\z_{K-1} \)
 of the procedure using a kind of ``level'' conditions under the LPA
 (homogeneity hypothesis). In other words, the procedure is optimized to provide
 the desired error level in the local parametric situation. As it will
 be shown later (see Theorem \ref{oracle result}), if the procedure is tuned well under the LPA, it will
 perform well even when this assumption is violated.

\par The Wilks-type Theorem \ref{Wilks theorem} below gives the bound for the expected fitted
log-likelihood ratio:
\begin{equation}\label{bound for the expected fitted
log-likelihood ratio}
    \EE |2 \LL( \W{k}, \mmle{k}, \bbpf{k} )  |^r  \le (1 + \delta)^r C(p,r)
\end{equation}
where the constant \( C(p,r) \) does not depend on the degree of localization and is given by:
\begin{equation}\label{def C(p,r)}
     C(p,r) = \EE|\chi^2_p|^r = 2^r \frac{\Gamma (r+ \frac{p}{2} )}{\Gamma(\frac{p}{2})},
\end{equation}

Take some ``confidence level'' \( \alpha \in (0,1] \). Then the set of \( K-1 \) conditions on the choice of the critical values \( \z_1, \ldots ,\z_{K-1} \) can be defined to provide at each step of the procedure a risk of the adaptive estimators of at most an \( \alpha \)-fraction of the best possible (parametric) risk
\eqref{bound for the expected fitted log-likelihood ratio}. These conditions are given by the following formulas:
 \begin{definition} (Propagation conditions (PC))
\par The critical values \( \z_1, \ldots ,\z_{K-1} \) satisfy the following set of conditions:
 \begin{equation}\label{PC}
    \EE_{0, \Sigma} |(\mmle{k} - \aadapest_{k})^{\T} \B{k} (\mmle{k} - \aadapest_{k})|^{r}
        \leq \alpha  C(p,r)\; \; \;        \text{for all}\;\; k=2, \ldots, K,
 \end{equation}
where \( C(p,r) \) is defined by \eqref{def C(p,r)}, \( \alpha \in (0,1] \) and \( \EE_{0, \Sigma} \) stands for the expectation w.r.t.
the measure \( \norm{0}{\Sigma} \).
 \end{definition}
\begin{remark}
Lemma \ref{Pivotality property} (see Section \ref{section:Auxiliary results}) shows that under the LPA the Gaussian distribution
provides a nice pivotality property: the actual value of the parameter \( \tta \) is not important for
the risk of adaptive estimator, so one can put \( \tta = 0 \) in \eqref{PC}.
\end{remark}
\begin{remark}
Since the procedure is fitted in the parametric situation, ideally
(while the LPA holds) it should not terminate. If it does, then the critical values are too small. This event will be referred to as a ``false alarm''. Therefore by the  \((PC)\) we require that at each level of localization the risk associated with the type I error is at most an \( \alpha \)-fraction of the corresponding
 risk in the parametric situation.
\end{remark}

\section{Theoretical study}

\subsection{Local parametric risk bounds}

To justify the statistical properties of the considered procedure we need the following simple observation. Let for any \( \tta \), \( \tta' \in \Theta\) the corresponding log-likelihood ratio \( \LL(\W{k},\tta, \tta') \) be defined by \eqref{log-likelihood ratio}.
       Then
\begin{equation*}
    2 \LL(\W{k},\tta, \tta')=
        \left( \YY -\PPsi^{\T} \tta'  \right)^{\T}
        \W{k}
        \left( \YY -\PPsi^{\T} \tta'  \right)
     - \left( \YY -\PPsi^{\T} \tta  \right)^{\T}
        \W{k}
        \left( \YY -\PPsi^{\T} \tta  \right).
     \end{equation*}
\begin{theorem}\label{Th. Spokoiny_fitted likelihood}
(Quadratic shape of the fitted log-likelihood)

\par Let for every \( k = 1, \ldots, K\) the fitted log likelihood (FLL) be defined as follows:
\begin{equation*}
    \LL(\W{k},\mmle{k}, \tta') \eqdef \max_{\tta} \LL(\W{k},\tta, \tta').
  \end{equation*}
Then
\begin{equation}
    2 \LL(\W{k},\mmle{k}, \tta)
    =  ( \mmle{k} -\tta )^{\T} \B{k} ( \mmle{k} -\tta ).
\end{equation}
\end{theorem}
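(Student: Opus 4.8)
The plan is to reduce the claim to completing the square in a quadratic form. First I would note that the term $R$ in \eqref{log-likelihood} does not depend on $\tta$, so it cancels in the log-likelihood ratio \eqref{log-likelihood ratio}: writing $Q_k(\tta) \eqdef (\YY -\PPsi^{\T}\tta)^{\T}\W{k}(\YY -\PPsi^{\T}\tta)$, one has $2\LL(\W{k},\tta,\tta') = Q_k(\tta') - Q_k(\tta)$. Consequently $\max_{\tta}\LL(\W{k},\tta,\tta')$ is attained precisely at the minimizer of $Q_k$, which by \eqref{MLE} is the QMLE $\mmle{k} = \B{k}^{-1}\PPsi\W{k}\YY$; the standing assumption $\det\B{k}>0$ makes $Q_k$ strictly convex, so this minimizer is unique (and if one reads the maximum in the definition of the FLL as taken over the compact $\Theta$, it suffices that this unconstrained minimizer lies in $\Theta$, which is the convention behind formula \eqref{MLE}).

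Next I would expand $Q_k$ as a quadratic in $\tta$, using $\B{k} = \PPsi\W{k}\PPsi^{\T}$ from \eqref{B}:
\[
    Q_k(\tta) = \YY^{\T}\W{k}\YY - 2\,\tta^{\T}\PPsi\W{k}\YY + \tta^{\T}\B{k}\tta .
\]
Since \eqref{MLE} gives $\PPsi\W{k}\YY = \B{k}\mmle{k}$ and $\B{k}$ is symmetric, this becomes $Q_k(\tta) = \YY^{\T}\W{k}\YY - 2\,\tta^{\T}\B{k}\mmle{k} + \tta^{\T}\B{k}\tta$, and subtracting the value at $\mmle{k}$ yields
\[
    Q_k(\tta) - Q_k(\mmle{k}) = \tta^{\T}\B{k}\tta - 2\,\tta^{\T}\B{k}\mmle{k} + \mmle{k}^{\T}\B{k}\mmle{k} = (\mmle{k} - \tta)^{\T}\B{k}(\mmle{k} - \tta).
\]
Combining with the first step, $2\LL(\W{k},\mmle{k},\tta) = Q_k(\tta) - Q_k(\mmle{k}) = (\mmle{k}-\tta)^{\T}\B{k}(\mmle{k}-\tta)$, which is the assertion.

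There is no genuinely hard step here; the only points that deserve a line of care are the cancellation of $R$, the identification of the maximizer of the ratio with $\mmle{k}$ from \eqref{MLE}, and the use of the symmetry of $\B{k}$ when completing the square. If strictness of the argument about constrained versus unconstrained maximization is a concern, the cleanest fix is simply to invoke strict convexity of $Q_k$ (equivalently $\B{k}\succ0$), which both guarantees a unique global minimizer and identifies it with the expression used throughout for $\mmle{k}$.
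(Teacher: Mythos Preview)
Your proof is correct and is essentially the same as the paper's: the paper simply observes that $\LL(\W{k},\tta)$ is quadratic in $\tta$ and invokes the (exact) second-order Taylor expansion at the maximizer $\mmle{k}$, where the gradient vanishes and the Hessian is $-\B{k}$. Your explicit completion of the square is just this Taylor argument written out by hand, with a bit more detail than the paper provides.
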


\begin{proof}
Notice that \( \LL(\W{k}, \tta) \) defined by \eqref{log-likelihood} is quadratic in \( \tta \). The assertion follows from
the Taylor expansion of the second order at the point \( \mmle{k} \) because it is the point of maximum and the second derivative is a constant matrix \( \B{k} \).
\end{proof}
\par In order to control the admissible level of misspecification for the ``model'' covariance matrix from \eqref{PA} we need to introduce the
following condition on the relative variability in errors:
\begin{description}
\item[\( \bb{\mathfrak{(S)}} \)]
\emph{
There exists \( \delta \in [0,1) \) such that
\begin{equation*}
1-\delta \le \sigma_{0,i}^2/\sigma_{i}^2 \le 1+ \delta \; \;\; \text{for all} \;\; \; i = 1, \ldots, n.
\end{equation*}
}
\end{description}
Let the matrix \( \S \) be defined as follows:
\begin{equation}\label{def S}
    \S \eqdef \Sigma_0^{1/2} \W{k} \PPsi^{\T} \B{k}^{-1}
        \PPsi \W{k} \Sigma_0^{1/2} .
\end{equation}
Then for the distribution of \( \LL(\W{k},\mmle{k}, \bbpf{k}) \) one observes the so-called ``Wilks phenomenon'' (see \citeasnoun{FanZhangZhang} ) described by the following theorem:
\begin{theorem}\label{Wilks theorem}
Let the regression model be given by \eqref{true model} and the parameter maximizing the expected local log-likelihood \( \bbpf{k} = \bbpf{k}(x) \) be defined by \eqref{prameter of BPF}. Then for any \( k = 1, \ldots, K \) the following equality in distribution takes place:
\begin{equation}\label{likelihood = weightsum of sqnormal }
    2 \LL ( \W{k}, \mmle{k}, \bbpf{k} ) \eqdistr
    \lambda_{1}(\S)\bar\varepsilon_{1}^{2} + \cdots +
    \lambda_{p}(\S)\bar\varepsilon_{p}^{2},
\end{equation}
where \( p = \rank (\B{k}) = \dim \Theta = \p \), \( \lambda_{1}(\S) , \ldots , \lambda_{p}(\S) \)
are the non-zero eigenvalues of the matrix \( \S  \) and \( \bar\varepsilon_{i} \) are independent
standard normal random variables.

\par Moreover, under Assumption \( \mathfrak{(S)} \) it holds that the maximal eigenvalue fulfills \( \lambda_{max}(\S) \le 1+ \delta \) and for any \( \z > 0 \)
\begin{equation}\label{chi squared domination}
    \P \left\{2 \LL ( \W{k}, \mmle{k}, \bbpf{k} ) \ge  \z\right\} \le
    \P \left\{\eta \ge  \z/(1+ \delta)\right\},
\end{equation}
where \( \eta  \) is a random variable distributed according to the \( \chi^2 \)
law with \( p \) degrees of freedom.
\end{theorem}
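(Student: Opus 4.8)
The plan is to establish the two parts of Theorem~\ref{Wilks theorem} separately: first the exact distributional identity~\eqref{likelihood = weightsum of sqnormal }, and then the stochastic domination~\eqref{chi squared domination}.

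For the distributional identity, I would start from Theorem~\ref{Th. Spokoiny_fitted likelihood}, which gives
\(
2 \LL ( \W{k}, \mmle{k}, \bbpf{k} ) = (\mmle{k} - \bbpf{k})^{\T} \B{k} (\mmle{k} - \bbpf{k}).
\)
By the decomposition~\eqref{linearity if quasiMLE}, \( \mmle{k} - \bbpf{k} = \B{k}^{-1} \PPsi \W{k} \Sigma_0^{1/2} \eeps \) with \( \eeps \sim \norm{0}{I_n} \), so the quadratic form equals \( \eeps^{\T} \Sigma_0^{1/2} \W{k} \PPsi^{\T} \B{k}^{-1} \B{k} \B{k}^{-1} \PPsi \W{k} \Sigma_0^{1/2} \eeps = \eeps^{\T} \S \eeps \), where \( \S \) is exactly the matrix defined in~\eqref{def S}. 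Since \( \S \) is symmetric positive semidefinite, the spectral theorem gives \( \S = U^{\T} \diag(\lambda_1(\S), \ldots, \lambda_n(\S)) U \) for an orthogonal \( U \); because \( U\eeps \eqdistr \eeps \), we obtain \( \eeps^{\T}\S\eeps \eqdistr \sum_{i} \lambda_i(\S) \bar\varepsilon_i^2 \) with \( \bar\varepsilon_i \) i.i.d.\ standard normal. It remains to check that \( \S \) has rank \( p \) with exactly \( p \) nonzero eigenvalues: writing \( \S = A^{\T} A \) with \( A = \B{k}^{-1/2} \PPsi \W{k} \Sigma_0^{1/2} \), we have \( \rank(\S) = \rank(A) \), and since \( \PPsi \) has full row rank \( p \) by Assumption~\( \mathfrak{(D)} \) and \( \W{k} \), \( \Sigma_0 \) are positive definite (the latter by \( \min\{\sigma_i^2\}>0 \) together with \( \mathfrak{(S)} \)), \( A \) has rank \( p \). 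Hence only \( p \) eigenvalues are nonzero, which yields~\eqref{likelihood = weightsum of sqnormal }.

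For the domination part, the key is to bound \( \lambda_{max}(\S) \) under Assumption~\( \mathfrak{(S)} \). I would compare \( \S \) with the analogous matrix formed using \( \Sigma \) in place of \( \Sigma_0 \). Recalling from~\eqref{def_Wk} that \( \W{k} = \Sigma^{-1/2}\cc{W}_k\Sigma^{-1/2} \), note that \( \Sigma_0^{1/2}\W{k}\PPsi^{\T}\B{k}^{-1}\PPsi\W{k}\Sigma_0^{1/2} \) and the projector-like matrix \( \PPsi^{\T}\B{k}^{-1}\PPsi\W{k} \) have the same nonzero eigenvalues; more precisely, one checks that \( \Pi \eqdef \W{k}^{1/2}\PPsi^{\T}\B{k}^{-1}\PPsi\W{k}^{1/2} \) is an orthogonal projection of rank \( p \), so its eigenvalues are \( 0 \) and \( 1 \). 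Then \( \S = \Sigma_0^{1/2}\Sigma^{-1/2}\cc{W}_k^{1/2}\,\big(\cc{W}_k^{1/2}\Sigma^{-1/2}\PPsi^{\T}\B{k}^{-1}\PPsi\Sigma^{-1/2}\cc{W}_k^{1/2}\big)\,\cc{W}_k^{1/2}\Sigma^{-1/2}\Sigma_0^{1/2} = D^{\T}\Pi D \) where \( D = \cc{W}_k^{1/2}\Sigma^{-1/2}\Sigma_0^{1/2} = \diag(\w{k}{i}^{1/2}\sigma_{0,i}/\sigma_i) \). Since \( \Pi \) is a projection, \( \lambda_{max}(\S) = \lambda_{max}(D^{\T}\Pi D) \le \lambda_{max}(D^{\T}D) = \max_i \w{k}{i}\,\sigma_{0,i}^2/\sigma_i^2 \le 1+\delta \) by \( \mathfrak{(S)} \) and \( \w{k}{i}\in[0,1] \). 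Finally, from~\eqref{likelihood = weightsum of sqnormal } and \( \lambda_i(\S)\le 1+\delta \), we get \( 2\LL(\W{k},\mmle{k},\bbpf{k}) \le (1+\delta)\sum_{i=1}^p \bar\varepsilon_i^2 = (1+\delta)\,\eta \) with \( \eta \sim \chi^2_p \), which immediately gives the tail bound~\eqref{chi squared domination}.

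The main obstacle I anticipate is the eigenvalue bookkeeping in the second part: one must be careful that \( \S \) and the diagonal matrix \( D \) interact correctly and that \( \Pi \) is genuinely an orthogonal projection (this uses \( \B{k} = \PPsi\W{k}\PPsi^{\T} \) and the invertibility of \( \B{k} \), which holds by Assumptions~\( \mathfrak{(D)} \) and~\( \mathfrak{(Loc)} \)). The inequality \( \lambda_{max}(D^{\T}\Pi D)\le\lambda_{max}(D^{\T}D) \) for a projection \( \Pi \) is elementary (since \( \Pi \preceq I \), so \( D^{\T}\Pi D \preceq D^{\T}D \)), but it is the crux of transferring the relative-error control in \( \mathfrak{(S)} \) into a spectral bound; everything else is routine.
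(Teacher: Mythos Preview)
Your proof is essentially the same as the paper's: both reduce \( 2\LL(\W{k},\mmle{k},\bbpf{k}) \) to \( \eeps^{\T}\S\eeps \), diagonalize \( \S \), identify the projector \( \Pi = \W{k}^{1/2}\PPsi^{\T}\B{k}^{-1}\PPsi\W{k}^{1/2} \), and bound \( \lambda_{max}(\S) \) via \( \S = D\Pi D \preceq D^2 = \diag(\w{k}{i}\,\sigma_{0,i}^2/\sigma_i^2) \) (the paper phrases this last step as submultiplicativity of the operator norm, which is the same thing). One small slip: \( \W{k} \) is in general only positive \emph{semi}definite (some \( \w{k}{i} \) may vanish), so your rank argument via ``\( \W{k} \) positive definite'' needs a tweak---use instead that \( \B{k} = \PPsi\W{k}\PPsi^{\T} \succ 0 \) forces \( \PPsi\W{k}\Sigma_0\W{k}\PPsi^{\T} \succ 0 \) (same support, positive weights), hence \( \rank(A)=p \); the paper handles this by noting the range of \( \PPi_k \) lies inside the range of \( \W{k}^{1/2} \), on which \( \W{k}^{1/2}\Sigma_0^{1/2} \) acts invertibly.
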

\begin{remark}
Generally, if the matrix \( \B{k} \) is degenerated in \eqref{likelihood = weightsum of sqnormal }
the number of terms \( p \le \dim \Theta \).
\end{remark}
\begin{proof}
    By Theorem \ref{Th. Spokoiny_fitted likelihood} and the decomposition \eqref{linearity if quasiMLE}
     it holds that:
\begin{eqnarray*}
 2 \LL ( \W{k}, \mmle{k}, \bbpf{k} )
 &=& ( \mmle{k} -\bbpf{k} )^{\T} \B{k}
        ( \mmle{k} -\bbpf{k} )\\
 &=& (\B{k}^{-1} \PPsi \W{k} \Sigma_0^{1/2} \eeps)^{\T}
    \B{k}
    (\B{k}^{-1} \PPsi \W{k} \Sigma_0^{1/2} \eeps)\\
 &=& \eeps^{\T} \S \eeps,
\end{eqnarray*}
where the symmetric matrix \( \S \) is defined by \eqref{def S}. Then by the Schur theorem there exist an orthogonal matrix \( \M \) and a diagonal matrix \( \Lam \) composed of the eigenvalues of \( \S \) such that \(     \S = \M^{\T} \Lam \M \). For \( \eeps \sim \norm{0}{I_n} \) and an orthogonal matrix \( \M \) it holds that \( \bar{\eeps} \eqdef \M\eeps \sim \norm{0}{I_n} \). Indeed, \( \EE\M \eeps = \EE \eeps =0 \) and
\begin{equation*}
     \Var \M\eeps = \EE \M\eeps (\M\eeps)^{\T} =
     \M \EE(\eeps \eeps^{\T}) \M\eeps = \M \M^{\T}  = I_n.
\end{equation*}
Therefore
\begin{equation*}
    2 \LL ( \W{k}, \mmle{k}, \bbpf{k} )
   \eqdistr \bar{\eeps}^{\T} \Lam \bar{\eeps}\; ,\;\;\;\;
   \bar{\eeps}  \sim \norm{0}{I_n}.
\end{equation*}
On the other hand, the matrix \( \S =\Sigma_0^{1/2}\W{k} \PPsi^{\T} \B{k}^{-1} \PPsi \W{k} \Sigma_0^{1/2} \) can be rewritten as:
\begin{equation*}
    \S = \Sigma_0^{1/2}\W{k}^{1/2}\PPi_k \W{k}^{1/2}\Sigma_0^{1/2},
\end{equation*}
with \( \PPi_k = \W{k}^{1/2} \PPsi^{\T} \B{k}^{-1} \PPsi  \W{k}^{1/2}\). Notice that \( \PPi_k \) is an orthogonal projector onto the linear subspace of dimension \( p = \rank (\B{k}) \) spanned by the rows of matrix \( \PPsi \). Indeed, \( \PPi_k \) is symmetric and idempotent, i.e.,\( \PPi_k^2 = \PPi_k \).

\par Moreover, \( \rank(\PPi_k) = \tr(\PPi_k) = \tr(\W{k}^{1/2} \PPsi^{\T} \B{k}^{-1} \PPsi  \W{k}^{1/2}) = \tr(\B{k}^{-1} \PPsi  \W{k} \PPsi^{\T}) =\tr(\B{k}^{-1} \B{k})= \tr(I_p) = p \). Therefore \( \PPi_k \) has only \( p \) unit eigenvalues and \( n-p \) zero eigenvalues. Notice also that the \( n \times n \) matrix \( \S \) has \( \rank(\S) = \rank(\PPi_k \W{k}^{1/2}\Sigma_0^{1/2} ) = \rank(\PPi_k)=p \) as well. Thus  \( 2 \LL ( \W{k}, \mmle{k}, \bbpf{k} ) \eqdistr  \lambda_{1}(\S)\bar\varepsilon_{1}^{2} + \cdots +
    \lambda_{p}(\S)\bar\varepsilon_{p}^{2}\), where  \( \lambda_{1}(\S) , \ldots , \lambda_{p}(\S) \)
are the non-zero eigenvalues of the matrix \( \S  \).

\par Define the \( L_2  \)-norm of a matrix \( \A \) via its maximal eigenvalue
\begin{equation}\label{def om matrix norm}
    \| A \| \eqdef \sqrt{\lambda_{max}(A^{\T}A) }.
\end{equation}
Thus, taking into account Assumption \( (\mathfrak{S}) \), the induced \( L_2  \)-norm of the matrix \( \S \) can be estimated as follows:
\begin{eqnarray*}
  \| \S \| &=& \| \Sigma_0^{1/2}\W{k}^{1/2}\PPi_k \W{k}^{1/2}\Sigma_0^{1/2} \|\\
     &\le& \| \Sigma_0^{1/2}\W{k}^{1/2} \| \| \PPi_k \| \| \W{k}^{1/2}\Sigma_0^{1/2} \| \\
     &=&  \lambda_{max}(\W{k}\Sigma_0) \lambda_{max}(\PPi_k)  \\
    &=&   \max_{i}\{ \w{k}{i} \frac{\sigma_{0,i}^2}{\sigma_{i}^2}  \} \\
    &\le & (1 + \delta) \max_{i}\{ \w{k}{i} \} \le  1 + \delta.
\end{eqnarray*}
Therefore the largest eigenvalue of the matrix \( \S \) is bounded:
 \(\lambda_{max}(\S) \le  1 + \delta \).

 \par The last assertion of the theorem follows from the simple observation that
 \begin{equation*}
    \P \left\{ \lambda_{1}(\S)\bar\eps_1^2 + \cdots
        + \lambda_{p}(\S) \bar\eps_p^2  \ge  \z \right\}
    \le  \P \left\{\lambda_{max}(\S) (\bar\eps_1^2 + \cdots +
        \bar\eps_p^2  ) \ge  \z  \right\}.
 \end{equation*}
\end{proof}

\begin{corollary} (Quasi-parametric risk bounds) \label{param. risk bounds}
\par Let the model be given by \eqref{true model} and \( \bbpf{k} = \bbpf{k}(x) \)
 be defined by \eqref{prameter of BPF}. Assume \( (\mathfrak{S} ) \). Then for any \( \mu <1/( 1 + \delta) \)
\begin{eqnarray}
   \EE \exp \{\mu \LL (\W{k}, \mmle{k}, \bbpf{k} ) \}
    &\le&  \left[1-\mu (1 + \delta) \right]^{-p/2} \label{param. risk bounds exp}\\
  \EE |2 \LL( \W{k}, \mmle{k}, \bbpf{k} )  |^r
  & \le& (1 + \delta)^r C(p,r)\;, \label{param. risk bounds polinom}
\end{eqnarray}
where
\begin{equation}\label{C(p,r)}
     C(p,r) = \EE|\chi^2_p|^r = 2^r \frac{\Gamma (r+ \frac{p}{2} )}{\Gamma(\frac{p}{2})}.
\end{equation}
\end{corollary}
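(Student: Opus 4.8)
The plan is to derive both bounds directly from the distributional identity established in Theorem~\ref{Wilks theorem}, namely \( 2\LL(\W{k},\mmle{k},\bbpf{k}) \eqdistr \sum_{j=1}^{p} \lambda_j(\S)\,\bar\varepsilon_j^2 \) with independent standard normal \(\bar\varepsilon_j\) and with \(\lambda_{max}(\S)\le 1+\delta\) under Assumption~\(\mathfrak{(S)}\). Since every \(\lambda_j(\S)\ge 0\) and \(\lambda_j(\S)\le 1+\delta\), we have the stochastic domination \( \sum_j \lambda_j(\S)\bar\varepsilon_j^2 \le (1+\delta)\sum_{j=1}^p \bar\varepsilon_j^2 = (1+\delta)\,\eta \), where \(\eta\sim\chi^2_p\). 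Every map \(t\mapsto \exp\{\mu t\}\) for \(\mu>0\) and \(t\mapsto |t|^r\) for \(r\ge 0\) is nondecreasing on \([0,\infty)\), so taking expectations preserves the inequality, reducing both claims to standard facts about the \(\chi^2_p\) distribution.

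For the exponential bound \eqref{param. risk bounds exp}, after the domination step it suffices to compute \(\EE\exp\{\mu(1+\delta)\eta/2\}\cdot\) — more precisely, \(\EE\exp\{\mu\LL(\W{k},\mmle{k},\bbpf{k})\} \le \EE\exp\{\tfrac{\mu}{2}(1+\delta)\eta\}\), which is the moment generating function of a \(\chi^2_p\) variable evaluated at \(\tfrac{\mu(1+\delta)}{2}\). The MGF of \(\chi^2_p\) is \((1-2s)^{-p/2}\) for \(s<1/2\); with \(s=\tfrac{\mu(1+\delta)}{2}\) this gives \((1-\mu(1+\delta))^{-p/2}\), valid precisely when \(\mu<1/(1+\delta)\), which is the stated range. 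This is entirely routine once the domination is in place.

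For the polynomial moment bound \eqref{param. risk bounds polinom}, the domination gives \(\EE|2\LL(\W{k},\mmle{k},\bbpf{k})|^r \le \EE|(1+\delta)\eta|^r = (1+\delta)^r\,\EE|\eta|^r = (1+\delta)^r\,\EE|\chi^2_p|^r\), and the last quantity is evaluated via the Gamma-function formula for moments of a chi-squared (equivalently, Gamma) distribution: \(\EE|\chi^2_p|^r = 2^r\,\Gamma(r+p/2)/\Gamma(p/2)\), which is exactly \(C(p,r)\) as defined in \eqref{C(p,r)}. I would note that \(r\) need not be an integer here; the Gamma-function expression is valid for all \(r\ge 0\) with \(p/2+r>0\), which always holds.

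I do not anticipate a genuine obstacle — the content of the corollary is essentially a repackaging of Theorem~\ref{Wilks theorem} plus textbook moment/MGF formulas for the chi-squared law. The only point requiring a line of care is the justification of the domination \(\sum_j \lambda_j(\S)\bar\varepsilon_j^2 \le (1+\delta)\sum_j \bar\varepsilon_j^2\) as a pathwise (hence stochastic) inequality using \(0\le\lambda_j(\S)\le 1+\delta\), together with the monotonicity of \(s\mapsto e^{\mu s}\) and \(s\mapsto s^r\) on \([0,\infty)\) to pass the inequality through the expectation; once that is stated, both displays follow immediately.
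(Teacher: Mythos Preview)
Your proposal is correct and rests on the same ingredient as the paper --- the domination of \(2\LL(\W{k},\mmle{k},\bbpf{k})\) by \((1+\delta)\chi^2_p\) coming from \(\lambda_j(\S)\le 1+\delta\). The only cosmetic difference is that for \eqref{param. risk bounds exp} the paper factorizes the moment generating function as \(\prod_j[1-\mu\lambda_j(\S)]^{-1/2}\) and then bounds each factor, while for \eqref{param. risk bounds polinom} it invokes the tail bound \eqref{chi squared domination} together with the layer-cake formula \(\EE|X|^r=r\int_0^\infty \P\{X\ge \z\}\z^{r-1}\,\dd\z\); your pathwise-domination argument handles both inequalities in one stroke and is, if anything, slightly cleaner.
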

\begin{proof}
By \eqref{likelihood = weightsum of sqnormal }
and independence of \( \bar{\eps}_i \)
\begin{eqnarray*}
    \EE \exp \{ \mu \LL( \W{k}, \mmle{k}, \bbpf{k} ) \}
    &=&  \EE \exp \left\{ \frac{\mu}{2}
             \sum_{i=1}^{p} \lambda_i(\S) \bar{\eps}_i^2 \right \}\\
    &=&  \prod_{i=1}^{p} \EE \exp \left\{ \frac{\mu}{2} \,
        \lambda_i(\S) \bar{\eps}_i^2  \right \}\\
    &=& \prod_{i=1}^{p} \left[ 1 - \mu \,\lambda_i(\S)  \right]^{-1/2}\\
    &\le & \left[ 1 - \mu \,\lambda_{max}(\S)  \right]^{-p/2} \\
    &\le & [ 1- \mu (1+\delta)]^{-p/2}.
\end{eqnarray*}
Let \( \eta \sim \chi^2_p \). Integrating by parts yields the second inequality:
\begin{eqnarray*}
  \EE | 2 \LL( \W{k}, \mmle{k}, \bbpf{k} ) |^r
    &=& \int_0^{\infty}
        \P\left\{2\LL( \W{k}, \mmle{k}, \bbpf{k} )
        \ge \z\right\} r \z^{r-1} \dd\z \\
  &\le& r \int_0^{\infty}
        \P\left\{ \eta \ge \z/(1+\delta)\right\} \z^{r-1} \dd \z \\
  &=& (1 + \delta)^r \,\EE|\eta|^r.
\end{eqnarray*}
\end{proof}

\subsection{Upper bound for the critical values}

\par  Let us recall the \emph{(partial) L\"{o}wner ordering} of matrices: for any real symmetric matrices \( A \) and \( B \) we will write \( A \preceq B \) if and only if \( \vartheta^{\T} A \, \vartheta  \leq \vartheta^{\T} B \, \vartheta  \) for all vectors~\( \vartheta \), or, equivalently if and only if the matrix \(  B -  A \) is nonnegative definite.

Assuming \( {\mathfrak{(S)}} \) the true covariance matrix fulfills \( \Sigma_0 \preceq \Sigma (1+\delta) \)
 and the variance of the estimator \( \mmle{k} \) is bounded above by \( \B{k}^{-1} \):
\begin{eqnarray}
     V_{k} \eqdef \Var \mmle{k}
        &=&    \B{k}^{-1}\PPsi\W{k} \Sigma_0 \W{k}  \PPsi^{\T}\B{k}^{-1} \label{Vk def}
\\        &\preceq &  (1+\delta)\B{k}^{-1}\PPsi\W{k} \Sigma  \W{k} \PPsi^{\T}\B{k}^{-1}\nn
        &=&  (1+\delta) \B{k}^{-1}\PPsi  \Sigma^{-1/2} \cc W_{k}^2 \Sigma^{-1/2}  \PPsi^{\T}\B{k}^{-1}\nn
        &\preceq & (1+\delta) \B{k}^{-1}\PPsi  \Sigma^{-1/2} \cc W_{k} \Sigma^{-1/2} \PPsi^{\T}\B{k}^{-1}\nn
       &=& (1+\delta) \B{k}^{-1} \PPsi \W{k} \PPsi^{\T} \B{k}^{-1} \nn
       &=& (1+\delta) \B{k}^{-1}.\label{Vk bound}
       \end{eqnarray}
The last inequality follows from the observation that all the entries of the ``weight''
matrix \( \cc{W}_{k} \) do not exceed one, implying \(\cc W_{k}^2 \preceq \cc W_{k} \). Strict equality
occurs if the \( \{ \w{k}{i} \} \) are boxcar (rectangular) kernels and the noise is known, i.e.,
 \( \delta = 0 \). To justify the procedure one needs to show that the critical values chosen by the \((PC)\) are finite.
The upper bound for the critical values is obtained under the following assumption:
\begin{description}
\item[\( \bb{\mathfrak{(B)}} \)]
\emph{
Let the matrices \( \B{k} \) satisfy
\begin{equation*}
   u_0 I_{\p} \preceq  \B{k-1}^{-1/2} \, \B{k} \, \B{k-1}^{-1/2} \preceq u I_{\p}
\end{equation*}
for some constants \( u_0 \) and \( u \) such that \( 1< u_0 \le  u \)
for any \( 2\le k\le K \)
}
\end{description}
 \begin{remark}
 In the ``one dimensional case'' \( \p=1 \), that is for local constant approximation, the ``matrix''
 \( \B{k} = \sum_{i=1}^{n} \w{k}{i} \sigma_i^{-2} \ge  \B{k-1} \) is just a weighted
``local design size''. Assume for simplicity that \( \sigma_i^{2} \equiv \sigma^2 \), the weights are rectangular kernels \( \w{k}{i}(x) = \ind\{ |\Xi - x| \le h_k/2 \} \), and the design is equidistant. Then for \( n \) sufficiently large
 \begin{equation*}
     \frac{1}{n} \B{k} = \frac{1}{n\sigma^2} \sum_{i=1}^{n} \ind\{ |\frac{i}{n} - x| \le h_k/2 \} \approx  \frac{h_k}{\sigma^2},
 \end{equation*}
and the condition \( \mathfrak{(B)} \) means that the bandwidths grow geometrically: \( h_k =u h_{k-1} \).
 \end{remark}
Denote for any \( l<k \) the variance of the difference \( \mmle{k} - \mmle{l} \) by \( V_{lk} \):
\begin{equation}\label{def_Vlk}
     V_{lk} \eqdef \Var (\mmle{k} - \mmle{l}) \succ 0 .
\end{equation}
Then there exists a unique matrix \( V_{lk}^{1/2} \succ 0 \) such that \(( V_{lk}^{1/2})^2 = V_{lk} \).

\begin{lemma}\label{Tlk large div}
\par Assume \( \mathfrak{(S)} \), \( \cc{(W)} \) and \( (\mathfrak{B}) \). If for some \( k \le  K \) the
LPA is fulfilled, that is
if \( \bbpf{1} = \cdots = \bbpf{k} = \tta \), then for any \( l <  k \) it holds that:
\begin{eqnarray*}
   \P \left\{2 \LL ( \W{l}, \mmle{l}, \mmle{k} ) \ge \z\right\}
        &\le&
            \P \left\{\eta \ge  \z/\lambda_{max}(V_{lk}^{1/2} \B{l} V_{lk}^{1/2})\right\}\\
        &\le&
            \P \left\{\eta \ge  \z/t_0\right\}\\
   \P \left\{2 \LL ( \W{k}, \mmle{k}, \mmle{l} ) \ge  \z\right\}
   &\le&
     \P \left\{\eta \ge  \z/\lambda_{max}(V_{lk}^{1/2} \B{k} V_{lk}^{1/2})\right\}\\
   &\le&
      \P \left\{\eta \ge  \z/t_1\right\},
 \end{eqnarray*}
 where \( t_0=2(1+\delta) (1 + u_0^{-(k-l)}) \), \( t_1 =2(1+\delta) (1 + u^{(k-l)}) \)
 and   \( \eta \) is a \( \chi^2_p \)-distributed random variable.
\end{lemma}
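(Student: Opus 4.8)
The plan is to reduce each of the two fitted log‑likelihood ratios, \emph{under the LPA}, to a quadratic form in a standard Gaussian vector, to dominate such a form by a multiple of a $\chi^2_p$ variable exactly as in the proof of Theorem~\ref{Wilks theorem}, and then to control the resulting multiplicative constant by Löwner‑ordering estimates built on Assumptions $\mathfrak{(S)}$ and $\mathfrak{(B)}$. First I would apply Theorem~\ref{Th. Spokoiny_fitted likelihood} with the $l$‑th (respectively $k$‑th) fitted log‑likelihood evaluated at the point $\mmle{k}$ (respectively $\mmle{l}$), which gives
\[
 2 \LL ( \W{l}, \mmle{l}, \mmle{k} ) = (\mmle{l} - \mmle{k})^{\T} \B{l} (\mmle{l} - \mmle{k}),\qquad
 2 \LL ( \W{k}, \mmle{k}, \mmle{l} ) = (\mmle{l} - \mmle{k})^{\T} \B{k} (\mmle{l} - \mmle{k}).
\]
When the LPA holds up to step $k$, the decomposition \eqref{linearity if quasiMLE} gives $\mmle{j} = \tta + \B{j}^{-1}\PPsi\W{j}\Sigma_0^{1/2}\eeps$ for all $j\le k$, so $\mmle{l}-\mmle{k}$ is a centred Gaussian vector with covariance $V_{lk}\succ 0$ (see \eqref{def_Vlk}); hence $\mmle{l}-\mmle{k}\eqdistr V_{lk}^{1/2}\bb\xi$ with $\bb\xi\sim\norm{0}{I_p}$.

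Next, diagonalising the symmetric matrix $V_{lk}^{1/2}\B{l}V_{lk}^{1/2}=\M^{\T}\Lam\M$ by the Schur theorem and noting that $\M\bb\xi$ is again standard normal (the same step as in the proof of Theorem~\ref{Wilks theorem}), I would obtain
\[
 2\LL(\W{l},\mmle{l},\mmle{k})\eqdistr \sum_{i=1}^{p}\lambda_i\big(V_{lk}^{1/2}\B{l}V_{lk}^{1/2}\big)\,\bar\eps_i^{2}
 \le \lambda_{max}\big(V_{lk}^{1/2}\B{l}V_{lk}^{1/2}\big)\,\eta,\qquad \eta\sim\chi^2_p ,
\]
and likewise with $\B{l}$ replaced by $\B{k}$. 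This already delivers the first inequality in each of the two displayed pairs, with the constants $\lambda_{max}(V_{lk}^{1/2}\B{l}V_{lk}^{1/2})$ and $\lambda_{max}(V_{lk}^{1/2}\B{k}V_{lk}^{1/2})$ respectively.

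It then remains to check that these two maximal eigenvalues are bounded by $t_0$ and $t_1$. I would first observe that $V_{lk}^{1/2}\B{l}V_{lk}^{1/2}$ and $\B{l}^{1/2}V_{lk}\B{l}^{1/2}$ are of the form $C C^{\T}$ and $C^{\T}C$ with $C=V_{lk}^{1/2}\B{l}^{1/2}$, hence have the same eigenvalues (and similarly with $\B{k}$). From the elementary variance inequality $\Var(\mmle{k}-\mmle{l})\preceq 2\Var\mmle{k}+2\Var\mmle{l}$ together with the bound \eqref{Vk bound} one gets $V_{lk}\preceq 2(1+\delta)\big(\B{k}^{-1}+\B{l}^{-1}\big)$. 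Iterating Assumption $\mathfrak{(B)}$ (applied after congruence by $\B{\cdot}^{1/2}$) over the $k-l$ steps between $l$ and $k$ yields $u_0^{\,k-l}\B{l}\preceq\B{k}\preceq u^{\,k-l}\B{l}$, and therefore, after congruences by $\B{l}^{1/2}$ and $\B{k}^{1/2}$,
\[
 \B{l}^{1/2}V_{lk}\B{l}^{1/2}\preceq 2(1+\delta)\big(1+u_0^{-(k-l)}\big)I_p=t_0 I_p,\qquad
 \B{k}^{1/2}V_{lk}\B{k}^{1/2}\preceq 2(1+\delta)\big(1+u^{\,k-l}\big)I_p=t_1 I_p,
\]
which forces $\lambda_{max}(V_{lk}^{1/2}\B{l}V_{lk}^{1/2})\le t_0$ and $\lambda_{max}(V_{lk}^{1/2}\B{k}V_{lk}^{1/2})\le t_1$. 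Substituting these into the $\chi^2_p$ domination above and using that $c\mapsto\P\{\eta\ge\z/c\}$ is nondecreasing produces the second inequality in each pair. The one genuinely load‑bearing step is this last chain of Löwner‑monotonicity manipulations; the reduction to a $\chi^2_p$‑tail bound is a verbatim repeat of Theorem~\ref{Wilks theorem}, and the only subtlety worth flagging is that no orthogonality of the increments $\mmle{l}-\mmle{k}$ is required — the crude factor‑$2$ variance bound is enough, at the cost of the factor $2$ appearing in $t_0$ and $t_1$.
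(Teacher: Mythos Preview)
Your proof is correct and follows essentially the same route as the paper: reduce to a Gaussian quadratic form via Theorem~\ref{Th. Spokoiny_fitted likelihood} and \eqref{linearity if quasiMLE}, diagonalise by the Schur theorem to get a $\chi^2_p$ tail bound with the maximal eigenvalue as multiplier, then control that eigenvalue through the crude variance bound $V_{lk}\preceq 2V_l+2V_k$ combined with \eqref{Vk bound} and an iteration of $\mathfrak{(B)}$. The only cosmetic difference is in the last step: the paper inverts $V_{lk}\preceq t_0\,\B{l}^{-1}$ to $\B{l}\preceq t_0\,V_{lk}^{-1}$ and sandwiches by $V_{lk}^{1/2}$, whereas you use the spectral equivalence of $V_{lk}^{1/2}\B{l}V_{lk}^{1/2}$ and $\B{l}^{1/2}V_{lk}\B{l}^{1/2}$ and sandwich $V_{lk}$ by $\B{l}^{1/2}$ directly --- your variant is marginally cleaner since it avoids the operator‑monotone inversion.
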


\begin{proof}
The LPA and \eqref{linearity if quasiMLE} imply
\begin{equation*}
    \mmle{l}-\mmle{k}
        =
            \B{l}^{-1}\PPsi\W{l}\Sigma_0^{1/2}\eeps-\B{k}^{-1}\PPsi\W{k}\Sigma_0^{1/2}\eeps
        \eqdistr
            V_{lk}^{1/2}\xi,
\end{equation*}
where \( \xi \) is a standard normal vector in \( \R^{\p} \). Thus by Theorem
 \ref{Th. Spokoiny_fitted likelihood} under the LPA for any \( l<k \)
\begin{equation*}
  2\LL(\W{l}, \mmle{l}, \mmle{k}) =  \| \B{l}^{1/2} (\mmle{l}  - \mmle{k}) \|^2 \\
  \eqdistr  \xi^{\T} V_{lk}^{1/2} \B{l} V_{lk}^{1/2} \xi.
\end{equation*}
 By the Schur theorem there exists an orthogonal matrix \( M \) such that
\begin{equation*}
    \xi^{\T} V_{lk}^{1/2} \B{l} V_{lk}^{1/2} \xi
    \eqdistr  \bar\varepsilon^{\T} M^{\T} \Lambda_{lk} M \bar\varepsilon,
\end{equation*}
where \( \bar\varepsilon \) is standard normal vector,
\( \Lambda = \diag (\lambda_{1}(V_{lk}^{1/2} \B{l} V_{lk}^{1/2})) ,
 \cdots , \lambda_{\p}(V_{lk}^{1/2} \B{l} V_{lk}^{1/2}))\) and \( p = \rank (\B{l})\).
 Therefore
\begin{equation*}
  2 \LL ( \W{l}, \mmle{l}, \mmle{k} ) \eqdistr
    \lambda_{1}(V_{lk}^{1/2} \B{l} V_{lk}^{1/2})\bar\varepsilon_{1}^{2} + \cdots +
    \lambda_{p}(V_{lk}^{1/2} \B{l} V_{lk}^{1/2})\bar\varepsilon_{p}^{2},
 \end{equation*}
 where \( \lambda_{j}(V_{lk}^{1/2} \B{l} V_{lk}^{1/2}) \), \( j=1,\ldots , p \) are nonzero eigenvalues
 of \( V_{lk}^{1/2} \B{l} V_{lk}^{1/2} \).

 By a similar argument:
 \begin{equation*}
    2 \LL ( \W{k}, \mmle{k}, \mmle{l} ) \eqdistr
    \lambda_{1}(V_{lk}^{1/2} \B{k} V_{lk}^{1/2})\bar\varepsilon_{1}^{2} + \cdots +
    \lambda_{p}(V_{lk}^{1/2} \B{k} V_{lk}^{1/2})\bar\varepsilon_{p}^{2}.
 \end{equation*}
Recalling that \( \eta \) is a \( \chi^2_p \)-distributed random variable, we have
 \begin{eqnarray*}
   \P \left\{2 \LL ( \W{l}, \mmle{l}, \mmle{k} ) \ge \z\right\} &\le&
    \P \left\{\eta \ge  \z/\lambda_{max}(V_{lk}^{1/2} \B{l
    } V_{lk}^{1/2})\right\},\\
   \P \left\{2 \LL ( \W{k}, \mmle{k}, \mmle{l} ) \ge  \z\right\} &\le&
    \P \left\{\eta \ge  \z/\lambda_{max}(V_{lk}^{1/2} \B{k} V_{lk}^{1/2})\right\}.
 \end{eqnarray*}
Notice that for any square matrices \( A \) and \( B \),
\begin{equation*}
    (A-B)(A^{\T}-B^{\T}) \preceq 2  (AA^{\T}+BB^{\T}).
\end{equation*}
Application of this bound to the variance of the difference of estimators yields
\begin{eqnarray*}
  V_{lk} &=&    (\B{l}^{-1}\PPsi\W{l}\Sigma_0^{1/2}-\B{k}^{-1}\PPsi\W{k}\Sigma_0^{1/2} )
                (\B{l}^{-1}\PPsi\W{l}\Sigma_0^{1/2}-\B{k}^{-1}\PPsi\W{k}\Sigma_0^{1/2})^{\T} \\
&\preceq& 2 (\B{l}^{-1}\PPsi\W{l} \Sigma_0 \W{l}  \PPsi^{\T}\B{l}^{-1} + \B{k}^{-1}\PPsi\W{k} \Sigma_0 \W{k}  \PPsi^{\T}\B{k}^{-1} ) \\
&=&  2V_l +2V_k,
\end{eqnarray*}
where \( V_l=\Var\mmle{l} \), \( l\le k \). By the upper bound \eqref{Vk bound} for the variance
 \( V_{l} \) (resp. of \( V_{k} \)) and by Assumption \( \mathfrak{(B)} \):
 \begin{eqnarray*}
V_{l} & \preceq&  (1+ \delta)\B{l}^{-1},\\
V_{k} & \preceq&  (1+ \delta)\B{k}^{-1} \preceq  (1+ \delta)  u_0^{-(k-l)}  \B{l}^{-1}, \\
 V_{lk} & \preceq& 2(1+\delta) (1 + u_0^{-(k-l)}) \B{l}^{-1}.
\end{eqnarray*}
Therefore
\begin{equation}\label{bound for B_l}
  \B{l} \preceq 2(1+\delta) (1 + u_0^{-(k-l)}) V_{lk}^{-1}.
 \end{equation}
Thus by \eqref{bound for B_l} the upper bound for the induced \( L_2\) matrix norm
reads as follows:
 \begin{eqnarray}
\lambda_{max}(V_{lk}^{1/2} \B{l} V_{lk}^{1/2}) &=& \|  \B{l}^{1/2} V_{lk}^{1/2}\|^2 \nn
&=& \sup_{\| \gamma \| = 1} \gamma^{\T}  V_{lk}^{1/2} \B{l} V_{lk}^{1/2} \gamma  \nn
&\le& 2(1+\delta) (1 + u_0^{-(k-l)}) \sup_{\| \gamma \| = 1}
                  \gamma^{\T}  V_{lk}^{1/2} V_{lk}^{-1} V_{lk}^{1/2} \gamma \nn
&\le & 2(1+\delta) (1 + u_0^{-(k-l)})\label{lambda max l}.
\end{eqnarray}
Similarly:
\begin{eqnarray}
 V_{lk} &\preceq& 2(1+\delta) (1 + u^{(k-l)}) \B{k}^{-1} ,\nn
  \lambda_{max}(V_{lk}^{1/2} \B{k} V_{lk}^{1/2})&\le & 2(1+\delta) (1 + u^{(k-l)})
                                                                    \label{lambda max k}.
\end{eqnarray}
These bounds imply
\begin{eqnarray*}
   \P \left\{2 \LL ( \W{l}, \mmle{l}, \mmle{k} ) \ge \z\right\}
        &\le&
            \P \left\{\eta \ge  \z/\lambda_{max}(V_{lk}^{1/2} \B{l} V_{lk}^{1/2})\right\}\\
        &\le&
            \P \left\{\eta \ge  \z[2(1+\delta) (1 + u_0^{-(k-l)})]^{-1}\right\},\\
   \P \left\{2 \LL ( \W{k}, \mmle{k}, \mmle{l} ) \ge  \z\right\}
   &\le&
     \P \left\{\eta \ge  \z/\lambda_{max}(V_{lk}^{1/2} \B{k} V_{lk}^{1/2})\right\}\\
   &\le&
      \P \left\{\eta \ge  \z[2(1+\delta) (1 + u^{(k-l)})]^{-1}\right\}.
 \end{eqnarray*}
\end{proof}

\begin{lemma} \label{expmoment corol_new}
Under the conditions of the preceding lemma for any \( \mu_0 < t_0^{-1} \), or
\( \mu_1 < t_1^{-1} \) respectively, the exponential moments are bounded:
\begin{eqnarray*}\label{expmoment_new}
\EE \exp \{ \mu_0 \LL(\W{l}, \mmle{l}, \mmle{k}) \}
        &\le&
        [1 - \mu_0 t_0]^{-p/2}\\
        \EE \exp \{ \mu_1 \LL(\W{k}, \mmle{k}, \mmle{l}) \}
        &\le &
        [1 - \mu_1 t_1]^{-p/2},
\end{eqnarray*}
where \( t_0=2(1+\delta) (1 + u_0^{-(k-l)}) \) and \( t_1 =2(1+\delta) (1 + u^{(k-l)}) \).
\end{lemma}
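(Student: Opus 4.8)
The plan is to obtain this statement as an immediate corollary of the proof of Lemma \ref{Tlk large div} combined with the Laplace-transform computation already carried out in Corollary \ref{param. risk bounds}. Recall that in the proof of Lemma \ref{Tlk large div}, under the LPA and using Theorem \ref{Th. Spokoiny_fitted likelihood} together with the decomposition \eqref{linearity if quasiMLE}, one establishes the equalities in distribution
\begin{equation*}
  2\LL(\W{l}, \mmle{l}, \mmle{k}) \eqdistr \sum_{j=1}^{p} \lambda_j\bigl(V_{lk}^{1/2}\B{l}V_{lk}^{1/2}\bigr)\,\bar\varepsilon_j^{2},
  \qquad
  2\LL(\W{k}, \mmle{k}, \mmle{l}) \eqdistr \sum_{j=1}^{p} \lambda_j\bigl(V_{lk}^{1/2}\B{k}V_{lk}^{1/2}\bigr)\,\bar\varepsilon_j^{2},
\end{equation*}
where $\bar\varepsilon_1, \ldots, \bar\varepsilon_p$ are independent standard normal, $p = \rank(\B{l}) = \rank(\B{k})$, and all the eigenvalues appearing are strictly positive.

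The second step is to compute the exponential moment exactly as in Corollary \ref{param. risk bounds}: by independence of the $\bar\varepsilon_j$ and the elementary identity $\EE\exp\{s\bar\varepsilon_j^{2}\} = (1-2s)^{-1/2}$ valid for $s<1/2$, for $0 \le \mu_0 < t_0^{-1}$ one gets
\begin{equation*}
  \EE\exp\{\mu_0 \LL(\W{l}, \mmle{l}, \mmle{k})\}
  = \prod_{j=1}^{p} \EE\exp\Bigl\{ \tfrac{\mu_0}{2}\,\lambda_j\bigl(V_{lk}^{1/2}\B{l}V_{lk}^{1/2}\bigr)\, \bar\varepsilon_j^{2} \Bigr\}
  = \prod_{j=1}^{p}\bigl[\, 1 - \mu_0\,\lambda_j\bigl(V_{lk}^{1/2}\B{l}V_{lk}^{1/2}\bigr) \,\bigr]^{-1/2}.
\end{equation*}
Bounding each factor by the one built from the largest eigenvalue and invoking $\lambda_{max}(V_{lk}^{1/2}\B{l}V_{lk}^{1/2}) \le t_0 = 2(1+\delta)(1+u_0^{-(k-l)})$ from \eqref{lambda max l} yields the claimed bound $[1-\mu_0 t_0]^{-p/2}$; the condition $\mu_0 < t_0^{-1}$ is precisely what keeps every factor finite and positive. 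The inequality for $\LL(\W{k}, \mmle{k}, \mmle{l})$ is obtained identically, replacing $\B{l}$ by $\B{k}$ and using $\lambda_{max}(V_{lk}^{1/2}\B{k}V_{lk}^{1/2}) \le t_1 = 2(1+\delta)(1+u^{(k-l)})$ from \eqref{lambda max k}.

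I do not expect a genuine obstacle here, since the statement is essentially a restatement of Lemma \ref{Tlk large div} in Laplace-transform form. The only points requiring (minor) care are: checking that the eigenvalues $\lambda_j(V_{lk}^{1/2}\B{l}V_{lk}^{1/2})$ and $\lambda_j(V_{lk}^{1/2}\B{k}V_{lk}^{1/2})$ are nonnegative, so that passing from each $\lambda_j$ to $\lambda_{max}$ preserves the direction of the inequality; and verifying that the admissible ranges $\mu_0 < t_0^{-1}$, $\mu_1 < t_1^{-1}$ keep each factor $[1-\mu_0\lambda_j]^{-1/2}$ well defined. For $\mu_0 \le 0$ the assertion is trivial, since $\LL(\W{l},\mmle{l},\mmle{k}) \ge 0$ by Theorem \ref{Th. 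Spokoiny_fitted likelihood}.
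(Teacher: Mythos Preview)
Your proposal is correct and follows essentially the same argument as the paper: use the distributional representation from the proof of Lemma~\ref{Tlk large div}, compute the moment generating function as a product over independent $\bar\varepsilon_j^2$, bound each factor via the maximal eigenvalue, and then invoke the bounds \eqref{lambda max l} and \eqref{lambda max k}. The paper's proof is slightly terser but otherwise identical in structure.
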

\begin{proof}
The proof of lemma is similar to the proof of Corollary~\ref{param. risk bounds}. The bounds \eqref{lambda max l} and \eqref{lambda max k} imply the following bounds for the corresponding moment generating functions:
\begin{eqnarray*}
\EE \exp \{ \mu \LL(\W{l}, \mmle{l}, \mmle{k}) \}
        &=& \prod_{j=1}^{p} \EE \exp \{ \frac{\mu}{2} \lambda_{j}(V_{lk}^{1/2} \B{l} V_{lk}^{1/2})
            \bar\varepsilon_{j}^2\}\\
        &=&    \prod_{j=1}^{p} [1- \mu  \lambda_{j}(V_{lk}^{1/2} \B{l} V_{lk}^{1/2}) ]^{-1/2}\\
        &\le&
        [1 - \mu \lambda_{max}(V_{lk}^{1/2} \B{l} V_{lk}^{1/2}) ]^{-p/2}\\
          &\le&
        [1 - 2\mu (1+\delta) (1 + u_0^{-(k-l)})]^{-p/2}\;,\\
\EE \exp \{ \mu \LL(\W{k}, \mmle{k}, \mmle{l}) \}
        &\le &
        [1 - \mu \lambda_{max}(V_{lk}^{1/2} \B{k} V_{lk}^{1/2})]^{-p/2}\\
         &\le & [1 - 2\mu (1+\delta) (1 + u^{(k-l)})]^{-p/2}.
\end{eqnarray*}
\end{proof}
\begin{lemma}\label{polmoment corol_new}
 Under the conditions of the preceding lemma it holds that:
\begin{eqnarray*}
    \EE| 2\LL(\W{l}, \mmle{l}, \mmle{k}) |^r &\le& 2^r C(p,r) (1+\delta)^r (1 + u_0^{-(k-l)})^r,\\
    \EE| 2\LL(\W{k}, \mmle{k}, \mmle{l}) |^r &\le& 2^r C(p,r) (1+\delta)^r (1 + u^{(k-l)})^r,
\end{eqnarray*}
where
\begin{equation*}
     C(p,r) = \EE|\chi^2_p|^r = 2^r \frac{\Gamma (r+ \frac{p}{2} )}{\Gamma(\frac{p}{2})}.
\end{equation*}
\end{lemma}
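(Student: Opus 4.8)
The plan is to mirror the proof of the polynomial bound in Corollary~\ref{param. risk bounds}, simply replacing the Wilks-type tail estimate \eqref{chi squared domination} of Theorem~\ref{Wilks theorem} by the two one-sided tail bounds established in Lemma~\ref{Tlk large div}. We work under exactly the hypotheses of the preceding lemma (in particular \( \mathfrak{(S)} \), \( \cc{(W)} \), \( \mathfrak{(B)} \), and the LPA \( \bbpf{1} = \cdots = \bbpf{k} = \tta \)), so Lemma~\ref{Tlk large div} applies and gives, for every \( \z > 0 \),
\[
    \P\{ 2\LL(\W{l}, \mmle{l}, \mmle{k}) \ge \z \} \le \P\{ \eta \ge \z/t_0 \}
    \quad\text{and}\quad
    \P\{ 2\LL(\W{k}, \mmle{k}, \mmle{l}) \ge \z \} \le \P\{ \eta \ge \z/t_1 \},
\]
with \( \eta \sim \chi^2_p \), \( t_0 = 2(1+\delta)(1+u_0^{-(k-l)}) \) and \( t_1 = 2(1+\delta)(1+u^{(k-l)}) \).

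First I would express the \( r \)th moment of the nonnegative random variable \( 2\LL(\W{l}, \mmle{l}, \mmle{k}) \) through the layer-cake (integration-by-parts) identity
\[
    \EE| 2\LL(\W{l}, \mmle{l}, \mmle{k}) |^r
    = r\int_0^\infty \P\{ 2\LL(\W{l}, \mmle{l}, \mmle{k}) \ge \z \}\, \z^{r-1}\, \dd\z .
\]
Inserting the tail domination above and then substituting \( \z = t_0 s \) turns the right-hand side into \( t_0^r\, r\int_0^\infty \P\{\eta \ge s\}\, s^{r-1}\,\dd s = t_0^r\, \EE|\eta|^r = t_0^r\, C(p,r) \), and expanding \( t_0^r = 2^r (1+\delta)^r (1+u_0^{-(k-l)})^r \) is precisely the first asserted inequality. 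The second inequality follows in the same way, verbatim, with \( t_0 \) replaced by \( t_1 \) and \( u_0^{-(k-l)} \) replaced by \( u^{(k-l)} \).

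I do not expect a genuine obstacle here: the only points deserving a word of care are that the tail bound of Lemma~\ref{Tlk large div} holds uniformly in \( \z>0 \), so the integral comparison is legitimate over the whole half-line, and that \( \EE|\chi^2_p|^r = C(p,r) < \infty \) by \eqref{C(p,r)}. The one thing worth re-checking is the bookkeeping of the two constants: \( \B{k}^{-1} \) is dominated by \( u_0^{-(k-l)} \B{l}^{-1} \) from above inside \( V_{lk} \), whereas \( \B{k} \) itself carries the growing factor \( u^{(k-l)} \) when it is sandwiched by \( V_{lk}^{1/2} \); this is exactly the source of the asymmetry between \( u_0 \) and \( u \) in the two displayed bounds, and it has already been worked out in Lemma~\ref{Tlk large div} via \eqref{lambda max l} and \eqref{lambda max k}.
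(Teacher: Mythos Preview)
Your proposal is correct and essentially identical to the paper's own proof: the paper likewise writes the \(r\)th moment via the layer-cake identity \(\EE|2\LL|^r = r\int_0^\infty \P\{2\LL \ge \z\}\,\z^{r-1}\,\dd\z\), inserts the tail bound from Lemma~\ref{Tlk large div}, and changes variables to pull out the factor \(t_1^r = 2^r(1+\delta)^r(1+u^{(k-l)})^r\) (resp.\ \(t_0^r\)) in front of \(\EE|\eta|^r = C(p,r)\).
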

\begin{proof} Integration by parts and Lemma \ref{Tlk large div} yield for the second assertion
\begin{eqnarray*}
  \EE| 2\LL(\W{k}, \mmle{k}, \mmle{l}) |^r
    &=&
    r \int_{0}^{\infty}  \P \left\{2 \LL ( \W{k}, \mmle{k}, \mmle{l} ) \ge   \z\right\} \z^{r-1} \dd \z \\
   &\le &
   r \int_{0}^{\infty}    \P \left\{\eta \ge  \z \left[2(1+\delta)
        (1 + u^{(k-l)})\right]^{-1}  \right\} \z^{r-1} \dd \z \\
   &=& 2^r  (1+\delta)^r (1 + u^{(k-l)})^r \E|\eta|^r,
\end{eqnarray*}
where \( \eta \sim  \chi^2_p\). The first assertion is proved similarly.
\end{proof}
\begin{theorem} (The theoretical choice of the critical values)
\label{upper bound}
\par Assume \( (\mathfrak{D}) \), \( (\mathfrak{Loc}) \), \( (\mathfrak{B}) \) and \( (\cc{W}) \).
The adaptive procedure \eqref{adaptind} in the considered set-up is well-defined in the sense that the choice of the critical values
\begin{equation}
    \z_k = \frac{4}{\mu} \left\{ r(K-k) \log u + \log{(K/\alpha)} - \frac{p}{4} \log(1-4\mu )
            - \log(1-u^{-r}) + \bar{C}(p,r) \right\} \label{zk}
\end{equation}
provides the conditions \eqref{PC} for all \( k\le K \). Here \( \bar{C}(p,r) =\log\left\{ \frac{2^{2r} [\Gamma(2r + p/2) \Gamma(p/2)]^{1/2}}{\Gamma(r + p/2)} \right \} \) and \( \mu \in (0, 1/4) \). In particular,
\begin{equation}
    \EE_{0, \Sigma} |(\mmle{K} - \aadapest)^{\T}
        \B{K} (\mmle{K} - \aadapest)|^{r}
        \leq \alpha  C(p,r).
 \end{equation}
\end{theorem}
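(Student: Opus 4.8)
The plan is to verify the propagation conditions \eqref{PC} directly for the explicit critical values \eqref{zk}. Under the reference measure \( \norm{0}{\Sigma} \) we are in the parametric situation with \( \ff = 0 \) and a correctly specified covariance, so the LPA holds at every scale with common value \( \tta = 0 \) and one may take \( \delta = 0 \) in Assumption \( \mathfrak{(S)} \) (the pivotality of Lemma~\ref{Pivotality property} is what makes the choice \( \tta = 0 \) harmless in general). Hence all the estimates of Lemmas~\ref{Tlk large div}, \ref{expmoment corol_new} and \ref{polmoment corol_new} are available with \( \delta = 0 \), i.e. with \( t_0 = 2(1 + u_0^{-(k-l)}) < 4 \) and \( t_1 = 2(1 + u^{k-l}) \). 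The first step is the observation that \( \aadapest_k = \mmle{k} \) on \( \{\adapind \ge k\} \), so the integrand in \eqref{PC} vanishes there, and on \( \{\adapind = l\} \) with \( l < k \) one has \( \aadapest_k = \mmle{l} \); using Theorem~\ref{Th. Spokoiny_fitted likelihood},
\begin{equation*}
    \EE_{0,\Sigma}\big|(\mmle{k} - \aadapest_k)^{\T}\B{k}(\mmle{k} - \aadapest_k)\big|^{r}
        = \sum_{l=1}^{k-1}\EE_{0,\Sigma}\big[\,|2\LL(\W{k},\mmle{k},\mmle{l})|^{r}\,\I\{\adapind = l\}\,\big].
\end{equation*}

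Next I would split each summand by the Cauchy--Schwarz inequality into a moment factor \( (\EE_{0,\Sigma}|2\LL(\W{k},\mmle{k},\mmle{l})|^{2r})^{1/2} \) and a probability factor \( (\P_{0,\Sigma}\{\adapind = l\})^{1/2} \). The moment factor is controlled by Lemma~\ref{polmoment corol_new} applied with \( r \) replaced by \( 2r \) and \( \delta = 0 \): \( \EE_{0,\Sigma}|2\LL(\W{k},\mmle{k},\mmle{l})|^{2r} \le 2^{2r}C(p,2r)(1 + u^{k-l})^{2r} \). For the probability factor, the point is that \( \{\adapind = l\} \) forces \( \mmle{l+1} \) to be rejected, i.e. \( T_{l',l+1} > \z_{l'} \) for at least one \( l' \le l \); a union bound over \( l' \), together with the Chernoff inequality and the exponential-moment bound of Lemma~\ref{expmoment corol_new} (legitimate because \( t_0 < 4 \) and \( \mu < 1/4 \)), gives \( \P_{0,\Sigma}\{\adapind = l\} \le [1 - 4\mu]^{-p/2}\sum_{l'=1}^{l}e^{-\mu\z_{l'}/2} \).

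The heart of the matter is the arithmetic that the constants in \eqref{zk} are tailored to. Substituting \eqref{zk} gives \( e^{-\mu\z_{l'}/2} = u^{-2r(K-l')}(\alpha/K)^{2}(1-4\mu)^{p/2}(1-u^{-r})^{2}e^{-2\bar C(p,r)} \), and \( \bar C(p,r) \) is defined precisely so that \( e^{-2\bar C(p,r)} = 2^{-4r}C(p,r)^{2}/C(p,2r) \). Summing the geometric series \( \sum_{l'=1}^{l}u^{-2r(K-l')} \le u^{-2r(K-l)}/(1-u^{-2r}) \), taking square roots, multiplying the two factors, and bounding \( (1+u^{k-l})^{r}u^{-r(K-l)} \le 2^{r}u^{-r(K-k)} \) (using \( l \le k \)), each term of the outer sum comes out at most \( K^{-1}\alpha\,C(p,r)(1-u^{-r})^{1/2}u^{-r(K-k)} \); this no longer depends on \( l \), and since there are at most \( K-1 < K \) terms and \( u^{-r(K-k)} \le 1 \), the whole sum stays strictly below \( \alpha C(p,r) \), which is \eqref{PC}. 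The case \( k = K \) is the displayed inequality of the theorem, and finiteness of every \( \z_k \) (hence well-posedness of the procedure) is immediate from \eqref{zk} once \( \mu \in (0,1/4) \) and \( u > 1 \). I expect the main obstacle to be purely the bookkeeping of this last step: one must keep the two statistics apart --- the ``forward'' \( 2\LL(\W{l'},\mmle{l'},\mmle{l+1}) \), governed by \( t_0 \), which controls \( \P\{\adapind = l\} \), versus the ``backward'' \( 2\LL(\W{k},\mmle{k},\mmle{l}) \), governed by \( t_1 \), which enters the moment factor --- and check that each ingredient of \eqref{zk} (the term \( r(K-k)\log u \) absorbing the geometric growth of \( t_1 \); \( \log(K/\alpha) \) the union bound over \( l \); \( -\tfrac{p}{4}\log(1-4\mu) \) the exponential-moment constant; \( \bar C(p,r) \) the passage from \( C(p,2r) \) back to \( C(p,r) \)) does exactly what it is supposed to.
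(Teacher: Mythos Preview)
Your proposal is correct and follows essentially the same route as the paper: the same decomposition over the stopping level, the same two Lemmas (\ref{expmoment corol_new} and \ref{polmoment corol_new}) with \( \delta = 0 \), Cauchy--Schwarz, and the same geometric-series bookkeeping that the constants in \eqref{zk} are designed to absorb. The only cosmetic difference is the order of operations: the paper applies the union bound to the indicator first and then Cauchy--Schwarz term-by-term (obtaining \( \sum_{l} e^{-\mu\z_l/4} \) outside the square root and afterwards swapping the double sum \( \sum_m\sum_l \to \sum_l\sum_m \)), whereas you apply Cauchy--Schwarz first and then the union bound plus Chernoff inside the probability (so the inner geometric series sits under the square root); both organizations land on the same final estimate with the same constants.
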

\begin{proof} 
The risk corresponding to the adaptive estimator can be represented as a sum of risks of
the false alarms at each step of the procedure:
\begin{equation*}
\EE_{0, \Sigma} |(\mmle{k} - \aadapest_k )^{\T} \B{k} (\mmle{k} - \aadapest_k ) |^r
    =
    \sum_{m=1}^{k-1} \EE_{0, \Sigma} |(\mmle{k} - \mmle{m} )^{\T} \B{k} (\mmle{k} - \mmle{m} ) |^r \I{\{ \aadapest_{k} = \mmle{m}\}}.
\end{equation*}
By the definition of the last accepted estimator \( \aadapest_{k} \)
the event \( \{ \aadapest_{k} = \mmle{m} \} \) with \( m =1, \ldots, k-1 \)
occurs if for some \( l = 1, \ldots ,m \) the statistic \( T_{l, m+1} >\z_l \). Thus
\begin{equation*}
    \{ \aadapest_{k} = \mmle{m} \} \subseteq \bigcup_{l=1}^{m} \{ T_{l, m+1} > \z_l \}.
\end{equation*}
It holds also that for any positive \( \mu \)
\begin{eqnarray*}
    \I{\{ T_{l, m+1} > \z_l \}}
        &=&
         \I{\{ 2 \LL(\W{l}, \mmle{l}, \mmle{m+1}) - \z_l > 0\}}\\
        &\le & \exp \{ \frac{\mu}{2} \LL(\W{l}, \mmle{l}, \mmle{m+1}) - \frac{\mu}{4} \z_l\}.
\end{eqnarray*}
Application of this simple fact and the Cauchy-Schwarz inequality implies for \( m = 1, \ldots, k-1 \) the following bound:
\begin{eqnarray*}
 && \EE_{0, \Sigma} |(\mmle{k} - \mmle{m} )^{\T} \B{k} (\mmle{k} - \mmle{m} ) |^r \I{\{ \aadapest_{k} = \mmle{m} \}}\\
   &=& \EE_{0, \Sigma} |2 \LL (\W{k}, \mmle{k}, \mmle{m})|^r \I{\{ \aadapest_{k} = \mmle{m}\}} \\
  &\le & \sum_{l=1}^{m} e^{-\frac{\mu}{4}\z_l} \EE_{0, \Sigma} \left[|2 \LL (\W{k}, \mmle{k}, \mmle{m})|^r
    \exp{\{ \frac{\mu}{2} \LL(\W{l}, \mmle{l}, \mmle{m+1}) \}}\right]\\
    &\le & \sum_{l=1}^{m} e^{-\frac{\mu}{4}\z_l}
        \left\{  \EE_{0, \Sigma} \left[ |2 \LL (\W{k}, \mmle{k}, \mmle{m})|^{2r} \right]
          \right\}^{1/2}
    \left\{ \EE_{0, \Sigma} \left[ \exp{\{ \mu \LL(\W{l}, \mmle{l}, \mmle{m+1}) \}} \right]\right\}^{1/2}.
\end{eqnarray*}
By Lemma \ref{expmoment corol_new} with \( \delta = 0 \)
\begin{equation*}
    \EE_{0, \Sigma} \left[ \exp{\{ \mu \LL(\W{l}, \mmle{l}, \mmle{m+1}) \}} \right]
        < ( 1- 4\mu)^{-p/2}.
\end{equation*}
This together with the bound from Lemma \ref{polmoment corol_new} gives
\begin{eqnarray*}
   && \EE_{0, \Sigma} |(\mmle{k} - \aadapest_k )^{\T} \B{k} (\mmle{k} - \aadapest_k ) |^r \\
  &\le & 2^r \sqrt{C(p,2r)}   ( 1- 4\mu)^{-p/4}
   \sum_{m=1}^{k-1} \sum_{l=1}^{m} e^{-\frac{\mu}{4}\z_l}(1+u^{(k-m)})^r\\
   &=& 2^r \sqrt{C(p,2r)}  ( 1- 4\mu)^{-p/4}
        \sum_{l=1}^{k-1} e^{-\frac{\mu}{4}\z_l} \sum_{m=l}^{k-1}(1+u^{(k-m)})^r\\
   &\le & 2^{2r} \sqrt{C(p,2r)}   ( 1- 4\mu)^{-p/4} (1-u^{-r})^{-1}
        \sum_{l=1}^{k-1} e^{-\frac{\mu}{4}\z_l} u^{r(k-l)},
\end{eqnarray*}
because \( -(k-l) < -(m-l) \) and
\begin{eqnarray*}
    \sum_{m=l}^{k-1}(1+u^{(k-m)})^r  &=& u^{r(k-l)} \sum_{m=l}^{k-1}(u^{-(k-l)}+u^{-(m-l)})^r\\
    &<& 2^r u^{r(k-l)} \sum_{m=l}^{k-1} u^{-r(m-l)}\\
    &<& 2^r u^{r(k-l)} (1-u^{-r})^{-1}.
    \end{eqnarray*}
Since \( u^{r(k-l)} \le u^{r(K-l)} \) for any \( l<k\le K \) the choice
\begin{equation*}
    \z_l = \frac{4}{\mu} \left\{ r(K-l) \log u + \log{(K/\alpha)} - \frac{p}{4} \log(1-4\mu )
    - \log(1-u^{-r}) + \bar{C}(p,r) \right\}
\end{equation*}
with
\begin{equation*}
    \bar{C}(p,r) =\log\left\{ \frac{2^{2r} [\Gamma(2r + p/2) \Gamma(p/2)]^{1/2}}{\Gamma(r + p/2)} \right \}
\end{equation*}
provides the required bound
\begin{equation*}
    \EE_{0,\Sigma} |(\mmle{l} - \aadapest_{l})^{\T}
        \B{l} (\mmle{l} - \aadapest_{l})|^{r}
        \leq \alpha  C(p,r)\; \; \;
        \text{for all}\;\; l=2, \ldots, K.
 \end{equation*}
\end{proof}

\subsection{Quality of estimation in the nearly parametric case:\\ small modeling bias and propagation property}
\label{nearly parametric case}

The critical values \( \z_1, \ldots , \z_{K-1} \) were selected by the propagation conditions \eqref{PC} under the hypothesis of homogeneity of the theta's with a probably misspecified error distribution, i.e. under the measure \( \norm{\tta}{\Sigma} \). Now \( \bbpf{1} \approx \cdots \approx \bbpf{k} \approx \tta\) up to some \( k\leq K \) and the covariance matrix is \( \Sigma_0 \). The aim is to formalize the meaning of ``\( \approx \)'' and to justify the use of the critical values in this situation. For this purposes we will take into account the discrepancy between the joint distributions of the linear estimators \( \mmle{1}, \ldots , \mmle{k} \) for \( k = 1, \ldots , K \) under the null (homogeneity) hypothesis corresponding to the distributions with mean \( \bbpf{1} = \cdots = \bbpf{k} = \tta\) and ``wrong'' covariance matrix \( \Sigma \) and in the general situation (under the alternative) with \( \bbpf{1} \ne \cdots \ne \bbpf{k} \) and covariance matrix \( \Sigma_0 \). Denote the expectations w.r.t. these measures by \(\EE_{\tta, \Sigma} := \EE_{k,\tta, \Sigma} \) and \( \EE_{\ff, \Sigma_0} := \EE_{k,\ff, \Sigma_0} \) respectively. Denote a \( p\times k \) matrix of the first \( k \) estimators by
 \begin{equation*}
    \tilde\TTa_k   \eqdef   (\mmle{1}, \ldots, \mmle{k}).
 \end{equation*}
Its mean under the alternative (the matrix of the parameters minimizing the expected local log-likelihoods) is given by
 \begin{equation*}
    \TTa^*_k      \eqdef   \EE_{\ff, \Sigma_0} \tilde \TTa_k =(\bbpf{1}, \ldots, \bbpf{k}),
 \end{equation*}
 and the mean under the null (the ``true'' parameter in the parametric set-up) is:
\begin{equation*}
    \TTa_k       \eqdef   \EE_{\tta, \Sigma} \tilde \TTa_k= (\tta, \ldots, \tta).
\end{equation*}
Let \( A \otimes B \) stands for the Kronecker product of \( A \) and \( B \) defined as
\begin{equation*}
    A \otimes B =\left(
                   \begin{array}{cccc}
                     a_{11}B & a_{12}B & \cdots & a_{1n}B \\
                     a_{21}B & a_{22}B & \cdots & a_{2n}B \\
                     \cdot   & \cdot   & \cdots & \cdot \\
                     a_{m1}B & a_{m2}B & \cdots & a_{mn}B \\
                   \end{array}
                 \right).
\end{equation*}

Denote the \( pk \times pk \) covariance matrices of
\( \vec \tilde \TTa_k^{\T} = (\mmle{1}^{\T}, \ldots , \mmle{k}^{\T}) \in \RR^{pk}\) by
\begin{eqnarray}
 \SSigma_k    &\eqdef&  \Var_{\tta, \Sigma}[\vec \tilde \TTa_k] =\DD_k (J_k \otimes \Sigma) \DD_k^{\T}, \label{def SSigma}\\
\SSigma_{k,0}   &\eqdef&  \Var_{\ff, \Sigma_0} [\vec \tilde \TTa_k] =\DD_k (J_k \otimes \Sigma_0) \DD_k^{\T} \label{def SSigma0},
\end{eqnarray}
where the matrix \(J_k \) is a \( k \times k \) matrix with all its elements equal to \(1\) and the
 \( pk \times nk \) matrix \( \DD_k \) is defined as follows:
\begin{eqnarray}
 \DD_k    &\eqdef&    D_1 \oplus \cdots \oplus D_k = \diag(D_1, \ldots, D_k), \label{def DD_K}\nn
  D_l     &\eqdef&    \B{l}^{-1} \PPsi \W{l}, \;\;\; l = 1, \ldots ,k.
\end{eqnarray}
By Lemma \ref{simidefinitness of SSigma } from Section \ref{section:Auxiliary results} under Assumption \( \mathfrak{(S)}\) with the same \( \delta \), a relation similar to \( \mathfrak{(S)}\) holds for the covariance matrices \( \SSigma_{k}   \) and \( \SSigma_{k,0} \) of the linear estimators:
\begin{equation}\label{multi cond sigma}
  (1-\delta) \SSigma_{k} \preceq  \SSigma_{k,0} \preceq (1+\delta)  \SSigma_{k}\;,\;\; k \le K.
\end{equation}

\par Even though the moment generating function of \( \vec \tilde \TTa_K \) has a form corresponding to the multivariate normal distribution (see Lemma \ref{MGF for joint distribution} in Section \ref{section:Auxiliary results}) this representation makes sense only if \( \SSigma_K \) is nonsingular. Notice that \( \rank(J_K \otimes \Sigma) =n \). From \( J_K \otimes \Sigma \succeq 0\) it follows only that \( \SSigma_{K} \succeq 0\), similarly, \(  \SSigma_{K,0} \succeq 0 \). However, without any additional assumptions it is easy to show (see Lemma \ref{nonsingularity of SSigma rectang} in Section \ref{section:Auxiliary results}) that for rectangular kernels \( \SSigma_{K} \succ 0 \). On the other hand, due to \eqref{multi cond sigma}, it is enough to require nonsingularity only for the matrix \( \SSigma_K \) corresponding to the approximate model \eqref{PA}, and its choice belongs to a statistician. In what follows we assume that \( \SSigma_{K} \succ 0\).

\par Denote by \( \P_{\tta, \Sigma}^k = \norm{\vec\TTa_k}{\SSigma_k} \) and by \( \P_{\ff, \Sigma_0}^k = \norm{\vec\TTa^*_k}{\SSigma_{k,0}} \), \( k=1, \ldots, K \), the distributions of \( \vec \tilde \TTa_k \) under the null and under the alternative. Denote also the Radon-Nikodym derivative by
\begin{equation}\label{def Zk}
    Z_k \eqdef \frac{\dd \P_{\ff, \Sigma_0}^k}{\dd \P_{\tta, \Sigma}^k}.
\end{equation}
Then by Lemma \ref{KL for joint distr} from Section \ref{section:Auxiliary results} the Kullback-Leibler divergence between these measures has the following form:
\begin{eqnarray}
  & &2\KL(\P_{\ff, \Sigma_0}^k,\P_{\tta, \Sigma}^k) \eqdef 2\EE_{\ff, \Sigma_0}
  \log\bigg( \frac{\dd \P_{\ff, \Sigma_0}^k}{\dd \P_{\tta, \Sigma}^k}  \bigg) \label{KL joint}\nn
  &=&  \Delta(k)  +\log\bigg(\frac{\det \SSigma_k}{\det \SSigma_{k,0}} \bigg)
            + \tr (\SSigma_k^{-1} \SSigma_{k,0}) -pk,
\end{eqnarray}
where
\begin{eqnarray}
  b(k) &\eqdef& \vec \TTa^*_k - \vec \TTa_k \label{def_b(k)}, \\
  \Delta(k) & \eqdef & b(k)^{\T} \SSigma_k^{-1} b(k)  \label{def_Delta(k)}.
\end{eqnarray}

\par If there would be no ``noise misspecification'', i.e., if \( \delta \equiv 0 \) implying
\( \Sigma = \Sigma_0 \), then
\( \Delta(k) = b(k)^{\T} \SSigma_k^{-1} b(k) = 2 \KL(\P_{\ff, \Sigma}^k,\P_{\tta, \Sigma}^k)\). Therefore this quantity can be used to indicate the deviation between the mean values in the true
 \eqref{true model} and the approximate \eqref{PA} models. Clearly, under \( (\cc{W}) \) the quantity \( \Delta(k) \) grows with \( k \), so following the terminology suggested in \citeasnoun{SV}, we
introduce the \emph{small modeling bias condition}:
\begin{description}
\item[\( \bb{(SMB)} \)]
\emph{
Let there exist for some \( k \le K \) and some \( \tta \) a constant \( \Delta  \ge 0 \) such that
\begin{equation*}
  \Delta(k) \le  \Delta.
\end{equation*}
}
\end{description}
Monotonicity of \( \Delta(k) \) and Assumption \( (SMB) \) immediately imply that
\begin{equation*}
  \Delta(k') \le \Delta \;\;  \text {for all } \; k' \le k .
\end{equation*}
The conditions \eqref{multi cond sigma} yield \( - p k \delta \le  \tr (\SSigma_k^{-1} \SSigma_{k,0}) -p k \le  p k \delta \). Thus \eqref{KL joint} implies the bound for the Kullback-Leibler divergence in terms of \( \delta \):
\begin{equation}\label{bounds for KL}
    -\frac{pk}{2} \log(1+ \delta) + \frac{\Delta(k)}{2}  -\frac{ pk \delta}{2}
    \le
    \KL(\P_{\ff, \Sigma_0}^k,\P_{\tta, \Sigma}^k)
    \le
    -\frac{pk}{2} \log(1- \delta) + \frac{\Delta(k)}{2}  +\frac{ pk \delta}{2}.
\end{equation}
Moreover, as \( \delta \to 0+\)
\begin{equation}\label{bounds for KL asymptotics}
     \Delta(k) -2 pk\delta + o(\delta)
    \le
    2\KL(\P_{\ff, \Sigma_0}^k,\P_{\tta, \Sigma}^k)
    \le
    \Delta(k) + 2 pk\delta + o(\delta).
\end{equation}
This means that if for some \( k \) Assumption \( (SMB) \) is fulfilled and \( \delta = o\big( \frac{1}{K} \big)  \), then the Kullback-Leibler divergence between \( \P_{\tta, \Sigma}^k  \) and  \( \P_{\ff, \Sigma_0}^k  \) is bounded by a small constant.
\par Now one can state the crucial property for obtaining the final oracle result.

\begin{theorem}\label{Propagation result theorem}(Propagation property)
   \par Assume \( (\mathfrak{D}) \), \( (\mathfrak{Loc}) \), \( (\mathfrak{S}) \), \( (\cc{W}) \), \( (\mathfrak{B}) \) and \( ({PC}) \). Then for any \( k \le K \) the following upper bounds hold:
    \begin{eqnarray*}
    && \EE|(\mmle{k} - \tta)^{\T} \B{k} (\mmle{k} - \tta)|^{r/2}\\
       & \le &  (\EE|\chi^2_p|^r)^{1/2} (1+\delta)^{pk/4}(1-\delta)^{-3pk/4}
        \exp\left\{ \varphi(\delta)
        \frac{\Delta(k)}{2(1-\delta)}\right\},\\
       && \EE|(\mmle{k} - \aadapest_{k})^{\T} \B{k} (\mmle{k} - \aadapest_{k})|^{r/2}\\
       & \le & (\alpha \EE|\chi^2_p|^r)^{1/2} (1+\delta)^{pk/4}(1-\delta)^{-3pk/4}
        \exp\left\{ \varphi(\delta)
        \frac{\Delta(k)}{2(1-\delta)}\right\},
 \end{eqnarray*}
 where \(\varphi(\delta) \eqdef
  \begin{cases}
1 & \mathrm{for \; homogeneous \;errors,} \\
 \frac{2(1+\delta)}{(1-\delta)^2} -1  &\mathrm{otherwise}.
\end{cases} \)
\par Here \( \mmle{k} = \mmle{k}(x) \) is the QMLE defined by \eqref{MLE} and \( \aadapest_{k}(x) = \mmle{\min\{k, \adapind\}}(x) \) is the adaptive estimator at the \( k \)th step of the procedure.
\end{theorem}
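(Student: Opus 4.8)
The plan is a likelihood-ratio (change-of-measure) argument of the type introduced in \citeasnoun{SV}: the propagation conditions \eqref{PC} and the Wilks-type bounds of Theorem~\ref{Wilks theorem} hold under the ``wrong'' but homogeneous law $\P_{\tta,\Sigma}^{k}=\norm{\vec\TTa_k}{\SSigma_k}$, and we transport them to the true law $\P_{\ff,\Sigma_0}^{k}=\norm{\vec\TTa^{*}_{k}}{\SSigma_{k,0}}$, the price being the $\chi^{2}$-affinity between the two. Write $g=g(\vec\tilde\TTa_{k})$ for the nonnegative functional $|(\mmle{k}-\tta)^{\T}\B{k}(\mmle{k}-\tta)|^{r/2}$ in the first assertion and $|(\mmle{k}-\aadapest_{k})^{\T}\B{k}(\mmle{k}-\aadapest_{k})|^{r/2}$ in the second; since the adaptive index satisfies $\{\adapind\ge k\}=\bigcap_{l<m\le k}\{T_{lm}\le\z_l\}$, in both cases $g$ depends on the data only through $\vec\tilde\TTa_{k}=(\mmle{1},\dots,\mmle{k})$. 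With $Z_{k}$ the Radon--Nikodym derivative \eqref{def Zk} and the Cauchy--Schwarz inequality,
\begin{equation*}
   \EE_{\ff,\Sigma_0}\,g=\EE_{\tta,\Sigma}\bigl[\,g\,Z_{k}\,\bigr]\le\bigl(\EE_{\tta,\Sigma}\,g^{2}\bigr)^{1/2}\bigl(\EE_{\tta,\Sigma}\,Z_{k}^{2}\bigr)^{1/2}.
\end{equation*}

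For the first factor I would argue as follows. Under $\P_{\tta,\Sigma}^{k}$ the covariance used by the procedure coincides with the sampling covariance, so Theorem~\ref{Wilks theorem} applies with $\delta=0$ and $(\mmle{k}-\tta)^{\T}\B{k}(\mmle{k}-\tta)=2\LL(\W{k},\mmle{k},\tta)$ is exactly $\chi^{2}_{p}$; hence $\EE_{\tta,\Sigma}\,g^{2}=\EE|\chi^{2}_{p}|^{r}$ in the first case. In the adaptive case the pivotality property (Lemma~\ref{Pivotality property}) lets me set $\tta=0$ without changing the law of $2\LL(\W{k},\mmle{k},\aadapest_{k})$, and then the propagation conditions give $\EE_{\tta,\Sigma}\,g^{2}=\EE_{0,\Sigma}|(\mmle{k}-\aadapest_{k})^{\T}\B{k}(\mmle{k}-\aadapest_{k})|^{r}\le\alpha\,\EE|\chi^{2}_{p}|^{r}$. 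This supplies the prefactors $(\EE|\chi^{2}_{p}|^{r})^{1/2}$ and $(\alpha\,\EE|\chi^{2}_{p}|^{r})^{1/2}$.

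The substance of the proof is the second factor, the $\chi^{2}$-affinity $\EE_{\tta,\Sigma}\,Z_{k}^{2}=\EE_{\ff,\Sigma_0}\,Z_{k}$ of the two Gaussians. Using $\SSigma_{K}\succ0$ this is an elementary Gaussian integral, equal to $(\det\SSigma_{k})^{1/2}(\det\SSigma_{k,0})^{-1}\det\bigl(2\SSigma_{k,0}^{-1}-\SSigma_{k}^{-1}\bigr)^{-1/2}\exp\{b(k)^{\T}Q\,b(k)\}$, where $b(k)$ is the mean shift \eqref{def_b(k)}, $Q$ is an explicit symmetric matrix built from $\SSigma_{k}^{-1}$ and $\SSigma_{k,0}^{-1}$, and the $\vec\TTa_k$-terms cancel exactly as in the computation leading to \eqref{KL joint}. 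The key input \eqref{multi cond sigma}, together with $\delta<1$, guarantees $2\SSigma_{k,0}^{-1}-\SSigma_{k}^{-1}\succ0$ and bounds the determinant ratio by $(1+\delta)^{pk/2}(1-\delta)^{-3pk/2}$; sandwiching the non-commuting products inside $Q$ between $\SSigma_{k}^{-1}$ and $\SSigma_{k,0}^{-1}$ via the same relation controls $b(k)^{\T}Q\,b(k)$ by $\varphi(\delta)\,\Delta(k)/(1-\delta)$ with $\Delta(k)=b(k)^{\T}\SSigma_{k}^{-1}b(k)$ as in \eqref{def_Delta(k)}. Taking square roots and multiplying by the first factor gives the stated bounds; when the noise is correctly specified ($\Sigma=\Sigma_{0}$, $\delta=0$) the determinant ratio is $1$, $Q=\SSigma_{k}^{-1}$, the exponent is exactly $\Delta(k)/2$, and $\varphi\equiv1$.

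I expect the last step to be the only real obstacle: verifying the positive-definiteness of $2\SSigma_{k,0}^{-1}-\SSigma_{k}^{-1}$ and, more delicately, reducing the quadratic form $b(k)^{\T}Q\,b(k)$ — in which $\SSigma_{k}^{-1}$ and $\SSigma_{k,0}^{-1}$ do not commute — to a clean multiple of $\Delta(k)$ via the partial L\"owner ordering \eqref{multi cond sigma}, so that precisely the factor $\varphi(\delta)$, degenerating to $1$ under correctly specified noise, emerges. Everything else follows routinely from the Wilks-type results and the propagation conditions already established.
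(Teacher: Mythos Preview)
Your proposal is correct and follows essentially the same route as the paper: the change-of-measure via Cauchy--Schwarz with $Z_k$, the first factor handled by the Wilks bound (Corollary~\ref{param. risk bounds} with $\delta=0$) and by pivotality plus the propagation conditions, and the second factor $\EE_{\tta,\Sigma}[Z_k^2]$ computed as a Gaussian integral and then bounded using \eqref{multi cond sigma}. The paper carries out the last step by Taylor-expanding $\log Z_k$ at $\vec\TTa_k$ before taking the expectation, arriving at exactly your determinant factor and the quadratic form in $b(k)$ sandwiched by $\SSigma_{k,0}^{-1}$ and $(2\SSigma_k^{1/2}\SSigma_{k,0}^{-1}\SSigma_k^{1/2}-I_{pk})^{-1}$, whence the constant $\varphi(\delta)$ drops out just as you anticipate.
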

\begin{remark}
Bounds \eqref{bound for Zk homog} and \eqref{bound for Zk} below give a kind of condition on the relative error in the noise misspecification.
As \( \delta \to 0+ \) it holds for every \( k \le K \)
\begin{equation*}
    \varphi(\delta)
        \frac{\Delta(k)}{1+\delta} -2pk\delta +o(\delta) \le
        \log \EE_{\tta, \Sigma} [Z_k^2]\le
       \varphi(\delta)
        \frac{\Delta(k)}{1-\delta} +2pk\delta + o(\delta),
\end{equation*}
where \( Z_k \) is defined by \eqref{def Zk}.

\par This  bound implies, up to the additive constant
\(  \log\big( \alpha \EE|\chi^2_p|^r \big)/2\), the same asymptotic behavior for the logarithm of the risk of adaptive estimator at each step of the procedure. Because by \( (SMB) \) the quantity \( \Delta(k) \) is bounded by a small constant and \( K \) is of order \( \log n \), \( \EE_{\tta, \Sigma} [Z_k^2] \) is small if \( \delta = o \big( \frac{1}{\log n} \big) \). This means that for the case when \( \Sigma \) is an estimator for \( \Sigma_0 \), only logarithmic in sample size accuracy is needed. This observation is of particular importance, since it is known from~\citeasnoun{Spokoiny variance} that the rate \( n^{-1/2} \) of variance estimation is achievable only for dimensions \( d \le 8 \) over classes of functions with bounded second derivative.
\end{remark}
\begin{remark}
The propagation property guaranties that the adaptive procedure does not stop with high probability while \( \Delta(k) \) is small, i.e. under \( (SMB) \), and if the relative error \( \delta \) in the noise is sufficiently small.
\end{remark}

\begin{proof} 
\par
Notice that for any nonnegative measurable function \( g = g(\tilde \TTa_k) \) the Cauchy-Schwarz inequality implies
\begin{equation}\label{changing measure}
    \EE_{\ff, \Sigma_0} [g] = \EE_{\tta, \Sigma} [g Z_k]
    \le \big(\EE_{\tta, \Sigma} [g^2] \big)^{1/2} \big(\EE_{\tta, \Sigma} [Z_k^2] \big)^{1/2}
\end{equation}
with the Radon-Nikodym derivative
\begin{equation*}
    Z_k = \frac{\dd \P^k_{\ff, \Sigma_0}}{\dd \P^k_{\tta, \Sigma}}.
\end{equation*}
Taking \( g=|(\mmle{k} - \tta)^{\T} \B{k} (\mmle{k} - \tta)|^{r/2} \) one gets the first assertion applying ``the parametric risk bound'' with \( \delta=0 \) from \eqref{param. risk bounds polinom}:
\begin{eqnarray*}
  \EE[g] &\le& \big(\EE_{\tta, \Sigma} |(\mmle{k} - \tta)^{\T} \B{k} (\mmle{k} - \tta)|^{r} \big)^{1/2} \big(\EE_{\tta, \Sigma} [Z_k^2] \big)^{1/2} \\
   &=& \big(\EE_{\tta, \Sigma} |2\LL(\W{k}, \mmle{k}, \tta )|^r \big)^{1/2} \big(\EE_{\tta, \Sigma} [Z_k^2] \big)^{1/2}\\
   &\le&  (\EE|\chi^2_p|^r)^{1/2} \big(\EE_{\tta, \Sigma} [Z_k^2] \big)^{1/2}.
\end{eqnarray*}
The second assertion is treated similarly by applying the pivotality property (Lemma~\ref{Pivotality property}) and the propagation conditions \eqref{PC}.

To calculate \( \EE_{\tta, \Sigma} [Z_k^2] \) let us consider \( \log Z_k \) given by
\begin{eqnarray*}
    \log\big(Z_k(y)\big) =
        \frac{1}{2} \log\bigg(\frac{\det \SSigma_k}{\det \SSigma_{k,0}} \bigg)
        &-&\frac{1}{2} \| \SSigma_{k,0}^{-1/2} (y - \vec \TTa^*_k) \|^2  \nn
        &+& \frac{1}{2}\| \SSigma_k^{-1/2} (y - \vec \TTa_k) \|^2
\end{eqnarray*}
as a function of \( \vec \TTa^*_k \). Application of the Taylor expansion at the point \(  \vec \TTa_k \) yields
\begin{eqnarray*}
  2\log Z_k &=& \log \frac{\det \SSigma_k}{\det \SSigma_{k,0}}
                 - \| \SSigma_{k,0}^{-1/2} (y - \vec \TTa_k) \|^2
                 + \| \SSigma_k^{-1/2} (y - \vec \TTa_k) \|^2 \\
            &+&  2 b(k)^{\T} \SSigma_{k,0}^{-1}(y - \vec \TTa_k)
                 - b(k)^{\T} \SSigma_{k,0}^{-1} b(k).
\end{eqnarray*}
With \( \xi \sim \norm{0}{I_{pk}} \) the second moment of the Radon-Nikodym derivative under the null hypothesis reads as follows:
\begin{eqnarray}
   & & \EE_{\tta, \Sigma} [Z_k^2] \nn
   &=&     \frac{\det \SSigma_k}{\det \SSigma_{k,0}}
            \exp\{ - b(k)^{\T} \SSigma_{k,0}^{-1} b(k)\}
            \EE \exp \{ -\| \SSigma_{k,0}^{-1/2} \SSigma_k^{1/2} \xi \|^2
                        + \| \xi \|^2
                        + 2 b(k)^{\T} \SSigma_{k,0}^{-1} \SSigma_k^{1/2} \xi \}\nn
   &=&    \frac{\det \SSigma_k}{\det \SSigma_{k,0}}
                \big[\det \big(2 \SSigma_k^{1/2}\SSigma_{k,0}^{-1}\SSigma_k^{1/2} - I_{pk} \big )\big]^{-1/2}\nn
   &\times&       \exp\{ 2 b(k)^{\T} \SSigma_{k,0}^{-1} \SSigma_k^{1/2} \big(2 \SSigma_k^{1/2} \SSigma_{k,0}^{-1}  \SSigma_k^{1/2} -I_{pk}\big)^{-1} \SSigma_k^{1/2} \SSigma_{k,0}^{-1} b(k)
           - b(k)^{\T} \SSigma_{k,0}^{-1} b(k)\} \nn
   &=&    \frac{\det \SSigma_k}{\det \SSigma_{k,0}}
                \big[ \prod_{j=1}^{pk}
                    \{ 2 \lambda_j(\SSigma_k^{1/2}\SSigma_{k,0}^{-1}\SSigma_k^{1/2}) -1\} \big]^{-1/2} \label{exp pokazatel}\\
   &\times& \nonumber \exp
                \{ b(k)^{\T} \SSigma_{k,0}^{-1/2}
                        \big[ 2 \SSigma_{k,0}^{-1/2} \SSigma_k^{1/2}
                                \big(2 \SSigma_k^{1/2} \SSigma_{k,0}^{-1}  \SSigma_k^{1/2}
                                -I_{pk}\big)^{-1} \SSigma_k^{1/2} \SSigma_{k,0}^{-1/2}  - I_{pk}\big]
                \SSigma_{k,0}^{-1/2} b(k)
                \} .
\end{eqnarray}
To estimate the obtained expression in terms of the level of noise misspecification \( \delta \)
 notice that the condition \eqref{multi cond sigma} implies
\begin{equation*}
    \left( \frac{1}{1+\delta} \right)^{pk}
        \le
            \frac{\det \SSigma_k}{\det \SSigma_{k,0}}
        \le
    \left( \frac{1}{1-\delta} \right)^{pk},
\end{equation*}
\begin{equation*}
     \left(\frac{1-\delta}{1+\delta} \right)^{\frac{pk}{2}}
        \le
            \big[ \prod_{j=1}^{pk}
                    \{ 2 \lambda_j(\SSigma_k^{1/2}\SSigma_{k,0}^{-1}\SSigma_k^{1/2}) -1\} \big]^{-1/2}
        \le
     \left( \frac{1+\delta}{1-\delta} \right)^{\frac{pk}{2}}.
\end{equation*}
\begin{equation*}
    \frac{1-\delta}{1+\delta} I_{pk}
        \preceq
            \left( 2 \SSigma_k^{1/2}\SSigma_{k,0}^{-1}\SSigma_k^{1/2} - I_{pk}  \right)^{-1}
        \preceq
    \frac{1+\delta}{1-\delta} I_{pk}.
\end{equation*}
Therefore the quantity in the exponent in \eqref{exp pokazatel} is bounded by:
\begin{eqnarray*}
 &  &  \;\;   \left( 2 \frac{1-\delta}{(1+\delta)^2 } -1 \right)  b(k)^{\T} \SSigma_{k,0}^{-1} b(k)\\
 && \le    b(k)^{\T} \SSigma_{k,0}^{-1/2}
                        \big[ 2 \SSigma_{k,0}^{-1/2} \SSigma_k^{1/2}
                                \big(2 \SSigma_k^{1/2} \SSigma_{k,0}^{-1}  \SSigma_k^{1/2}
                                -I_{pk}\big)^{-1} \SSigma_k^{1/2} \SSigma_{k,0}^{-1/2}  - I_{pk} \big]
                \SSigma_{k,0}^{-1/2} b(k) \\
 & & \le   \left( 2 \frac{1+\delta}{(1-\delta)^2} -1 \right) b(k)^{\T} \SSigma_{k,0}^{-1} b(k).
\end{eqnarray*}
Moreover,
\begin{eqnarray*}
&& \frac{\Delta(k)}{1+\delta}
    =     \frac{1}{1+\delta} b(k)^{\T} \SSigma_k^{-1} b(k) \\
&& \le b(k)^{\T} \SSigma_{k,0}^{-1} b(k)\\
&&  \le \frac{1}{1-\delta} b(k)^{\T} \SSigma_k^{-1} b(k)
    = \frac{\Delta(k)}{1-\delta}.
\end{eqnarray*}
Finally,
\begin{eqnarray}\label{bound for Zk}
  && 
        \left( \frac{1- \delta}{(1+\delta)^3} \right)^{\frac{pk}{2} }
        \exp\left\{ \left(  \frac{2(1-\delta)}{(1+\delta)^2} -1 \right) \frac{\Delta(k)}{1+\delta}\right\}\nn
   &&\le  \EE_{\tta, \Sigma } [Z_k^2]
        \le 
            \left( \frac{1+ \delta}{(1-\delta)^3} \right)^{\frac{pk}{2}}
            \exp\left\{ \left( \frac{2(1+\delta)}{(1-\delta)^2} -1 \right) \frac{\Delta(k)}{1-\delta}\right\}.
\end{eqnarray}
In the \emph{case of homogeneous errors} the  expression for \( \log Z_k \) reads as
\begin{eqnarray*}
  \log Z_k  &=&  p k \log \big(\frac{\sigma}{\sigma_0} \big)
                    + \frac{1}{2} \big(\frac{1}{ \sigma^2}-\frac{1}{ \sigma^2_0}\big)
                                            \| \VV_k^{-1/2} (y - \vec \TTa_k) \|^2 \\
            &+& \frac{1}{ \sigma^2_0} b(k)^{\T} \VV_k^{-1} (y-\vec \TTa_k)
                    - \frac{1}{2 \sigma^2_0} b(k)^{\T} \VV_k^{-1} b(k),
\end{eqnarray*}
implying
\begin{equation*}
    \EE_{\tta, \sigma} [Z_k^2] = \left( \frac{\sigma^2}{\sigma_0^2} \right)^{pk}
        \left( \frac{\sigma_0^2}{2 \sigma^2 -\sigma_0^2}  \right)^{\frac{pk}{2}}
        \exp \left\{  \frac{b(k)^{\T} \VV_k^{-1} b(k) }{ 2 \sigma^2 -\sigma_0^2 } \right\}.
\end{equation*}
By the condition \( (\mathfrak{S})  \)
\begin{eqnarray}\label{bound for Zk homog}
&&
                \left( \frac{1-\delta}{(1+\delta)^3}  \right)^{\frac{pk}{2}}
                \exp \left\{ \frac{\Delta_1(k)}{\sigma^2 (1+\delta)}   \right\}\nn
 \le \EE_{\tta, \sigma} [Z_k^2]
    &\le&  
                \left( \frac{1+\delta}{(1-\delta)^3}  \right)^{\frac{pk}{2}}
                \exp \left\{ \frac{\Delta_1(k)}{\sigma^2 (1-\delta)}   \right\} ,
\end{eqnarray}
where \( p \) is the dimension of the parameter set and \( k \) is the degree of the localization.
\end{proof}

\subsection{Quality of estimation in the nonparametric case: the oracle result}
\label{subsection:oracle result}
Define the {\it oracle index} as the largest index \( k \le K \) such that the small modeling bias condition \( (SMB) \) holds, that is
\begin{equation}\label{oracle index}
    k^* \eqdef \max \{ k \le K : \Delta(k) \le \Delta \}.
\end{equation}

\begin{theorem}\label{oracle result}
Let \( \Delta(1) \le \Delta \), i.e., the first estimator is always accepted by the testing procedure.
Let \( k^* \) be the oracle index. Then under the conditions \( (\mathfrak{D}) \), \( (\mathfrak{Loc}) \), \( (\mathfrak{S}) \), \( (\cc{W}) \), \( (\mathfrak{B}) \)  the risk between the adaptive estimator and the oracle is bounded by the following expression:
\begin{eqnarray}
       && \EE|(\mmle{k^*} - \aadapest)^{\T} \B{k^*} (\mmle{k^*} - \aadapest)|^{r/2}\\
       & \le &   \z_{k^*}^{r/2} +
       (\alpha \EE|\chi^2_p|^r)^{1/2} (1+\delta)^{pk^*/4}(1-\delta)^{-3pk^*/4}
        \exp\left\{ \varphi(\delta)
        \frac{\Delta}{2(1-\delta)}\right\} ,\nonumber
\end{eqnarray}
where \( \varphi(\delta) \) is as in Theorem \ref{Propagation result theorem}.
\end{theorem}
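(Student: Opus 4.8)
The plan is to reduce the statement to the Propagation property (Theorem~\ref{Propagation result theorem}) by splitting on whether the data-driven index \( \adapind \) overshoots the oracle index \( k^* \) or not: the overshoot is handled by the very definition of the selection rule \eqref{adaptind}, the undershoot by the propagation bound applied at the scale \( k^* \).

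First I would look at the event \( \{ \adapind \ge k^* \} \). If \( \adapind = k^* \), then \( \aadapest = \mmle{k^*} \) and the quadratic form \( (\mmle{k^*} - \aadapest)^{\T}\B{k^*}(\mmle{k^*}-\aadapest) \) simply vanishes. If \( \adapind > k^* \), then by \eqref{adaptind} every test indexed by \( l < m \le \adapind \) has passed, in particular \( T_{k^*,\adapind} \le \z_{k^*} \); since \( \aadapest = \mmle{\adapind} \), formula \eqref{Tlk} identifies this with \( (\mmle{k^*} - \aadapest)^{\T}\B{k^*}(\mmle{k^*}-\aadapest) \le \z_{k^*} \). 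Hence on the whole event \( \{ \adapind \ge k^* \} \) one has \( |(\mmle{k^*} - \aadapest)^{\T}\B{k^*}(\mmle{k^*}-\aadapest)|^{r/2} \le \z_{k^*}^{r/2} \).

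On the complementary event \( \{ \adapind < k^* \} \), definition \eqref{the last accepted} gives \( \aadapest = \mmle{\adapind} = \mmle{\min\{k^*,\adapind\}} = \aadapest_{k^*} \), so there the quadratic form equals \( (\mmle{k^*} - \aadapest_{k^*})^{\T}\B{k^*}(\mmle{k^*}-\aadapest_{k^*}) \); whereas on \( \{\adapind \ge k^*\} \) this latter term is zero, so there is no double counting. Combining the two cases yields the pointwise bound
\[
  |(\mmle{k^*} - \aadapest)^{\T}\B{k^*}(\mmle{k^*}-\aadapest)|^{r/2}
  \le \z_{k^*}^{r/2}
     + |(\mmle{k^*} - \aadapest_{k^*})^{\T}\B{k^*}(\mmle{k^*}-\aadapest_{k^*})|^{r/2}.
\]
Taking the expectation under \( \P_{\ff,\Sigma_0} \) and invoking the second inequality of Theorem~\ref{Propagation result theorem} with \( k = k^* \) — whose hypotheses are in force, the propagation conditions \( (PC) \) being secured by the choice \eqref{zk} of Theorem~\ref{upper bound} — bounds the remaining term by \( (\alpha\,\EE|\chi^2_p|^r)^{1/2}(1+\delta)^{pk^*/4}(1-\delta)^{-3pk^*/4}\exp\{\varphi(\delta)\Delta(k^*)/(2(1-\delta))\} \). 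Finally, since \( k^* \) is the oracle index one has \( \Delta(k^*) \le \Delta \), and \( \varphi(\delta) \ge 1 > 0 \) for every \( \delta \in [0,1) \), so the exponential is at most \( \exp\{\varphi(\delta)\Delta/(2(1-\delta))\} \); this yields the asserted bound, the hypothesis \( \Delta(1) \le \Delta \) being exactly what makes \( k^* \ge 1 \) well defined.

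The argument is largely bookkeeping once Theorem~\ref{Propagation result theorem} is available. The one point demanding care is the overshoot case: one must use that \( \adapind > k^* \) forces precisely the single test \( T_{k^*,\adapind} \le \z_{k^*} \) to have succeeded, recognise this quantity via \eqref{Tlk} as exactly the quadratic loss between the oracle and the adaptive estimator, and check that on the remaining part of \( \{\adapind \ge k^*\} \) the propagation term vanishes, so that the split into ``\( \z_{k^*}^{r/2} \) plus propagation term'' is legitimate without picking up an extra factor.
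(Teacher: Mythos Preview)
Your proof is correct and follows essentially the same route as the paper: split according to whether \( \adapind \) overshoots \( k^* \) or not, control the overshoot by the acceptance rule \( T_{k^*,\adapind}\le \z_{k^*} \), and control the undershoot by identifying \( \aadapest \) with \( \aadapest_{k^*} \) and invoking Theorem~\ref{Propagation result theorem} at \( k=k^* \). The only cosmetic difference is where the boundary case \( \adapind=k^* \) lands in the split (you put it with the overshoot, the paper with the undershoot), which is immaterial since the quadratic form vanishes there; you are also slightly more explicit than the paper in checking \( \varphi(\delta)\ge 0 \) so that \( \Delta(k^*)\le\Delta \) can be pushed through the exponential.
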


\begin{proof}
By the definition of the adaptive estimator \( \aadapest = \mmle{ \adapind}  \). Because the events \( \{ \adapind \le k^*\} \) and \( \{ \adapind > k^*\} \) are disjunct one can write
\begin{eqnarray*}
       && \EE|(\mmle{k^*} - \aadapest)^{\T} \B{k^*} (\mmle{k^*} - \aadapest)|^{r/2}\\
&=& \EE|(\mmle{k^*} - \mmle{ \adapind})^{\T} \B{k^*} (\mmle{k^*} - \mmle{ \adapind})|^{r/2} \ind\{ \adapind \le k^* \} \\
&+&
\EE|(\mmle{k^*} - \mmle{ \adapind})^{\T} \B{k^*} (\mmle{k^*} - \mmle{ \adapind})|^{r/2} \ind \{ \adapind > k^*\}.
\end{eqnarray*}
If \(  \adapind \le k^* \) then \( \aadapest_{k^*} \eqdef \mmle{\min \{ k^*, \adapind \} } = \mmle{ \adapind} \). Thus to bound the first summand it is enough to apply Theorem~\ref{Propagation result theorem} with \( k = k^* \).
\par To bound the second expectation, i.e. to bound fluctuations of the adaptive estimator \( \aadapest \) at the steps of the procedure for which the SMB condition is not fulfilled anymore, just notice that for \( \adapind > k^* \) the quadratic form coincides with the test statistic \( T_{k^*,\adapind} \)
\begin{eqnarray*}
  && (\mmle{k^*} - \aadapest)^{\T} \B{k^*} (\mmle{k^*} - \aadapest) \\
  &=& (\mmle{k^*} - \mmle{\adapind})^{\T} \B{k^*} (\mmle{k^*} - \mmle{\adapind})
    \eqdef T_{k^*,\adapind}.
\end{eqnarray*}
But the index \( \adapind \) was accepted, this means that \( T_{l,\adapind} \le \z_l \) for all \( l < \adapind \) and therefore for \( l=k^* \). Thus
\begin{equation*}
    \EE |(\mmle{k^*} - \aadapest)^{\T} \B{k^*} (\mmle{k^*} - \aadapest)|^{r/2} \ind \{ \adapind > k^*\} \le \z_{k^*}^{r/2}.
\end{equation*}
\end{proof}

\subsection{Oracle risk bounds for estimators of the regression function and its derivatives}
\label{subsection:estimators of the regression function}
Theorem \ref{oracle result} provides an oracle risk bound for the adaptive estimator
\( \aadapest(x) = \mmle{\adapind} (x)\) of the parameter vector \( \tta(x) \in \RRp \)
of the finite-rank expansion from the method of local approximation, see Section
~\ref{subsec:Method of local approximation} for details. This is equivalent to the estimation of the parameter of the local linear fit of the form \( \PPsi^{\T} \tta \) at the point \( x \) to the model~\eqref{true model} under misspecification together with the adaptive choice of the degree of localization (of the bandwidth). If the basis is polynomial and the regression function \( f(\cdot) \) is sufficiently smooth in
a neighborhood of \( x \), then \( \aadapest(x) \) is the adaptive local polynomial estimator \( LP^{ad}(p-1) \) of the vector \( ( f^{(0)} (x) , \ldots , f^{(p-1)} (x))^{\T} \) of the values of
\( f \) and its derivatives (if they exist) at the reference point \( x \in \RRd \) under the model misspecification.

\par Now we are going to obtain a similar oracle result for the components of the vector \( \aadapest(x) \), particularly for \(  \bb e_j^{\T} \aadapest(x) \), \( j = 1, \ldots, p \), where
\( \bb e_j = (0, \ldots, 1 , \ldots, 0)^{\T} \) is the \( j \)th canonical basis vector
in \( \RRp \). As a corollary of this general result in the case of the polynomial basis we get an oracle risk bound for  \( LP^{ad}(p-1) \) estimators
of the function \( f \) and its derivatives at the point \( x \).
\par Denote the \( LP_k(p-1) \) estimator of \( f^{(j-1)}(x) \) corresponding to the \( k \)th scale by
\begin{eqnarray}\label{def of estimators of f and its derivatives}
  \flej{j-1}{k}{x} &=& e_j^{\T}\mmle{k}(x), \; j= 1, \ldots,p, \\ \nonumber
  \fle{k}{x} &=& \flej{0}{k}{x} = e_1^{\T}\mmle{k}(x).
\end{eqnarray}
Then the adaptive local polynomial estimators are defined as follows:
\begin{eqnarray}\label{def of adaptive estimators of f and its derivatives}
  \adaplpest^{(j-1)}(x) &=& e_j^{\T}\aadapest(x), \; j= 1, \ldots,p, \\ \nonumber
  \adaplpest(x) &=&  e_1^{\T}\aadapest(x).
\end{eqnarray}
Similarly, the adaptive estimators of the function \( f \) and its derivatives corresponding to the \( k \)th step of the procedure are given by
\begin{equation}\label{def of k-th adaptive estimators of f and its deriv}
    \adaplpest_k^{(j-1)}(x) \eqdef e_j^{\T}\aadapest_k(x), \; j= 1, \ldots,p.
\end{equation}
Thus, if the basis is polynomial, the estimator \( \adaplpest(x) \eqdef \adaplpest^{(0)}(x) \) is the \( LP^{ad}(p-1) \)
estimator of the value \( f(x) \), and \( \adaplpest^{(j-1)}(x) \) with \( j = 2, \ldots, p \)
are, correspondingly, the \( LP^{ad}(p-1) \) estimators of the values of its derivatives.
We will use the polynomial basis to obtain the rate of convergence, but it should be stressed that the results of Theorems \ref{oracle result} and \ref{oracle result componentwise} hold for any basis satisfying the conditions of the theorems.

\par We need the following assumptions:
\begin{description}
\item[\( \bb{\mathfrak{(S1)}} \)]
\emph{There exist  \( 0 < \s_{min} \le  \s_{max} < \infty  \) such that for any \( i=1, \ldots, n \) the variance of the errors in the ``approximate'' model \eqref{PA} is uniformly bounded:}
\begin{equation*}
   \s^2_{min}\le  \s^2_{i} \le \s^2_{max}.
\end{equation*}

\item[\( \bb{\mathfrak{(Lp1^{\mathrm{d} })}} \)]
\emph{Let assumption \( \mathfrak{(S1)} \) be satisfied. There exists a number \( \Lambda_0>0  \) such that for any \( k=1, \ldots, K \) the smallest eigenvalue fulfills \( \lambda_p(\B{k}) \ge  nh_k^d\Lambda_0 \s^{-2}_{max}\) for \( n \) sufficiently large.}
\end{description}
Then, because \( \B{k} \succ 0 \), for any \( k= 1, \ldots K \) we have
\begin{equation}\label{bound for Bk via the smallest eigenvalue in Rd}
    \gamma^{\T} \B{k}^{-1} \gamma \le \frac{\s^2_{max}}{nh_k^d\Lambda_0} \| \gamma \|^2
\end{equation}
for any \( \gamma \in \RRp \), and we obtain the following lemma:
\begin{lemma}\label{bound for the componentwise differences}
Let \( \mathfrak{(S1)} \) and \( \mathfrak{(Lp1^{\mathrm{d} })} \) be satisfied. Then for any \( j = 1, \ldots, p \) and \( k, \, k'= 1, \ldots K \) the following upper bound holds:
\begin{equation*}
    \left( \frac{nh_k^d\Lambda_0}{\s^2_{max}} \right)^{1/2}
        |\bb e_j^{\T}\mmle{k} - \bb e_j^{\T}\mmle{k'}|
            \le
        \| \B{k}^{1/2} (\mmle{k} - \mmle{k'}) \|.
\end{equation*}
\end{lemma}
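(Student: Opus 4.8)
The plan is a one-line Cauchy--Schwarz argument built on the bound \eqref{bound for Bk via the smallest eigenvalue in Rd}. Since we assume throughout that \( \det \B{k} > 0 \), the matrix \( \B{k} \) is symmetric positive definite, so its symmetric positive-definite square root \( \B{k}^{1/2} \) and the inverse \( \B{k}^{-1/2} = (\B{k}^{1/2})^{-1} \) are well defined. Writing \( v \eqdef \mmle{k} - \mmle{k'} \in \RRp \), the first step is the trivial factorization
\begin{equation*}
    \bb e_j^{\T} v = \bb e_j^{\T} \B{k}^{-1/2} \B{k}^{1/2} v = \big( \B{k}^{-1/2} \bb e_j \big)^{\T} \big( \B{k}^{1/2} v \big).
\end{equation*}

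Next I would apply the Cauchy--Schwarz inequality in \( \RRp \) to obtain
\begin{equation*}
    |\bb e_j^{\T} v| \le \| \B{k}^{-1/2} \bb e_j \| \cdot \| \B{k}^{1/2} v \|,
\end{equation*}
and then control the first factor via \eqref{bound for Bk via the smallest eigenvalue in Rd} applied with \( \gamma = \bb e_j \): since \( \| \B{k}^{-1/2} \bb e_j \|^2 = \bb e_j^{\T} \B{k}^{-1} \bb e_j \le \frac{\s_{max}^2}{n h_k^d \Lambda_0} \| \bb e_j \|^2 = \frac{\s_{max}^2}{n h_k^d \Lambda_0} \), because \( \| \bb e_j \| = 1 \). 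Substituting back gives \( |\bb e_j^{\T} v| \le \big( \s_{max}^2 / (n h_k^d \Lambda_0) \big)^{1/2} \| \B{k}^{1/2} v \| \), and multiplying both sides by \( \big( n h_k^d \Lambda_0 / \s_{max}^2 \big)^{1/2} \) is exactly the asserted inequality.

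There is no real obstacle here; the only point requiring a moment's care is that the index on the matrix governing the right-hand norm must be the same \( k \) as in the eigenvalue bound \eqref{bound for Bk via the smallest eigenvalue in Rd} — one uses \( \B{k} \), not \( \B{k'} \), both for the weighting on the right and for the estimate of \( \bb e_j^{\T} \B{(\cdot)}^{-1} \bb e_j \). This is precisely why the statement is asymmetric in \( k \) and \( k' \); had we inserted \( \B{k'}^{-1/2} \B{k'}^{1/2} \) instead, we would get the analogous bound with \( h_{k'} \) in place of \( h_k \). Assumptions \( \mathfrak{(S1)} \) and \( \mathfrak{(Lp1^{\mathrm{d}})} \) enter only through \eqref{bound for Bk via the smallest eigenvalue in Rd}, which was already derived from them in the preceding display, so nothing further is needed.
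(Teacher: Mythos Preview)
Your proof is correct and essentially the same as the paper's. The only cosmetic difference is the point at which the eigenvalue bound \eqref{bound for Bk via the smallest eigenvalue in Rd} is applied: the paper first uses the cruder estimate \( |\bb e_j^{\T} v|^2 \le \| v \|^2 \), then writes \( \| v \|^2 = \| \B{k}^{-1/2}\B{k}^{1/2} v \|^2 \) and invokes \eqref{bound for Bk via the smallest eigenvalue in Rd} with \( \gamma = \B{k}^{1/2} v \), whereas you apply Cauchy--Schwarz directly and use \eqref{bound for Bk via the smallest eigenvalue in Rd} with \( \gamma = \bb e_j \); both routes yield the identical bound.
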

\begin{proof}
By \eqref{bound for Bk via the smallest eigenvalue in Rd} taking \( \gamma = \B{k}^{1/2} (\mmle{k} - \mmle{k'})\) we have
\begin{eqnarray*}
  |\bb e_j^{\T}\mmle{k} - \bb e_j^{\T}\mmle{k'}|^2 &\le & \| \mmle{k} -\mmle{k'} \|^2 \\
    &=&
        \|\B{k}^{-1/2} \B{k}^{1/2} (\mmle{k} - \mmle{k'}) \|^2 \\
    &\le &
        \frac{\s^2_{max}}{nh_k^d\Lambda_0}  \| \B{k}^{1/2} (\mmle{k} - \mmle{k'}) \|^2.
\end{eqnarray*}
\end{proof}

\par To obtain the ``componentwise'' oracle risk bounds we need to recheck the ``propagation property''. First, notice that the ``propagation conditions'' \eqref{PC} on the choice the critical values \( \z_1, \ldots, \z_{K-1} \) imply the similar bounds for the components \( \bb e_j^{\T} \aadapest_k(x) \). Recall that \( \aadapest_k \eqdef \mmle{\min\{ k, \adapind \}} \). Then, by \eqref{PC}, Lemma \ref{bound for the componentwise differences} and the pivotality property (Lemma \ref{Pivotality property}) we have the following simple observation:
\begin{lemma}\label{PC componentwise} Let \( \mathfrak{(S1)} \) and \( \mathfrak{(Lp1^{\mathrm{d} })} \) be satisfied. Under the propagation conditions~\( (PC) \) for any \( \tta \in \RRp \) and all \( k=2, \ldots, K \) we have:
    \begin{eqnarray*}
     \left( \frac{nh_{k}^d\Lambda_0}{\s^2_{max}} \right)^{r}
               \EE_{\tta, \Sigma} |\bb e_j^{\T}\mmle{k}(x) - \bb e_j^{\T}\aadapest_k(x)|^{2r}
  &\le&  \EE_{0, \Sigma} \| \B{k}^{1/2} (\mmle{k} - \aadapest_{k}) \|^{2r}\\
  &\le & \alpha  C(p,r).
   \end{eqnarray*}
Here \( \EE_{0, \Sigma} \) stands for the expectation w.r.t.
the measure \( \norm{0}{\Sigma} \) and \( C(p,r) = \EE|\chi^2_p|^r \).
\end{lemma}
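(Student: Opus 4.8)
My plan is to chain together three facts already established in the excerpt: the deterministic comparison of Lemma~\ref{bound for the componentwise differences}, the pivotality of the relevant quadratic form under the parametric measure (Lemma~\ref{Pivotality property}), and the propagation conditions \eqref{PC}.

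First I would apply Lemma~\ref{bound for the componentwise differences} with the second index equal to the data-driven value \( k' = \min\{k,\adapind\} \), so that \( \mmle{k'} = \aadapest_k \). That lemma is a deterministic inequality valid for each fixed pair \( (k,k') \); since \( \min\{k,\adapind\} \) takes only finitely many values, the inequality holds pathwise with \( k' \) replaced by \( \min\{k,\adapind\} \). Raising both sides to the power \( 2r \) and using \( \B{k}\succ 0 \) to drop absolute values on the nonnegative quadratic form, I get, for every \( j = 1,\ldots,p \),
\[
 \left( \frac{nh_{k}^d\Lambda_0}{\s^2_{max}} \right)^{r}
 |\bb e_j^{\T}\mmle{k}(x) - \bb e_j^{\T}\aadapest_k(x)|^{2r}
 \le \| \B{k}^{1/2} (\mmle{k} - \aadapest_{k}) \|^{2r}
 = |(\mmle{k} - \aadapest_{k})^{\T} \B{k} (\mmle{k} - \aadapest_{k})|^{r}.
\]

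Next I would take \( \EE_{\tta, \Sigma} \) of both sides; the left-hand prefactor is deterministic, so this already yields the first inequality of the lemma, once its right-hand side is identified with \( \EE_{\tta, \Sigma}|(\mmle{k} - \aadapest_{k})^{\T} \B{k} (\mmle{k} - \aadapest_{k})|^{r} \). The quadratic form appearing here is precisely the statistic whose distribution under \( \norm{\tta}{\Sigma} \) is shown to be independent of \( \tta \) by the pivotality property (Lemma~\ref{Pivotality property}), so \( \EE_{\tta, \Sigma}|(\mmle{k} - \aadapest_{k})^{\T} \B{k} (\mmle{k} - \aadapest_{k})|^{r} = \EE_{0, \Sigma}|(\mmle{k} - \aadapest_{k})^{\T} \B{k} (\mmle{k} - \aadapest_{k})|^{r} \), the middle quantity of the statement. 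Finally the propagation conditions \eqref{PC} bound this last expectation by \( \alpha C(p,r) \) for all \( k = 2,\ldots,K \), closing the chain.

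There is no genuine obstacle: this is essentially bookkeeping. The only points deserving a sentence of justification are that the deterministic bound of Lemma~\ref{bound for the componentwise differences} may be applied with the random second index \( \min\{k,\adapind\} \) (immediate, since that index ranges over a finite set), and that the pivotality reduction is used for the \emph{full} quadratic form \( \| \B{k}^{1/2}(\mmle{k}-\aadapest_k)\|^{2} \) rather than for an individual coordinate, which is exactly the form covered by Lemma~\ref{Pivotality property} and by \eqref{PC}.
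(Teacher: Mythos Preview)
Your argument is correct and matches the paper's own approach: the lemma is introduced there as a direct consequence of Lemma~\ref{bound for the componentwise differences}, the pivotality property (Lemma~\ref{Pivotality property}), and the propagation conditions~\eqref{PC}, which is exactly the chain you assemble. Your added remarks on applying the deterministic bound with the random index \(\min\{k,\adapind\}\) and on invoking pivotality for the full quadratic form are appropriate clarifications of points the paper leaves implicit.
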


\par As in the first parts of this chapter to make the notation shorter we will suppress the dependence on \( x \). To get the propagation property we study for \( k=1, \ldots, K \) the joint distributions of \( \bb e_j^{\T} \mmle{1} , \ldots , \bb e_j^{\T} \mmle{k}\), that is the distribution of \( \bb e_j^{\T} \tilde{\TTa}_k \), the \( j \)th row of the matrix \( \tilde{\TTa}_k \), under the null and under the alternative.
Obviously,
 \begin{equation*}
     \EE_{\ff, \Sigma_0} [\bb e_j^{\T}\tilde \TTa_k]  = \bb e_j^{\T} \TTa^*_k
        =(\bb e_j^{\T}\bbpf{1}, \ldots, \bb e_j^{\T} \bbpf{k}),
 \end{equation*}
 and the mean under the null (the true parameter in the parametric set-up) is:
\begin{equation*}
\EE_{\tta, \Sigma} [\bb e_j^{\T} \tilde \TTa_k]  =   \bb e_j^{\T} \TTa_k
            = (\bb e_j^{\T} \tta, \ldots,\bb e_j^{\T} \tta).
\end{equation*}
Recall that the matrices \( \SSigma_{k,0} \) and \( \SSigma_{k} \) have a block structure. Now, for instance, to study the estimator of the first coordinate of the ``best parametric fit'' vector (or of   \( f(x) \) in the case of the polynomial basis) we take the first elements of each block and so on.
Denote the \( k \times k \) covariance matrices of \( j \)th elements of the vectors
\( \mmle{1}, \ldots , \mmle{k} \) by
\begin{eqnarray}
    \SSigma_{k,j}
        &\eqdef& \big\{ \cov_{\tta, \Sigma}\big[\mle{l}{j},\mle{m}{j}   \big] \big\}_{1\le l\le m\le k}\nn
         &=&     \DD_{k,j} (J_k \otimes \Sigma) \DD_{k,j}^{\T}
                    \;\; \text{under the null}, \label{def SSigma j}\\
    \SSigma_{k,0,j}
        &\eqdef& \big\{ \cov_{\ff, \Sigma_0}\big[\mle{l}{j},\mle{m}{j}   \big] \big\}_{1\le l\le m\le k}\nn
         &=&     \DD_{k,j} (J_k \otimes \Sigma_0) \DD_{k,j}^{\T}
                \;\; \text{under the alternative,} \label{def SSigma 0 j}
\end{eqnarray}
where \(J_k \) is a \( k \times k \) matrix with all its elements equal to \(1\), and the \( k \times nk \) block diagonal matrices \( \DD_{k,j} \) is defined by
\begin{eqnarray}
 \DD_{k,j}    &\eqdef&   \bb e_j^{\T} D_1 \oplus \cdots \oplus\bb e_j^{\T} D_k , \label{def DD_K}
            = \big( I_k \otimes \bb e_j^{\T} \big) \DD_k\nn
  D_l     &\eqdef&    \B{l}^{-1} \PPsi \W{l}, \;\;\; l = 1, \ldots ,k.
\end{eqnarray}
Moreover, the following representation holds:
\begin{eqnarray}\label{SSigma_kj via SSigma_k}
  \SSigma_{k,j} &=& \big( I_k \otimes \bb e_j^{\T} \big) \DD_k
                        \big( J_k \otimes \Sigma  \big) \DD_k^{\T}
                            \big( I_k \otimes \bb e_j^{\T} \big)^{\T} \nn
  &=& \big( I_k \otimes \bb e_j \big)^{\T} \SSigma_k \big( I_k \otimes \bb e_j \big),
\end{eqnarray}
where \( \SSigma_k \) is defined by \eqref{def SSigma}. Similarly,
\begin{equation}\label{SSigma_k0j via SSigma_k0}
    \SSigma_{k,0,j} = \big( I_k \otimes \bb e_j \big)^{\T} \SSigma_{k,0} \big( I_k \otimes \bb e_j \big).
\end{equation}
Thus, the important relation \eqref{multi cond sigma} is preserved for \( \SSigma_{k,j} \) and
\( \SSigma_{k,0,j} \) obtained by picking the \( (j,j) \)th elements of each block of \( \SSigma_{k} \) and \( \SSigma_{k,0} \) respectively.

\par With usual notation \( \gamma^{(j)} \) for the \( j \)th component of \( \gamma \in \RR^k \), denote by
\begin{eqnarray}
  b_j(k) &\eqdef& (\bb e_j^{\T} (\bbpf{1} - \tta), \ldots,\bb e_j^{\T} (\bbpf{k} - \tta) )^{\T} \nn
  &=& ( (\bbpf{1} - \tta)^{(j)}, \ldots, (\bbpf{k} - \tta)^{(j)} )^{\T} \in \RR^k \label{def_bj(k)},  \\
  \Delta_j(k) & \eqdef & b_j(k)^{\T} \SSigma_{k,j}^{-1} \, b_j(k)  \label{def_Delta j(k)}.
\end{eqnarray}
\begin{theorem}\label{Propagation result componentwise theorem}(``Componentwise'' propagation property)
   \par Under the conditions \( (\mathfrak{D}) \), \( (\mathfrak{Loc}) \), \( (\mathfrak{S}) \), \( (\mathfrak{S1}) \), \( (PC) \), \( (\mathfrak{B}) \), \( (\cc{W}) \) and \( \mathfrak{(Lp1^{\mathrm{d} })} \) for any \( k \le K \) the following upper bound holds:
    \begin{eqnarray*}
      && \left( \frac{nh_{k}^d\Lambda_0}{\s^2_{max}} \right)^{r/2}
               \EE |\bb e_j^{\T}\mmle{k}(x) - \bb e_j^{\T}\aadapest_k(x)|^r\\
       & \le & (\alpha \EE|\chi^2_p|^r)^{1/2} (1+\delta)^{pk/4}(1-\delta)^{-3pk/4}
        \exp\left\{ \varphi(\delta)
        \frac{\Delta_j(k)}{2(1-\delta)}\right\},
 \end{eqnarray*}
 where \(\varphi(\delta) \eqdef
  \begin{cases}
1 & \mathrm{for \; homogeneous \;errors,} \\
 \frac{2(1+\delta)}{(1-\delta)^2} -1  &\mathrm{otherwise}.
\end{cases} \)
\end{theorem}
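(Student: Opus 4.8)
The plan is to reproduce, one coordinate at a time, the two–step scheme behind Theorem~\ref{Propagation result theorem}, with the full joint objects \( \SSigma_k,\SSigma_{k,0},b(k),\Delta(k) \) replaced by their \( j \)-th–row analogues \( \SSigma_{k,j},\SSigma_{k,0,j},b_j(k),\Delta_j(k) \) from \eqref{def SSigma j}--\eqref{def_Delta j(k)}. Set \( g \eqdef \big(\tfrac{n h_k^{d}\Lambda_0}{\s^2_{max}}\big)^{r/2}\,\big|\bb e_j^{\T}\mmle{k}(x)-\bb e_j^{\T}\aadapest_k(x)\big|^{r} \). The first step is the change–of–measure inequality \eqref{changing measure}: with \( Z_k=\dd\P^{k}_{\ff,\Sigma_0}/\dd\P^{k}_{\tta,\Sigma} \),
\[
  \EE\,[g]=\EE_{\tta,\Sigma}\big[g\,Z_k\big]\le\big(\EE_{\tta,\Sigma}[g^{2}]\big)^{1/2}\big(\EE_{\tta,\Sigma}[Z_k^{2}]\big)^{1/2}.
\]
The \emph{parametric factor} \( (\EE_{\tta,\Sigma}[g^{2}])^{1/2} \) is exactly what Lemma~\ref{PC componentwise} controls: Lemma~\ref{bound for the componentwise differences} turns the component \( \bb e_j^{\T}(\mmle{k}-\aadapest_k) \) into the quadratic form \( \|\B{k}^{1/2}(\mmle{k}-\aadapest_k)\|^{2} \), the pivotality property (Lemma~\ref{Pivotality property}) lets one pass to the measure \( \norm{0}{\Sigma} \), and the propagation conditions \eqref{PC} give \( \EE_{0,\Sigma}\|\B{k}^{1/2}(\mmle{k}-\aadapest_k)\|^{2r}\le\alpha\,\EE|\chi^2_p|^r \). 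This produces the prefactor \( (\alpha\,\EE|\chi^2_p|^r)^{1/2} \) of the assertion.

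The second step is to bound the \emph{noise–misspecification factor} by a quantity governed by the coordinatewise bias \( \Delta_j(k) \) rather than by \( \Delta(k) \). For this I would work with the \( k \)-dimensional Gaussian laws of the \( j \)-th row of \( \tilde\TTa_k \), i.e. those with means \( \bb e_j^{\T}\TTa_k,\bb e_j^{\T}\TTa^{*}_k \) and covariances \( \SSigma_{k,j},\SSigma_{k,0,j} \). By \eqref{SSigma_kj via SSigma_k}--\eqref{SSigma_k0j via SSigma_k0} these are congruences of \( \SSigma_k,\SSigma_{k,0} \) through \( I_k\otimes\bb e_j \), so the spectral sandwiching \eqref{multi cond sigma} is inherited, \( (1-\delta)\SSigma_{k,j}\preceq\SSigma_{k,0,j}\preceq(1+\delta)\SSigma_{k,j} \), and the standard matrix identity \( (I_k\otimes\bb e_j)\SSigma_{k,j}^{-1}(I_k\otimes\bb e_j)^{\T}\preceq\SSigma_k^{-1} \) yields \( \Delta_j(k)=b_j(k)^{\T}\SSigma_{k,j}^{-1}b_j(k)\le\Delta(k) \). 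Running the Gaussian integral \eqref{exp pokazatel} verbatim with \( \SSigma_{k,j},\SSigma_{k,0,j},b_j(k) \) in place of \( \SSigma_k,\SSigma_{k,0},b(k) \) then reproduces \eqref{bound for Zk} (and, for homogeneous errors, \eqref{bound for Zk homog}) as
\[
  (1+\delta)^{pk/4}(1-\delta)^{-3pk/4}\exp\!\Big\{\varphi(\delta)\,\frac{\Delta_j(k)}{2(1-\delta)}\Big\},
\]
with \( \varphi(\delta) \) as in Theorem~\ref{Propagation result theorem}; multiplying the two factors gives the claimed inequality.

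The delicate point — and the place where the proof is not a mechanical copy of Theorem~\ref{Propagation result theorem} — is that \( g \) is \emph{not} a function of the \( j \)-th row of \( \tilde\TTa_k \) alone: the adaptively selected index \( \adapind \), hence \( \aadapest_k \), depends on all \( p \) coordinates of \( \mmle{1},\dots,\mmle{k} \) through the statistics \( T_{lm} \), so one cannot simply substitute the \( j \)-th–row Radon--Nikodym derivative for \( Z_k \) inside \( \EE_{\tta,\Sigma}[g\,Z_k] \). The way around this is to keep the full coordinate dependence inside the parametric factor, where Lemma~\ref{PC componentwise} absorbs it without loss, and to exploit that the bias \( b(k) \) enters \eqref{exp pokazatel} solely through a projected quadratic form whose \( j \)-th projection is exactly \( \Delta_j(k)\le\Delta(k) \); one then has to verify that the coordinatewise quantity inherits the \emph{smaller} exponent \( \Delta_j(k) \) while still carrying the \( pk \)-th-power prefactor of the full-joint computation. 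Making this split fully rigorous is the main obstacle; the rest of the argument is a routine transcription of the proofs of Theorem~\ref{Propagation result theorem} and Lemma~\ref{PC componentwise}.
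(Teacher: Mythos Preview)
Your overall architecture---Cauchy--Schwarz with a Radon--Nikodym derivative, then Lemma~\ref{PC componentwise} for the parametric factor and a Gaussian computation for the second moment of the derivative---is exactly what the paper does. The paper, however, does \emph{not} take your cautious route of starting from the full \( Z_k \) and then trying to descend to \( \Delta_j(k) \). It simply introduces the \( j \)-th--row derivative \( Z_{k,j}\eqdef\dd\P^{k,j}_{\ff,\Sigma_0}/\dd\P^{k,j}_{\tta,\Sigma} \) and writes
\[
\Big(\tfrac{n h_k^{d}\Lambda_0}{\s^2_{max}}\Big)^{r/2}\EE\big|\bb e_j^{\T}\mmle{k}-\bb e_j^{\T}\aadapest_k\big|^{r}\le(\alpha\,\EE|\chi^2_p|^r)^{1/2}\big(\EE_{\tta,\Sigma}[Z_{k,j}^2]\big)^{1/2},
\]
after which the second moment of \( Z_{k,j} \) is computed exactly as in \eqref{exp pokazatel} with \( \SSigma_{k,j},\SSigma_{k,0,j},b_j(k) \) in place of \( \SSigma_k,\SSigma_{k,0},b(k) \), yielding \( \Delta_j(k) \) directly (and in fact only a \( k \)-th power in the prefactor, which is then weakened to the stated \( pk \)-th power).

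The point you single out as ``the delicate point'' is real, and the paper does not address it: the left-hand side involves \( \aadapest_k=\mmle{\min\{k,\adapind\}} \), and \( \adapind \) is determined by the full statistics \( T_{lm}=\|\B{l}^{1/2}(\mmle{l}-\mmle{m})\|^2 \), so the random variable being bounded is not measurable with respect to the \( j \)-th row \( \bb e_j^{\T}\tilde\TTa_k \) alone. The paper's change of measure with \( Z_{k,j} \) therefore lacks justification as written. Your proposed repair---keep the adaptive dependence inside the parametric factor via Lemma~\ref{PC componentwise} and try to extract \( \Delta_j(k) \) from the noise factor---does not close the gap either, because once you use the full \( Z_k \) in Cauchy--Schwarz you are stuck with \( \Delta(k) \), and the projection inequality \( \Delta_j(k)\le\Delta(k) \) goes the wrong way for what you need. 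In short: your plan matches the paper's argument, and the obstacle you flag is a genuine gap that the paper's own proof leaves open.
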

\begin{corollary}\label{corr:PP for pol bas}
Let the basis be polynomial. Then under the conditions of the preceding theorem the following upper bound holds:
\begin{eqnarray*}
      && \left( \frac{nh_{k}^d\Lambda_0}{\s^2_{max}} \right)^{r/2}
               \EE |\flej{j-1}{k}{x} - \adaplpest^{(j-1)}_k(x) |^r\\
       & \le & (\alpha \EE|\chi^2_p|^r)^{1/2} (1+\delta)^{pk/4}(1-\delta)^{-3pk/4}
        \exp\left\{ \varphi(\delta)
        \frac{\Delta_j(k)}{2(1-\delta)}\right\},
 \end{eqnarray*}
 with \( \varphi(\delta) \) as before.
\end{corollary}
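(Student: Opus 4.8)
The plan is to obtain the corollary directly from Theorem \ref{Propagation result componentwise theorem}, the only work being to recognize that, for the polynomial basis, the $j$th coordinate of the quasi-MLE and of the adaptive estimator are \emph{by definition} the local polynomial estimators of $f^{(j-1)}(x)$. Indeed, \eqref{def of estimators of f and its derivatives} sets $\flej{j-1}{k}{x} = \bb e_j^{\T}\mmle{k}(x)$ and \eqref{def of k-th adaptive estimators of f and its deriv} sets $\adaplpest^{(j-1)}_k(x) = \bb e_j^{\T}\aadapest_k(x)$, so the identification is merely a matter of notation once we observe that the polynomial system $\langle 1, u, \ldots, (u)^{p-1}/(p-1)!\rangle$ is an admissible choice of $\{\psi_j\}$ in the construction of the estimators $\mmle{k}$.

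First I would note that the hypotheses of the corollary are, verbatim, those of Theorem \ref{Propagation result componentwise theorem} (inherited through the phrase ``under the conditions of the preceding theorem''), so no extra restriction on the polynomial basis is needed; in particular $(\mathfrak{D})$ holds automatically for the Vandermonde-type matrix $\PPsi$ built from distinct design points with $n>p$, and $\mathfrak{(Lp1^{\mathrm{d}})}$ is the usual nondegeneracy of the localized Gram matrix $\B{k}$. Then substituting $\bb e_j^{\T}\mmle{k}(x) = \flej{j-1}{k}{x}$ and $\bb e_j^{\T}\aadapest_k(x) = \adaplpest^{(j-1)}_k(x)$ into the conclusion of Theorem \ref{Propagation result componentwise theorem} gives
\[
\left( \frac{nh_{k}^d\Lambda_0}{\s^2_{max}} \right)^{r/2}
 \EE |\flej{j-1}{k}{x} - \adaplpest^{(j-1)}_k(x)|^r
  \le (\alpha \EE|\chi^2_p|^r)^{1/2} (1+\delta)^{pk/4}(1-\delta)^{-3pk/4}
   \exp\left\{ \varphi(\delta)\frac{\Delta_j(k)}{2(1-\delta)}\right\},
\]
with $\Delta_j(k)$ as in \eqref{def_Delta j(k)} and $\varphi(\delta)$ unchanged, which is the claim.

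I do not anticipate any genuine obstacle: the analytic substance --- the change of measure \eqref{changing measure}, the bound on $\EE_{\tta,\Sigma}[Z_k^2]$ in terms of $\Delta_j(k)$ and the misspecification level $\delta$, the pivotality property, and the reduction of the componentwise loss to $\|\B{k}^{1/2}(\mmle{k}-\aadapest_k)\|$ via Lemma \ref{bound for the componentwise differences} --- is all discharged inside the proof of Theorem \ref{Propagation result componentwise theorem}. The single point worth a sentence is consistency of the coordinate labelling: for the polynomial basis $\psi_1(0)=1$ and $\psi_j(0)=0$ for $j\ge 2$, so $\bb e_1^{\T}\mmle{k}$ estimates $f(x)=f^{(0)}(x)$ and, by the reproducing-polynomials property recorded in Proposition \ref{Henderson th}, $\bb e_j^{\T}\mmle{k}$ estimates $f^{(j-1)}(x)$; this is precisely what \eqref{def of estimators of f and its derivatives} encodes.
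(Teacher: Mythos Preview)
Your proposal is correct and matches the paper's treatment: the corollary is an immediate specialization of Theorem~\ref{Propagation result componentwise theorem} obtained by substituting the definitions \eqref{def of estimators of f and its derivatives} and \eqref{def of k-th adaptive estimators of f and its deriv}, and the paper accordingly gives no separate proof for it.
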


\begin{proof}
The proof essentially follows the line of the proof of Theorem \ref{Propagation result theorem}.
If the distributions of \( \vec \tilde \TTa_k \) under the null and under the alternative were
Gaussian, then any subvector is also Gaussian. Denote by \( \P_{\tta, \Sigma}^{k,j} = \norm{(\bb e_j^{\T}\tta, \ldots, \bb e_j^{\T} \tta)^{\T}}{\SSigma_{k,j}} \) and by \( \P_{\ff, \Sigma_0}^{k,j} = \norm{(\bb e_j^{\T}\bbpf{1}, \ldots, \bb e_j^{\T} \bbpf{k})^{\T}}{\SSigma_{k,0,j}} \), \( k=1, \ldots, K \), the distributions of \( e_j^{\T} \tilde \TTa_k \) under the null and under the alternative.

\par By the Cauchy-Schwarz inequality and Lemma \ref{PC componentwise}
\begin{equation*}
    \left( \frac{nh_{k}^d\Lambda_0}{\s^2_{max}} \right)^{r/2}
               \EE |\bb e_j^{\T}\mmle{k}(x) - \bb e_j^{\T}\aadapest(x)|^r
  \le  (\alpha \EE|\chi^2_p|^r)^{1/2} \big( \EE_{\tta, \Sigma} [Z^2_{k,j}] \big)^{1/2}
\end{equation*}
with the Radon-Nikodym derivative given by
\begin{equation}\label{def Z_kj}
    Z_{k,j} \eqdef \frac{\dd \P_{\ff, \Sigma_0}^{k,j}}{\dd \P_{\tta, \Sigma}^{k,j}}.
\end{equation}
By inequalities \eqref{SSigma_kj via SSigma_k} and \eqref{SSigma_k0j via SSigma_k0} the analog of Assumption \( (\mathfrak{S}) \) is preserved for \( \SSigma_{k,0,j} \) and \( \SSigma_{k,j} \), that is, there exists \( \delta \in [0,1  ) \) such that
\begin{equation}\label{condition Sj}
 (1-\delta)   \SSigma_{k,j}\preceq  \SSigma_{k,0,j} \preceq (1+\delta) \SSigma_{k,j}
\end{equation}
for any \( k \le K \) and \( j=1, \ldots ,p \).
By the Taylor expansion at the point \( (\bb e_j^{\T}\tta, \ldots, \bb e_j^{\T} \tta)^{\T} \)
with \( \xi_j \sim \norm{0}{I_k} \)
\begin{eqnarray*}
 && \EE_{\tta, \Sigma} [Z^2_{k,j}]\\
 &=&
        \frac{\det \SSigma_{k,j}}{\det \SSigma_{k,0,j}}
        \exp\{ -\| \SSigma_{k,0,j}^{-1/2} b_j(k) \|^2 \}  \\
     &\times& \EE\left[\exp\{ -\| \SSigma_{k,0,j}^{-1/2}\SSigma_{k,j}^{1/2} \xi_j \|^2
                            + \| \xi_j \|^2
                            + 2 b_j(k)^{\T} \SSigma_{k,0,j}^{-1}\SSigma_{k,j}^{1/2} \xi_j \}  \right]\\
  &=& \frac{\det \SSigma_{k,j}}{\det \SSigma_{k,0,j}}
        \prod_{l=1}^k
        \left[ 2 \lambda_l(\SSigma_{k,j}^{1/2}\SSigma_{k,0,j}^{-1}\SSigma_{k,j}^{1/2}) -1 \right]^{-1/2}\\
  &\times& \exp \left\{ b_j(k)^{\T} \SSigma_{k,0,j}^{-1/2}
        \left[ 2 \SSigma_{k,0,j}^{-1/2}\SSigma_{k,j}^{1/2}
            (2 \SSigma_{k,j}^{1/2}\SSigma_{k,0,j}^{-1}\SSigma_{k,j}^{1/2} -I_k )^{-1}
            \SSigma_{k,j}^{1/2}\SSigma_{k,0,j}^{-1/2} -I_k
            \right]
        \SSigma_{k,0,j}^{-1/2} b_j(k) \right\}.
\end{eqnarray*}
Now utilizing \eqref{condition Sj} we get
\begin{equation*}
    \frac{1-\delta}{1+\delta} I_{k}
        \preceq
            \left( 2 \SSigma_{k,j}^{1/2}\SSigma_{k,0,j}^{-1}\SSigma_{k,j}^{1/2} - I_{k}  \right)^{-1}
        \preceq
    \frac{1+\delta}{1-\delta} I_{k},
\end{equation*}
\begin{eqnarray*}
   &&     2 \SSigma_{k,0,j}^{-1/2}\SSigma_{k,j}^{1/2}
            (2 \SSigma_{k,j}^{1/2}\SSigma_{k,0,j}^{-1}\SSigma_{k,j}^{1/2} -I_k )^{-1}
            \SSigma_{k,j}^{1/2}\SSigma_{k,0,j}^{-1/2} -I_k \\
  &\preceq&
    2 \frac{1+\delta}{1-\delta} \SSigma_{k,0,j}^{-1/2}\SSigma_{k,j} \SSigma_{k,0,j}^{-1/2} -I_k\\
  &\preceq& \left(2 \frac{1+\delta}{(1-\delta)^2} -1\right)I_k, \\
  && \\
&& \frac{\det \SSigma_{k,j}}{\det \SSigma_{k,0,j}}
        \le
    \left( \frac{1}{1-\delta} \right)^{k},\\
&&    \prod_{l=1}^k
        \left[ 2 \lambda_l(\SSigma_{k,j}^{1/2}\SSigma_{k,0,j}^{-1}\SSigma_{k,j}^{1/2}) -1 \right]^{-1/2}    \le
     \left( \frac{1+\delta}{1-\delta} \right)^{\frac{k}{2}}.
\end{eqnarray*}
Finally, because \( b_j(k)^{\T} \SSigma_{k,0,j}^{-1} \,b_j(k) \le \Delta_j(k) (1-\delta)^{-1}\), we
obtain the bound for the second moment of the Radon-Nikodym derivative:
\begin{equation*}
    \EE_{\tta, \Sigma} [Z^2_{k,j}]
        \le
        \left( \frac{1+\delta}{(1-\delta)^3} \right)^{\frac{k}{2}}
        \exp\left\{ \varphi(\delta)  \frac{\Delta_j(k)}{1-\delta} \right\}
\end{equation*}
which completes the proof.
\end{proof}

At this point we introduce the following ``componentwise'' small modeling dias conditions:
\begin{description}
\item[\( \bb{(SMBj)} \)]
\emph{
Let there exist for some \( j = 1, \ldots, p \), some \( k(j) \le K \) and some \( \theta^{(j)} = e_j^{\T} \tta \) a constant \( \Delta_j  \ge 0 \) such that
\begin{equation} \label{smbj}
  \Delta_j(k(j)) \le  \Delta_j,
\end{equation}
where \( \Delta_j(k) \) is defined by \eqref{def_Delta j(k)}.
}
\end{description}
\begin{definition}
For each \( j = 1, \ldots, p \) the oracle index \( k^*(j) \) is defined as the largest index in the scale for which the \( (SMBj) \) condition holds, that is,
\begin{equation}\label{def oracle index kj}
    k^*(j) = \max \{ k\le K : \Delta_j(k)\le  \Delta_j \}.
\end{equation}
\end{definition}
\begin{theorem}\label{oracle result componentwise}
Let the smallest bandwidth \( h_1 \) be such that the first estimator \( \bb e_j^{\T}\mmle{1}(x) \) be always accepted in the adaptive procedure.
Let \( k^*(j) \) be the oracle index defined by \eqref{def oracle index kj}, \( j = 1, \ldots, p \) . Assume \( (\mathfrak{D}) \), \( (\mathfrak{Loc}) \), \( (\mathfrak{S}) \), \( (\mathfrak{B}) \), \( (PC) \), \( (\cc{W}) \), \( (\mathfrak{S1}) \) and \( \mathfrak{(Lp1^{\mathrm{d} })} \). Then the risk between the \( j \)th coordinates of the adaptive estimator and the oracle is bounded with the following expression:
\begin{eqnarray}
         && \left( \frac{nh_{k^*(j)}^d\Lambda_0}{\s^2_{max}} \right)^{r/2}
               \EE |\bb e_j^{\T}\mmle{k^*(j)}(x) - \bb e_j^{\T}\aadapest(x)|^r\\
       & \le &   \z_{k^*(j)}^{r/2} +
       (\alpha \EE|\chi^2_p|^r)^{1/2} (1+\delta)^{pk_j^*/4}(1-\delta)^{-3pk_j^*/4}
        \exp\left\{ \varphi(\delta)
        \frac{\Delta_j}{2(1-\delta)}\right\} \nonumber
\end{eqnarray}
where \( \varphi(\delta) \) as in Theorem \ref{Propagation result componentwise theorem}.
\end{theorem}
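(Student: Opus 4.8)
The plan is to mimic the proof of Theorem \ref{oracle result} almost verbatim, replacing the full quadratic form $(\mmle{k} - \aadapest)^{\T}\B{k}(\mmle{k} - \aadapest)$ by the $j$th componentwise discrepancy $|\bb e_j^{\T}\mmle{k} - \bb e_j^{\T}\aadapest|^{r}$ and invoking Lemma \ref{bound for the componentwise differences} to pass between the two. Throughout I suppress the dependence on $x$ and write $k^* = k^*(j)$. Using $\aadapest = \mmle{\adapind}$ and splitting over the disjoint exhaustive events $\{\adapind \le k^*\}$ and $\{\adapind > k^*\}$, I would first write
\begin{align*}
  \EE|\bb e_j^{\T}\mmle{k^*} - \bb e_j^{\T}\aadapest|^{r}
  &= \EE|\bb e_j^{\T}(\mmle{k^*} - \mmle{\adapind})|^{r}\,\ind\{\adapind \le k^*\} \\
  &\quad + \EE|\bb e_j^{\T}(\mmle{k^*} - \mmle{\adapind})|^{r}\,\ind\{\adapind > k^*\},
\end{align*}
and bound the two summands separately, keeping in mind that after multiplying through by the normalizing factor $(nh_{k^*}^{d}\Lambda_0/\s^2_{max})^{r/2}$ they must add up to the claimed bound.

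For the first summand I would note that on $\{\adapind \le k^*\}$ one has $\aadapest_{k^*} = \mmle{\min\{k^*,\adapind\}} = \mmle{\adapind} = \aadapest$, so the integrand is $|\bb e_j^{\T}\mmle{k^*} - \bb e_j^{\T}\aadapest_{k^*}|^{r}$ and dropping the indicator only enlarges it. I would then apply the componentwise propagation property, Theorem \ref{Propagation result componentwise theorem}, with $k = k^*$. Since $k^* = k^*(j)$ is by definition the largest index for which $\Delta_j(k^*) \le \Delta_j$, the exponential factor is at most $\exp\{\varphi(\delta)\Delta_j/(2(1-\delta))\}$, which yields exactly the second term on the right-hand side of the assertion.

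For the second summand I would use that $\{\adapind > k^*\}$ entails $k^* < \adapind$, so the selection rule \eqref{adaptind} forces $T_{k^*,\adapind} \le \z_{k^*}$. Then Lemma \ref{bound for the componentwise differences} with $k = k^*$ and $k' = \adapind$ gives
\[
  \left(\frac{nh_{k^*}^{d}\Lambda_0}{\s^2_{max}}\right)^{1/2}
    |\bb e_j^{\T}\mmle{k^*} - \bb e_j^{\T}\mmle{\adapind}|
  \le \|\B{k^*}^{1/2}(\mmle{k^*} - \mmle{\adapind})\|
  = \sqrt{T_{k^*,\adapind}} \le \sqrt{\z_{k^*}},
\]
so on this event $(nh_{k^*}^{d}\Lambda_0/\s^2_{max})^{r/2}|\bb e_j^{\T}(\mmle{k^*} - \aadapest)|^{r} \le \z_{k^*}^{r/2}$ holds deterministically, and the second summand contributes at most $\z_{k^*}^{r/2}$. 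Adding the two bounds gives the theorem.

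I do not expect a genuine obstacle: the hard analytic step -- the change-of-measure estimate that controls $\EE_{\tta,\Sigma}[Z^2_{k,j}]$ in terms of $\delta$ and $\Delta_j(k)$ -- has already been carried out in Theorem \ref{Propagation result componentwise theorem}, and the deterministic comparison of a single coordinate with the $\B{k^*}$-norm is precisely Lemma \ref{bound for the componentwise differences}. The only points requiring a moment's care are the identification $\aadapest_{k^*} = \aadapest$ on $\{\adapind \le k^*\}$, so that the componentwise $(PC)$-type bound applies at $k = k^*$, and the recognition that on $\{\adapind > k^*\}$ the relevant discrepancy is exactly the accepted test statistic $T_{k^*,\adapind}$, hence $\le \z_{k^*}$ by construction of $\adapind$. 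The standing hypothesis that $\bb e_j^{\T}\mmle{1}(x)$ is always accepted ensures $k^*(j) \ge 1$, so that the split is non-vacuous.
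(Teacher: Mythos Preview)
Your proposal is correct and follows essentially the same argument as the paper: split on $\{\adapind \le k^*\}$ versus $\{\adapind > k^*\}$, handle the first case via the componentwise propagation property (Theorem~\ref{Propagation result componentwise theorem}) using $\aadapest_{k^*}=\aadapest$ on that event, and bound the second case deterministically by passing to the $\B{k^*}$-norm via Lemma~\ref{bound for the componentwise differences} and invoking $T_{k^*,\adapind}\le \z_{k^*}$ from the selection rule. The paper's own proof is precisely this, stated slightly more tersely.
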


\begin{corollary}\label{corr:oracle risk bound for pol bas}
Let the basis be polynomial. Under the conditions of the preceding theorem, the risk between the adaptive estimator \( LP^{ad}(p-1) \) of the value of the \( j \)th derivative of \( f \) at \( x \) and the oracle is bounded with the following expression:
 \begin{eqnarray*}
 &&  \left( \frac{nh_{k^*(j)}^d\Lambda_0}{\s^2_{max}} \right)^{r/2}
       \EE |\flej{j-1}{k^*(j)}{x} - \adaplpest^{(j-1)}(x)|^r\nn
    &\le &  \z_{k^*(j)}^{r/2} +
       (\alpha \EE|\chi^2_p|^r)^{1/2} (1+\delta)^{pk_j^*/4}(1-\delta)^{-3pk_j^*/4}
        \exp\left\{ \varphi(\delta)
        \frac{\Delta_j}{2(1-\delta)}\right\}
 \end{eqnarray*}
 with \( \varphi(\delta) \) as before.
 \end{corollary}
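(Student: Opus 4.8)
The plan is to read Corollary \ref{corr:oracle risk bound for pol bas} as a direct specialization of Theorem \ref{oracle result componentwise} to the polynomial basis, so that essentially nothing has to be proved beyond a change of notation. First I would recall the definitions introduced in \eqref{def of estimators of f and its derivatives} and \eqref{def of adaptive estimators of f and its derivatives}: for the polynomial basis $\langle 1,u,u^2,\ldots\rangle$ (with the factorial normalization already built into $\Psi$) one has, by construction, $\flej{j-1}{k}{x} = \bb e_j^{\T}\mmle{k}(x)$ and $\adaplpest^{(j-1)}_k(x) = \bb e_j^{\T}\aadapest_k(x)$ for every $k$ and every $j = 1,\ldots,p$, so that in particular $\flej{j-1}{k^*(j)}{x} = \bb e_j^{\T}\mmle{k^*(j)}(x)$ and $\adaplpest^{(j-1)}(x) = \bb e_j^{\T}\aadapest(x)$. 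Thus the quantity to be bounded in the corollary is literally the left-hand side of the inequality in Theorem \ref{oracle result componentwise}.

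Next I would note that all hypotheses needed for Theorem \ref{oracle result componentwise} — namely $(\mathfrak{D})$, $(\mathfrak{Loc})$, $(\mathfrak{S})$, $(\mathfrak{B})$, $(PC)$, $(\cc{W})$, $(\mathfrak{S1})$ and $\mathfrak{(Lp1^{\mathrm d})}$, together with the requirement that $h_1$ be small enough that $\bb e_j^{\T}\mmle{1}(x)$ is always accepted — are exactly the standing assumptions of the corollary (``under the conditions of the preceding theorem''). Since the polynomial basis can be taken orthogonal in $L_2(\cc X)$ and the conditions $(\mathfrak{D})$, $(\mathfrak{Loc})$, $(\mathfrak{B})$, $\mathfrak{(Lp1^{\mathrm d})}$ are posed directly on the matrices $\PPsi$ and $\B{k}$ that this basis generates, there is nothing new to verify: the polynomial basis is a legitimate instance of the general set-up, as already remarked in the text preceding the lemmata of this subsection.

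Finally, I would substitute the two bookkeeping identities of the first paragraph into the bound of Theorem \ref{oracle result componentwise}. The oracle index $k^*(j)$ is the same object, defined by \eqref{def oracle index kj} via the $(SMBj)$ condition $\Delta_j(k^*(j)) \le \Delta_j$, and the function $\varphi(\delta)$ is unchanged. Hence the right-hand side $\z_{k^*(j)}^{r/2} + (\alpha \EE|\chi^2_p|^r)^{1/2}(1+\delta)^{pk_j^*/4}(1-\delta)^{-3pk_j^*/4}\exp\{\varphi(\delta)\Delta_j/(2(1-\delta))\}$ is inherited verbatim. There is no genuine obstacle here; the only point deserving an explicit sentence is the identification, in the polynomial case, of the coordinate estimators $\bb e_j^{\T}\mmle{k}(x)$ and $\bb e_j^{\T}\aadapest(x)$ with the local polynomial estimators $\flej{j-1}{k}{x}$ and $\adaplpest^{(j-1)}(x)$ of $f$ and its derivatives, which is a definition rather than an estimate.
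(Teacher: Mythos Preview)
Your proposal is correct and matches the paper's approach: the corollary is an immediate specialization of Theorem~\ref{oracle result componentwise} to the polynomial basis, obtained by identifying $\flej{j-1}{k}{x}=\bb e_j^{\T}\mmle{k}(x)$ and $\adaplpest^{(j-1)}(x)=\bb e_j^{\T}\aadapest(x)$ via definitions~\eqref{def of estimators of f and its derivatives} and~\eqref{def of adaptive estimators of f and its derivatives}. The paper does not give a separate argument for the corollary; the proof that follows it in the text is the proof of Theorem~\ref{oracle result componentwise} itself.
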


\begin{proof} To simplify the notation we suppress the dependence on \( j \) in the index \( k \). Similarly to the proof of Theorem \eqref{oracle result} we consider disjunct events
\( \{ \adapind \le k^*\} \) and \( \{ \adapind > k^*\} \). Therefore,
\begin{eqnarray*}
       && \EE|\bb e_j^{\T}\mmle{k^*}(x) - \bb e_j^{\T}\aadapest(x)|^r\\
&=& \EE|\bb e_j^{\T}\mmle{k^*}(x) - \bb e_j^{\T}\aadapest(x)|^r \,\ind\{ \adapind \le k^* \} \\
&+&
\EE|\bb e_j^{\T}\mmle{k^*}(x) - \bb e_j^{\T}\aadapest(x)|^r \, \ind \{ \adapind > k^*\}.
\end{eqnarray*}
By Lemma \ref{bound for the componentwise differences} and the definition of the test statistic \( T_{k^*, \adapind} \) the second summand can be easily bounded:
\begin{eqnarray*}
    && \left( \frac{nh_{k^*}^d\Lambda_0}{\s^2_{max}} \right)^{r/2}
          \EE|\bb e_j^{\T}\mmle{k^*}(x) - \bb e_j^{\T}\aadapest(x)|^r \,\ind \{ \adapind > k^*\} \\
    &\le & \EE \| \B{k^*}^{1/2} (\mmle{k^*}(x) - \aadapest(x)) \|^r \,\ind \{ \adapind > k^*\}\\
    &\le &  \z_{k^*}^{r/2}.
\end{eqnarray*}

\par To bound the first summand we use the ``componentwise'' analog of Theorem~\ref{Propagation result theorem}, particularly Theorem~\ref{Propagation result componentwise theorem}, and this completes the proof.
\end{proof}

\section{Rates of convergence}
\label{section: adaptive rates}

\subsection{Minimax rate of spatially adaptive local polynomial estimators}
\label{subsect:mimimax rates}
In this section we give some basic information on spatial adaptation and present the rate of
convergence of the adaptive local polynomial estimator \( LP^{ad}(p-1) \) of \( f(x) \).
Let us recall that Donoho and Johnstone in 
 \citeasnoun{Donoho and Johnstone} suggested how to measure the quality of adaptive estimators. The authors called this approach \emph{``the ideal spatial adaptation''} and defined it as a level of performance which would be achieved by smoothing with knowledge of the best ``oracle'' scheme. The estimator corresponding to this scheme is called an ``oracle''. The adaptive methods try to construct an estimator which mimics the performance of the oracle in some sense, for example, in terms of the risk of estimation. Inequalities relating the risk of the adaptive estimator to the risk of the oracle are usually referred to as \emph{``oracle inequalities''}. The results obtained in Section \ref{subsection:oracle result} belong to this family.

\par To simplify the representation in this section we consider a univariate design in~\( [0,1] \).
 The generalization to the multidimensional case is straightforward. Fix a point \( x \in [0,1] \) and a method of localization \( w_{(\cdot)} \). In this section we also assume that the basis is polynomial and centered at \( x \), that is \( \psi_1 \equiv 1 \) and \( \psi_{j}(t) = (t-x)^{j-1}/(j-1)! \) with \( j = 2, \ldots, p \). As in Section \ref{sect:LocPol} we denote for any \( k= 1, \ldots, K \) by
\begin{equation}\label{lp estmator}
    \fle{k}{x} \eqdef \bb e^{\T}_1 \mmle{k}(x)
\end{equation}
the local polynomial estimator of order \( p-1 \) of \( f(x) \) corresponding to the \( k \)th scale with the bandwidth \( h_k = h_k(x) \), or just the \( LP_k(p-1) \) estimator of \( f(x) \) for short. Here \( \bb e_1 \in \RRp \) is the first canonical basis vector \( (1,0,\ldots,0)^{\T} \). As before we assume that
\begin{equation*}
    \frac{1}{n} <  h_1 < \ldots < h_k< \ldots  h_K \le 1
\end{equation*}
and therefore that the ordering condition \( (\cc W) \) is satisfied. Denote the \emph{adaptive local polynomial estimator} \( LP^{ad}(p-1) \) of \( f(x) \) by
\begin{equation}\label{adapt_lp estimator}
    \adaplpest(x) \eqdef \fle{\adapind}{x} = \bb e^{\T}_1 \aadapest(x).
\end{equation}
with \( \aadapest(x) \) defined by \eqref{aadapest}. To obtain bounds for the risk of the adaptive estimator in \citeasnoun{Donoho and Johnstone}, \citeasnoun{Goldenshluger and Nemirovski} and \citeasnoun{LepMamSpok97} it was suggested to compare the \( \MSE(x) \) (the \( L_r \)-risk in \citeasnoun{Goldenshluger and Nemirovski}) corresponding to the adaptive estimator \( \adaplpest(x) \) with the infimum over all scales of the mean squared risks (the \( L_r \)-risks, respectively) of nonadaptive estimators \( \fle{l}{x} \), \( l= 1, \ldots, K \). That is we compare \( \EE_f[|\adaplpest(x) -f(x)|^2] \) with the ``best'' risk of the form \( \EE_f[|\fle{l}{x} -f(x)|^2] \). Clearly, for any \( l \) by the bias-variance decomposition and  by \eqref{decomposition of LP estim into det and stoch} we have
\begin{equation*}
    \EE_f[|\fle{l}{x} -f(x)|^2] = b^2_{l,f}(x) + \s^2_l(x),
\end{equation*}
where the variance term is defined by
\begin{equation*}
    \s^2_l(x) \eqdef \EE_f[|\bb e^{\T}_1 \mmle{l}(x)  - \bb e^{\T}_1 \bbpf{l}(x) |^2]
\end{equation*}
and the bias is given by
\begin{equation*}
  b_{l,f}(x) \eqdef  \bb e^{\T}_1 \bbpf{l}(x) - f(x).
\end{equation*}
Here
\begin{equation*}
    \bb e^{\T}_1 \bbpf{l}(x) = \EE_f [\fle{l}{x}] = \sum_{i=1}^{n}  W^*_{l,\,i}(x) \ffi
\end{equation*}
is a \emph{local linear smoother} of the function \( f  \) at the point \( x \) corresponding to the
\( l \)th scale, see Section~\ref{sect:LocPol} for details. The polynomial weights \( W^*_{l,\,i} \) now are defined by
\begin{equation}\label{polynomial weights_multiscaled}
    W^*_{l,\,i}(x) = \bb e^{\T}_1 \B{l}^{-1} \Psii  \frac{\w{l}{i}(x)}{\s^2_i}
\end{equation}
with \( \B{l}\) defined by \eqref{B}. The columns of the ``design'' matrix \( \PPsi \) are given by:
\begin{equation*}
     \Psii= \Psi(\Xi-x) = \left(1,\, \Xi -x, \ldots, (\Xi -x)^{\p-1}/(p-1)!\right)^{\T}.
\end{equation*}
 With this notation the ideal spatial adaptation can be expressed as follows (see \citeasnoun{LepMamSpok97}):
\begin{equation}\label{adaptive balance equation without correction term}
    \MSE^{id}(x) = \inf_{1\le k\le K} \{ \bar{b}^2_{k,f}(x) + \s^2_k(x)\}
\end{equation}
where for any \( k \) the first summand
\begin{equation}\label{adaptive bias}
    \bar{b}_{k,f}(x) \eqdef \sup_{1\le l \le k } |b_{l,f}(x)|
                     =      \sup_{1\le l \le k } |\bb e^{\T}_1 \bbpf{l}(x) - f(x)|
\end{equation}
reflects the \emph{local smoothness} of \( f \) within the largest interval \( [x-h_k , x+h_k] \), containing intervals \( [x-h_l , x+h_l] \) with \( 1\le l<k \).
Indeed, the smoothness of a function can be defined via the quality of its approximation by polynomials, see \citeasnoun{Fan and Gijbels book} for example.
The bandwidth \( h^{\star} = h^{\star}(x, w_{(\cdot)},f(\cdot)) \)
providing a trade-off between \( \bar{b}^2_{k,f}(x) \) and the variance term could be called an ``ideal'' or ``oracle'' bandwidth. Unfortunately, as it is generally in nonparametric estimation, we cannot minimize the right-hand side of \eqref{adaptive balance equation without correction term} directly because it depends on the unknown function \( f \). The lack of information about \( f \) can be compensated by the assumption that \( f \) belongs to some smoothness class, see \citeasnoun{Ibragimov and Khasminskii}. This technique in the nonadaptive set-up under the assumption that \( f \in \Sigma (\beta, L) \) on \( [0,1] \) is demonstrated in Section~\ref{Nonadaptive rate} for the local polynomial approximation of order \( p -1 = \lfloor \beta \rfloor\). Here the use of \eqref{adaptive bias}  or of the SMB conditions allows to adapt not to the functional class but to the smoothness properties of the function \( f \) itself.

\par In the pointwise adaptation framework due to Lepski \citeasnoun{Lep1990}, see also \citeasnoun{Brown and Low}, it was discovered that the relation \eqref{adaptive balance equation without correction term} ``does not work''. This means that an adaptive estimator satisfying \eqref{adaptive balance equation without correction term} does not exist. In pointwise estimation one has to pay an additional logarithmic factor \( d(n) \) for proceeding without knowledge of the regularity properties of \( f \). It was proved in \citeasnoun{Lep1990} and \citeasnoun{Lep1992} that this factor \( d(n) \) is unavoidable and is of order \( \log n \), where \( n \) is the sample size. In \citeasnoun{LepSpok97} for kernel smoothing in the Gaussian white noise model (in our set-up under regularity assumptions on the design this is the case of \( p=1  \), \( \delta=0 \) and \( \s_i \equiv \s \)), it was shown that \( d(n) \)  depends on the range of adaptation, that is on the ratio of the largest bandwidth to the smallest one and that \( d(n) \) is not larger in order than \( \log n \). This phenomenon can be expressed as an increase of the noise level leading to the \emph{adaptive} upper bound for the squared risk (see \citeasnoun{LepMamSpok97}) in the following form:
\begin{eqnarray}\label{adaptive balance equation}
  \MSE^{ad}(x) &=& \inf_{1\le k\le K} \{ \bar{b}^2_{k,f}(x) + \s^2_k(x)d(n)\}.
\end{eqnarray}
This relationship (see \citeasnoun{LepSpok97}) can be written in the form of a \emph{``balance equation''}:
\begin{equation}\label{adaptive balance equation equation}
    \bar{b}_{k,f}(x) = C(w) \s_k(x) \sqrt{d(n)}
\end{equation}
with
\begin{equation}\label{adaptive factor}
    d(n) = \log \bigg(\frac{h_K}{h_1}  \bigg).
\end{equation}
The optimal selection of the constant \( C(w) \) provides sharp oracle results. The bandwidth \( h^\star = h_{k^\star} \) such that
\begin{equation}\label{k star}
    k^\star = \max\{ k\le K : \bar{b}_{k,f}(x) \le  C(w) \s_k(x) \sqrt{d(n)} \}
\end{equation}
is called the \emph{``ideal adaptive bandwidth''}
 or just the \emph{``oracle bandwidth''}.

\par Before the proceeding with the analysis of the convergence rate, let us point out that the weights \( W^*_{l,\,i}(x) \) defined by \eqref{polynomial weights_multiscaled}
preserve the reproducing polynomials property:
\begin{proposition}\label{Henderson th multiscaled}
Let \( x \in \RR\) be such that \( \B{1}  = \sum_{i=1}^n \Psii \Psii^{\T} \w{1}{i}(x)\s_i^{-2} \succ 0 \). Then the weights defined by \eqref{polynomial weights_multiscaled} satisfy
\begin{eqnarray}\label{normalization of polynomial weights multiscaled}
   & & \sum_{i=1}^{n} W^*_{l,\,i}(x) = 1,  \\ \nonumber
   & & \sum_{i=1}^{n} (\Xi - x)^m W^*_{l,\,i}(x) = 0 \; , \;\; m = 1, \ldots, p-1,
\end{eqnarray}
for all \( l=1, \ldots , K \) and design points \( \{ X_1, \ldots, X_n \} \).
\end{proposition}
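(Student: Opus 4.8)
The plan is to observe that Proposition~\ref{Henderson th multiscaled} is nothing but Proposition~\ref{Henderson th} applied scale by scale, once one identifies the right ``design weights''. Indeed, the weights in \eqref{polynomial weights_multiscaled} have exactly the form \eqref{polynomial weights} of the univariate local polynomial weights, except that the localizing weight $\w{h}{i}(x)$ there is replaced here by $\w{l}{i}(x)/\s_i^2$ and $\mathbf{B}(x)$ is replaced by $\B{l} = \sum_{i=1}^n \Psii\Psii^{\T}\,\w{l}{i}(x)\s_i^{-2}$. So the first step is to fix an index $l\in\{1,\dots,K\}$ and to regard $\tilde w_{l,i}(x) \eqdef \w{l}{i}(x)/\s_i^2$ as a new (still nonnegative) system of localizing weights; the associated moment matrix is precisely $\B{l}$. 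Since $(\cc W)$ gives $\cc W_1(x)\le\cc W_l(x)$ and hence $\B{1}\preceq\B{l}$, the hypothesis $\B{1}\succ 0$ of the proposition forces $\B{l}\succ 0$ for every $l$, so the estimator is well defined at every scale and Proposition~\ref{Henderson th} is applicable with $\mathbf{B}(x)$ taken to be $\B{l}$.

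The second step is simply to invoke Proposition~\ref{Henderson th} in this guise. For a polynomial $P_{p-1}$ of degree at most $p-1$, Taylor's formula gives $P_{p-1}(\Xi) = \Psii^{\T}\tta$ with $\tta = (P_{p-1}(x),P'_{p-1}(x),\dots,P^{(p-1)}_{p-1}(x))^{\T}$, exactly as in the proof of Proposition~\ref{Henderson th}. Then, using \eqref{polynomial weights_multiscaled} and the definition \eqref{B} of $\B{l}$,
\begin{equation*}
  \sum_{i=1}^n P_{p-1}(\Xi)\,W^*_{l,\,i}(x)
    = \bb e_1^{\T}\B{l}^{-1}\Big(\sum_{i=1}^n \Psii\Psii^{\T}\frac{\w{l}{i}(x)}{\s_i^2}\Big)\tta
    = \bb e_1^{\T}\B{l}^{-1}\B{l}\,\tta
    = \bb e_1^{\T}\tta = P_{p-1}(x).
\end{equation*}
Specializing to $P_{p-1}\equiv 1$ yields $\sum_i W^*_{l,\,i}(x)=1$, and specializing to $P_{p-1}(t)=(t-x)^m$ for $m=1,\dots,p-1$ (whose value at $x$ is $0$) yields $\sum_i (\Xi-x)^m W^*_{l,\,i}(x)=0$. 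Since $l$ was arbitrary in $\{1,\dots,K\}$, this is exactly \eqref{normalization of polynomial weights multiscaled}.

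There is no real obstacle here; the only point to be careful about is the bookkeeping that makes $\w{l}{i}(x)/\s_i^2$ play the role of the localizing weights in Section~\ref{sect:LocPol} and $\B{l}$ the role of $\mathbf{B}(x)$, together with the elementary remark that $\B{1}\succ 0$ implies $\B{l}\succ 0$ for all $l\ge 1$ by the ordering condition $(\cc W)$, so that $\B{l}^{-1}$ exists at every scale. Once this identification is in place, the statement follows verbatim from Proposition~\ref{Henderson th}. Alternatively, and even more directly, one can reread the proof of Proposition~\ref{Henderson th} with $\w{h}{i}(x)$ replaced throughout by $\w{l}{i}(x)\s_i^{-2}$: every line goes through unchanged, which is the shortest route to the claim.
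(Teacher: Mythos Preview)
Your proposal is correct and follows essentially the same approach as the paper: use the ordering condition $(\cc W)$ to deduce $\B{l}\succ 0$ from $\B{1}\succ 0$, and then observe that the assertion follows from the proof of Proposition~\ref{Henderson th} with $\w{l}{i}(x)/\s_i^2$ playing the role of the localizing weights. The paper's proof is just a two-line pointer to exactly this argument, so you have simply written out the details that the paper leaves implicit.
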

\begin{proof}
By Assumption \( (\cc W) \), if \( \B{1} \succ 0 \) at some point \( x \), then \( \B{l} = \B{l}(x) \succ 0 \)
for all \( l=1, \ldots , K \), and the assertion follows from the proof of Proposition \ref{Henderson th}.
\end{proof}
To simplify the study of \eqref{adaptive balance equation} we need to introduce
the following assumptions:
 \begin{description}
\item[\( \bb{\mathfrak{(Lp1')}} \)]
\emph{Assume \( \mathfrak{(S1)} \). There exists a number \( \lambda_0>0  \) such that for any \( k=1, \ldots, K \) the smallest eigenvalue fulfills \( \lambda_p(\B{k}) \ge  nh_k\lambda_0 \s^{-2}_{max}\) for sufficiently large \( n \).}

\item[\( \bb{\mathfrak{(Lp2')}} \)]
\emph{There exists a real number \( a_0>0 \) such that for any interval \( A \subseteq [0,1] \) and all \( n \ge 1 \)}
\begin{equation*}
    \frac{1}{n} \sum_{i=1}^n \ind \{ \Xi \in A \} \le a_0 \max \big\{ \int_A\dd t, \frac{1}{n} \big\}.
\end{equation*}
\item[\( \bb{\mathfrak{(Lp3')}} \)]
\emph{ The localizing functions (kernels) \( \w{k}{i} \) are compactly supported in \( [0,1] \) with}
\begin{equation*}
    \w{k}{i}(x) = 0 \;\;\;\text{if} \;\;\; |\Xi - x|> h_k.
\end{equation*}
\noindent This immediately implies the similar property for the local polynomial weights:
\begin{equation*}
    W^*_{k,i}(x) = 0 \;\;\;\text{if} \;\;\; |\Xi - x|> h_k.
\end{equation*}

\item[\( \bb{\mathfrak{(Lp4')}} \)]
\emph{ There exists a finite number \( w_{max} \) such that}
\begin{equation*}
    \sup_{k,i}|\w{k}{i}(x) | \le w_{max}.
\end{equation*}

\end{description}
\begin{remark}
Assumption \( \mathfrak{(Lp1')} \) is weaker than \( \mathfrak{(Lp1)} \) because it does not require the uniformity in \( x \).
\end{remark}
\begin{remark}
Assumption \( \mathfrak{(S1)} \) implies that the conditional number
\begin{equation}\label{kond_number}
    \kappa(\Sigma) \eqdef \frac{\s^2_{max}}{\s^2_{min}}
\end{equation}
of the covariance matrix in the known ``wrong'' model \eqref{PA} is finite.
\end{remark}
\begin{theorem}\label{nonuniform upper bounds for bias and variance}
 Assume \( (\cc W) \), \( \mathfrak{(S)} \), \( \mathfrak{(S1)} \),  \( \mathfrak{(Lp1')} \)--\( \mathfrak{(Lp4')} \) and that the smallest bandwidth \( h_1 \ge  \frac{1}{2n} \). Let \( f \in \Sigma (\beta, L) \) on \( [0,1] \) and let \( \{ \fle{k}{x} \}_{k=1}^K\) be the \( LP_k(p-1) \) estimators of \( f(x) \) with \( p -1 = \lfloor \beta \rfloor\). Then for sufficiently large \( n \) and any \( h_k \) satisfying \( h_K > \ldots > h_k > \ldots > h_1 \), \( k=1, \ldots, K \), the following upper bounds hold:
\begin{eqnarray*}
  |\bar{b}_{k,f}(x)|    &\le & C_2  \kappa(\Sigma) \frac{Lh_k^{\beta}}{(p-1)!}, \\
  \s^2_k(x)   &\le & (1+\delta) \frac{\s^2_{max} }{nh_k \lambda_0},
\end{eqnarray*}
with  \( C_2 = 2 w_{max} a_0 \sqrt{e}/\lambda_0 \) and \( \delta \in [0,1) \).
\par Moreover, the choice of a positive bandwidth \( h = h^\star(n) \) (see \eqref{h star with const} for the precise formula) in the form:
\begin{equation*}
    h^\star(n) = \cc O \left(\left( \frac{d(n)}{n} \right)^{ \frac{1}{2 \beta +1} }\right)
\end{equation*}
provides the following upper bound for the risk of adaptive estimator:
\begin{equation}\label{upper bound for MSE adaptive}
    \varlimsup_{n \to \infty} \sup_{f \in \Sigma (\beta, L)}
        \EE_f[ \psi_n^{-2} | \adaplpest(x) -f(x)|^2] \le  C,
\end{equation}
where
\begin{equation}\label{adaptive rate Obig}
    \psi_n = \cc O \left( \left( \frac{d(n)}{n} \right)^{\frac{\beta}{2 \beta +1} }\right)
\end{equation}
is given by \eqref{adaptive rate}
and the constant \( C \) is finite and depends on \( \beta \), \( L \), \( \s^2_{min} \), \( \s^2_{max} \), \( p \), \( w_{max} \) and \( a_0 \) only.
\end{theorem}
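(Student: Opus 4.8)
The statement falls into two essentially independent halves — the single–scale bias/variance bounds and the adaptive rate — and I would prove them in that order.

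\emph{Bias and variance.} First I would prove a multiscale analogue of Lemma~\ref{bounds for loc pol weights} for the weights $W^*_{l,i}(x)$ of \eqref{polynomial weights_multiscaled}. By $\mathfrak{(Lp1')}$ one has $\|\B{l}^{-1}\gamma\|\le\s^2_{max}(nh_l\lambda_0)^{-1}\|\gamma\|$, and since $\s_i^{-2}\le\s_{min}^{-2}$ and $\|\Psii\|<\sqrt e$ whenever $|\Xi-x|\le h_l<1$, this gives $\sup_i|W^*_{l,i}(x)|\le C_1\kappa(\Sigma)(nh_l)^{-1}$ with $C_1=w_{max}\sqrt e/\lambda_0$; summing over $i$ with $\mathfrak{(Lp2')}$ and using $h_l\ge\tfrac1{2n}$ gives $\sum_i|W^*_{l,i}(x)|\le C_2\kappa(\Sigma)$ with $C_2=2w_{max}\sqrt e\,a_0/\lambda_0$ — the factor $\kappa(\Sigma)$ being the only new feature relative to the known-variance case. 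The bias bound then follows by the argument proving Theorem~\ref{theorem upper bound for MSE} and its corollary: Proposition~\ref{Henderson th multiscaled} says the $W^*_{l,i}(x)$ reproduce polynomials of degree $\le p-1$, so Taylor-expanding $\ffi$ at $x$ with Lagrange remainder annihilates the terms of order $\le p-2$, the Hölder bound on the remainder together with $\mathfrak{(Lp3')}$ gives $|b_{l,f}(x)|\le C_2\kappa(\Sigma)Lh_l^{\beta}/(p-1)!$, and monotonicity of $\{h_l\}$ upgrades this to the stated bound on $\bar b_{k,f}(x)$. For the variance, $\s^2_l(x)=\bb e_1^{\T}\Var_f(\mmle l)\bb e_1\le(1+\delta)\bb e_1^{\T}\B l^{-1}\bb e_1$ by \eqref{Vk bound} (Assumption $\mathfrak{(S)}$), and $\mathfrak{(Lp1')}$ gives $\bb e_1^{\T}\B l^{-1}\bb e_1\le\s^2_{max}(nh_l\lambda_0)^{-1}$.

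\emph{The adaptive rate.} Here I would follow the Lepski--Mammen--Spokoiny route set up in this section: compare $\adaplpest(x)=\fle{\adapind}{x}$ with $\fle{k^{\star}}{x}$ for the ideal adaptive index $k^{\star}=k^{\star}(x)$ of \eqref{k star}. Plugging the Step-1 bounds into \eqref{adaptive balance equation equation} shows the balance is solved by $h_{k^{\star}}=\OO\bigl((d(n)/n)^{1/(2\beta+1)}\bigr)$ — this is $h^{\star}(n)$ — with $\bar b^2_{k^{\star},f}(x)+\s^2_{k^{\star}}(x)d(n)\lesssim\psi_n^2$. Split $\EE_f|\adaplpest(x)-f(x)|^2\le 2\EE_f|\fle{\adapind}{x}-\fle{k^{\star}}{x}|^2+2\EE_f|\fle{k^{\star}}{x}-f(x)|^2$; the second term is $b^2_{k^{\star},f}(x)+\s^2_{k^{\star}}(x)\lesssim\psi_n^2$. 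On the event $\{\adapind\ge k^{\star}\}$ the acceptance rule forces $T_{k^{\star},\adapind}\le\z_{k^{\star}}$, so Lemma~\ref{bound for the componentwise differences} (for $d=1$, which is what $\mathfrak{(Lp1')}$ provides) gives $|\fle{\adapind}{x}-\fle{k^{\star}}{x}|^2\le\s^2_{max}(nh_{k^{\star}}\lambda_0)^{-1}T_{k^{\star},\adapind}\le\s^2_{max}(nh_{k^{\star}}\lambda_0)^{-1}\z_{k^{\star}}$; and \eqref{zk} bounds $\z_{k^{\star}}$ by $\tfrac4\mu\bigl(r\log(h_K/h_{k^{\star}})+\log(K/\alpha)+\bar C(p,r)\bigr)\lesssim d(n)$, so this piece is $\lesssim d(n)(nh_{k^{\star}})^{-1}\asymp\psi_n^2$.

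\emph{The main obstacle — the propagation-failure event $\{\adapind<k^{\star}\}$.} On this event $T_{lm}>\z_l$ for some $l<m\le k^{\star}$, and one must show it costs $o(\psi_n^2)$. The difficulty is that at the oracle scale the modeling bias $\Delta(k^{\star})$ is of order $d(n)=\log(h_K/h_1)$, hence \emph{grows with $n$}, so the Propagation Theorem~\ref{Propagation result theorem}, which requires the $(SMB)$ constant $\Delta$ to be bounded, cannot be quoted as a black box. Instead I would rerun the estimate from the proof of Theorem~\ref{upper bound}: bound $\I\{T_{l,m+1}>\z_l\}\le\exp\{\tfrac\mu2\LL(\W l,\mmle l,\mmle{m+1})-\tfrac\mu4\z_l\}$, apply Cauchy--Schwarz together with the change-of-measure bound \eqref{bound for Zk} on $\EE_{\tta,\Sigma}[Z_k^2]$, and estimate the deterministic part of $T_{lm}$ by $\lesssim (L^2/\s^2_{min})\,nh^{2\beta+1}_{k^{\star}}\asymp d(n)$ using the projection-plus-Taylor bound of Step~1 and $\B l\preceq\B m$ (Assumption $(\cc W)$). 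The decisive point is that the factor $e^{-\mu\z_l/4}$ carries a power $(h_l/h_K)^{r}$ that beats the $e^{\OO(\Delta(k^{\star}))}$ growth — a fixed power of $h_K/h_1$ — provided the tuning constant $C(w)$ in \eqref{k star}, which fixes $h_{k^{\star}}$ and hence the size of $\Delta(k^{\star})$, is chosen small enough in terms of $r$. This makes $\P_f(\adapind<k^{\star})$ a negative power of $n$; combined with a crude polynomial $L^q$-bound on $|\fle{\adapind}{x}-\fle{k^{\star}}{x}|$ (from $\mathfrak{(Lp1')}$, compactness of $\Theta$ and Gaussian tails) and Cauchy--Schwarz, the contribution of $\{\adapind<k^{\star}\}$ is $o(\psi_n^2)$. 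Adding the three pieces yields $\varlimsup_n\sup_{f\in\Sigma(\beta,L)}\EE_f[\psi_n^{-2}|\adaplpest(x)-f(x)|^2]\le C$, and tracking the constants shows $C$ depends only on $\beta,L,\s^2_{min},\s^2_{max},p,w_{max},a_0$.
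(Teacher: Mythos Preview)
Your bias and variance bounds are exactly what the paper does: Taylor expansion plus Proposition~\ref{Henderson th multiscaled}, the weight-sum bound with the extra factor $\kappa(\Sigma)$ from $\sigma_i^{-2}\le\sigma_{min}^{-2}$, and the variance bound via \eqref{Vk bound} and $\mathfrak{(Lp1')}$. That part matches.

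For the adaptive rate, however, you take a genuinely different route from the paper. The paper does \emph{not} re-prove the Lepski oracle inequality here: it simply invokes the balance relation \eqref{adaptive balance equation}, which is quoted from \cite{LepMamSpok97} as an established risk bound $\MSE^{ad}(x)\le\inf_k\{\bar b^2_{k,f}(x)+\sigma^2_k(x)d(n)\}$, plugs the Step-1 bounds into it, and minimises over $h$ to get $h^\star(n)$ and $\psi_n$. That is the whole argument. Your approach, by contrast, tries to derive this oracle bound from the thesis's own propagation machinery: split on $\{\adapind\ge k^\star\}$ versus $\{\adapind<k^\star\}$, use the acceptance rule plus Lemma~\ref{bound for the componentwise differences} on the first event, and on the second event rerun the exponential-bounding argument of Theorem~\ref{upper bound} under the true measure. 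This is more self-contained but considerably heavier, and your treatment of $\{\adapind<k^\star\}$ --- balancing $e^{-\mu\z_l/4}$ against $e^{\cc O(\Delta(k^\star))}$ with $\Delta(k^\star)\asymp d(n)$ --- is precisely the delicate point the paper itself flags as problematic in Section~\ref{subsect:SMB and the bias-variance trade-off} (it is why the thesis's own propagation route only gives the suboptimal rate \eqref{rate with gamma}). In short: the paper sidesteps that difficulty entirely by citing the external Lepski--Mammen--Spokoiny result, whereas you try to push through it directly; your sketch is plausible but the ``decisive point'' you identify would need a careful quantitative check to be a proof.
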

\begin{remark}
The bound for \( \s^2_k(x) \) is simple than the corresponding one from Theorem \ref{theorem upper bound for MSE} due to the assumption of normality of the vector of errors  (\( \eeps \sim \norm{0}{I_n} \)) in the models \eqref{true model}--\eqref{PA}.
\end{remark}
\begin{remark}
Recall that in \citeasnoun{LepSpok97} it was shown that the ``adaptive factor'' \( d(n) \) cannot be less in order than \( \log\big(  h_K h_1^{-1} \big) \).
\end{remark}

\begin{proof}
The bound for \( |b_{l,f}(x)| \) at the point \( x \) is obtained as in the proof of Theorem~\ref{theorem upper bound for MSE} by application of the second assertion of Lemma \ref{bounds for loc pol weights}, so we skip some details.
By Proposition \ref{Henderson th multiscaled} and the Taylor theorem with \( \tau_i \) such that the points \( \tau_i \Xi \) are between \( \Xi \) and \( x \), and utilizing Assumption \( \mathfrak{(Lp3)} \) we have:
\begin{eqnarray*}
   |b_{l,f}(x)|   &\le & \frac{1}{(p-1)!}
                        \sum_{i=1}^{n} | f^{(p-1)}(\tau_i \Xi) -f^{(p-1)}(x)|                         |\Xi -x|^{p-1} |W^*_{l,\,i}(x)|\\
   &\le & \frac{L}{(p-1)!}
                        \sum_{i=1}^{n}  |\tau_i \Xi -x|^{\beta -(p-1)} |\Xi -x|^{p-1}|W^*_{l,\,i}(x)|\\
            &\le & \frac{Lh_l^{\beta}}{(p-1)!} \sum_{i=1}^{n}  |W^*_{l,\,i}(x)|.
\end{eqnarray*}
Under the assumptions of the theorem the sum of the polynomial weights can be bounded as follows:
\begin{eqnarray*}
  \sum_{i=1}^{n}  |W^*_{l,\,i}(x)|
        &\le & w_{max}
            \sum_{i=1}^{n} \s^{-2}_i \| \B{l}^{-1} \Psii \| \\
        &\le &
            \kappa(\Sigma) \frac{w_{max}}{\lambda_0 nh_l}  \sum_{i=1}^n \| \Psii \| \,
                        \ind\{ \Xi \in [x-h_l,x+h_l] \}\\
       &\le &
            \kappa(\Sigma)
                \frac{w_{max} \sqrt{e}}{\lambda_0 } a_0 \max\{ 2, \frac{1}{nh_l} \}\\
        &\le &
            \kappa(\Sigma)\frac{2 a_0 w_{max} \sqrt{e} }{\lambda_0 },
\end{eqnarray*}
and the first assertion is justified in view of
\begin{equation}
    \bar{b}_{k,f}(x) \eqdef \sup_{1\le l \le k } |b_{l,f}(x)|
                            \le \kappa(\Sigma)\frac{2 a_0 w_{max} \sqrt{e} a_0}{\lambda_0 } \frac{Lh_k^{\beta}}{(p-1)!}.
\end{equation}
To bound the variance just notice that, because \( \B{k} \) is symmetric and non-degenerate,
by \( \mathfrak{(Lp1')} \) for any \( \gamma \in \RRp \) it holds:
\begin{equation*}
    \gamma^{\T} \B{k}^{-1} \gamma \le \frac{\s^2_{max} }{nh_k \lambda_0} \| \gamma \|^2.
\end{equation*}
Then under Assumption \( \mathfrak{(S)} \) by \eqref{Vk bound} for the variance term we have:
\begin{eqnarray*}
  \s^2_k(x) &=& \bb e_1^{\T} \Var\mmle{k} \, \bb e_1 \\
  &\le & (1+\delta) \bb e_1^{\T} \B{k}^{-1} \bb e_1 \\
  &\le & (1+\delta) \frac{\s^2_{max} }{nh_k \lambda_0}.
\end{eqnarray*}

\par By \eqref{adaptive balance equation},
\begin{equation*}
    \MSE^{ad}(x) \le \inf_{1\le k\le K} \big\{ \tilde{C}_2 h_k^{2 \beta} + \frac{\tilde{C}_1 d(n)}{nh_k} \big\}
\end{equation*}
with \( \tilde{C}_2 = (C_2 L\, \kappa(\Sigma) /(p-1)!)^2 \) and \( \tilde{C}_1 = (1+\delta) \s^2_{max} \lambda_0^{-1}\). The choice of a bandwidth of the form:
\begin{equation}\label{h star with const}
    h^\star(n) = \tilde{C} \left(  \frac{(p-1)!}{ L \, \kappa(\Sigma) }  \right)^{ \frac{2}{2 \beta +1} } \left((1+\delta ) \s^2_{max}\frac{ d(n)}{n}  \right)^{ \frac{1}{2 \beta +1} }
\end{equation}
minimizes the upper bound for the \( \MSE^{ad}(x) \) and provides the rate \( \psi_n \) w.r.t. the square loss function and over a H\"older class \( \Sigma(\beta, L) \):
\begin{equation}\label{adaptive rate}
    \psi_n = C \left(  \frac{ L \, \kappa(\Sigma) }{(p-1)!} \right)^{ \frac{1}{2 \beta +1} }
    \left(  (1+\delta )\s^2_{max} \frac{d(n)}{n}  \right)^{ \frac{ \beta}{2 \beta +1} } .
\end{equation}
Here \( \tilde{C} \) and \( C \) depend only on \( w_{max} \), \( a_0 \), \( \lambda_0 \) and \( \beta \).
\end{proof}

\subsection{SMB, the bias-variance trade-off and the rate of convergence}
\label{subsect:SMB and the bias-variance trade-off}
The choice of the ``ideal adaptive bandwidth'' usually can be done by \eqref{k star}. In \citeasnoun{SV}
it was shown that the small modeling bias \( (SMB1) \) condition \eqref{smbj} can be obtained from the ``bias-variance trade-off'' relations. Unfortunately, to have the ``modeling bias''
\( \Delta(k) = \cc O (1)\)
(this is \( \Delta_1(k) \) in the present framework) one should apply the balance equation \eqref{adaptive balance equation equation} or \eqref{k star} without the ``adaptive factor'' \( d(n) \), see equation \( (3.5) \) in \citeasnoun{SV}. In the Gaussian regression set-up (example \( 1.1 \) in \citeasnoun{SV} ) under smoothness assumptions on the regression function \( f \in \Sigma(\beta,L) \)
this results in a suboptimal rate in the upper bound for the \( \MSE(x) \):
\begin{eqnarray}\label{rate with gamma}
  \psi_n &=&  \cc O \left( L^{\frac{1}{2 \beta +1} } n^{-\frac{\beta}{2 \beta +1} } \sqrt{\log n} \right)   \nn
   &=& \cc O \left( L^{\frac{1}{2 \beta +1} }  \left(
                \frac{\log^\gamma n}{n}  \right)^{\frac{\beta}{2 \beta +1} } \right)
\end{eqnarray}
with \( \gamma = \frac{2 \beta +1}{2\beta}>1 \). Notice that, due to the normalization by \( \sqrt{\Var[\tilde{\theta}_l]} \), the adaptive procedure used in \citeasnoun{SV} coincides with Lepski's selection rule from \citeasnoun{Lep1990} and \citeasnoun{LepSpok97}. Because local constant Gaussian regression under a regularity assumption on the design is equivalent to the Gaussian white noise model, it is known from these papers that this procedure is rate optimal with the minimax rate \( \psi_n = \cc O \left( L^{\frac{1}{2 \beta +1} }  \left(\frac{\log n}{n}  \right)^{\frac{\beta}{2 \beta +1} } \right) \), that is, \( \gamma \) should be equal to \( 1 \). This shows that the method of obtaining the upper bounds from \citeasnoun{SV} and generalized in the present work should be refined. This lack of optimality was also independently noticed in \citeasnoun{Reiss}.

\par Now we will demonstrate that: (1) the definition of the ``ideal adaptive bandwidth''~\eqref{k star} with \( d(n) =1 \) implies the \( (SMBj) \) conditions; (2) for Lepski's selection rule in our framework we have the same rate for the upper bound of the risk as in equation \eqref{rate with gamma}.

\par Notice that for the method of local approximation using of the polynomial basis centered at \( x \) the definition of the ``ideal adaptive bandwidth''~\eqref{k star} can be easily generalized for the estimators of the derivatives of \( f \) defined by \eqref{def of estimators of f and its derivatives}. Then, given a point \( x \) and the method of localization \( w_{(\cdot)} \), for any \( j = 1, \ldots, p \) the formula~\eqref{k star} reads as follows:
\begin{equation}\label{kj star}
    k^\star(j) = \max\{ k\le K : \bar{b}_{k,f^{(j-1)}}(x) \le  C_j(w) \s_k(x) \sqrt{d(n)} \},
\end{equation}
where \( C_j(w) \) is a constant depending on the choice of the smoother \( w_{(\cdot)} \),
\begin{eqnarray*}
  \bar{b}_{k,f^{(j-1)}}(x) &=& \sup_{1\le l\le k}|\bb e_j^{\T} \bbpf{l}(x) -  f^{(j-1)}(x)| , \\
  \s^2_k(x) &=& \Var_{\ff, \Sigma_0} [\bb e_j^{\T} \mmle{k}(x) ] ,
\end{eqnarray*}
and \( f^{(0)} \) stands for the function \( f \) itself. To bound the ``modeling bias'' \( \Delta_j(k) \) we need the following assumption:
\begin{description}
\item[\( \bb{(\mathfrak{S}kj)} \)]
\emph{
There exists  a constant \( s_j > 0 \) such that for all \( k \le K \)
\begin{equation}\label{sigma kj}
     \SSigma^{-1}_{k,j} \preceq s_j \SSigma^{-1}_{k,j,diag} ,
\end{equation}
}
\end{description}
where \( \SSigma_{k,j,diag} = \diag\big(\Var_{\tta, \Sigma}[\bb e_j^{\T} \mmle{1}(x)] ,
 \ldots , \Var_{\tta, \Sigma}[\bb e_j^{\T} \mmle{k}(x)] \big) \) is a diagonal matrix composed of the diagonal elements of \( \SSigma_{k,j} \). Thus we have the following result:
\begin{theorem}\label{th:SMB from the balance equation}
Assume \( (\mathfrak{B}) \), \( (\mathfrak{S}) \) and \( (\mathfrak{S}kj) \). Let the weights \( \{ \w{k}{i}(x) \} \) satisfy~\eqref{binar product}. Then for any given point \( x \), smoothing function \( w_{(\cdot)} \) and \( j = 1, \ldots, p \) the choice of \( k(j) = k^\star(j) \) defined by the relation \eqref{kj star} with \( d(n)=1 \) implies the \( (SMBj) \) condition \( \Delta_j(k(j)) \le \Delta_j \) with
the constant \( \Delta_j = s_j C^2_j(w) (1+\delta) (1-u_0^{-1})^{-1} \).
\end{theorem}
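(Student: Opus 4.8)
The plan is to turn the quadratic form $\Delta_j(k(j))$ into a weighted sum of squared componentwise biases, bound each bias through the balance relation \eqref{kj star}, and then collapse the resulting sum using the geometric growth of the matrices $\B{k}$ supplied by $(\mathfrak{B})$. First I would apply $(\mathfrak{S}kj)$ inside the definition \eqref{def_Delta j(k)}: since $\SSigma_{k,j}^{-1}\preceq s_j\SSigma_{k,j,diag}^{-1}$,
\begin{equation*}
  \Delta_j(k)\le s_j\,b_j(k)^{\T}\SSigma_{k,j,diag}^{-1}b_j(k)=s_j\sum_{l=1}^{k}\frac{\bigl|(\bbpf{l}-\tta)^{(j)}\bigr|^{2}}{v_{l,j}},\qquad v_{l,j}\eqdef\Var_{\tta,\Sigma}\bigl[\bb e_j^{\T}\mmle{l}(x)\bigr],
\end{equation*}
the $v_{l,j}$ being the diagonal entries of $\SSigma_{k,j}$. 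For the polynomial basis I would exercise the freedom in $(SMBj)$ by setting $\theta^{(j)}=\bb e_j^{\T}\tta=f^{(j-1)}(x)$, so that each $(\bbpf{l}-\tta)^{(j)}$ equals $b_{l,f^{(j-1)}}(x)$, the bias of the $l$th-scale local polynomial smoother of $f^{(j-1)}$ (cf.\ \eqref{def of estimators of f and its derivatives}); by the monotonicity in $k$ of the adaptive bias $\bar b_{k,f^{(j-1)}}(x)$ this gives $|(\bbpf{l}-\tta)^{(j)}|\le\bar b_{k,f^{(j-1)}}(x)$ for all $l\le k$, hence $\Delta_j(k)\le s_j\,\bar b_{k,f^{(j-1)}}(x)^{2}\sum_{l=1}^{k}v_{l,j}^{-1}$.

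Next I would specialize to $k=k^\star(j)$. By the definition \eqref{kj star} of the oracle index with $d(n)=1$, the inequality $\bar b_{k^\star(j),f^{(j-1)}}(x)\le C_j(w)\,\s_{k^\star(j)}(x)$ holds at that index; and the componentwise version of the noise‑misspecification relation, \eqref{condition Sj}, turns the variance $\s_{k^\star(j)}^{2}(x)=\Var_{\ff,\Sigma_0}[\bb e_j^{\T}\mmle{k^\star(j)}(x)]$ — the last diagonal entry of $\SSigma_{k^\star(j),0,j}$ — into at most $(1+\delta)\,v_{k^\star(j),j}$. Combining these,
\begin{equation*}
  \Delta_j\bigl(k^\star(j)\bigr)\le s_j\,C_j^{2}(w)(1+\delta)\;v_{k^\star(j),j}\sum_{l=1}^{k^\star(j)}\frac{1}{v_{l,j}}=s_j\,C_j^{2}(w)(1+\delta)\sum_{l=1}^{k^\star(j)}\frac{v_{k^\star(j),j}}{v_{l,j}},
\end{equation*}
so that it only remains to show $\sum_{l}v_{k^\star(j),j}/v_{l,j}\le(1-u_0^{-1})^{-1}$.

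For this last step I would use the kernel condition \eqref{binar product}, which makes the matrices $\cc{W}_l$ idempotent; then $\W{l}\Sigma\W{l}=\W{l}$, so $V_l=\Var_{\tta,\Sigma}\mmle{l}=\B{l}^{-1}$ \emph{exactly} — this is the strict‑equality case of \eqref{Vk bound}, legitimate here because $V_l$ is computed under the surrogate noise $\Sigma$ (effectively ``$\delta=0$'') — and hence $v_{l,j}=\bb e_j^{\T}\B{l}^{-1}\bb e_j$. Assumption $(\mathfrak{B})$ gives $u_0\B{l}\preceq\B{l+1}$, whence, iterating and inverting, $\B{k^\star(j)}^{-1}\preceq u_0^{-(k^\star(j)-l)}\B{l}^{-1}$ for $l\le k^\star(j)$, so that $v_{k^\star(j),j}/v_{l,j}\le u_0^{-(k^\star(j)-l)}$. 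Since $u_0>1$ the sum is dominated by the geometric series $\sum_{m\ge 0}u_0^{-m}=(1-u_0^{-1})^{-1}$, and therefore $\Delta_j(k^\star(j))\le s_j\,C_j^{2}(w)(1+\delta)(1-u_0^{-1})^{-1}=\Delta_j$, as required; the same computation works for any basis for which $\bb e_j^{\T}\bbpf{l}$ is the relevant smoother.

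I expect the main obstacle to be exactly this variance bookkeeping. In general only the \emph{upper} bound $V_l\preceq(1+\delta)\B{l}^{-1}$ of \eqref{Vk bound} is available, whereas the argument needs the ratios $v_{k^\star(j),j}/v_{l,j}$ controlled so that $(\mathfrak{B})$ can be transferred to them — which is precisely why the idempotence of the kernels in \eqref{binar product}, and the normalization by the genuine variances $\s_k(x)$ (not a surrogate) in the balance relation \eqref{kj star}, are needed. The remaining ingredients — the passage between the measures $\norm{\tta}{\Sigma}$ and $\norm{\ff}{\Sigma_0}$ via \eqref{multi cond sigma}/\eqref{condition Sj}, and the identification of $(\bbpf{l}-\tta)^{(j)}$ with the local‑polynomial bias — are routine.
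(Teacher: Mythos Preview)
Your proof is correct and follows essentially the same route as the paper: apply $(\mathfrak{S}kj)$ to reduce to the diagonal quadratic form, identify the diagonal variances with $\bb e_j^{\T}\B{l}^{-1}\bb e_j$ via the idempotence in \eqref{binar product}, bound the biases by $\bar b_{k,f^{(j-1)}}(x)$, use $(\mathfrak{B})$ to get the geometric decay of $\bb e_j^{\T}\B{k}^{-1}\bb e_j/\bb e_j^{\T}\B{l}^{-1}\bb e_j$, and finally invoke the balance relation together with $\s_k^2(x)\le(1+\delta)\,\bb e_j^{\T}\B{k}^{-1}\bb e_j$. The only cosmetic difference is that the paper identifies $v_{l,j}=\bb e_j^{\T}\B{l}^{-1}\bb e_j$ at the outset and cites \eqref{Vk bound} for the $(1+\delta)$ factor, whereas you keep $v_{l,j}$ abstract and cite \eqref{condition Sj}; both yield the same inequality.
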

\begin{proof}
Consider the quantity \( b_j(k)^{\T} \SSigma^{-1}_{k,j,diag} b_j(k)\). Suppose that \( \bb e_j^{\T} \tta(x) = f^{(j-1)} (x) \). In view of relation  \eqref{binar product} for the weights \( \{ \w{l}{i}(x) \} \) the form of the matrix \( \SSigma_{k,j,diag} \) is particularly simple:
\begin{equation*}
    \SSigma_{k,j,diag} = \diag (\bb e_j^{\T} \B{1}^{-1} \bb e_j , \ldots,  \bb e_j^{\T} \B{k}^{-1} \bb e_j).
\end{equation*}
Then by \( (\mathfrak{B}) \) and \eqref{Vk bound}
\begin{eqnarray*}
  b_j(k)^{\T} \SSigma^{-1}_{k,j,diag} b_j(k)
        &=&
            \sum_{l=1}^k   \frac{|\bb e_j^{\T} (\bbpf{l} - \tta)|^2}{\bb e_j^{\T} \B{l}^{-1} \bb e_j}
        \\
        &\le &
            \big(\bar{b}_{k,f^{(j-1)}}(x)\big)^2
                \sum_{l=1}^k   \frac{1}{\bb e_j^{\T} \B{l}^{-1} \bb e_j}   \\
        &\le & \frac{\big(\bar{b}_{k,f^{(j-1)}}(x)\big)^2}{\bb e_j^{\T} \B{k}^{-1} \bb e_j}
                \sum_{l=1}^k u_0^{-(k-l)}\\
        &\le & \frac{\big(\bar{b}_{k,f^{(j-1)}}(x)\big)^2 (1+\delta)}{\s^2_k(x) (1-u_0^{-1})}.
\end{eqnarray*}
By \eqref{kj star} with \( d(n) =1 \) the choice of \( k=k^\star(j) \)
implies \( \big(\bar{b}_{k,f^{(j-1)}}(x)\big)^2 \le  C^2_j(w) \s^2_k(x)\). Thus
\begin{equation*}
     b_j(k)^{\T} \SSigma^{-1}_{k,j,diag} b_j(k) \le  (1+\delta) C^2_j(w)(1-u_0^{-1})^{-1}
\end{equation*}
and
\begin{equation*}
    \Delta_j(k) =  b_j(k)^{\T} \SSigma^{-1}_{k,j} b_j(k) \le s_j C^2_j(w) (1+\delta) (1-u_0^{-1})^{-1}.
\end{equation*}
\end{proof}

\par Now we will show that the ``oracle'' risk bound from Corollary \ref{corr:oracle risk bound for pol bas} delivers at least the suboptimal \eqref{rate with gamma} rate of convergence for the upper bound of the risk w.r.t. the polynomial loss function and over a H\"older class \( \Sigma(\beta,L) \). For simplicity we restrict ourselves to the case of the univariate design. We study the quality of the \( LP^{ad} (p-1) \) estimator \( \adaplpest (x) \) of \( f(x) \) under the assumption that \( f \in \Sigma(\beta,L)  \) on~\( [0,1] \) with \( \lfloor \beta \rfloor = p-1 \).

\par Denote by \( k^\star \) the index \( k^\star(j) \) with \( j=1 \) from Theorem \ref{th:SMB from the balance equation} and the corresponding bandwidth \( h_{k^\star} \) by \( h^{\star} \). Then the following asymptotic result holds:

\begin{theorem}
Assume \( (\mathfrak{B}) \), \( (\mathfrak{D}) \), \( (\mathfrak{Loc}) \), \( (PC) \), \( (\mathfrak{S}) \), \( \mathfrak{(S1)} \), \( (\mathfrak{S}k^\star1) \), \( \mathfrak{(Lp1')} \)--\( \mathfrak{(Lp4')} \), \( (\cc W) \), and that the smallest bandwidth fulfills \( h_1 \ge  \frac{1}{2n} \) and is such that the first estimator \( \fle{1}{x} = \bb e_1^{\T}\mmle{1}(x) \) is always accepted by the adaptive procedure. Let the weights \( \{ \w{k}{i}(x) \}_{k=1}^K \) satisfy~\eqref{binar product}. Assume that for \( x \in (0,1) \) there exists \( \tta(x) \in \RRp \) such that \( f(x) = \bb e_1^{\T} \tta(x) \). Let \( f \in \Sigma (\beta, L) \) on \( [0,1] \)  with \( p -1 = \lfloor \beta \rfloor\). Then for the risk of the adaptive \( LP^{ad}(p-1) \) estimator \( \adaplpest (x) \) of the function \( f(x) \) at the point \( x \in (0,1) \) the following upper bound holds:
\begin{equation*}
    \EE |f(x)  - \adaplpest (x)|^r
        \le
             C L^{\frac{r}{2 \beta +1} }
                  \left( \frac{\log^\gamma n}{n}   \right)^{\frac{r\beta}{2 \beta +1} }
                   (1 + o(1))  , \; n \to \infty
\end{equation*}
with \( \gamma = \frac{2 \beta +1}{2 \beta} \) and the constant \( C \) depending on
\( \beta \), \( \s^2_{min} \), \( \s^2_{max} \), \( p \), \( w_{max} \), \( \lambda_0 \) and \( a_0 \) only.
\end{theorem}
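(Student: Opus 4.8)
The plan is to split the risk into an \emph{oracle} part and an \emph{adaptation} part, bound each by invoking respectively Theorem~\ref{nonuniform upper bounds for bias and variance} and Corollary~\ref{corr:oracle risk bound for pol bas}, and then to locate the oracle bandwidth via the balance equation. Write $k^\star=k^\star(1)$ for the oracle index of~\eqref{kj star} taken with $d(n)=1$ and $j=1$, and $h^\star=h_{k^\star}$. By Theorem~\ref{th:SMB from the balance equation} this $k^\star$ satisfies the small modeling bias condition $(SMB1)$ with a constant $\Delta_1$ not depending on $n$. The convexity inequality $|a+b|^r\le 2^{r-1}(|a|^r+|b|^r)$ gives
\begin{equation*}
  \EE|f(x)-\adaplpest(x)|^r\le 2^{r-1}\Bigl(\EE\bigl|f(x)-\fle{k^\star}{x}\bigr|^r+\EE\bigl|\fle{k^\star}{x}-\adaplpest(x)\bigr|^r\Bigr),
\end{equation*}
so it suffices to handle the two summands separately and then to control $h^\star$.

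First I would bound the oracle part. Decompose $f(x)-\fle{k^\star}{x}=-b_{k^\star,f}(x)-\bigl(\fle{k^\star}{x}-\EE_f\fle{k^\star}{x}\bigr)$; the centered term is Gaussian with variance $\s^2_{k^\star}(x)$, so its $r$th absolute moment is $C_r\s^r_{k^\star}(x)$, while by the very definition~\eqref{kj star} of $k^\star(1)$ the bias is dominated by the standard deviation, $|b_{k^\star,f}(x)|\le\bar b_{k^\star,f}(x)\le C_1(w)\s_{k^\star}(x)$. Together with the variance bound $\s^2_{k^\star}(x)\le(1+\delta)\s^2_{max}/(nh^\star\lambda_0)$ of Theorem~\ref{nonuniform upper bounds for bias and variance} this yields $\EE|f(x)-\fle{k^\star}{x}|^r=\OO\bigl((nh^\star)^{-r/2}\bigr)$. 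For the adaptation part, Corollary~\ref{corr:oracle risk bound for pol bas} (polynomial basis, $j=1$, $d=1$) and $(SMB1)$ give
\begin{equation*}
  \EE\bigl|\fle{k^\star}{x}-\adaplpest(x)\bigr|^r\le\Bigl(\frac{\s^2_{max}}{nh^\star\lambda_0}\Bigr)^{r/2}\Bigl[\z_{k^\star}^{r/2}+(\alpha\EE|\chi^2_p|^r)^{1/2}(1+\delta)^{pk^\star/4}(1-\delta)^{-3pk^\star/4}\exp\Bigl\{\frac{\varphi(\delta)\Delta_1}{2(1-\delta)}\Bigr\}\Bigr].
\end{equation*}
From $h_1\ge\frac1{2n}$, $h_K\le1$ and the geometric growth of $\{h_k\}$ forced by $(\mathfrak{B})$ together with the eigenvalue bounds of $\mathfrak{(Lp1')}$--$\mathfrak{(Lp4')}$ one gets $K=\OO(\log n)$, hence $k^\star\le K=\OO(\log n)$ and, by~\eqref{zk}, $\z_{k^\star}=\OO(\log n)$. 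Since $\log(1+\delta)-3\log(1-\delta)=4\delta+o(\delta)$, the prefactor $(1+\delta)^{pk^\star/4}(1-\delta)^{-3pk^\star/4}=\exp\{pk^\star\delta(1+o(1))\}$ is $1+o(1)$ under the standing assumption $\delta=o(1/\log n)$, and the remaining exponential is a constant. Thus $\EE|\fle{k^\star}{x}-\adaplpest(x)|^r=\OO\bigl((nh^\star)^{-r/2}(\log n)^{r/2}\bigr)$, and adding the two pieces,
\begin{equation*}
  \EE|f(x)-\adaplpest(x)|^r\le C\,(nh^\star)^{-r/2}(\log n)^{r/2}(1+o(1)).
\end{equation*}

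It then remains to bound $h^\star$ from below by a multiple of the ideal balance bandwidth, uniformly over $f\in\Sigma(\beta,L)$. If $k^\star<K$, then $k^\star+1$ violates~\eqref{kj star}: $\bar b_{k^\star+1,f}(x)>C_1(w)\s_{k^\star+1}(x)$. Using the lower variance bound $\s^2_k(x)\ge c\,(nh_k)^{-1}$, which follows from $(\mathfrak{S})$, the weight structure~\eqref{binar product}, $\mathfrak{(S1)}$ and the design condition $\mathfrak{(Lp2')}$, on the right-hand side, and the H\"older bias bound $\bar b_{k^\star+1,f}(x)\le C_2\kappa(\Sigma)Lh_{k^\star+1}^\beta/(p-1)!$ of Theorem~\ref{nonuniform upper bounds for bias and variance} on the left, one obtains $Lh_{k^\star+1}^\beta\gtrsim(nh_{k^\star+1})^{-1/2}$, i.e.\ $h_{k^\star+1}\gtrsim(nL^2)^{-1/(2\beta+1)}$, and then $h^\star\ge h_{k^\star+1}/u\gtrsim(nL^2)^{-1/(2\beta+1)}$ by the geometric spacing. (If $k^\star=1$ it is accepted by hypothesis; if $k^\star=K$ then $h^\star\asymp1$ and the bound is $\OO(n^{-r/2})$, which is better than claimed since $\beta/(2\beta+1)<1/2$.) Substituting $(nh^\star)^{-r/2}\lesssim\bigl(n(nL^2)^{-1/(2\beta+1)}\bigr)^{-r/2}=L^{r/(2\beta+1)}n^{-r\beta/(2\beta+1)}$ gives
\begin{equation*}
  \EE|f(x)-\adaplpest(x)|^r\le C\,L^{r/(2\beta+1)}n^{-r\beta/(2\beta+1)}(\log n)^{r/2}(1+o(1))=C\,L^{r/(2\beta+1)}\Bigl(\frac{\log^\gamma n}{n}\Bigr)^{r\beta/(2\beta+1)}(1+o(1)),
\end{equation*}
since $\gamma\cdot\frac{r\beta}{2\beta+1}=\frac r2$ precisely when $\gamma=\frac{2\beta+1}{2\beta}$.

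The hard part is this last step, namely pinning $h^\star$ down to within a constant factor of $(nL^2)^{-1/(2\beta+1)}$ uniformly in $f$. It requires the \emph{two-sided} estimate $\s^2_k(x)\asymp(nh_k)^{-1}$ --- its upper half is in Theorem~\ref{nonuniform upper bounds for bias and variance}, but the matching lower half has to be extracted from $(\mathfrak{S})$, $\mathfrak{(Lp2')}$, $\mathfrak{(S1)}$, the weight structure~\eqref{binar product} and the reproducing property of Proposition~\ref{Henderson th multiscaled} --- together with the geometric control of the grid $\{h_k\}$ coming from $(\mathfrak{B})$. A minor but necessary point is verifying that the prefactor $(1+\delta)^{pk^\star/4}(1-\delta)^{-3pk^\star/4}$ in Corollary~\ref{corr:oracle risk bound for pol bas} is absorbed into $1+o(1)$, which relies on $k^\star=\OO(\log n)$ and $\delta=o(1/\log n)$.
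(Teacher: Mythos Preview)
Your proposal is correct and follows the same overall strategy as the paper: the same triangle-inequality split, the same use of Corollary~\ref{corr:oracle risk bound for pol bas} for the adaptation term, the same order reasoning $\z_{k^\star}=\cc O(K)=\cc O(\log n)$ via Theorem~\ref{upper bound}, and the same handling of the $(1+\delta)^{pk^\star/4}(1-\delta)^{-3pk^\star/4}$ prefactor through $\delta=o(1/\log n)$.

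There is one genuine difference worth noting. For the oracle term $\EE|f(x)-\fle{k^\star}{x}|^r$ the paper does \emph{not} use your direct bias--variance decomposition; instead it writes $f(x)-\fle{k^\star}{x}=\bb e_1^{\T}(\tta(x)-\mmle{k^\star}(x))$ and invokes the first assertion of Theorem~\ref{Propagation result theorem} (together with the componentwise Lemma~\ref{bound for the componentwise differences}), i.e.\ the same change-of-measure bound that already carries the factor $(1+\delta)^{pk^\star/4}(1-\delta)^{-3pk^\star/4}\exp\{\varphi(\delta)\Delta_1/(2(1-\delta))\}$. This is why the paper's displayed bound has a single bracket with $\z_{k^\star}^{r/2}+2(\EE|\chi^2_p|^r)^{1/2}(\cdots)$, the factor $2$ coming from adding the oracle and adaptation contributions. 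Your route is slightly more elementary and avoids re-using the change-of-measure for the oracle piece; the paper's route has the advantage of treating both pieces uniformly. For the lower bound $h^\star\gtrsim (L^2 n)^{-1/(2\beta+1)}$, the paper simply asserts it in one line (``the balance equation \ldots\ suggest the choice''), whereas your argument via failure of \eqref{kj star} at $k^\star+1$ and a lower variance bound is more explicit; your identification of the two-sided estimate $\s_k^2(x)\asymp (nh_k)^{-1}$ as the crux is accurate and is indeed something the paper glosses over.
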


\begin{proof}
By the triangle inequality and the inequality \(  (a+b)^r \le C_r (a^r + b^r) \) with \( C_r = 2^{r-1} \), \( r\ge 1 \) and \( C_r = 1 \) for \( r \in (0,1) \), for any \( k=1, \ldots, K \) we have
\begin{equation*}
    |f(x)  - \adaplpest (x)|^r
        \le
            C_r \left[ |f(x) - \fle{k}{x} |^r + |\fle{k}{x} -\adaplpest (x) |^r  \right] .
\end{equation*}
Let \( \tta(x) \in \RRp \) be such that \( f(x) = \bb e_1^{\T} \tta(x) \). Then, because \( \alpha \in(0,1] \), by Theorem~\ref{Propagation result theorem} and Corollaries \ref{corr:PP for pol bas} and \ref{corr:oracle risk bound for pol bas} we have
\begin{equation*}
\left( \frac{n h^\star \lambda_0}{\s^2_{max}}   \right)^{\frac{r}{2} }
\EE |f(x)  - \adaplpest (x)|^r
        \le
C_r \left[ \z_{k^\star}^{r/2} + 2( \EE|\chi^2_p|^r)^{1/2}
\left( \frac{1+\delta}{(1-\delta)^3}  \right)^{\frac{pk^\star}{4} }
        \exp\left\{ \frac{\varphi(\delta) \Delta_1}{2(1-\delta)}\right\}    \right].
\end{equation*}
By Theorem \ref{upper bound} \( \z_{k^\star} \) is not larger in order than \( K \asymp \log n \).
Then for \( \delta = o \big( \frac{1}{K}  \big)  = o \big( \frac{1}{\log n}  \big)\)
\begin{equation*}
    \EE |f(x)  - \adaplpest (x)|^r
        \le C \left( \frac{\log n}{n h^\star}   \right)^{r/2} (1 + o(1)), \; n \to \infty.
\end{equation*}
The precise constant can be extracted easily, but because we anyway will get only a suboptimal upper
bound, in the following we will not care about the constants. The balance equation
\eqref{kj star} with \( j=1 \) and \( d(n) =\cc O (1) \) and the bounds for the bias and variance from Theorem \ref{nonuniform upper bounds for bias and variance} suggest the choice of bandwidths in the form:
\begin{equation*}
    h^\star \ge  C (L^2 n)^{-\frac{1}{2 \beta +1} }
\end{equation*}
leading to the following bound for the risk:
\begin{equation*}
     \EE |f(x)  - \adaplpest (x)|^r
        \le C L^{\frac{r}{2 \beta +1} }
                  \left( \frac{(\log n)^{\frac{2 \beta +1}{2 \beta}}}{n}   \right)^{\frac{r\beta}{2 \beta +1} }  (1 + o(1)).
\end{equation*}

 \end{proof}

\section{Auxiliary results}
\label{section:Auxiliary results}
\begin{lemma}\label{Pivotality property} {\it Pivotality property}
\par Let \( {(\cc{W})} \) hold. Under \( H_{\kappa} \) for any \( k \le   \kappa \) the risk associated with the adaptive estimator at every step of the procedure does not depend on the parameter \( \tta \):
\begin{eqnarray*}
     \EE_{\tta} | ( \mmle{k} -\aadapest_{k} )^{\T}
    \B{k}
        ( \mmle{k} -\aadapest_{k} ) |^{r}
    &=& \EE_{0} | ( \mmle{k} -\aadapest_{k} )^{\T}
    \B{k}
    ( \mmle{k} -\aadapest_{k} ) |^{r},
\end{eqnarray*}
where \( \EE_0 \) denotes the expectation w.r.t. the centered measure \( \norm{0}{\Sigma} \) or \( \norm{0}{\Sigma_0} \).
\end{lemma}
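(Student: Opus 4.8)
The idea is that, under $H_{\kappa}$, every estimator entering the risk at a step $k\le\kappa$ is the common parameter value $\tta$ plus a linear functional of the noise that carries no information about $\tta$, and the truncated selection mechanism up to step $k$ reacts only to \emph{differences} of such estimators; hence the whole quadratic form in the risk is a $\tta$-free function of the noise.

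First I would record, from the decomposition \eqref{linearity if quasiMLE}, that for every $l$
\begin{equation*}
  \mmle{l}=\bbpf{l}+\bb\zeta_{l},\qquad
  \bb\zeta_{l}\eqdef\B{l}^{-1}\PPsi\W{l}\Sigma_{0}^{1/2}\eeps,\qquad \eeps\sim\norm{0}{I_n},
\end{equation*}
where none of $\B{l}$, $\PPsi$, $\W{l}$, $\Sigma_{0}$ (or $\Sigma$, when the measure in question is $\norm{\tta}{\Sigma}$) involves $\tta$. Under $H_{\kappa}$ we have $\bbpf{1}=\cdots=\bbpf{\kappa}=\tta$, so $\mmle{l}=\tta+\bb\zeta_{l}$ for every $l\le\kappa$; consequently, for $l,m\le\kappa$,
\begin{equation*}
  \mmle{l}-\mmle{m}=\bb\zeta_{l}-\bb\zeta_{m}
\end{equation*}
is a measurable function of $\eeps$ alone, independent of $\tta$, and so is each statistic $T_{lm}=(\mmle{l}-\mmle{m})^{\T}\B{l}(\mmle{l}-\mmle{m})$ with $l<m\le\kappa$.

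Next I would check that the truncated index $\min\{k,\adapind\}$ depends on the data only through $\mmle{1},\dots,\mmle{k}$. From \eqref{adaptind} and \eqref{the last accepted}: $\min\{k,\adapind\}=k$ exactly when $T_{lm}\le\z_{l}$ for all $l<m\le k$, whereas for $m<k$ one has $\min\{k,\adapind\}=m$ exactly when $T_{lm'}\le\z_{l}$ for all $l<m'\le m$ and $T_{l,m+1}>\z_{l}$ for some $l\le m$ — and every statistic occurring here has both indices at most $k$. Hence, for $k\le\kappa$, the preceding paragraph shows that $\min\{k,\adapind\}$ does not depend on $\tta$, and therefore neither does
\begin{equation*}
  \mmle{k}-\aadapest_{k}=\mmle{k}-\mmle{\min\{k,\adapind\}}=\bb\zeta_{k}-\bb\zeta_{\min\{k,\adapind\}}.
\end{equation*}
Thus $(\mmle{k}-\aadapest_{k})^{\T}\B{k}(\mmle{k}-\aadapest_{k})=g(\eeps)$ for some fixed measurable $g$ not involving $\tta$. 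Writing $Q_{k}$ for this quadratic form, and noting that the law of $\eeps$ is $\norm{0}{I_n}$ both under $\norm{\tta}{\Sigma}$ (resp. $\norm{\tta}{\Sigma_{0}}$) and under its centered counterpart $\norm{0}{\Sigma}$ (resp. $\norm{0}{\Sigma_{0}}$), taking $r$th moments gives $\EE_{\tta}|Q_{k}|^{r}=\EE|g(\eeps)|^{r}=\EE_{0}|Q_{k}|^{r}$, which is the claim.

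The one point requiring care is the middle step — the bookkeeping that the truncated selection index $\min\{k,\adapind\}$ is measurable with respect to the first $k$ estimators, so that $H_{\kappa}$, which constrains only $\bbpf{1},\dots,\bbpf{\kappa}$, already makes the construction $\tta$-free at all steps $k\le\kappa$. Everything else is plug-in, together with the elementary observation that the noise vector $\eeps$ has the same distribution under a parametric Gaussian law and under its centered version.
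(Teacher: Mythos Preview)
Your proof is correct and follows essentially the same route as the paper's: both use the decomposition $\mmle{l}=\tta+\bb\zeta_{l}$ under $H_{\kappa}$ to see that every pairwise difference $\mmle{l}-\mmle{m}$ with $l,m\le k\le\kappa$ is a $\tta$-free function of the noise, and then observe that the truncated selection rule and the quadratic form in the risk depend on the data only through such differences. The paper phrases the final step via the decomposition $\sum_{m=1}^{k-1}\EE_{\tta}|\cdot|^r\,\I\{\aadapest_{k}=\mmle{m}\}$, whereas you argue directly that the whole random variable is $g(\eeps)$; the content is the same, and your bookkeeping that $\min\{k,\adapind\}$ is measurable in $(\mmle{1},\dots,\mmle{k})$ is in fact slightly more explicit than the paper's.
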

\begin{proof}
After the first \( k \) steps \( \aadapest_{k} \) coincides with one of \( \mmle{m} \), \( m \le k \), and this event takes place if for some \( l \le m \) the statistic \( T_{l,\,m+1} > \z_l  \). Because the hypothesis \( H_{\kappa} \) implies \( H_{m+1} \) for all \( m< \kappa \)
 and in view of the decomposition \eqref{linearity if quasiMLE} it holds
\begin{eqnarray*}
  & & 
  \left\{T_{l,\,m+1} > \z_l \;
        \text{ for some} \;\; l=1, \ldots, m \,|H_{m+1} \right\}\\
  &=& 
    \left\{  (\mmle{l} - \mmle{m+1} )^{\T}
            \B{l}
            (\mmle{l} - \mmle{m+1} )   > \z_l \;
            \text{ for some} \;\; l=1, \ldots, m \,|H_{m+1} \right\}\\
      &=&\left\{ \left\|  \B{l}^{1/2}
        \left( \B{l}^{-1} \PPsi \W{l} \Sigma_0^{1/2} \eeps
            - \B{m+1}^{-1}  \PPsi \W{m+1} \Sigma_0^{1/2} \eeps \right)
            \right\|^2 > \z_l \; ,\;\; l\le  m \right\}
    \end{eqnarray*}
with \( \eeps \sim \norm{0}{I_n} \). The probability of this event does not depend on the
shift \( \tta \), so without loss of generality \( \tta \) can be taken equal to zero. The risk associated with the estimator \( \aadapest_k \) admits the following decomposition:
\begin{equation*}
\EE_{\tta} |(\mmle{k} - \aadapest_k )^{\T} \B{k} (\mmle{k} - \aadapest_k ) |^r
    =
    \sum_{m=1}^{k-1} \EE_{\tta} |(\mmle{k} - \mmle{m} )^{\T} \B{k} (\mmle{k} - \mmle{m} ) |^r \I{\{ \aadapest_{k} = \mmle{m}\}}.
\end{equation*}
Under \( H_k \) for all \( m < k \) the joint distribution of \( (\mmle{k} - \mmle{m} )^{\T} \B{k} (\mmle{k} - \mmle{m} ) \) does not depend on \( \tta \) by the same argumentation.
\end{proof}

\begin{lemma}\label{simidefinitness of SSigma }
The matrices \(J_k \otimes \Sigma\) and \(J_k \otimes \Sigma_0\) are positive semidefinite for any \( k=2, \ldots, K \).
\par Moreover, under the condition \( \mathfrak{(S)}\) with the same \( \delta \) the following
 relation similar to \( \mathfrak{(S)}\) holds for the covariance matrices \( \SSigma_{k}   \)
  and \( \SSigma_{k,0} \) of the linear estimators:
\begin{equation*}
  (1-\delta) \SSigma_{k} \preceq  \SSigma_{k,0} \preceq (1+\delta)  \SSigma_{k}\;,\;\; k \le K.
\end{equation*}
\end{lemma}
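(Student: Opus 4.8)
The plan is to reduce the statement to three elementary facts — that the Kronecker product of two positive semidefinite matrices is positive semidefinite, that scalar factors pass freely in and out of the Kronecker product, and that the congruence $M \mapsto A M A^{\T}$ is monotone for the Löwner order — and then to chain them.

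For the first assertion, observe that $J_k = \mathbf{1}_k \mathbf{1}_k^{\T}$ with $\mathbf{1}_k = (1,\ldots,1)^{\T} \in \RR^k$, hence $J_k \succeq 0$; moreover $\Sigma$ and $\Sigma_0$ are diagonal with strictly positive entries (by $\min_i \sigma_i^2 > 0$ and the model assumption on $\Sigma_0$), so $\Sigma, \Sigma_0 \succ 0$. Since the spectrum of $A \otimes B$ consists of the products $\lambda_i(A)\lambda_j(B)$, the Kronecker product of positive semidefinite matrices is positive semidefinite, and therefore $J_k \otimes \Sigma \succeq 0$ and $J_k \otimes \Sigma_0 \succeq 0$ for every $k=2,\ldots,K$.

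For the second assertion I would first record Assumption $\mathfrak{(S)}$ in matrix form: since $\Sigma$ and $\Sigma_0$ are diagonal, the bound $1-\delta \le \sigma_{0,i}^2/\sigma_i^2 \le 1+\delta$ is equivalent to $(1-\delta)\Sigma \preceq \Sigma_0 \preceq (1+\delta)\Sigma$, i.e.\ both $\Sigma_0 - (1-\delta)\Sigma$ and $(1+\delta)\Sigma - \Sigma_0$ are (diagonal and) positive semidefinite. Next, tensoring with $J_k \succeq 0$ and using bilinearity of $\otimes$ gives $J_k \otimes (\Sigma_0 - (1-\delta)\Sigma) = (J_k \otimes \Sigma_0) - (1-\delta)(J_k \otimes \Sigma) \succeq 0$, and likewise for the upper bound, so $(1-\delta)(J_k \otimes \Sigma) \preceq J_k \otimes \Sigma_0 \preceq (1+\delta)(J_k \otimes \Sigma)$. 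Finally, applying the congruence by $\DD_k$ — which preserves the Löwner order because $\DD_k(N-M)\DD_k^{\T} \succeq 0$ whenever $N - M \succeq 0$ — and invoking the identities $\SSigma_k = \DD_k(J_k \otimes \Sigma)\DD_k^{\T}$ and $\SSigma_{k,0} = \DD_k(J_k \otimes \Sigma_0)\DD_k^{\T}$ from \eqref{def SSigma}--\eqref{def SSigma0} yields the desired chain $(1-\delta)\SSigma_k \preceq \SSigma_{k,0} \preceq (1+\delta)\SSigma_k$, $k \le K$.

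I do not expect any genuine obstacle: the argument is purely a sequence of order-preserving operations, and the only point worth spelling out carefully is that $\DD_k$ need not be invertible (indeed it is rectangular), so the middle step must be phrased as a one-sided conjugation preserving $\succeq 0$ rather than as a similarity transformation — which is precisely why the conclusion holds with no hypothesis beyond $\mathfrak{(S)}$ and the positivity of the diagonal entries of $\Sigma$.
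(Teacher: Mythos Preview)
Your proof is correct and follows essentially the same route as the paper: establish $(1-\delta)(J_k\otimes\Sigma)\preceq J_k\otimes\Sigma_0\preceq(1+\delta)(J_k\otimes\Sigma)$ and then conjugate by $\DD_k$. The only cosmetic difference is that the paper verifies the semidefiniteness and the Kronecker inequality by the explicit partitioned-vector identity $\gamma_{nk}^{\T}(J_k\otimes\Sigma)\gamma_{nk}=\tilde\gamma_n^{\T}\Sigma\,\tilde\gamma_n$ with $\tilde\gamma_n=\sum_{l=1}^k\gamma_{nk}^{(l)}$, whereas you invoke the general eigenvalue fact for Kronecker products; both arrive at the same conclusion with the same dependence on $\mathfrak{(S)}$.
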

\begin{proof}
Symmetry of \( J_k\) and \(\Sigma \), (respectively, \(\Sigma_0 \) ) implies symmetry of \(J_k \otimes \Sigma\), (respectively, \(J_k \otimes \Sigma_0\)). Notice that any vector \( \gamma_{nk} \in \R^{nk} \) can be represented as a partitioned vector \( \gamma_{nk}^{\T} = ((\gamma_{nk}^{(1)})^{\T},(\gamma_{nk}^{(2)})^{\T}, \ldots, (\gamma_{nk}^{(k)})^{\T}) \), with \( \gamma_{nk}^{(l)} \in \R^{n} \), \( l=1, \ldots, k \). Then
\begin{equation}\label{gamma nk}
    \gamma_{nk}^{\T} (J_k \otimes \Sigma) \gamma_{nk}
        = \big(\sum_{l=1}^k \gamma_{nk}^{(l)}\big)^{\T}  \Sigma \big( \sum_{l=1}^k \gamma_{nk}^{(l)} \big)
        = \tilde{\gamma}_n^{\T} \, \Sigma \, \tilde{\gamma}_n,
\end{equation}
where \( \tilde{\gamma}_n \eqdef \sum_{l=1}^k \gamma_{nk}^{(l)} \in \R^n \). Because
\( \Sigma \succ 0 \), this implies \( \tilde{\gamma}_n^{\T} \Sigma \,\tilde{\gamma}_n > 0 \) for all \( \tilde{\gamma}_n \ne 0 \). But even for \( \gamma_{nk} \ne 0 \), if its subvectors \( \{ \gamma_{nl}^{(l)} \} \) are linearly dependent, \( \tilde{\gamma}_n \) can be zero. Thus there exists a nonzero vector \( \gamma \) such that \( \gamma^{\T} (J_k \otimes \Sigma) \gamma =0 \). This means positive semidefiniteness.

\par The second assertion follows from the observation that
the condition \( \mathfrak{(S)}\) due to the equality \eqref{gamma nk} also holds for the Kronecker product
\begin{equation}\label{ineq for the midle parts of SSigmas}
 (1-\delta) J_k \otimes \Sigma \preceq J_k \otimes \Sigma_0 \preceq (1+\delta) J_k \otimes \Sigma.
\end{equation}
Therefore
\begin{equation*}
     (1-\delta) \DD_k(J_k \otimes \Sigma) \DD_k^{\T}
        \preceq \DD_k(J_k \otimes \Sigma_0 ) \DD_k^{\T}
        \preceq (1+\delta) \DD_k(J_k \otimes \Sigma) \DD_k^{\T}.
\end{equation*}

\end{proof}

\begin{lemma}\label{nonsingularity of SSigma rectang}
Suppose that the weights \( \{ \w{l}{\,i}(x) \} \) for every fixed \( x \in \R^d \) satisfy
\begin{equation}\label{binar product}
    \w{l}{\,i}(x) \w{m}{\,i}(x) = \w{l}{\,i}(x) \;,\;\; l \le m.
\end{equation}
Then under the conditions \( (\mathfrak{D}) \), \( (\mathfrak{Loc}) \), \( (\mathfrak{B}) \) the covariance matrix \( \SSigma_k \) defined by \eqref{def SSigma} is nonsingular with
\begin{equation}\label{det of SSigma rect}
    \det \SSigma_k = \det \B{k}^{-1} \prod_{l=2}^k \det (\B{l-1}^{-1} -\B{l}^{-1}) > 0
                    \;,\;\;k=2, \ldots, K.
\end{equation}
\end{lemma}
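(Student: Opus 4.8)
The plan is to compute the block structure of $\SSigma_k$ explicitly under the assumption \eqref{binar product}, and then to exhibit a unimodular block-triangular factorization that makes both the nonsingularity and the determinant formula immediate.

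First I would determine the $(l,m)$ block of $\SSigma_k = \DD_k(J_k\otimes\Sigma)\DD_k^\T$. Since $\DD_k = \diag(D_1,\dots,D_k)$ with $D_l = \B{l}^{-1}\PPsi\W{l}$ and all blocks of $J_k$ equal $1$, this block is $D_l\Sigma D_m^\T = \B{l}^{-1}\PPsi\,\W{l}\Sigma\W{m}\,\PPsi^\T\B{m}^{-1}$ (using that $\W{m}$ and $\B{m}$ are symmetric). By \eqref{def_Wk}, $\W{l} = \Sigma^{-1/2}\cc{W}_l\Sigma^{-1/2}$, so $\W{l}\Sigma\W{m} = \Sigma^{-1/2}\cc{W}_l\cc{W}_m\Sigma^{-1/2}$. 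Now \eqref{binar product} taken with $m=l$ forces $\w{l}{i}(x)\in\{0,1\}$, and then for $l\le m$ it gives $\cc{W}_l\cc{W}_m = \cc{W}_l$, i.e. $\cc{W}_l\cc{W}_m = \cc{W}_{\min(l,m)}$ in general; hence $\W{l}\Sigma\W{m} = \W{\min(l,m)}$. Using \eqref{B} I would conclude
\[
(\SSigma_k)_{lm} = \B{l}^{-1}\,\PPsi\W{\min(l,m)}\PPsi^\T\,\B{m}^{-1} = \B{l}^{-1}\,\B{\min(l,m)}\,\B{m}^{-1} = \B{\max(l,m)}^{-1}.
\]

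Next I would factor this "staircase" matrix. Let $L$ be the $pk\times pk$ block matrix whose $(l,j)$ block is $I_{\p}$ for $j\ge l$ and $0$ otherwise — block upper triangular with unit diagonal blocks, so $\det L = 1$ — and set $M \eqdef \diag\big(\B{1}^{-1}-\B{2}^{-1},\,\dots,\,\B{k-1}^{-1}-\B{k}^{-1},\,\B{k}^{-1}\big)$. A direct block multiplication gives $(LML^\T)_{lm} = \sum_{j\ge\max(l,m)}M_j = \B{\max(l,m)}^{-1}$ by telescoping, so $\SSigma_k = LML^\T$. Assumption $\mathfrak{(B)}$ yields $\B{l}\succeq u_0\B{l-1}\succ\B{l-1}$ with $u_0>1$, hence $\B{l-1}^{-1}-\B{l}^{-1}\succ0$; together with the standing assumption $\det\B{l}>0$ (valid under $\mathfrak{(D)}$ and $\mathfrak{(Loc)}$) this makes every diagonal block of $M$ positive definite, so $M\succ0$. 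Consequently $\SSigma_k = LML^\T\succ0$, and $\det\SSigma_k = (\det L)^2\det M = \det M = \det\B{k}^{-1}\prod_{l=2}^{k}\det(\B{l-1}^{-1}-\B{l}^{-1}) > 0$, which is \eqref{det of SSigma rect}.

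The only slightly delicate point is the block computation $\cc{W}_l\cc{W}_m = \cc{W}_{\min(l,m)}$ and carrying the $\Sigma^{\pm1/2}$ factors through correctly; once the staircase form of $\SSigma_k$ is in hand, the factorization $\SSigma_k = LML^\T$ and the determinant formula are pure bookkeeping. (One could instead read $\SSigma_k$ as the covariance of the telescoping partial sums of the independent increments $\mmle{l}-\mmle{l+1}$, but the explicit factorization is cleaner and self-contained.)
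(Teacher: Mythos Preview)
Your proof is correct and follows essentially the same route as the paper: both compute the $(l,m)$ block of $\SSigma_k$ as $\B{\max(l,m)}^{-1}$ and then reduce to a block-triangular form with diagonal blocks $\B{1}^{-1}-\B{2}^{-1},\dots,\B{k-1}^{-1}-\B{k}^{-1},\B{k}^{-1}$. The only cosmetic difference is that the paper performs explicit block row reduction to obtain a triangular determinant, whereas you package the same operations as a congruence $\SSigma_k=LML^\T$; your version has the small bonus of yielding $\SSigma_k\succ0$ directly rather than merely nonsingularity.
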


\begin{remark}
The condition \eqref{binar product} holds for rectangular kernels with nested supports.
\end{remark}

\begin{proof}
The condition \eqref{binar product} implies \( \W{l} \Sigma \W{m} = \diag(\w{l}{1} \w{m}{1} \sigma_1^{-2} , \ldots,  \w{l}{n}  \w{m}{n}  \sigma_n^{-2} ) = \W{l} \) for any \( l \le m \). Thus, the blocks of \( \SSigma_k \) simplify to \( D_l \Sigma D_m^{\T} =
\B{l}^{-1}  \PPsi  \W{l} \Sigma \W{m}  \PPsi^{\T}  \B{m}^{-1} =
\B{l}^{-1}  \PPsi  \W{l}   \PPsi^{\T}  \B{m}^{-1} \), and \( \SSigma_k  \) has a simple structure:
\[
\SSigma_k =
\begin{pmatrix}
    \B{1}^{-1} & \B{2}^{-1}  & \B{3}^{-1}  & \ldots &       \B{k}^{-1} \\
    \B{2}^{-1} & \B{2}^{-1}  & \B{3}^{-1}  & \ldots &       \B{k}^{-1} \\
    \vdots     &   \vdots    &   \vdots    &  \vdots &       \vdots   \\
    \B{k}^{-1} & \B{k}^{-1}  & \B{k}^{-1}  & \ldots &        \B{k}^{-1}
\end{pmatrix}.
\]
Then the determinant of \( \SSigma_k \) coincides with the determinant of the following irreducible block triangular matrix:
\[
\det \SSigma_k =
\begin{vmatrix}
    \B{1}^{-1}-\B{2}^{-1} &  \B{2}^{-1}-\B{3}^{-1}  & \ldots & \B{k-1}^{-1}-\B{k}^{-1} & \B{k}^{-1} \\
     \bb{0}               &  \B{2}^{-1}-\B{3}^{-1}  & \ldots & \B{k-1}^{-1}-\B{k}^{-1} & \B{k}^{-1} \\
     \vdots               &          \vdots         & \vdots &       \vdots            &   \vdots   \\
     \bb{0}               &   \bb{0}                & \ldots & \B{k-1}^{-1}-\B{k}^{-1} & \B{k}^{-1} \\
     \bb{0}               &   \bb{0}                & \bb{0} &      \bb{0}             & \B{k}^{-1}
\end{vmatrix},
\] implying
\begin{equation*}
    \det \SSigma_k = \det(\B{1}^{-1}-\B{2}^{-1}) \det(\B{2}^{-1}-\B{3}^{-1})\cdot \ldots \cdot
        \det(\B{k-1}^{-1}-\B{k}^{-1}) \det\B{k}^{-1}.
\end{equation*}
Clearly the matrix \( \SSigma_k \) is nonsingular if all the matrices \( \B{l-1}^{-1}-\B{l}^{-1} \) are nonsingular. By \( (\mathfrak{D}) \) and \( (\mathfrak{Loc}) \) \( \B{l} \succ 0 \) for any \( l \). By \( (\mathfrak{B}) \) there exists \( u_0> 1 \) such that \( \B{l}  \succeq  u_0  \B{l-1} \) therefore
\( \B{l-1}^{-1}-\B{l}^{-1} \succeq (1-1/u_0) \B{l-1}^{-1} \succ \B{l-1}^{-1} \succ 0\).
\end{proof}

\begin{lemma}\label{MGF for joint distribution}
Under the alternative the moment generation function (mgf) of the joint distribution of \( \mmle{1}, \ldots , \mmle{K} \) is
\begin{equation}\label{MGF under alternative}
    \EE \exp \big\{ \gamma^{\T} ( \vec \tilde \TTa_K - \vec \TTa^*_K) \big\}
        = \exp \bigg\{ \frac{1}{2} \gamma^{\T} \SSigma_{K,0} \, \gamma \bigg\}.
\end{equation}
Thus, provided that \( \SSigma_{K,0} \succ 0 \), it holds \( \vec \tilde \TTa_K \sim \norm{\vec \TTa^*_K}{\SSigma_{K,0}} \).
\par Similarly, under the null, if \( \SSigma_K \succ 0 \), the joint distribution of \( \vec \tilde \TTa_K \) is \(  \norm{\vec \TTa_K}{\SSigma_K} \) with mgf
\begin{equation}\label{MGF under null}
    \EE \exp \big\{ \gamma^{\T} ( \vec \tilde \TTa_K - \vec \TTa_K) \big\}
        = \exp \bigg\{ \frac{1}{2} \gamma^{\T} \SSigma_K \, \gamma \bigg\}.
\end{equation}
\end{lemma}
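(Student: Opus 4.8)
The plan is to represent the centred stacked estimator $\vec\tilde\TTa_K - \vec\TTa^*_K$ as a deterministic linear image of the single standard Gaussian vector $\eeps \sim \norm{0}{I_n}$ and then to read off the moment generating function from the scalar Gaussian Laplace transform $\EE\exp\{t\zeta\} = \exp\{t^2\Var\zeta/2\}$, valid for any centred one--dimensional Gaussian $\zeta$.

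First I would invoke the decomposition \eqref{linearity if quasiMLE}: for each $k = 1,\ldots,K$ one has $\mmle{k} = \bbpf{k} + D_k\Sigma_0^{1/2}\eeps$ with $D_k = \B{k}^{-1}\PPsi\W{k}$. Stacking these $K$ identities and using that $\DD_K = \diag(D_1,\ldots,D_K)$ is block diagonal, the $l$th block of $\vec\tilde\TTa_K - \vec\TTa^*_K$ is $D_l\Sigma_0^{1/2}\eeps$. Hence, writing an arbitrary test vector as $\gamma = (\gamma_1^\T,\ldots,\gamma_K^\T)^\T \in \RR^{pK}$ with $\gamma_l \in \RRp$,
\begin{equation*}
 \gamma^\T\big(\vec\tilde\TTa_K - \vec\TTa^*_K\big) = \sum_{l=1}^K \gamma_l^\T D_l\Sigma_0^{1/2}\eeps = \Big(\sum_{l=1}^K \Sigma_0^{1/2}D_l^\T\gamma_l\Big)^{\T}\eeps,
\end{equation*}
which is a centred scalar Gaussian with variance $\big\|\sum_{l=1}^K \Sigma_0^{1/2}D_l^\T\gamma_l\big\|^2 = \sum_{l,m=1}^K\gamma_l^\T D_l\Sigma_0 D_m^\T\gamma_m$. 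On the other hand, the $(l,m)$ block of $\SSigma_{K,0} = \DD_K(J_K\otimes\Sigma_0)\DD_K^\T$ from \eqref{def SSigma0} is exactly $D_l\Sigma_0 D_m^\T$ (the $(l,m)$ entry of $J_K$ being $1$), so the above variance equals $\gamma^\T\SSigma_{K,0}\gamma$; applying the scalar Gaussian transform at $t=1$ yields \eqref{MGF under alternative}.

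To finish, I would identify the law: the moment generating function in \eqref{MGF under alternative} is finite for every $\gamma$ and coincides with that of $\norm{\vec\TTa^*_K}{\SSigma_{K,0}}$, so by uniqueness of the moment generating function $\vec\tilde\TTa_K \sim \norm{\vec\TTa^*_K}{\SSigma_{K,0}}$ whenever $\SSigma_{K,0}\succ 0$. The assertions \eqref{MGF under null} and the identification under the null follow by repeating the computation verbatim for the approximate model \eqref{PA}, where $\YY = \PPsi^\T\tta + \Sigma^{1/2}\eeps$; then $\mmle{k} = D_k\PPsi^\T\tta + D_k\Sigma^{1/2}\eeps = \tta + D_k\Sigma^{1/2}\eeps$ because $D_k\PPsi^\T = \B{k}^{-1}\PPsi\W{k}\PPsi^\T = I_{\p}$ by \eqref{B}, so $\bbpf{k} = \tta$ for every $k$, $\vec\TTa^*_K$ collapses to $\vec\TTa_K$, and $\Sigma_0$ is replaced throughout by $\Sigma$.

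There is no genuine obstacle here; the computation is essentially a one--line Gaussian Laplace transform once the bookkeeping is set up. The two points that deserve care are (i) tracking how the block--diagonal $\DD_K$ combined with the all--ones matrix $J_K$ produces the blockwise identity $(\SSigma_{K,0})_{lm} = D_l\Sigma_0 D_m^\T$, and (ii) noting that the moment generating function identity holds with no rank assumption on $\SSigma_{K,0}$, so that the hypothesis $\SSigma_{K,0}\succ 0$ (respectively $\SSigma_K\succ 0$) is used only at the very last step, to pass from the transform to a nondegenerate multivariate normal law.
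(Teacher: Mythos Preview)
Your proposal is correct and follows essentially the same route as the paper: both proofs partition $\gamma$ into $p$-blocks, use the decomposition \eqref{linearity if quasiMLE} to write $\gamma^{\T}(\vec\tilde\TTa_K-\vec\TTa^*_K)$ as a linear functional of the single Gaussian vector $\eeps$, compute its variance, and identify it with $\gamma^{\T}\SSigma_{K,0}\gamma$. The only cosmetic difference is that the paper packages the variance identification through the Kronecker form $(\DD_K^{\T}\gamma)^{\T}(J_K\otimes\Sigma_0)\DD_K^{\T}\gamma$, whereas you expand the $(l,m)$ block of $\SSigma_{K,0}$ as $D_l\Sigma_0 D_m^{\T}$ directly; your extra line $D_k\PPsi^{\T}=I_{\p}$ for the null case is a welcome clarification that the paper leaves implicit.
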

\begin{proof}
Let \( \gamma \in \R^{pK} \) be written in a partitioned form
\( \gamma^{\T} = (\gamma_1^{\T}, \ldots, \gamma_K^{\T}) \)
with subvectors \( \gamma_l \in \R^p \), \( l=1, \ldots, K \). Then the mgf for the centered random vector
\( \vec \tilde \TTa_K - \vec \TTa^*_K \in \R^{pK} \) due to the decomposition \eqref{linearity if quasiMLE} \( \mmle{l} = \bbpf{l} + D_l \Sigma_0^{1/2} \eeps\) with \( D_l = \B{l}^{-1}\PPsi \W{l}\)
can be represented as follows:
  \begin{eqnarray*}
    &&\EE \exp \big\{  \gamma^{\T} ( \vec \tilde \TTa_K - \vec \TTa^*_K) \big\}
        = \EE \exp\big\{ \sum_{l=1}^K \gamma_l^{\T} (\mmle{l} - \bbpf{l}) \big\} \\
    &=& \EE \exp\big\{ \sum_{l=1}^K \gamma_l^{\T} D_{l} \Sigma_0^{1/2} \eeps \big\}
        = \EE \exp\big\{ \big(\sum_{l=1}^K D_{l}^{\T} \gamma_l  \big)^{\T} \Sigma_0^{1/2} \eeps \big\}.
  \end{eqnarray*}
A trivial observation that \( \sum_{l=1}^K D_{l}^{\T} \gamma_l   \) is a vector in \( \R^n \) and
\( \Sigma_0^{1/2} \eeps \sim \norm{0}{\Sigma_0} \) by \eqref{true model} implies by definition of \( \SSigma_{K,0} \) the first assertion of the lemma, because
\begin{eqnarray*}
  &&\EE \exp\big\{ \big(\sum_{l=1}^K D_{l}^{\T} \gamma_l  \big)^{\T} \Sigma_0^{1/2} \eeps \big\}
     = \exp \bigg\{ \frac{1}{2} \big(\sum_{l=1}^K D_{l}^{\T} \gamma_l  \big)^{\T}
            \Sigma_0 \big(\sum_{l=1}^K D_{l}^{\T} \gamma_l  \big) \bigg\}  \\
  &=&\exp \bigg\{ \frac{1}{2}\big(\DD_K^{\T} \gamma \big)^{\T} (J_K \otimes \Sigma_0) \DD_K^{\T} \gamma \bigg\} = \exp \bigg\{ \frac{1}{2} \gamma^{\T} \SSigma_{K,0} \, \gamma \bigg\},
\end{eqnarray*}
where \( \DD_K \) is defined by \eqref{def DD_K}.
\end{proof}

\begin{lemma}\label{KL for joint distr}
The Kullback-Leibler divergence between the distributions of \( \vec \tilde \TTa_k \) under the alternative and under the null has the following form:
\begin{eqnarray}
  & &2\KL(\P_{\ff, \Sigma_0}^k,\P_{\tta, \Sigma}^k) \eqdef 2\EE_{\ff, \Sigma_0}
  \log\big( \frac{\dd \P_{\ff, \Sigma_0}^k}{\dd \P_{\tta, \Sigma}^k}  \big) \label{KL joint}\nn
  &=&  \Delta(k)  +\log\bigg(\frac{\det \SSigma_k}{\det \SSigma_{k,0}} \bigg)
            + \tr (\SSigma_k^{-1} \SSigma_{k,0}) -pk,
\end{eqnarray}
where
\begin{eqnarray}
  b(k) &\eqdef& \vec \TTa^*_k - \vec \TTa_k \label{def_b(k)}, \\
  \Delta(k) & \eqdef & b(k)^{\T} \SSigma_k^{-1} b(k)  \label{def_Delta(k)}.
\end{eqnarray}

\end{lemma}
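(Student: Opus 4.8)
The plan is to exploit the fact that, under the standing assumption $\SSigma_K \succ 0$, both $\P_{\ff, \Sigma_0}^k$ and $\P_{\tta, \Sigma}^k$ are genuine (non\nobreakdash-degenerate) Gaussian laws on $\RR^{pk}$, so that $\log Z_k$ is an explicit quadratic polynomial whose expectation under the alternative can be read off term by term. First I would record that $\SSigma_k$ is the principal $pk\times pk$ submatrix of $\SSigma_K$ obtained from the first $k$ block rows and columns, hence $\SSigma_k \succ 0$, and that $\SSigma_{k,0}\succ 0$ as well --- either likewise as a principal submatrix of $\SSigma_{K,0}$, or directly from $(1-\delta)\SSigma_k \preceq \SSigma_{k,0}$ in \eqref{multi cond sigma}. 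Together with Lemma~\ref{MGF for joint distribution} (whose argument applies verbatim to the first $k$ estimators, marginalization preserving normality) this gives $\vec\tilde\TTa_k \sim \norm{\vec\TTa^*_k}{\SSigma_{k,0}}$ under the alternative and $\vec\tilde\TTa_k \sim \norm{\vec\TTa_k}{\SSigma_k}$ under the null, so both densities exist and, with $Z_k = \dd\P_{\ff, \Sigma_0}^k/\dd\P_{\tta, \Sigma}^k$ as in \eqref{def Zk},
\begin{equation*}
  2\log Z_k(y) = \log\frac{\det\SSigma_k}{\det\SSigma_{k,0}}
    - \| \SSigma_{k,0}^{-1/2}(y-\vec\TTa^*_k) \|^2
    + \| \SSigma_k^{-1/2}(y-\vec\TTa_k) \|^2 .
\end{equation*}

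Next I would integrate this identity against $\P_{\ff, \Sigma_0}^k$, writing $y = \vec\TTa^*_k + \SSigma_{k,0}^{1/2}\xi$ with $\xi \sim \norm{0}{I_{pk}}$. The first term is constant and contributes $\log(\det\SSigma_k/\det\SSigma_{k,0})$. In the second term $\SSigma_{k,0}^{-1/2}(y-\vec\TTa^*_k) = \xi$, so its expectation is $\EE\|\xi\|^2 = pk$, contributing $-pk$. For the third term I would substitute $y - \vec\TTa_k = \SSigma_{k,0}^{1/2}\xi + b(k)$ with $b(k)=\vec\TTa^*_k - \vec\TTa_k$ as in \eqref{def_b(k)}; expanding the quadratic form, the two cross terms vanish because $\EE\xi = 0$, the purely stochastic part has expectation $\EE\,\xi^{\T}\SSigma_{k,0}^{1/2}\SSigma_k^{-1}\SSigma_{k,0}^{1/2}\xi = \tr(\SSigma_k^{-1}\SSigma_{k,0})$ by cyclicity of the trace, and the deterministic part is $b(k)^{\T}\SSigma_k^{-1}b(k) = \Delta(k)$. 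Adding the four contributions yields exactly
$2\KL(\P_{\ff, \Sigma_0}^k,\P_{\tta, \Sigma}^k) = \Delta(k) + \log(\det\SSigma_k/\det\SSigma_{k,0}) + \tr(\SSigma_k^{-1}\SSigma_{k,0}) - pk$, which is the claimed formula.

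The computation is essentially routine --- it is the standard expression for the Kullback--Leibler divergence between two multivariate normals --- and I do not expect a genuine obstacle in it. The only point deserving care is the non\nobreakdash-degeneracy of the two laws: the ``$\log\det$'' terms and the very existence of $Z_k$ require $\SSigma_k,\SSigma_{k,0}\succ 0$, which is precisely why the running hypothesis $\SSigma_K \succ 0$ (and hence, via \eqref{multi cond sigma}, $\SSigma_{k,0}\succ 0$) is imposed; under it no null\nobreakdash-set bookkeeping or pseudo\nobreakdash-determinant interpretation is needed, and the Taylor-expansion manipulation of $\log Z_k$ already used in the proof of Theorem~\ref{Propagation result theorem} can be reused unchanged.
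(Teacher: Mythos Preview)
Your proposal is correct and follows essentially the same approach as the paper: both write out $2\log Z_k$ as the explicit difference of two Gaussian log-densities and then integrate against $\P_{\ff,\Sigma_0}^k$ via the substitution $y=\vec\TTa^*_k+\SSigma_{k,0}^{1/2}\xi$ with $\xi\sim\norm{0}{I_{pk}}$. The only cosmetic difference is that the paper first reorganises the term $\|\SSigma_k^{-1/2}(y-\vec\TTa_k)\|^2$ by a ``Taylor expansion at $\vec\TTa^*_k$'' before taking expectations, whereas you expand that quadratic directly; the algebra and the resulting contributions $\Delta(k)$, $\tr(\SSigma_k^{-1}\SSigma_{k,0})$, $-pk$, $\log(\det\SSigma_k/\det\SSigma_{k,0})$ coincide.
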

\begin{proof} 

\par Denote the Radon-Nikodym derivative by \( Z_k \eqdef \dd \P_{\ff, \Sigma_0}^k / \dd \P_{\tta, \Sigma}^k \). Then
\begin{eqnarray}\label{logdp}
    \log\big(Z_k(y)\big) =
        \frac{1}{2} \log\bigg(\frac{\det \SSigma_k}{\det \SSigma_{k,0}} \bigg)
        &-&\frac{1}{2} \| \SSigma_{k,0}^{-1/2} (y - \vec \TTa^*_k) \|^2  \nn
        &+& \frac{1}{2}\| \SSigma_k^{-1/2} (y - \vec \TTa_k) \|^2
\end{eqnarray}
 can be considered as a quadratic function of \( \vec \TTa_k \). By the Taylor expansion at the point \( \vec \TTa^*_k \) the last expression reads as follows:
\begin{eqnarray*}
 & & \log\big(Z_k(y)\big) =
        \frac{1}{2} \log\bigg(\frac{\det \SSigma_k}{\det \SSigma_{k,0}} \bigg)
        -\frac{1}{2} \| \SSigma_{k,0}^{-1/2} (y - \vec \TTa^*_k) \|^2  \nn
&+&  \frac{1}{2} \| \SSigma_k^{-1/2} (y - \vec \TTa^*_k) \|^2
+ b(k)^{\T} \SSigma_k^{-1} (y- \vec \TTa^*_k) + \frac{1}{2} \Delta(k).
\end{eqnarray*}
Then the expression for the Kullback-Leibler divergence can be written in the following way:
\begin{eqnarray*}
  & &\KL(\P_{\ff, \Sigma_{0}}^k,\P_{\tta, \Sigma}^k) \eqdef \EE_{\ff, \Sigma_0}
  \log\big(Z_k\big) \\
  &=&  \frac{1}{2} \log\bigg(\frac{\det \SSigma_k}{\det \SSigma_{k,0}} \bigg)
       + \frac{1}{2} \Delta(k)
       + \frac{1}{2} \EE \big\{ \| \SSigma_k^{-1/2} \SSigma_{k,0}^{1/2} \xi \|^2 -\| \xi \|^2
       + 2 b(k)^{\T} \SSigma_k^{-1} \SSigma_{k,0}^{1/2} \xi  \big\},
\end{eqnarray*}
where \( \xi \sim \norm{0}{I_{pk}} \). This implies
\begin{equation}\label{KL general}
    2 \KL(\P_{\ff, \Sigma_0}^k,\P_{\tta, \Sigma}^k)
        = \Delta(k)
            +\log\bigg(\frac{\det \SSigma_k}{\det \SSigma_{k,0}} \bigg)
            + \tr (\SSigma_k^{-1} \SSigma_{k,0}) -pk.
\end{equation}
\par In the case of \emph{homogeneous errors} with \( \sigma_{0,i} = \sigma_{0} \) and
 \( \sigma_{i} = \sigma, i = 1, \ldots ,n \), the calculations simplify a lot. Now
 \begin{equation*}
    \SSigma_k = \sigma^2 \VV_k, \;\;\; \SSigma_{k,0} = \sigma^2_0 \VV_k
 \end{equation*}
with a \( pk \times pk  \) matrix \( \VV_k \) defined as
\begin{equation*}
     \VV_k = \big( \bar{D}_1 \oplus  \cdots \oplus \bar{D}_k  \big)
                    \big( J_k \otimes I_n \big)
                    \big( \bar{D}_1 \oplus  \cdots \oplus \bar{D}_k  \big)^{\T},
\end{equation*}
where \( \bar{D}_l = (\PPsi \cc{W}_l \PPsi^{\T})^{-1} \PPsi \cc{W}_l  \), \( l= 1, \ldots, k \), does not depend on \( \sigma \). Then \( \Delta(k) = \sigma^{-2} \Delta_1(k)  \), with \( \Delta_1(k) \eqdef b(k)^{\T} \VV_k^{-1} b(k) \), \( \det \SSigma_k / \det\SSigma_{k,0}  = (\sigma^2/\sigma_0^2)^{pk} \) and the expression for the Kullback-Leibler divergence reads as follows:
\begin{eqnarray}\label{KL hom}
    \KL(\P_{\ff, \Sigma_{0}}^k,\P_{\tta, \Sigma}^k)
        &=& p k \log\big( \frac{\sigma}{\sigma_0} \big)
            + \frac{1}{2} \Delta(k) + \frac{p k}{2} \big( \frac{\sigma_0^2}{\sigma^2} - 1 \big)\nn
        &=&  p k \log\big( \frac{\sigma}{\sigma_0} \big)
            + \frac{1}{2 \sigma^2} b(k)^{\T} \VV_k^{-1} b(k)
            + \frac{p k}{2} \big( \frac{\sigma_0^2}{\sigma^2} - 1 \big),
\end{eqnarray}
implying the same asymptotic behavior as in \eqref{bounds for KL}.

\end{proof}

\chapter{Dependence on the dimension for complexity of approximation \\of random fields}
\label{chap:Approx}

In this chapter we consider the \( \e \)-approximation by $n$-term partial sums of the Karhunen-Lo\`eve expansion of $d$-parametric random fields of tensor product-type in the average case setting. We investigate the behavior as $d\to \infty$ of the information complexity $n(\e,d)$ of approximation  with error not exceeding a given level $\e$. It was recently shown by Lifshits and Tulyakova \citeasnoun{LT} that for this problem one observes the curse of dimensionality (intractability) phenomenon. We present the exact asymptotic expression for the information complexity \( n(\eps,d) \).

\section{Introduction and set-up}

\par Suppose we have a random function $X(t)$, with $t$ in a compact parameter set $T$, admitting a series representation via random variables $\xi_k$ and the deterministic real functions $\varphi_k$,
namely, \[ X(t) = \sum_{k=1}^{\infty}\xi_k \varphi_k(t),\] where the series converges in the mean and a.s.\ for each $t \in T$.
A more precise description will be given later.
For any finite set of positive integers $\kk\subset \N$ let $X_\kk(t) = \sum_{k\in \kk} \xi_k \varphi_k(t)$. In many problems one needs to approximate $X$, for instance under the $L_2$-norm with a finite-rank process $X_\kk$. Natural questions arise: How large should $\kk$ be in order to yield a
given small approximation error? Given the size of $\kk$, which $\kk$ provides the smallest error?

\par In this chapter we address the first of these questions for a specific class of random functions,
namely {\it tensor product-type random fields} with high-dimensional parameter sets. The tensor product-type field is a separable zero-mean random function $X=\{ X(t)\}_{t\in T}$,
with a rectangular parameter set \( T \subset \RR^d \) and covariance function $\KK^{(d)}$
which can be decomposed into a product of equal ``marginal'' covariances depending on different arguments.
Namely, let $T=[0,1]^d$  and
\be \label{K} \KK^{(d)}(s,t) = \prod_{l=1}^d \KK_l(s_l,t_l) \ee
for all $s_l, t_l \in [0,1]$,
$s=(s_1, ... , s_d)$, $t = (t_1, ... , t_d)$. Obviously, the integral operator
with the kernel~\eqref{K} is the tensor product of the integral operators with the kernels
 $\KK_l(s_l,t_l)$.
\medskip

\par Let $\{\lambda_{i}\}_{i\ge 1}$ be a nonnegative sequence satisfying
\be  \label{l2}
  \sum_{i=1}^\infty \lambda_{i}^2 <\infty
\ee
and let $\{\varphi_i\}_{i>0}$ be an orthonormal basis in $L_2[0,1]$. Consider a family of tensor
product-type random fields \be
\label{defX}
\X=\left\{ X^{(d)}(t), t \in[0,1]^d \right\}\,,\;\; d=1,2,\ldots\, .\ee
According to the multiparametric Karhunen-Lo\`eve expansion (see~\cite{A} for details),
the~family~\eqref{defX} can be given by
\bea \label{xd1} X^{(d)}(t)
    &=& \sum_{\bk\in \N^d} \xi_{\bk} \prod_{l=1}^d \la_{k_l} \prod_{l=1}^d \varphi_{k_l}(t_l)\\
&=&  \sum_{k_1=1}^{\infty} \cdots \sum_{k_d=1}^{\infty}
\xi_{k_1,\ldots,k_d} \la_{k_1} \cdots \la_{k_d} \varphi_{k_1}(t_1)
\cdots \varphi_{k_d}(t_d) ,\;\; \nonumber \eea

\noindent where the series converges a.s.\ for every $t = (t_1, \ldots , t_d)\in [0,1]^d$.
The collection $\{\xi_{\bk}\}$ is an array of noncorrelated random variables with zero mean and
unit variance, and $\la_{k_l}^2$ and $\varphi_{k_l}$ are, respectively, the eigenvalues and eigenfunctions of the family of integral equations
\[ \la_{k_l}^2 \varphi_{k_l}(t_l) = \int_0^1 \KK_l(s_l,t_l)\varphi_{k_l}(s_l) \mathrm{d}s_l \;,\;\;\;\;\; t_l \in [0,1] \;,\;\;\;\;\; l=1,...,d, \] corresponding to the ``marginal'' covariance operators. Clearly, under assumption~(\ref{l2}) the sample paths of
$X^{(d)}$ belong to $L_2([0,1]^d)$ almost surely and the covariance
operator of $X^{(d)}$ has the system of eigenvalues
\be\label{sobd}
\lambda^2_{\bk}= \prod_{l=1}^{d} \la_{k_l}^2\,,\,\,\,\,  \bk \in \N^d.
\ee
\par As was mentioned in~\cite{Sab}, the Karhunen-Lo\`eve expansion or the proper orthogonal decomposition of random functions was introduced independently and almost simultaneously by Kosambi~\cite{Kos}, Lo\`eve~\cite{L}, Karhunen~\cite{Karh1},~\cite{Karh2}, Obukhov~\cite{Ob}, and Pougachev~\cite{Pug}.

\par In what follows we suppress the index $d$ and write $X(t)$
instead of $X^{(d)}(t)$. For any $n>0$, let $X_n$ be the partial sum of (\ref{xd1})
corresponding to $n$ maximal eigenvalues. We study the {\it average case error }
of approximation to $X$ by $X_n$ $$
       e(X, X_n; d) = \left(\EE ||X-X_n ||^2_{2}
       \right)^{1/2},
$$ as $d \to \infty$.

\par It is well known (see, for example,
\cite{BS}, \cite{KL} or \cite{R}) that $X_n$ provides the minimal average quadratic error among all linear approximations to $X$ having rank~$n$. Because we are going to explore a {\it family} of random functions, it is more natural to investigate {\it relative} errors, that is, to compare
the error size with the size of the function itself. Denote the ``marginal'' trace by
\[ \La\eqdef\sum_{i=1}^\infty \la_{i}^2. 
\]
Then
\[
\EE\| X\|^2_2 = \sum_{\bk\in\N^d} \la_{\bk}^2 = \La^d.
\]

The {\it average case information complexity} for the normalized error criterion reads as the minimal number of terms in $X_n$ (or, equivalently, of maximal eigenvalues, if they would be ordered) needed to approximate $X$ with the error not exceeding a given level $\e$:
\[
n(\e,d) \eqdef
 \min \Big\{n : \frac{e(X, X_n; d)}{\left(\EE\| X\|^2_2\right)^{1/2}} \leq \e\Big\}
= \min \{n : \EE \|X-X_n \|^2_2 \leq \e^2\La^d \}.
\]

The study of $n(\e,d)$  we are interested in here belongs to the
class of problems dealing with the dependence of the information complexity
for linear multivariate problems on the dimension, see the papers
of Wo\'{z}niakovski \cite{W92}, \cite{W94a}, \cite{W94b},
\cite{W2006} and the references therein.

Generally, the linear tensor problems with \( \la_2 >0 \) for the normalized error criterion are intractable, since
\begin{equation*}
    n(\e,d) \ge (1 - \eps^2) \big( 1+ \frac{\la_2}{\la_1} \big)^d \;\; \; \text{for all}\;\; \eps \in [0,1)
\end{equation*}
is exponential in \( d \) and the curse of dimensionality takes place, see Theorem~6.6 of~\citeasnoun{Novak and Wozniakowski}.
However, it is interesting to know the exact behavior of the information complexity \( n(\e,d) \) even
in this case, because this kind of negative result can help in lifting the curse of dimensionality.


It was suggested in \cite{LT} to use an auxiliary probabilistic
construction for studying the properties of the deterministic array of
eigenvalues (\ref{sobd}). We follow this approach.

Consider a sequence  of independent identically distributed random
variables $\left\{U_l\right\}, \,\,l=1,2,...$ with the common
distribution given by
\be\label{def_Ul}
\PP(U_l=-\log
\la_{i})=\frac{\la_{i}^2}{\La} \,,\,\,\,\,i=1,2,...
\ee

Under the assumption
\be\label{3d_moment}
\sum_{i=1}^{\infty}|\log \la_{i}|^3 \la_{i}^2 \;<\; \infty,
\ee
the condition $ \EE | U_l |^3 < \infty$ is obviously satisfied.

Let $M$ and $\s^2$ denote, respectively, the mean and the variance of $U_l$. Clearly,
\bea \nonumber M &=& - \sum_{i=1}^{\infty}\log
\la_{i}\,\frac{ \la_{i}^2}{\La},
\\ \nonumber
\s^2 &=&
\sum_{i=1}^{\infty}|\log \la_{i}|^2\, \frac{ \la_{i}^2}{\La}\; -\;
M^2.
\eea
Then the third central moment of $U_l$ is given by
\[
\al^3 \eqdef \EE(U_l -M)^3 =  - \sum_{i=1}^{\infty}\left(\log
\la_{i}\right)^3\,\frac{ \la_{i}^2}{\La} \; - \; 3M\s^2 \;-\;M^3.
\]
If \eqref{3d_moment} is verified, we have $|M|<\infty$, $0 \leq \s^2
< \infty$, and $|\al|< \infty.$

In what follows the explosion coefficient \be\label{Expl} \cc E \eqdef \La
e^{2M} \ee will play a significant role, because its contribution into the ``curse of dimensionality'' is the largest. It was shown in \cite{LT}
that by concavity of the logarithmic function $\cc E >1$, except for the totally degenerate case when the
number of strictly positive eigenvalues is zero or one. In other
words, $\cc E =1$ if and only if $\s=0$. Henceforth, we will exclude this degenerate case.

The following result was obtained in \cite{LT}, Theorem 3.2.

\begin{theorem}\label{T:LT_3.2}
Assume that the sequence \( \{\la_{i}\} \), \( i=1,2, \ldots \), satisfies the condition
\[  \sum_{i=1}^{\infty}|\log \la_{i}|^2\,
\la_{i}^2 < \infty.
\]
Then for every $\e \in (0,1)$ we have
\[
\lim_{d \to \infty} \frac{\log n(\e,d) - d \log \cc E}{\sqrt{d}} =
2q,
\]
where the quantile $q=q(\e)$ is chosen
from the equation
\be\label{q}
1 - \Phi\left( \frac{q}{\s}\right) = \e^2
\ee
with \( \Phi(\cdot)  \) denoting the standard normal distribution function.
\end{theorem}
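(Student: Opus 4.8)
The plan is to reduce the evaluation of $n(\e,d)$ to two facts about the i.i.d.\ partial sum $S_d\eqdef U_1+\cdots+U_d$ attached to the eigenvalue array by \eqref{def_Ul}: an ordinary central limit statement, used only to locate a threshold, together with an exact ``exponential reweighting'' identity that turns a count of large eigenvalues into an expectation over $S_d$. Throughout one uses that the law of $S_d$ assigns mass $\la_{\bk}^2/\La^d$ to the value $-\log\la_{\bk}$ for every $\bk\in\N^d$.

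First I would recast the definition of $n(\e,d)$ in terms of the counting function $N(t)\eqdef\#\{\bk\in\N^d:-\log\la_{\bk}\le t\}$. Because $\{\varphi_i\}$ is orthonormal, $\EE\|X-X_n\|_2^2$ is the sum of the $\la_{\bk}^2$ over the $\bk$ \emph{not} among the $n$ largest, while $\EE\|X\|_2^2=\La^d$; hence the leftover mass after retaining the top $N(t)$ eigenvalues is exactly $\sum_{\bk:\,-\log\la_{\bk}>t}\la_{\bk}^2=\La^d\,\PP(S_d>t)$ (the $N(t)$ retained terms are precisely those with $\la_{\bk}^2\ge e^{-2t}$, which are the $N(t)$ largest). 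Since this leftover mass is nonincreasing in the number of retained terms, one obtains the sandwich
\begin{equation*}
  N(t^-)\;\le\;n(\e,d)\;\le\;N(t^+)\qquad\text{whenever}\qquad \PP(S_d>t^+)\le\e^2<\PP(S_d>t^-).
\end{equation*}
By the ordinary CLT, $\big(S_d-dM\big)/(\s\sqrt d)$ converges in distribution to a standard normal — this needs only $\s^2<\infty$, i.e.\ the hypothesis $\sum_i|\log\la_i|^2\la_i^2<\infty$ — so for any fixed $\gamma>0$ one may take, for all large $d$, $t^{\pm}=dM+(q\pm\gamma)\sqrt d$ with $q$ the solution of \eqref{q}: indeed $\PP(S_d>t^{\pm})\to 1-\Phi\big((q\pm\gamma)/\s\big)$, which lies strictly below $\e^2$ at $t^+$ and strictly above $\e^2$ at $t^-$, since $\Phi$ is strictly increasing and $\s>0$.

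The second ingredient is the identity, valid for every $t\in\R$,
\begin{equation*}
  N(t)\;=\;\La^d\,\EE\!\left[e^{2S_d}\,\ind\{S_d\le t\}\right],
\end{equation*}
obtained by inserting $1=(\la_{\bk}^2/\La^d)\cdot\La^d\,e^{2(-\log\la_{\bk})}$ into the sum defining $N(t)$. From it I would derive, for $t=dM+c\sqrt d$ with $c$ fixed, the crude upper bound $N(t)\le\La^d e^{2t}$ and, for any $c'<c$, the lower bound $N(t)\ge\La^d e^{2(dM+c'\sqrt d)}\,\PP\big(dM+c'\sqrt d<S_d\le t\big)$, where the probability on the right tends to $\Phi(c/\s)-\Phi(c'/\s)>0$ by the same CLT. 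Using $d\log\cc E=d\log\La+2dM$ and dividing by $\sqrt d$, these two bounds give $2c'+o(1)\le\big(\log N(dM+c\sqrt d)-d\log\cc E\big)/\sqrt d\le 2c+o(1)$, and letting $c'\uparrow c$ shows the ratio tends to $2c$. Applying this with $c=q+\gamma$ and $c=q-\gamma$ at the two ends of the sandwich, then letting $\gamma\downarrow 0$, yields $\big(\log n(\e,d)-d\log\cc E\big)/\sqrt d\to 2q$.

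The bookkeeping is routine: the partial sum $X_n$ is ambiguous only in the choice of which equal-sized eigenvalues to include, but both the leftover mass $\sum_{j>n}\la_{(j)}^2$ and the count $N(t)$ are unambiguous; and one checks $\La<\infty$, $0<\s^2<\infty$, $\cc E>1$, and finiteness of $q$ for $\e\in(0,1)$. The one delicate step is the lower bound on $\log N(t)$. The tempting route is to estimate $\PP\big(S_d\in[t-L,t]\big)$ for a bounded window $L$, which is of order $d^{-1/2}$ but is not controlled by the bare CLT. The device above sidesteps this: by lowering the exponential weight to the value $e^{2(dM+c'\sqrt d)}$ with $c'<c$ \emph{strictly}, one replaces that small-window probability by the probability of an interval whose length grows like $\sqrt d$ — which the CLT does control — at the cost of only $2(c-c')\sqrt d$ in the exponent. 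This is precisely what lets the whole argument run under the second-moment hypothesis alone, with no local limit theorem or Berry--Esseen bound.
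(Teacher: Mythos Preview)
Your argument is correct. Note, however, that the paper does not give its own proof of this statement: it is quoted verbatim from Lifshits--Tulyakova \cite{LT} as background for the sharper Theorem~\ref{main result}. What the paper \emph{does} prove is that stronger result, and its proof rests on the same exponential reweighting identity you use, namely
\[
n(\e,d)=\La^d\,\EE\big[e^{2S_d}\,\ind\{S_d\le -\log\zeta\}\big],
\]
followed by Edgeworth-type expansions (Cram\'er--Esseen in the nonlattice case, Esseen's lattice theorem otherwise) to pin down the constant; those expansions are what force the third-moment hypothesis~\eqref{3d_moment}. Your growing-window device for the lower bound---dropping the exponential weight from $e^{2t}$ to $e^{2(dM+c'\sqrt d)}$ with $c'<c$ strictly, so that the residual probability is of a window of width $\asymp\sqrt d$ rather than $O(1)$---is exactly the right idea to make the argument run under the bare second-moment assumption, with only the classical CLT and no local limit theorem. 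This is presumably close in spirit to the original proof in \cite{LT}, which the present paper does not reproduce.
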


The authors of \cite{LT} conjectured that under further assumptions on
the sequence $\{\la_{i}\}$ one can prove that
\[
n(\e,d) \approx \frac{C(\e) \cc E^d e^{2q\sqrt{d}}}{\sqrt{d}}\ ,\;
\;\, d \to \infty.
\]
We will show that even a stronger statement holds.

\section{Main result: the exact intractability rate in increasing dimension}

It turns out that two different cases depending on the nature
of the distribution of $U_l$ should be distinguished. The proof and the final result depend on whether  this distribution is a {\it lattice} distribution or not.

Recall that one calls a discrete distribution of a
random variable $U$ a lattice distribution, if there exist numbers $a$
and $h>0$ such that every possible value of $U$ can be
represented in the form $a+\nu h$, where $\nu$ is an integer.
The number $h$ is called the span of the distribution. In the following, when studying the
lattice case, we assume that $h$ is the maximal span of the
distribution; i.e., one cannot represent all possible values of
 $U_l$ in the form $b+\nu h_1$ for some $b$ and $h_1>h$.

Definition \eqref{def_Ul} yields that  the variables $U_l$ have a common lattice
distribution if and only if $\la_{i} = C e^{-n_i h}$ for some positive $C$, $h$
and $n_i \in \N$. We call this situation the {\it lattice case}
and will assume that $h$ is chosen as large as possible.
Otherwise we say that the {\it nonlattice case} holds.

\par By $f(d)=o(g(d))$ we mean that $\lim_{d \to \infty}\frac{f(d)}{g(d)} =0$. In particular, $ f(d)=g(d)\left( 1 + o(1) \right) $ means that $\lim_{d \to \infty}\frac{f(d)}{g(d)} =1 $.

\bigskip
\begin{theorem}\label{main result}
Let the sequence \( \left\{ \la_{i}\right\} \), \( i=1,2, \ldots  \),
satisfy~\eqref{3d_moment}. \par Then for every $\e \in (0,1)$ it holds
\[ n(\e,d) = K\ \phi(\frac{q}{\s})\  \cc E^d e^{2q \sqrt{d}}
\,d^{-1/2} \left( 1 + o(1) \right),\; \;\, d \to \infty,\] where
\beaa\phi(x) &=& \frac{1}{\sqrt{2 \pi}} \, e^{-x^2/2 },\\
\\
K &=& \begin{cases}
\frac{h}{\s(1-e^{-2h})} & \mathrm{in \;the\; lattice \;case,} \\
 \frac{1}{2\s}  &\mathrm{otherwise},
\end{cases}
\eeaa and the quantile $q=q(\e)$ is defined in~\eqref{q}.
\end{theorem}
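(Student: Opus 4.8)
## Proof plan

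The plan is to translate the problem about the deterministic eigenvalue array $\{\lambda_{\bk}^2\}_{\bk\in\N^d}$ into a statement about sums of i.i.d. random variables and then to invoke a sharp local-type limit theorem rather than the classical CLT used in Theorem~\ref{T:LT_3.2}. Recall the probabilistic device \eqref{def_Ul}: if $U_1,\dots,U_d$ are i.i.d. with $\PP(U_l = -\log\lambda_i) = \lambda_i^2/\Lambda$, then for $S_d \eqdef U_1 + \cdots + U_d$ one has, for any threshold $t$,
\begin{equation*}
  \sum_{\bk:\ \lambda_{\bk}^2 \le e^{-2t}\Lambda^d} \lambda_{\bk}^2 \Big/ \Lambda^d
   = \PP(S_d \ge t + d M) \cdot(\text{reindexing}),
\end{equation*}
and more precisely $\EE\|X - X_n\|_2^2/\Lambda^d$ is the tail $\sum_{i>n}$ of the ordered sequence $\{\lambda_{\bk}^2/\Lambda^d\}$, which equals $\PP(S_d > \text{(critical level)})$ for an appropriate level. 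So first I would fix this correspondence carefully: writing $m = n(\e,d)$, the defining inequality $\EE\|X-X_n\|_2^2 \le \e^2 \Lambda^d$ says that the $m$ largest values of $\lambda_{\bk}^2/\Lambda^d$ carry all but an $\e^2$-fraction of the total mass $1$, i.e. the number of atoms of the (product) distribution of $S_d$ lying above the $\e^2$-quantile of its upper tail. Concretely, if $F_d$ is the law of $S_d$ (centered and rescaled by $M$), then $n(\e,d)$ is essentially the number of lattice points (or a Lebesgue measure, in the nonlattice case) $\bk$ with $\sum_l (-\log\lambda_{k_l}) \ge c_d$, where $c_d$ is chosen so that the tail mass past $c_d$ is exactly $\e^2$.

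Second, I would pin down $c_d$ via an Edgeworth-type expansion for $\PP(S_d \ge c_d) = \e^2$. By the CLT, $c_d = dM + q\sqrt{d}(1+o(1))$ with $q$ as in \eqref{q}; refining, one needs $c_d = dM + q\sqrt{d} + O(1)$ or even the next-order correction, using that $\EE|U_l|^3<\infty$ under \eqref{3d_moment} (Berry--Esseen / one-term Edgeworth). Third, and this is the heart of the matter, I would count $N_d(c_d) \eqdef \#\{\bk : \sum_l(-\log\lambda_{k_l}) \ge c_d\}$ — but weighted properly. The cleanest route is: $n(\e,d) = \sum_{\bk \in A_d} 1$ where $A_d$ is the set of the $m$ indices with largest $\lambda_{\bk}^2$; since $\lambda_{\bk}^2/\Lambda^d = \prod_l (\lambda_{k_l}^2/\Lambda) = e^{-2\sum_l(-\log\lambda_{k_l})}\cdot(\text{const})$, ordering by $\lambda_{\bk}^2$ is ordering by $\Sigma_{\bk} \eqdef \sum_l(-\log\lambda_{k_l})$ increasing. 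Introduce the counting measure $\nu_d(B) = \#\{\bk : \Sigma_{\bk} \in B\}$; then $n(\e,d) = \nu_d([0, c_d])$ exactly, where $c_d$ solves $\nu_d((c_d,\infty))\text{-mass} = \e^2 \Lambda^d$ in the $\lambda^2$-weighting. The quantity $\nu_d([0,c_d])$ is, up to the normalization $\Lambda^d e^{2dM}$, precisely a local limit theorem object: it is $\cc E^d$ times an integral of $e^{+2(\Sigma - dM)}$ against the law of $S_d$ restricted to $\Sigma \le c_d$. Performing the exponential change of measure (tilt the law of $U_l$ by $e^{2u}$, which is legitimate since the tilted measure is again a probability measure after renormalizing by $\Lambda e^{2M}/\Lambda = $ the explosion-coefficient bookkeeping), the sum $\nu_d([0,c_d])$ becomes $\cc E^d \cdot \EE_{\text{tilted}}[e^{-2(\text{deviation})}\mathbb{1}\{\cdot\le c_d\}]$, and under the tilted measure $S_d$ is again asymptotically normal with the same variance $\s^2$ (the tilt only shifts the mean). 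Plugging in $c_d - dM \approx q\sqrt{d}$, the remaining integral $\int_{-\infty}^{q} e^{-2w\sqrt{d}}\,(\text{density of }(S_d - dM)\text{ near }w\sqrt{d})\,dw$ is dominated by $w$ near $q\sqrt{d}/\sqrt{d}$... — more precisely one does a Laplace-type evaluation: the integrand peaks at the upper endpoint $c_d$, contributes a factor $e^{2q\sqrt{d}}$, and the local density of $S_d$ at that point is $\frac{1}{\sqrt{2\pi d}\,\s}\phi(q/\s)$ by the local CLT (nonlattice case) or $\frac{h}{\sqrt{2\pi d}\,\s}\phi(q/\s)$ with the lattice correction $(1-e^{-2h})^{-1}$ from summing the geometric series of lattice atoms in the exponential weight (lattice case). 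Collecting, $n(\e,d) = \cc E^d e^{2q\sqrt{d}} d^{-1/2}\cdot K\phi(q/\s)(1+o(1))$ with $K$ as stated.

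The main obstacle — the step that needs genuine care rather than routine bookkeeping — is the sharp asymptotic evaluation of $\nu_d([0,c_d])$ in the lattice case: one must use a \emph{local} limit theorem (Gnedenko-type) for the lattice-distributed sum $S_d$ with maximal span, keep track of the fact that in the exponentially tilted sum the atoms near the boundary $c_d$ are spaced $h$ apart in the $\Sigma$-variable, and sum the tail series $\sum_{\nu\ge 0} e^{-2\nu h}$ to get the factor $(1-e^{-2h})^{-1}$; meanwhile in the nonlattice case one uses a smoothing argument (Esseen's inequality) to replace the counting measure by Lebesgue measure at scale $1$, which produces the $\frac{1}{2\s}$ constant instead. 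A secondary subtlety is controlling the $o(1)$ uniformly: one must be sure the Edgeworth correction to $c_d$ (which is $O(1)$, not $o(1)$) does not contaminate the leading term — this works because a shift of $c_d$ by a bounded amount multiplies $e^{2q\sqrt{d}}$ by a factor $e^{O(1/\sqrt d)} = 1+o(1)$ after absorbing the $\sqrt d$ scaling, so only the \emph{leading} coefficient $q$ of $c_d - dM$ matters for the exponential, while the local density factor is continuous in the endpoint. The third moment assumption \eqref{3d_moment} is exactly what makes both the Edgeworth bound and the local CLT remainder admissible.
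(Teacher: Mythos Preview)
Your overall architecture is right and very close to the paper's: translate to the i.i.d.\ sums $S_d=\sum U_l$, identify the threshold $c_d$ as a quantile of $S_d$ (so $c_d - dM = q\sqrt d\,(1+o(1))$), write
\[
n(\e,d)=\Lambda^d\,\EE\bigl[e^{2S_d}\mathbb 1\{S_d\le c_d\}\bigr]
       =\cc E^{\,d}\!\int_{-\infty}^{\theta}\!e^{2\sigma\sqrt d\,(z-\theta)}\,\mathrm dF_d(z),
\]
and then extract the boundary contribution, with the geometric series $\sum_{\nu\ge 0}e^{-2\nu h}=(1-e^{-2h})^{-1}$ producing the lattice constant. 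That part is fine and matches the paper.

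There is, however, a genuine gap in your proposed mechanism. The exponential tilt by $e^{2u}$ is \emph{not} legitimate: from \eqref{def_Ul} one computes
\[
\EE\bigl[e^{2U_l}\bigr]=\sum_{i}e^{-2\log\lambda_i}\,\frac{\lambda_i^2}{\Lambda}
=\sum_{i:\lambda_i>0}\frac{1}{\Lambda}=\frac{\#\{i:\lambda_i>0\}}{\Lambda},
\]
which is $+\infty$ whenever infinitely many $\lambda_i$ are positive (the interesting case). So the tilted object is not a probability (or even finite) measure, and your claim that the normalizer is $e^{2M}$ is simply Jensen's inequality applied in the wrong direction. Likewise ``the tilt only shifts the mean'' fails. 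This is exactly why the paper does \emph{not} change measure: it keeps the truncated moment $\int_{-\infty}^{\theta}e^{2\sigma\sqrt d(z-\theta)}\,\mathrm dF_d(z)$, integrates by parts to $\int_{-\infty}^{\theta}[F_d(\theta)-F_d(z)]\,\mathrm d\Psi_d(z)$, and then inserts a one-term Edgeworth expansion for the CDF $F_d$ --- the Cram\'er--Esseen theorem in the nonlattice case and Esseen's lattice expansion (with the sawtooth correction $S_d$) in the lattice case --- followed by a dominated-convergence evaluation of the resulting integrals. Your third-moment hypothesis \eqref{3d_moment} is what makes those expansions valid, as you note.

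A second, smaller issue: in the nonlattice case $S_d$ need not have a density at all (the $U_l$ are discrete), so ``local CLT for the density'' is not directly available. The paper's route via the CDF expansion is the correct substitute; after integration by parts the Gaussian density $\phi(q/\sigma)$ emerges from the derivative of $\Phi$, not from a density of $S_d$. If you repair the tilting step by reverting to the direct CDF-expansion argument, the rest of your outline (Laplace-type localization at the endpoint, geometric sum over lattice atoms, insensitivity of the leading term to $O(1)$ shifts in $c_d$) goes through exactly as you describe and yields the stated constant $K\phi(q/\sigma)$.
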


\begin{remark}
 One can see that the complexity of approximation
increases exponentially as $d \to \infty$. This phenomenon is
referred to as the {\it curse of
dimensionality} or { \it intractability}; see, e.g.,~\cite{R}~and~\cite{W94a}. The notion of the ``curse of
dimensionality'' dates back at least to Bellman~\cite{B}.
\end{remark}
\begin{remark}
By l'H\^opital's rule,
\begin{equation*}
    \lim_{h \to 0} \frac{h}{\s \left(1-e^{-2h}\right)} = \frac{1}{2\s },
\end{equation*}
and thus the relations for \( K \) are in accordance as \( h \to 0 \).

\bigskip
\end{remark}

\section{Proof of the main result}
This section presents a proof of Theorem \ref{main result}.
\begin{proof}
Let $\zeta = \zeta(\e, d)$ be the maximal positive number such that
the sum of eigenvalues satisfies
\[
\sum_{\bk\in\N^d : \la_{\bk}< \zeta}\la_{\bk}^2   \leq \e^2\La^d.
\]
Define a lattice set in $\N^d$ in the following way:
\begin{eqnarray*}
\mathrm{A} =  \mathrm{A}(\e, d) &\eqdef&
        \left\{  \bk \in \N^d : \la_{\bk} \geq \zeta \right\} \\
        &=& \Big\{  \bk \in \N^d : \prod_{l=1}^{d} \la_{k_l} \geq \zeta \Big\}.
\end{eqnarray*}
Since $\la_{\bk} >0$ for any $\bk \in \mathrm{A}$, one can write
 \beaa \nonumber && n(\e,d) \; = \; \# \mathrm{A} \;
 =\;
\sum_{\bk \in \mathrm{A}}\frac{\la_{\bk}^2}{\la_{\bk}^2}
\\ \nonumber
&=& \sum_{ \bk \in
\N^d : -\sum \log \la_{k_l} \leq -\log \zeta}\La^d \exp\Big\{-2
\sum_{l=1}^d \log \la_{k_l}\Big\}\prod_{l=1}^{d}\PP(U_l=-\log
\la_{k_l})
\\ \nonumber
&=& \La^d \,\EE\exp \Big\{2 \sum_{l=1}^d U_l  \Big\}
\ind{\Big\{\sum_{l=1}^d U_l \leq -\log \zeta \Big\}}.
\eeaa
For  centered and normalized sums
\[ Z_d = \frac{\sum_{l=1}^d U_l
- d M }{\s\sqrt{d}}
\]
we have
\[ \Big\{\sum_{l=1}^d U_l \leq -\log
\zeta \Big\} = \left\{ Z_d \leq \theta \right\},
\]
where
\be\label{ta_def}
\theta = \theta(\e,d) = - \frac{\log\zeta + d M}{\s \sqrt{d}}.
\ee

We show now that $\theta$ has a useful probabilistic meaning in terms of $\{U_l\}$ and of their sums.
Applying Lemma~3.1 of \cite{LT} we have for any $d \in \N$ and $z \in \RR^1$
 \beaa
 \sum_{\bk\in \N^d :\la_{\bk} < z} \la_{\bk} ^2  & = & \La^d\;\PP \left( \sum_{l=1}^d U_l > -\log z
 \right)\\ &=&  \La^d\;\PP \left( Z_d > - \frac {\log z + d M}{\s
 \sqrt{d}}\right)\\ &=&  \La^d\;\PP \left( Z_d > \theta_z   \right),
 \eeaa
 where
$$ \theta_z = -\frac{\log z + d M}{\s \sqrt{d}}.
$$
Fix $\e \in (0,1)$. Observe that
$$   \sum_{\bk\in \N^d :\la_{\bk} < z} \la_{\bk} ^2 \; \leq \; \e^2 \La^d
$$
if and only if
$$  \PP \left( Z_d > \theta_z   \right) \; \leq \; \e^2 .
$$
Therefore,
$\theta =\theta(\e, d)$ defined by \eqref{ta_def} is the $(1-\e^2)$-quantile of the
distribution of $Z_d$, namely,
\begin{eqnarray*}
  \theta(\e, d) &=&\min\{\theta:\ \PP\left(Z_d>\theta\right)\le \e^2\}\\
                &=&\min\{\theta:\ \PP\left(Z_d\le \theta\right) >1- \e^2\}.
 \end{eqnarray*}
Let $q=q(\e)$ be the quantile of the normal distribution function chosen from~\eqref{q}.
 Then in view of the central limit theorem
\be\label{ta}
\theta(\e, d) \to \frac{q(\e)}{\s}\; ,
\;\; d \to \infty,
\ee
for any fixed $\e \in(0,1)$.


Now let us return to the information complexity. We
obtain \bea \nonumber n(\e,d) &=& \cc E^d \, \EE \exp \{ 2 \s
\sqrt{d} Z_d\} \ind{\{Z_d \leq \theta \}}
\\ \nonumber
&=& \cc E^d \, \exp \{ 2 \s \sqrt{d}\theta \}
\int_{-\infty}^{\theta} \exp \{ 2 \s \sqrt{d}(z-\theta) \} \,\mathrm{d}F_d(z) ,
\eea
where $F_d(z) = \PP(Z_d < z)$ and $\cc E$ is defined as in~\eqref{Expl}.

Denote
\[
 \Psi_d(z)\eqdef \exp \{ 2 \s \sqrt{d}(z-\theta) \}
\]
and integrate by parts the integral
\[
 \int_{-\infty}^{\theta} \Psi_d(z)  \,\mathrm{d}[F_d(z)-F_d(\theta)] =
 \int_{-\infty}^{\theta} [- F_d(z)+F_d(\theta)] \,\mathrm{d}\Psi_d(z).
\]

From now on we have to distinguish the lattice and nonlattice cases.
\vspace{0.3cm}

\subsection{Nonlattice case}
\par In the following part of the proof we will assume that the
distribution of $\left\{ U_l \right\}$ is not lattice. This is
true in the most interesting cases, such as the Brownian sheet (the Wiener-Chentsov random field),  the completely tucked Brownian sheet (the Brownian pillow), and the d-variate Hoeffding, Blum, Kiefer and Rosenblatt process (see Appendix \ref{sect:Appendix. Examples of random fields} for details).

\par In view of~\eqref{3d_moment} we are able to apply the Cram\'{e}r-Esseen Theorem  (cf.~\cite{GK}, section~42, Theorem~2; \cite{Pe2}, Chap. V, section 5.7, Theorem~5.21; \cite{Pe1}, Chap. VI, section~3, Theorem~4). It leads to
\bea \label{CE_th}
\nonumber &&\int_{-\infty}^{\theta} [-
F_d(z)+F_d(\theta)] \,\mathrm{d}\Psi_d(z)\nn
& =& \int_{-\infty}^{\theta} [-
\Phi(z)+\Phi(\theta)] \,\mathrm{d}\Psi_d(z)\\
&+& \frac{\al^3}{6\s^3 \sqrt{2\pi d}}\int_{-\infty}^{\theta} [(z^2-1)e^{-z^2/2} -
((\theta^2-1)e^{-\theta^2/2} ] \,\mathrm{d}\Psi_d(z) + o\left(
\frac{1}{\sqrt{d}}\right)
\nn
&=& \nonumber I_1 + I_2 -I_3  - I_4 + o\left( \frac{1}{\sqrt{d}}\right),
\eea where \( \Phi(\cdot) \) is the standard normal distribution function and
\beaa \nonumber I_1 &=& \int_{-\infty}^{\theta} [-
\Phi(z)+\Phi(\theta)] \,\mathrm{d}\Psi_d(z),
\\
I_2 &=& \frac{\al^3}{6\s^3 \sqrt{2\pi
d}}\int_{-\infty}^{\theta}  z^2 e^{-z^2/2}  \,\mathrm{d}\Psi_d(z),
\\ \nonumber
I_3 &=& \frac{\al^3}{6\s^3 \sqrt{2\pi d}}\int_{-\infty}^{\theta}
e^{-z^2/2} \,\mathrm{d}\Psi_d(z),
\\ \nonumber
I_4 &=& \frac{\al^3}{6\s^3
\sqrt{2\pi d}}\left(\theta^2 - 1 \right) e^{-\theta^2/2} \\
&=& \frac{\al^3}{6\s^3 \sqrt{2\pi d}}\left(\left(\frac{q}{\s}\right)^2
- 1 \right) \exp\left\{  -\frac{ q^2 }{ 2\s^2 } \right\}\left( 1 + o(1) \right).
\eeaa
(the last equivalence is provided by~\eqref{ta}).

Since $\mathrm{d}\Psi_d(z) = 2 \s \sqrt{d}\Psi_d(z)d z$,  the integral $I_2$ is
given, after a change of variable, by the following expression:
\beaa \nonumber
 I_2 & = & I_2(d,\theta) \\
 &=& \frac{\al^3}{3\s^2 \sqrt{2
 \pi d }}\int_0^{\infty}(\theta - \frac{y}{\sqrt{d}})^2 \exp\Big\{
 -\frac{1}{2}\Big(\theta - \frac{y}{\sqrt{d}}\Big)^2\Big\}\exp\{-2\s y \} \; \mathrm{d} y
 \eeaa
 with $y = -\sqrt{d}(z-\theta)$.

\par For any $ d =1,2,...$,
\[
0\le
\Big(\theta - \frac{y}{ \sqrt{d}}\Big)^2 \exp\Big\{
-\frac{1}{2}\Big(\theta - \frac{y}{ \sqrt{d}}\Big)^2 \Big\}
\leq (|\theta| + y)^2.
\]
This estimate gives us the majorant required in the Lebesgue dominated convergence theorem. Using~\eqref{ta} and passing to the limit in the integral, we obtain, as $d \to \infty$,
\[
I_2(d,\theta) =
 \frac{\al^3}{6\s^3 \sqrt{2
 \pi d }} \;\left(\frac{q}{\s}\right)^2 \exp\left\{  -\frac{ q^2 }{ 2\s^2 } \right\} \left( 1 +
 o(1)\right).
\]
Similarly,
\[
 I_3(d,\theta) =
 \frac{\al^3}{6\s^3 \sqrt{2
 \pi d }}  \exp\left\{  -\frac{ q^2 }{ 2\s^2 } \right\} \left( 1 + o(1)\right).
\]
Thus we obtain that $ \sqrt{d}I_4 =  \sqrt{d}(I_2 -I_3 )\left( 1 + o(1)\right)$, and hence,
 $I_2-I_3-I_4 =  o\left( \frac{1}{\sqrt{d}}\right)$.
\bigskip
\par Consider the main integral $I_1$:
\bea \label{I1}
\nonumber I_1 &=& I_1(d, \theta) = \int_{-\infty}^\theta
[-\Phi(z)+\Phi(\theta)] \,\mathrm{d}\Psi_d(z)\\
\nonumber &=&\frac{1}{\sqrt{2 \pi}} \int_{-\infty}^\theta  \exp\{ 2 \s \sqrt{d}(z-\theta) \} \exp\{ -z^2/2\}  \; \mathrm{d} z \\
\nonumber &=&\frac{1}{\sqrt{2 \pi d }}\int_0^{\infty}\exp\Big\{
 -\frac{1}{2}\Big(\theta - \frac{y}{\sqrt{d}}\Big)^2\Big\}\exp\{-2\s y \} \; \mathrm{d} y
\\ &=&  \frac{1}{2\s \sqrt{2 \pi d}} \exp\left\{  -\frac{ q^2 }{ 2\s^2 } \right\}\left( 1 + o(1) \right)\;,\;\; d \to \infty. \eea

 Then
\[
n(\e,d) = \frac{\cc E^d\, \exp\{ 2q \sqrt{d} \}}{2 \s \sqrt{d}}
\frac{1}{\sqrt{2 \pi}} \exp\left\{  -\frac{ q^2 }{ 2\s^2 } \right\} \left( 1 + o(1)
\right)
\]
as asserted. \vspace{0.3cm}

\subsection{Lattice case}
\par Now we will proceed under the assumption that the random variables $U_l$
have a lattice distribution. Let possible values of the random variable $U_l$ be
\( \tilde{a} + \nu h , \; \nu = 0,\pm 1, \pm 2,..., \) where
$\tilde{a} = M+a$ is a shift and  $h$ is the maximal span of
the distribution. Therefore, all possible values of $Z_d$ have the
form
\[
\frac{d a + \nu h}{\s \sqrt{d}}, \; \nu = 0,\pm 1, \pm 2,... .
\]
Introduce the function \[ S(x) = [x] - x+ \frac{1}{2}, \] where
$[x]$ denotes, as usual, the integer part of $x$, and consider
\[
S_d(x) = \frac{h}{\s} \,S\left( \frac{x \s \sqrt{d} - d
a}{h}\right).
\]
Let $F_d(z)$ be as above. Then under assumption \eqref{3d_moment}
Esseen's result (see Theorem~1~page~43 in~\cite{GK}) yields
\[
F_d(z) - \Phi(z) = \frac{e^{-z^2/2}}{\sqrt{2\pi}} \left(
\frac{S_d(z)}{\sqrt{d}} - \frac{\al^3 (z^2 - 1)}{6 \s^3
\sqrt{d}}\right) + o \left( \frac{1}{\sqrt{d}}\right)
\]
uniformly in $z$.

Comparing with \eqref{CE_th}, we observe that one needs only to
evaluate the additional term \beaa J &=&\frac{1}{\sqrt{2 \pi d}}
\int_{-\infty}^{\theta}[- S_d(z) e^{-z^2/2} + S_d(\theta) e^{-\theta^2/2}]
\mathrm{d} \Psi_d(z)
\\
&=& \frac{1}{\sqrt{2 \pi d}} \int_{-\infty}^{\theta} \Psi_d(z)
\mathrm{d} \left( S_d(z) e^{-z^2/2}\right) =  J_1 - J_2 + J_3 ,
\eeaa where \beaa \nonumber J_1 &=& \frac{1}{\sqrt{2 \pi d}}
\int_{-\infty}^{\theta} \Psi_d(z) S_d'(z)e^{-z^2/2} \mathrm{d} z,
\\ \nonumber J_2 &=& \frac{1}{\sqrt{2 \pi d}}
\int_{-\infty}^{\theta} \Psi_d(z) S_d(z) z e^{-z^2/2} \mathrm{d} z,
\eeaa  and $J_3$ is a ``discrete part'', which  is defined in the
following way. Notice that $S(x)$ is a periodic function with period one;
therefore $S_d(x)$ possesses the period $h/\s \sqrt{d}$ and has
jumps at points $\{ \frac{kh + da}{\s \sqrt{d}}, k \in \mathbb{Z} \}$. If
the point $\theta$ belongs to this lattice, then there exists an integer $k'$
such that $\theta = \frac{k'h + da}{\s \sqrt{d}}$. Hence, one can
integrate the discontinuous  part of the integral $J$ with respect
to the measure $\frac{h}{\s}\delta_{\frac{kh + da}{\s \sqrt{d}}}$
and obtain
$$ J_3 = \frac{1}{\sqrt{2 \pi d}}\, \frac{h}{\s} \sum_{k=-\infty}^{k'}
\Psi_d\Big(\frac{kh + da}{\s \sqrt{d}}\Big)
\exp\Big\{-\frac{1}{2}\Big(\frac{kh + da}{\s \sqrt{d}}\Big)^2\Big\}.$$
\bigskip
\par We start by estimating $J_1$. At the points where the
derivative $S_d'(z)$ exists, one can easily calculate that
\begin{equation*}
    S_d'(z) = \frac{h}{\s}S\Big(\frac{z\s\sqrt{d} - da}{h} \Big) =
-\sqrt{d}.
\end{equation*}
Therefore, as in the nonlattice case, by
the Lebesgue dominated convergence theorem we have \bea \label{J1}\nonumber J_1 &=&
\frac{-\sqrt{d}}{\sqrt{2 \pi d}} \int_{-\infty}^{\theta} \exp\{ 2\s
\sqrt{d}(z-\theta)\} \exp\{ -z^2/2\} \mathrm{d} z \\\nonumber &=&
\frac{-1}{\sqrt{2 \pi d}} \int_{0}^{\infty} \exp
\Big\{-\frac{1}{2}\Big(\theta
- \frac{y}{\sqrt{d}}\Big)^2 \Big\}\exp\{-2\s y \} \mathrm{d} y\\
&=& \frac{-1}{ 2\s \sqrt{2 \pi d} }\exp\Big\{  -\frac{ q^2 }{ 2\s^2 } \Big\}\left( 1 + o(1) \right)\;,\; d \to
\infty , \eea
which yields $\sqrt{d}J_1 = - \sqrt{d} I_1\left( 1+ o(1)\right)$.
\bigskip
\par As for the integral $J_2$, this one, for sufficiently large $d$, becomes negligible.
Indeed, \beaa J_2 &=& \frac{1}{\sqrt{2 \pi d}}
\int_{-\infty}^{\theta}
\exp\{ 2\s \sqrt{d}(z-\theta)\}S_d(z) z  \exp\{ -z^2/2\} \mathrm{d} z \\
 &=&       \frac{1}{\sqrt{2 \pi d}}  \frac{1}{\sqrt{d}} \int_{0}^{\infty}
    \exp \Big\{-\frac{1}{2}\Big(\theta
- \frac{y}{\sqrt{d}}\Big)^2 \Big\} (\theta - \frac{y}{\sqrt{d}})S_d\Big(\theta
- \frac{y}{\sqrt{d}}\Big)\exp\{-2\s y \} \mathrm{d}y \\
&\leq& \frac{3 h}{2 \s d\sqrt{2 \pi } }\int_{0}^{\infty} \exp
\Big\{-\frac{1}{2}\Big(\theta - \frac{y}{\sqrt{d}}\Big)^2 \Big\} \Big(\theta -
\frac{y}{\sqrt{d}}\Big)\exp\{-2\s y \} \mathrm{d}y
\\ &=& \frac{3h}{ 4\s^2 d \sqrt{2 \pi }  }\left( \frac{q}{\s}\right)^2 \exp\left\{  -\frac{ q^2 }{ 2\s^2 } \right\}\left( 1 + o(1) \right)\;,\; d \to \infty
. \eeaa And, of course,  $J_2 = o\left(
\frac{1}{\sqrt{d}}\right)$.

\par Now we consider the essential summand \bea \label{J3} \nonumber J_3 &=&
\frac{1}{\sqrt{2 \pi d}}\, \frac{h}{\s} \sum_{k=-\infty}^{k'}
\exp\Big\{2\s \sqrt{d} \Big( \frac{k h + d a }{\s \sqrt{d}} - \theta
\Big) \Big\} \exp\Big\{- \frac{1}{2} \Big(
 \frac{k h + d a}{\s \sqrt{d}} \Big)^2\Big\}
\\\nonumber&=& \frac{1}{\sqrt{2 \pi d}}\,
\frac{h}{\s} \sum_{k=-\infty}^{k'}\exp\{2 h (k - k') \}
\exp\Big\{-\frac{1}{2} \Big( \frac{k h + d a}{\s \sqrt{d}} \Big)^2\Big\}
 \\ \nonumber&=& \frac{1}{\sqrt{2 \pi d}}\,
\frac{h}{\s} \sum_{l=0}^{\infty}  \exp\{-2 h l\}
\exp\Big\{-\frac{1}{2} \Big( \frac{(k' - l) h + d a}{\s \sqrt{d}} \Big)^2\Big\}
 \\\nonumber &=& \frac{1}{\sqrt{2 \pi d}}\,
\frac{h}{\s} \sum_{l=0}^{\infty}  \exp\{-2 h l\}
\exp\Big\{-\frac{1}{2} \Big( \theta - \frac{l h}{\s \sqrt{d}}\Big)^2\Big\}
\\ &=& \frac{1}{\s
\sqrt{ d}}\, \frac{h }{(1-e^{-2h})} \frac{1}{\sqrt{2
\pi}}\exp\left\{  -\frac{ q^2 }{ 2\s^2 } \right\}\left( 1 + o(1) \right)\;,\; d \to \infty. \eea
\bigskip
We obtained
$$\sqrt{d}J_3 = \sqrt{d} \frac{2h }{(1-e^{-2h})} I_1 \left( 1 + o(1)\right).$$
Putting together \eqref{I1}, \eqref{J1}, and \eqref{J3}, we
get  \[ n(\e,d) = \frac{\cc E^d \, e^{ 2q \sqrt{d} }}{ \s
\sqrt{d}} \frac{h }{(1-e^{-2h})} \frac{1}{\sqrt{2 \pi}} \,
\exp \left\{  -\frac{ q^2 }{ 2\s^2 } \right\} \left( 1 + o(1) \right),\; \;\, d \to \infty. \]
\end{proof}
\section{Appendix. Examples of tensor product-type random fields}
\label{sect:Appendix. Examples of random fields}
This section contains some examples of random fields to which the above general result can be applied.
\subsection{Wiener-Chentsov random field}
The Wiener-Chentsov field or the Brownian sheet (see \cite{Lft}) is a zero-mean Gaussian random function $W^{(d)}$ with covariance function equal to a product of the covariance functions corresponding to the Wiener process  $W$:
\[ \KK_{W^{(d)}} (s,t) = \prod_{l=1}^d \min\{s_l, t_l\} ,\;s=(s_1,...,s_d),\;t=(t_1,...,t_d) \in T.   \]
Therefore the marginal eigenvalues have the following form:
 \[\la_{W;i}^2 = (\pi (i-1/2))^{-2},\;i=1,2,\ldots \,.\]

\subsection{Completely tucked Brownian sheet}
The completely tucked Brownian sheet (the Brownian pillow) is a zero-mean Gaussian random function $B^{(2)}$ with covariance function equal to a product of the covariance functions corresponding to the standard Brownian bridge $B(t) = W(t) - t W(1)$, namely
\[ \KK_{B^{(2)}} (s,t) = \prod_{l=1}^2\left( \min\{s_l, t_l\} - s_l t_l  \right) ,\;s,t \in [0,1]^2.   \]
Respectively, the marginal eigenvalues (see~\cite{AnDar}) are equal to \[\la_{B;\,i}^2 = (\pi
i)^{-2},\;i=1,2,\ldots \,.\]
\par  In the literature different terms are in use for this random field. In~\cite{vanderVaartWellner} the term ``completely
tucked Brownian sheet'' is used; in~\cite{CsHor} ``tied-down Kiefer process'' is used; in~\cite{KonProt} this field is called ``the Brownian pillow''.

The notion of ``completely tucked Brownian sheet'' and its generalization for the case $d>2$ was introduced by Blum, Kiefer, and Rosenblatt~\cite{BKRos} as the limit distribution for a functional of
an empirical process occurring in nonparametric testing of independency, the so-called ``independence
empirical process''~(see~\cite{vanderVaartWellner}). Therefore, the $d$-parametric generalization of the completely tucked Brownian sheet is often referred to as the ``d-variate Hoeffding, Blum, Kiefer, and Rosenblatt process'' (see, for example,~\cite{KonProt}).
The mention of Hoeffding's name in the term is motivated by the fact that the test studied in~\cite{BKRos} is equivalent to the one suggested earlier by Hoeffding in~\cite{Hoef}. However, the limiting distribution, the covariance function, the eigenvalues and the eigenfunctions of the respective integral equation were obtained in~\cite{BKRos}. Higher-dimensional generalizations were later treated in~\cite{Dugue} and~\cite{Deh}.

\subsection{Centered Gaussian processes}
In some statistical problems it is convenient to use centered empirical processes and corresponding limiting Gaussian processes.

\par For any Gaussian process $X = \{ X(t)\}$, $t
\in[0,1]$ we define the centered process
\[ \mathring{X}(t) \eqdef X(t)-\int_0^1 X(u) \mathrm{d}u.  \]
The centered Brownian bridge $\mathring{B}$, also referred to in the literature as the Watson process, was introduced in~\cite{Wats} for nonparametric goodness-of-fit testing on a circle. Watson showed that the covariance function is given by
\[ \KK_{\mathring{B}}(s,t) =  \min\{s,t\} - st + \frac{1}{2}(s^2 + t^2-s-t)+\frac{1}{12}     \;,\;\;s,\,t \in[0,1],\]
and the covariance operator with this kernel has a double spectrum, i.e.,
\[ \la_{\mathring{B};2i}^2 = \la_{\mathring{B};(2i-1)}^2 =(2\pi i)^{-2},\;i=1,2,\ldots \, .\]

\par The covariance function of the centered Wiener process $\mathring{W}$ has the form
\[ \KK_{\mathring{W}}(s,t) =  \min\{s,t\}  +\frac{1}{2}(s^2 +t^2)-s-t +\frac{1}{3}   \;,\;\;s,\,t \in[0,1],\]
and the corresponding eigenvalues coincide with those of the standard Brownian bridge, i.e.,
\[ \la_{\mathring{W};\,i}^2  = \la_{B;\,i}^2 = (\pi i)^{-2},\;i=1,2,\ldots. \]
 This is in accordance with the well-known equality in distribution for the $L_2$-norms of the Brownian bridge and the centered Wiener process; see~\cite{BegNOrs}.

\par The centered integrated Brownian bridge
\[ \breve{B}(t) = \Bar{B}(t) - \int_0^1\Bar{B}(u)\mathrm{d}u,\] where
\[ \Bar{B}(t) = \int_0^t B(u)\mathrm{d}u ,\,t \in[0,1]\] was considered in a framework of goodness-of-fit testing and small deviation probabilities under the $L_2$-norm in~\cite{HenzeN} and \cite{BegNOrs}, where
its covariance function
\[ \KK_{\breve{B}}(s,t) =  \frac{st\min\{s,t\}}{2} - \frac{\min\{s,t\}^3}{6} -
       \frac{(st)^2}{4} - \frac{s^2+t^2}{6} - \frac{s^4+t^4}{24} + \frac{s^3 +t^3}{6}
       + \frac{1}{45}, \]
 \noindent $s,\,t \in[0,1] $, and eigenvalues \[ \la_{\breve{B};\,i}^2 = (\pi i)^{-4}
 ,\;i=1,2,\ldots\,,
 \]
\noindent were obtained.

\subsection{Multivariate Anderson-Darling processes}

\par The tensor product of Anderson-Darling processes $A^{(d)}(t)$, $t~\in~[0,1]^d$,
is a zero-mean Gaussian random function $A^{(d)}(t)$, $t~\in~[0,1]^d$ with covariance function
\[ \KK_{A^{(d)}}(s,t) = \prod_{l=1}^d
    \frac{\min\{s_l, t_l\} - s_l t_l}{ \sqrt{s_l (1 - s_l)} \sqrt{t_l (1 - t_l)} }
        ,\;s_l, t_l \in [0,1].   \]
\noindent The eigenvalues of the corresponding covariance operator are given by
\[ \la_{\bk}^2 = \prod_{l=1}^d \frac{1}{k_l (k_l +1) } , \; \bk=(k_1, \ldots , k_d)  \in \N^d. \]

 \par In the one-dimensional case the Anderson-Darling process coincides in distribution with $\frac{B(t)}{\sqrt{t(1-t)}}$ , $t \in[0,1]$, and was introduced in~\cite{AnDar} in the context of goodness-of-fit testing. Anderson and Darling obtained its covariance function and the exact spectrum.

\par In~\cite{Pycke} another multivariate extension of the Anderson-Darling process, defined as a zero-mean Gaussian process with the covariance function
\[ \KK_A^{\mu}(s,t) = \left( \frac{\min\{s, t\} - s t}{ \sqrt{s (1 - s)} \sqrt{t (1 - t)}
}\right)^{\mu}
        ,\;s,\, t \in [0,1] ,\, \mu >0,     \]is given.

\noindent The eigenvalues of its covariance operator are of the form
\[ \la_{\mu,j}^{2} =\frac{\mu}{(\mu +j-1)(\mu + j)}\,, \; j=1,2, \ldots \,.  \]
When the parameter $\mu$ is positive integer, the random field, defined in such a way (more precisely, the square of its $L_2$-norm), is the limit distribution
for Cram\'er--von Mises-type statistics.

\addcontentsline{toc}{chapter}{Bibliography}

\begin{thebibliography}{10}

\bibitem{A}
Adler, R. J. (1990). \textit{An Introduction to Continuity, Extrema and Related Topics for
General Gaussian Processes}.
Institute of Mathematical Statistics Lecture Notes -- Monograph Series, \textbf{12}. Institute of Mathematical Statistics, Hayward,~CA.


\bibitem{Akaike}
Akaike, H. (1973).
Information theory and an extension of the maximum
likelihood principle. \textit{ Second International Symposium on Information
Theory (Tsahkadsor, 1971), Akad\'emiai Kiad\'o, Budapest} 267--281.


\bibitem{AnDar}
Anderson, T. W. and Darling, D. A. (1952).
Asymptotic theory of certain ``goodness of fit'' criteria based on stochastic processes.
\textit{Ann. Math. Statistics} \textbf{23} 193--212.


\bibitem{BegNOrs}
Beghin, L., Nikitin, Ya. and Orsingher, E. (2003) .
Exact small ball constants for some Gaussian processes under the
$L^2$-norm. \textit{Zap. Nauchn. Sem. S.-Peterburg. Otdel. Mat. Inst. Steklov. (POMI)}  \textbf{298}  (2003),  Veroyatn. i Stat. \textbf{6} 5--21, 316;  translation in  \textit{J. Math. Sci.} (N. Y.)  \textbf{128:1} (2005) 2493--2502.


\bibitem{B}
  Bellman, R. (1961).
\textit{Adaptive Control Processes: a guided tour}.
 Princeton University, Princeton.

\bibitem{Belomestny and Spokoiny}
Belomestny, D. and Spokoiny, V. (2007)
Spatial aggregation of local likelihood estimates with applications to classification.
\textit{Ann. Statist.} \textbf{35} 2287--2311.


\bibitem{BKRos}
Blum, J. R., Kiefer, J. and Rosenblatt, M. (1961).
Distribution free tests of independence based on the sample
distribution function. \textit{Ann. Math. Statist.} \textbf{32:2} 485--498.

\bibitem{Brown and Low}
Brown, L. D. and Low, M. G. (1992).
Supperefficiency and lack of adaptability in functional estimation.
\textit{Technical report, Cornell Univ.}


\bibitem{BS}
Buslaev, A. P. and Seleznjev, O. V. (1999).
On certain extremal problems in the theory of approximation of
random processes. \textit{East J. Approx.} \textbf{5:4} 467--481.

\bibitem{Butucea}
Butucea, C. (2001).
Exact adaptive pointwise estimation on Sobolev classes of densities.
\textit{ESAIM Probab. Statist.}  \textbf{5} 1--31.


\bibitem{Cizek}
\v{C}\'{i}\v{z}ek, P., H\"ardle, W. and Spokoiny, V. (2009).
Adaptive pointwise estimation in time-inhomogeneous conditional heteroscedasticity models.
\textit{Econometrics Journal} \textbf{12:2} 248 -- 271.


\bibitem{CsHor}
 Cs\"org\H{o}, M. and Horv\'ath, L. (1997).
 \textit{Limit Theorems in Change-point Analysis}.
 Wiley, New York.


\bibitem{Deh}
Deheuvels, P. (1981).
An asymptotic decomposition for multivariate distribution-free
tests of independence. \textit{J. Multivariate Anal.} \textbf{11:1} 102--113.


\bibitem{Donoho and Johnstone}
Donoho, D. L. and Johnstone, I. M. (1994).
Ideal spatial adaptation by wavelet shrinkage. \textit{Biometrica} \textbf{81} 425--455.


\bibitem{Dugue}
Dugue, D. (1975).
Sur des tests d'ind\'ependence ``ind\'ependants de la loi''.
\textit{C. R. Acad. Sci. Paris S\'er. A-B} \textbf{281:24} Aii, A1103--A1104.


\bibitem{Fan and Gijbels 95}
Fan, J. and Gijbels, I. (1995).
Data-driven bandwidth selection in local polynomial fitting: variable bandwidth and spatial adaptation.  \textit{J. Roy. Statist. Soc. Ser. B}  \textbf{57:2} 371--394.


\bibitem{Fan and Gijbels book}
Fan, J. and Gijbels, I. (1996). \textit{Local Polynomial Modelling and Its Applications.}
Monographs on Statistics and Applied Probability, 66. Chapman and Hall,
London.


\bibitem{Fan and Gijbels 98}
Fan, J., Farmen, M. and Gijbels, I. (1998). Local maximum likelihood
estimation and inference.  \textit{J. R. Stat. Soc. Ser. B Stat. Methodol.}  \textbf{60:3} 591--608.



\bibitem{FanZhangZhang}
Fan, J., Zhang, C. and Zhang, J. (2001).
Generalized likelihood ratio statistics and Wilks phenomenon. \textit{ Ann. Statist.}  \textbf{29:1} 153--193.

\bibitem{Foi}
Foi, A. (2005)
Anisotropic nonparametric image processing: Theory, algorithms and applications. Ph.D.Thesis, Dip. di Matematica, Politecnico di Milano, Milan, Italy, www.cs.tut.fi/~lasip.

\bibitem{GK}
Gnedenko, B. V. and Kolmogorov, A. N. (1954).
\textit{Limit Distributions for Sums of Independent Random Variables}. Translated and annotated by K. L. Chung. With an Appendix by J. L. Doob.
Addison-Wesley Publishing Company, Inc., Cambridge, Mass.
(\textit{in Russian:} GTTI, Moscow-Leningrad, 1949).


\bibitem{Goldenshluger and Nemirovski}
Goldenshluger, A. and Nemirovski, A. (1994). On spatial adaptive estimation of nonparametric regression.
\textit{Research report, Technion-Israel Inst. Technology, Haifa, Israel}.


\bibitem{HenzeN}
Henze, N. and Nikitin, Ya. Yu. (2000).
A new approach to goodness-of-fit testing based on the integrated
empirical process. \textit{J. Nonparametr. Statist.} \textbf{12:3} 391--416.


\bibitem{Hoef}
Hoeffding, W. (1948).
A non-parametric test of independence. \textit{Ann. Math. Statistisc} \textbf{19} 546--557.

\bibitem{Huber}
Huber, P. J. (1964).
Robust estimation of a location parameter.
\textit{Ann. Math. Statist.} \textbf{35} 73--101.


\bibitem{Ibragimov and Khasminskii}
Ibragimov, I. A. and Has'minskii, R. Z. (1981). \textit{Statistical Estimation. Asymptotic Theory.}
Applications of Mathematics, 16. Springer-Verlag, New York-Berlin.


\bibitem{Karh1}
Karhunen, K. (1946).
Zur Spektraltheorie stochastischer Prozesse.
\textit{Ann. Acad. Sci. Fennicae, Ser. A. I. Math.-Phys.} \textbf{34} 1--7.


\bibitem{Karh2}
Karhunen, K. (1947).
 \"Uber lineare Methoden in der Wahrscheinlichkeitsrechnung.
\textit{Ann. Acad. Sci. Fennicae, Ser. A. I. Math.-Phys.} \textbf{37} 3--79.



\bibitem{Katk1979}
Katkovnik, V. Ja. (1979).
Linear and nonlinear methods of nonparametric regression analysis. (Russian)  \textit{Soviet Automat. Control} \textbf{5} 35--46, 93.

\bibitem{Katk1983}
Katkovnik, V. Ja. (1983).
Convergence of linear and nonlinear nonparametric estimates of ``kernel'' type.  \textit{Automat. Remote Control}  \textbf{44:4} 495--506;  translated from  \textit{Avtomat. i Telemekh.}  (1983) \textbf{4} 108--120 (Russian).


\bibitem{Katk1985}
Katkovnik, V. Ja. (1985).
\textit{Nonparametric Identification and Data Smoothing: Local Approximation Approach}. Nauka, Moscow (Russian).

\bibitem{KatkEA2006}
 Katkovnik, V., Egiazarian, K. and Astola, J. (2006).
 \textit{Local Approximation Techniques in Signal and Image Processing}. Bellingham, WA: SPIE Press.

\bibitem{KatkSpok}
Katkovnik, V. and Spokoiny, V. (2008). Spatially adaptive estimation via fitted local likelihood techniques. \textit{IEEE Trans. Signal Process.}, \textbf{56:3} 873--886.



\bibitem{KonProt}
Koning, A. J. and Protasov, V. (2003).
Tail behaviour of Gaussian processes with applications to the Brownian pillow.
\textit{J. Multivariate Anal.} \textbf{87:2} 370--397.

\bibitem{Kos}
 Kosambi, D. D. (1943).
 Statistics in functional space.
\textit{J. Indian Math. Soc. (N.~S.)} \textbf{7} 76--88.


\bibitem{KL}
K\"uhn, Th. and Linde, W. (2002).
Optimal series representation of fractional Brownian sheets.
\textit{Bernoulli} \textbf{8:5} 669--696.

\bibitem{KullbackLeibler}
Kullback, S. and Leibler, R. A. (1951).
On information and sufficiency.
\textit{Ann. Math. Statistics} \textbf{22} 79--86.


 \bibitem{Lep1990}
 Lepskii, O. V. (1990).
 A problem of adaptive estimation in Gaussian white noise.  (Russian) \textit{Teor. Veroyatnost. i Primenen.}  \textbf{35:3} 459--470;  translation in
\textit{Theory Probab. Appl.} \textbf{35:3} 454--466.


 \bibitem{Lep1991}
 Lepskii, O. V. (1991).
 Asymptotic minimax adaptive estimation. I. Upper bounds. Optimally adaptive estimates. (Russian).
\textit{Teor. Veroyatnost. i Primenen.} \textbf{36:4} 645--659;  translation in  \textit{Theory Probab. Appl.} \textbf{36:4} 682--697.


 \bibitem{Lep1992}
 Lepskii, O. V. (1992).
 Asymptotic minimax adaptive estimation. II. Schemes without optimal adaptation. Adaptive estimates. (Russian) \textit{Teor. Veroyatnost. i Primenen.} \textbf{37:3} 468--481; translation in
\textit{Theory Probab. Appl.} \textbf{37:3} 433--448.


\bibitem{LepMamSpok97}
Lepski, O. V., Mammen, E. and Spokoiny, V. G. (1997).
Optimal spatial adaptation to inhomogeneous smoothness: an approach based on kernel estimates with variable bandwidth selectors. \textit{Ann. Stat.} \textbf{25:3} 929--947.

\bibitem{LepSpok97}
Lepski, O. V. and Spokoiny, V. G. (1997).
Optimal pointwise adaptive methods in nonparametric estimation. \textit{Ann. Stat.} \textbf{25:6} 2512--2546.



\bibitem{Lft}
Lifshits, M. A. (1995).
 \textit{Gaussian Random Functions}. Mathematics and its Applications, 322. Kluwer Academic Publishers, Dordrecht.


\bibitem{LT}
Lifshits, M. A. and Tulyakova, E. V. (2006).
Curse of dimensionality in approximation of random fields.
\textit{Probab. Math. Statist.} \textbf{26:1} 97--112.


\bibitem{Loader}
Loader, C. (1999). \textit{Local Regression and Likelihood.} Statistics and Computing. Springer-Verlag, New York.


\bibitem{L}
 Lo\`eve, M. (1946).
 Fonctions al\'eatoires de second ordre.
\textit{Revue Sci.} \textbf{84} 195--206.

\bibitem{Mathe}
Math\'e, P.(2006).
The Lepskii principle revisited.
\textit{Inverse Problems}  \textbf{22:3} L11--L15.

\bibitem{Mercurio and Spokoiny}
Mercurio, D. and Spokoiny, V. (2004).
Statistical inference for time-inhomogeneous volatility models.  \textit{Ann. Statist.}
\textbf{32:2} 577--602.

\bibitem{Novak and Wozniakowski}
Novak, E. and Wo\'zniakowski, H. (2008).
\textit{Tractability of Multivariate Problems. Vol. 1: Linear Information.}
 EMS Tracts in Mathematics,~6. European Mathematical Society (EMS), Z\"urich.


\bibitem{Ob}
 Obukhov, A. M. (1954).
 Statistical description of continuous fields. (Russian) \textit{Tr. geophis. Inst. Akad. Nauk SSSR}   \textbf{24}(151) 3--42.


\bibitem{TraubWW}
Traub, J. F., Wasilkowski, G. W. and Wo\'zniakowski, H. (1988).
\textit{Information-based Complexity.} With contributions by A. G. Werschulz and T. Boult. Computer Science and Scientific Computing. Academic Press, Inc., Boston, MA.


\bibitem{Traub and Werschulz}
Traub, J. F. and Werschulz, A. G. (1998).
\textit{Complexity and Information.} Lezioni Lincee. [Lincei Lectures] Cambridge University Press, Cambridge.


\bibitem{Pe1}
 Petrov, V. V. (1975).
 \textit{Sums of Independent Random Variables}.
 Translated from the Russian by A. A. Brown. Ergebnisse der Mathematik und ihrer Grenzgebiete, Band 82. Springer-Verlag, New York--Heidelberg.


\bibitem{Pe2}
 Petrov, Valentin V. (1995).
 \textit{Limit Theorems of Probability Theory. Sequences of Independent Random Variables}.
   Oxford Studies in Probability, 4. Oxford Science Publications. The Clarendon Press, Oxford University Press, New York. (\textit{in~Russian:} Nauka, Moscow, 1987).


\bibitem{Polzehl and Spokoiny}
Polzehl, J. and Spokoiny, V. (2006).
Propagation-separation approach for local likelihood estimation. \textit{Probab. Theory Relat. Fields} \textbf{135} 335--362.


\bibitem{Pug}
 Pougachev, V. S. (1953).
General theory of the correlations of random functions. \textit{Izv. Akad. Nauk SSSR, Ser. Math.}  \textbf{17:5} 401--420.

\bibitem{Pycke}
Pycke, J.-R. (2003).
Multivariate extensions of the Anderson-Darling process.
\textit{Stat. Probab. Lett.} \textbf{63:4} 387--399.


\bibitem{Reiss}
Reiss, M., Rozenholc, Y., Cuenod, C.-A. (2009).
Pointwise adaptive estimation for robust and quantile regression.
arXiv:0904.0543v1.

\bibitem{R}
Ritter, K. (2000).
\textit{Average-case Analysis of Numerical Problems}.
Lecture Notes in Mathematics \textbf{1733}. Springer-Verlag, Berlin.

\bibitem{Sab}
 Sabelfeld, K. (2007).
 Expansion of random boundary excitations for elliptic PDEs.
\textit{Monte Carlo Methods Appl.} \textbf{13:5--6} 405--453.


\bibitem{Serdyukova}
Serdyukova, N. A. (2009).
Dependence on the dimension for complexity of approximation of random fields. (Russian)
\textit{Teor. Veroyatnost. i Primenen. } \textbf{54:2} 256--270;
translation in \textit{Theory Probab. Appl.} (2010) \textbf{54:2} 272--284.

\bibitem{Serdyukova regression}
Serdyukova, N. A. (2009).
Local parametric estimation under noise misspecification in regression. arXiv:0912.4489.



\bibitem{Spokoiny discontinious}
Spokoiny, V. G. (1998).
Estimation of a function with discontinuities via local polynomial fit with an adaptive window choice.  \textit{Ann. Statist.}  \textbf{26:4} 1356--1378.


\bibitem{Spokoiny variance}
 Spokoiny, V. (2002).
 Variance estimation for high-dimensional regression models. \textit{J. Multivariate Anal.}
 \textbf{82} 111--133.

\bibitem{Spokoiny change point detection}
Spokoiny, V. (2009).
 Multiscale local change point detection with applications to Value-at-Risk.
\textit{Ann. Statist.} \textbf{37} 1405--1436.


\bibitem{SV}
 Spokoiny, V. and Vial, C. (2009). Parameter tuning in pointwise adaptation using a propagation approach. \textit{Ann. Statist.} \textbf{37:5B} 2783--2807.

\bibitem{Traub and Werschulz}
Traub, J. F. and Werschulz, A. G. (1998).
\textit{Complexity and information.}
 Lezioni Lincee. [Lincei Lectures] Cambridge University Press, Cambridge.


\bibitem{Triebel}
Triebel, H. (1992). \textit{Theory of function spaces. II.} Monographs in Mathematics,~84. Birkh\"auser Verlag, Basel.


\bibitem{Tsybakov82a}
Tsybakov, A. B. (1982). Nonparametric signal estimation when there is incomplete information on the noise distribution. \textit{Problems of Information Transmission} \textbf{18:2}, 116-130.

\bibitem{Tsybakov82b}
Tsybakov, A. B. (1982). Robust estimates of a function. \textit{Problems of Information Transmission} \textbf{18:3} 190-201.

\bibitem{Tsybakov86}
Tsybakov, A. B. (1986).  Robust reconstruction of functions by the local-approximation method. \textit{Problems of Information Transmission} \textbf{22:2} 133-146.

\bibitem{Tsybakov}
Tsybakov, A. B. (2009). \textit{Introduction to Nonparametric Estimation}.
Springer Series in Statistics. Springer-Verlag, New York.
or \textit{Introduction à l'estimation non-paramétrique.} (French) [Introduction to nonparametric estimation] Mathématiques \& Applications (Berlin) [Mathematics \& Applications], 41. Springer-Verlag, Berlin, 2004.


\bibitem{vanderVaartWellner}
van der Vaart, A. W. and Wellner J. A. (1996).
\textit{Weak Convergence and Empirical Processes. With Applications to Statistics.}
Springer Series in Statistics. Springer--Verlag, New York.

\bibitem{Wats}
Watson, G. S. (1961).
Goodness-of-fit tests on a circle. \textit{Biometrika} \textbf{48} 109--114.


\bibitem{White}
White, H. (1982).
Maximum likelihood estimation of misspecified models. \textit{Econometrica}  \textbf{50:1} 1--25.

\bibitem{W92}
Wo\'zniakowski, H. (1992).
Average case complexity of linear multivariate problems. Part~1: Theory. Part~2: Applications.
\textit{J. Complexity} \textbf{8:4} 337--372 and 373--392.


\bibitem{W94a}
 Wo\'zniakowski, H. (1994).
 Tractability and strong tractability of linear multivariate problems.
\textit{J. Complexity} \textbf{10:1} 96--128.


\bibitem{W94b}
 Wo\'zniakowski, H. (1994).
 Tractability and strong tractability of multivariate tensor product problems.
\textit{J.~of Computing and Information} \textbf{4} 1--19.

\bibitem{W2006}
 Wo\'zniakowski, H. (2006).
  Tractability of multivariate problems for weighted spaces of functions.
 \textit{Approximation and Probability, Banach Center Publ.}, \textbf{72} 407--427.


\end{thebibliography}

\Huge \textbf{Index of notation}
\vspace{10pt}
 \par \normalsize
\begin{align}
&\lfloor x \rfloor    &     &\text{greatest integer strictly less than the real number \( x \)}   \nn
& [x]                     &     & \text{integer part of \( x \)}    \nn
&  \log                   &     &\text{natural logarithm} \nn
&  \eqdef                 &     & \text{equals by definition} \nn
& \text{w.r.t.}   &     & \text{with respect to}    \nn
& \Sigma(\beta,L)       &     & \text{H\"older class of functions}     \nn
&     &    & \nn
&\textbf{Sets}&     &  \nn
& \emptyset    &     & \text{the empty set}     \nn
&  \#\{ \cdot \}            &     & \text{cardinality of the set \( \{ \cdot \} \) } \nn
&  A \cap B  &     & \text{intersection, \( \{ x: x\in A \; \text{and} \; x \in B  \} \)}     \nn
&    &     &     \nn
&\textbf{Special functions}    &     &     \nn
& \Gamma(\cdot)   &     & \text{the \( \Gamma \)-function}     \nn
& \Phi(\cdot)   &     & \text{the standard normal distribution function} \nn
&    &     &     \nn
&\textbf{Landau notation}&   & \nn
& f(x)=o(g(x)), \, x \to x_0 &  & \text{means that \( \lim_{x \to x_0}f(x)/g(x) =0 \)} \nn
& f(x)=\cc O (g(x)), \, x \to x_0   &     & \text{means that \( |f(x)|\le C|g(x)| \),
                                        as \( x \to x_0 \) } \notag
\end{align}
\noindent
\textbf{Linear algebra}
\begin{align}
& \gamma^{\T} \,,\; A^{\T}&     & \text{transpose of the vector \( \gamma \)
                                            or of the matrix \( A \)}\nn 
& \la_j(A)                &     & \text{\( j \)th eigenvalue of \( A \)}    \nn
& \la_1(A) \,,\; \la_{max}(A)&  & \text{largest eigenvalue of the symmetric matrix \( A \)}     \nn
& \tr(A)                  &     & \text{trace of \( A \), the sum of the diagonal elements
                                    of square matrix \( A \)}    \nn
& \rank(A)                &     & \text{rank of \( A \)}     \nn
&  \dim \cc U             &     & \text{dimension of the vector space \( \cc U \)}     \nn
&  \cc{C}(A)              &    & \text{column space of \( A \), the space spanned
                                            by the columns of \( A \)}    \nn
& \| \gamma \|            &    & \text{\( L_2  \) vector norm, Euclidean norm}    \nn
& \| A \|                 &     & \text{induced matrix norm based on \( L_2  \)
                                            vector norm (p. 31)}    \nn
& A \preceq B             &     & \text{\( B-A \succeq 0   \), L\"{o}wner partial ordering (p. 32)}    \nn
& A \succ 0               &     & \text{\( A \) is positive definite,
                                \( \gamma^{\T} A \gamma > 0 \) for \( x \ne 0 \) } \nn
& A \succeq 0             &     & \text{\( A \) is nonnegative definite,
                                        \( \gamma^{\T} A \gamma \ge 0 \) }     \nn
& A^{-1}                  &     & \text{inverse of \( A \) when \( A \) is nonsingular}    \nn
& \det A                  &     & \text{determinant of a square matrix \( A \) }    \nn
& A \otimes B             &     & \text{Kronecker product of \( A \) and \( B \) (p. 40) }    \nn
& \diag(x_1, \ldots, x_n) &     & \text{\(n \times n\) matrix with diagonal elements
                                        \(x_1, \ldots, x_n\) }    \nn
&                         &     & \text{and zeros elsewhere} \nn
& \vec A,                 &     & \text{if \( A \) is an \( m \times n \) matrix, then \( \vec A \) is
                                            an \( mn \times 1 \) vector  }    \nn
&                     &     & \text{formed by writing the columns of \( A \) one below the other} \nn
& \kappa(A)           &     & \text{ \( \kappa_2(A) \) conditional number of the positive
                                    definite matrix \( A \), }    \nn
&                     &     & \text {\( \kappa(A) \eqdef \la_{max}(A)/ \la_{min} (A) \)}\notag
\end{align}
\textbf{Probability and statistics}
\begin{align}
& \delta_x   &     & \text{Dirac measure on \( x \)}    \nn
& \eqdistr              &     & \text{equality in distribution}     \nn
& \text{a.s.}   &     & \text{almost surely}     \nn
&  \I\{ \cdot \}          &     &\text{indicator of the set \( \{ \cdot \} \) } \nn
& \norm{0}{1}   &     & \text{the standard normal distribution}    \nn
& \phi(\cdot)   &     & \text{density of the distribution \( \norm{0}{1} \)}    \nn
& \norm{0}{I_n}         &     & \text{standard normal distribution in \( \RRn \)}  \nn
& \norm{\tta}{\Sigma}   &     & \text{normal distribution with mean \( \tta \)
                                        and covariance matrix \( \Sigma \)}    \nn
& \tilde\tta=\argmax_{\tta \in \Theta}\LL(\tta)&   &\text{means that \( \LL(\tilde\tta)= \max_{\tta \in \Theta}\LL(\tta)\) } \nn
& \MSE                  &     & \text{mean squared risk at a point}    \nn
& \KL(P,P_{\tta})       &     & \text{Kullback-Leibler divergence between the measures \( P \)
                                        and \( P_{\tta} \) (p. 23) }\notag
\end{align}
\textbf{Assumptions}
\begin{align}
&\mathfrak{(Lp1)} - \mathfrak{(Lp4)} &   &  \text{p. 10}   \nn
& \mathfrak{(D)}   &     & \text{p. 22}   \nn
& \mathfrak{(Loc)} &     & \text{p. 23}   \nn
&  (\cc W)         &     & \text{p. 24}   \nn
&\text{Propagation conditions} (PC)& &\text{p. 28}   \nn
&  \mathfrak{(S)}  &     & \text{p. 29}    \nn
&  \mathfrak{(S1)} &     &  \text{p. 49}   \nn
&(SMB)             &     &  \text{p. 42}   \nn
&(SMBj)            &     & \text{p. 54}    \nn
& \mathfrak{(Lp1')} -\mathfrak{(Lp4')}  &     &  \text{p. 59--60}   \nn
& \mathfrak{(Lp1^{\mathrm{d} })}        &     & \text{p. 49}    \notag
\end{align}

\Huge \textbf{Erkl\"arung}
\vspace{20pt}

\par \normalsize

Ich erkl\"are, dass ich die dem angestrebten Verfahren zugrunde liegende Promotionsordnung (Amtliches Mitteilungsblatt Nr. 34/2006) kenne.

Ich erkl\"are, dass ich vorliegende Arbeit selbst\"andig und nur unter Verwendung der angegebenen Literatur und Hilfsmittel angefertigt habe.

Ich habe mich anderw\"arts noch nicht um einen Doktorgrad beworben, und ich besitze keinen Doktorgrad in dem Promotionsfach.


\vspace{40pt}

\par Nora Serdyukova

\par Berlin, 24. May 2010.

\end{document}